\documentclass[12pt,reqno]{article}

\usepackage[margin=1in]{geometry}
\usepackage{amsmath,amsthm,amsfonts,amssymb}
\usepackage{mathrsfs}
\usepackage{cases}
\usepackage{url}
\usepackage{enumitem}
\usepackage{tikz}
\usepackage{microtype}

\usepackage[square,comma,sort&compress,numbers]{natbib}
\usetikzlibrary{arrows.meta,fit}
\usepackage[colorlinks=true,
linkcolor=blue,
citecolor=blue,
urlcolor=blue]{hyperref}

\newtheorem{thm}{Theorem}[section]
\newtheorem{lem}[thm]{Lemma}
\newtheorem{cor}[thm]{Corollary}
\newtheorem{prop}[thm]{Proposition}
\newtheorem{conj}[thm]{Conjecture}

\theoremstyle{definition}

\newtheorem{exm}[thm]{Example}
\newtheorem{rem}[thm]{Remark}

\numberwithin{equation}{section}

\newcommand{\floor}[1]{\left\lfloor #1 \right\rfloor}
\newcommand{\defi}[1]{\textit{#1}}

\newcommand{\cB}{\mathcal{B}}

\newcommand{\cN}{\mathcal{N}}

\DeclareMathOperator{\Ant}{Ant}

\title{Antichain polynomials of products of chains and minuscule posets}
\author{%
	Xi Chen,
	Yuxian Dong,
	Sihao Tang\thanks{Corresponding author.\\
		\textit{Email addresses:}
		\href{mailto:chenxi@dlut.edu.cn}{\nolinkurl{chenxi@dlut.edu.cn}} (X. Chen),
		\href{mailto:yuxiand@hotmail.com}{\nolinkurl{yuxiand@hotmail.com}} (Y. Dong), and
		\href{mailto:sihao_tang@hotmail.com}{\nolinkurl{sihao_tang@hotmail.com}} (S. Tang).}}
\date{\small School of Mathematical Sciences, Dalian University of Technology, Dalian 116024, PR China}
\begin{document}
\maketitle

\begin{abstract}
This paper studies the antichain polynomials of $[k]\times P$, where $P$ is a connected minuscule poset.
We give a formula for the number of antichains, counted by size, of an arbitrary poset.
Using this formula, we present necessary and sufficient conditions for the palindromicity of antichain polynomials for two infinite families of connected minuscule posets.
We show that, for every connected minuscule poset $P$,
if the antichain polynomial of $[k]\times P$ is palindromic, then it has only real and strictly negative zeros.
This result, in particular, gives an affirmative answer to Ding-Dong's conjecture about $\gamma$-positivity of the antichain polynomial of $[k]\times P$.
By constructing a bijection between antichains and labeled Dyck paths, we also give a layer-refined enumeration of the antichains in $[2]\times[m]\times[n]$.
We establish a relation between such antichains and Clar covers of the hexagonal flakes $O(2,m,n)$,
thus prove a conjectured determinantal formula for a family of Zhang-Zhang polynomials.
Real-rootedness and stability results are also obtained when the shortest chain has length at most two.
Finally, we present infinitely many connected Peck posets whose antichain polynomials are not unimodal,
disproving the log-concavity conjecture of Ding and Dong.
\\[1pt]
{\sl MSC:}\quad 06A07; 05A15; 26C10; 05C92
\\
{\sl Keywords:}\quad Antichain polynomial, minuscule poset, defect-one antichain, palindromicity, real-rootedness, $\gamma$-positivity, Zhang-Zhang polynomial.
\end{abstract}
\tableofcontents

\section{Introduction}
% The Cartesian product of posets is ordered componentwise: $(x_1,\ldots,x_r)\le(y_1,\ldots,y_r)$ if and only if $x_i\le y_i$ for every $i\in[r]$.
Let $P$ be a finite poset.
An \defi{antichain} of $P$ is a subset whose elements are pairwise incomparable.
We write $\Ant(P)$ for the set of antichains of $P$ and $A(P)=|\Ant(P)|$.
%  for their total number.
The \defi{width} of $P$, denoted by $w(P)$, is the maximum cardinality of an antichain in $P$.
The \defi{antichain polynomial} of $P$ is defined as
\[
\mathcal N_P(x):=
\sum_{\mathcal A\in\Ant(P)}x^{|\mathcal A|}
=
\sum_{j=0}^{w(P)}a_jx^j,
\]
where $a_j$ is the number of antichains with $j$ elements.
It is clear that $\mathcal N_P(1)=A(P)$.
%  and $\deg\mathcal N_P(x)=w(P)$.

The enumeration of antichains is a classical problem in combinatorics.
Dedekind's problem asks for the number of antichains in the Boolean lattice $\cB_n$~\cite{Dedekind1897}.
Sperner's theorem~\cite{Sperner1928} shows that $w(\cB_n)=\binom{n}{\floor{n/2}}$,
hence $A(\cB_n)\ge 2^{\binom{n}{\floor{n/2}}}$ as every subfamily of a middle layer forms an antichain.
Successively sharper asymptotic estimates for $A(\mathcal B_n)$ were obtained in~\cite{Kleitman1969,KleitmanMarkowsky1975,Korshunov1981,Sapozhenko1991}; see also~\cite{Kahn2002,BaloghTreglownWagner2016,NoelScottSudakov2018} for approaches based on entropy, supersaturation, and container methods.
Very recently, Jenssen et al.~\cite{JenssenMalekshahianPark2026} studied the number of antichains of a prescribed cardinality in $\mathcal B_n$.
It is well known that $\cB_n$ can be viewed as the a product of two-element chains.
Therefore Dedekind's problem naturally lead to the enumeration of antichains in product of chains.
See~\cite{Tsai2019,PohoataZakharov2024,ParkSarantisTetali2025,FalgasRavryRatyTomon2026} for some recent work.

% In this paper, we study antichains according to their cardinalities.

Products of chains lead naturally to the theory of minuscule posets, 
which originates in the representation theory of simple Lie algebras. 
The weight poset of a minuscule representation is a finite distributive lattice and hence can be realized as the lattice of order ideals of a finite poset.
For a finite poset $P$, let $J(P)$ be the poset of its order ideals, ordered by inclusion, and let $J^r(P)$ denote the $r$-fold iteration of this construction.
Following Proctor~\cite[Section~4]{Proctor1984},
say that $P$ is \defi{minuscule} if $J(P)$ is isomorphic to the weight poset of a minuscule representation. 
We say that $P$ is \defi{connected} if its Hasse diagram is connected.
By Proctor's classification~\cite[Proposition~4.2]{Proctor1984}, the connected minuscule posets, up to isomorphism, are
\[
[m]\times[n],
\qquad
H_n=\{(i,j)\in[n]\times[n]:i\le j\},
\qquad
K_n=[n]\oplus([1]\sqcup[1])\oplus[n],
\]
together with two exceptional posets
\[
J^2([2]\times[3])
\qquad\text{and}\qquad
J^3([2]\times[3]).
\]
Here $\oplus$ and $\sqcup$ denote the ordinal sum and disjoint union, respectively.

In this paper, we study combinatorial properties of the antichain polynomials of connected minuscule posets,
including $\gamma$-positivity and real-rootedness.
% Let $f(x)=a_s x^s+\cdots+a_t x^t$ be a polynomial with nonnegative coefficients,
% where $s\le t$ and $a_s a_t\neq0$.
% Following Zeilberg \cite{Zei89}, we call $s+t$ the \defi{darga} of $f(x)$.
Let $f(x)=\sum_{j=0}^d a_jx^j$ be a polynomial of with positive coefficients.
The polynomial $f(x)$ is \defi{palindromic} if $a_j=a_{d-j}$ for every $0\le j\le d$, and is \defi{real-rooted} if all its zeros are real numbers.
Its coefficient sequence $(a_0,a_1,\ldots,a_d)$ is \defi{unimodal} if there exists an index $k$ such that $a_0\le\cdots\le a_k\ge\cdots\ge a_d$, and is \defi{log-concave} if $a_j^2\ge a_{j-1}a_{j+1}$ for all $1\le j<d$.
A palindromic polynomial $f(x)$ is called \defi{$\gamma$-positive} if it can be written as
\[
f(x)
=
\sum_{j=0}^{\lfloor d/2\rfloor}
\gamma_j x^j(1+x)^{d-2j}
\]
with $\gamma_j\ge0$ for all $j$.
Newton's inequalities shows that if a polynomial with positive coefficients is real-rooted then its coefficient sequence is log-concave, hence unimodal. 
% A nonnegative log-concave sequence with no internal zeros is unimodal; see~\cite{Brenti1989,Stanley1989}.
It is also known that if a palindromic polynomial with nonnegative coefficients is real-rooted then it is $\gamma$-positive; see~\cite{Branden2015} and~\cite[P. 82]{Petersen2015}.

Ding and Dong~\cite{DingDong2019} proposed the following conjecture.

\begin{conj}[{\cite[Conjecture B]{DingDong2019}}]\label{conj:DD-minuscule}
	Let $P$ be a connected minuscule poset and let $k$ be a positive integer.
	If $\mathcal N_{[k]\times P}(x)$ is palindromic, then it is $\gamma$-positive.
\end{conj}

% Ding and Dong verified Conjecture~\ref{conj:DD-minuscule} when $k=1$, for the family $K_n$, and for the two exceptional minuscule posets.
% We settle both families and obtain the following stronger conclusion.

We settle this conjecture and obtain the following stronger result.

\begin{thm}\label{thm:main-minuscule}
	Let $P$ be a connected minuscule poset and let $k$ be a positive integer.
	If $\mathcal N_{[k]\times P}(x)$ is palindromic, then all its zeros are real and negative.
\end{thm}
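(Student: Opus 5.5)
The plan is a reduction: first determine exactly which pairs $(k,P)$ give a palindromic $\mathcal N_{[k]\times P}(x)$, then check that each such polynomial is, up to trivial factors, a known real-rooted polynomial. The tool for the classification is the general counting formula for antichains by size, announced in the abstract. For $[k]\times P$, an antichain meets each fibre $\{i\}\times P$ in an antichain of $P$, and these pieces must be mutually compatible across levels. Palindromicity first forces $a_{w-1}=a_1=k|P|$, where $w=w([k]\times P)$. So the first step is to count maximum antichains and antichains of size $w-1$ ("defect-one" antichains) in $[k]\times P$, and compare with $a_1$ and $a_w=1$.

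For connected minuscule $P$, which is graded and Sperner with rank-symmetric structure, I expect $w([k]\times P)$ to be the size of the middle rank(s). Palindromicity will then force strong restrictions:
\begin{itemize}
\item $a_w=1$ forces a unique maximum antichain, hence a unique middle rank.
\item $a_{w-1}=k|P|$ pins down the remaining parameters.
\end{itemize}
I would carry this out family by family. For $[m]\times[n]$, note that $[k]\times[m]\times[n]$ is a three-dimensional product of chains. For $H_n$ and $K_n$, where $[k]\times K_n$ is nearly a product of two chains with a doubled middle, I would derive necessary and sufficient conditions. For the two exceptional posets, I would compute $\mathcal N$ directly for small $k$ and show palindromicity fails for large $k$, by comparing the growth of $a_{w-1}$ and $a_1$.

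Next I would show that in every palindromic case the polynomial has a closed form. I expect the surviving cases to be very restricted:
\begin{itemize}
\item $k=1$ with $P=[m]\times[n]$, $H_n$, or $K_n$;
\item small $m$ or $n$ with arbitrary $k$, e.g.\ $[k]\times[1]\times[n]$ or $[k]\times[2]\times[2]$;
\item $K_n$ with $k\le 2$;
\item a few explicit exceptional instances.
\end{itemize}
For products of two chains $[a]\times[b]$ the antichain polynomial is the Narayana polynomial $\sum_j \binom{a}{j}\binom{b}{j}x^j$. Here the palindromic case $a=b$ gives the Narayana polynomial (up to normalization), which is real-rooted by classical results: it is a Jacobi/hypergeometric polynomial with an interlacing recurrence. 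For $H_n$, antichains correspond to type-B Narayana-type objects with coefficients $\binom{n}{j}^2$ or similar, also real-rooted. For $K_n$, $\mathcal N$ is an explicit low-degree polynomial such as $1+(2n+2)x+x^2$, whose discriminant is positive. For the finitely many exceptional cases, a direct root computation suffices. Negativity of the zeros is automatic from positive coefficients once the roots are real.

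The main obstacle is the classification step, specifically computing $a_{w-1}$ for $[k]\times[m]\times[n]$ and $[k]\times H_n$ in general. This needs a careful description of the defect-one antichains: they arise by deleting one element from a maximum antichain, or as antichains that "jump" between adjacent levels near the middle rank. Getting an exact count, or at least a lower bound exceeding $k|P|$ outside the exceptional list, is where I would use the general formula. I would first bound $a_{w-1}$ from below by $w$ times the number of maximum antichains plus the number of shifted antichains, and show this already exceeds $a_1$ except in the listed cases.
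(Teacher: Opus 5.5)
Your overall architecture is the same as the paper's. You classify the palindromic cases by comparing $[x^{w-1}]\mathcal N$ (that is, $w$ plus the number of defect-one antichains relative to the unique maximum antichain) with $[x]\mathcal N=k|P|$, and then verify real-rootedness case by case. But both halves of your plan have a genuine gap.

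\textbf{The list of palindromic cases.} Your predicted list is wrong, and the cases you miss are where the real content lies. The actual list is:
\begin{itemize}
\item $[k]\times[m]\times[n]$ with $(k,m,n)=(1,r,r)$ or $(2,r,r+1)$. It is not $[k]\times[2]\times[2]$ for arbitrary $k$; for example, $\mathcal N_{[2]\times[2]\times[2]}=1+8x+9x^2+2x^3$.
\item $[k]\times H_n$ with $k=1$ and $n$ odd, or $(k,n)\in\{(3,2),(5,3),(3,4),(7,4)\}$. Note that $\mathcal N_{H_n}=\sum_q\binom{n+1}{2q}x^q$, not a $\binom nj^2$-type polynomial.
\item $[k]\times K_n$ with $k\in\{1,2n+1\}$; the case $k=2$ is never palindromic.
\item $[k]\times J^2([2]\times[3])$ with $k\in\{5,11\}$, and $[k]\times J^3([2]\times[3])$ with $k=1$.
\end{itemize}
Your plan does not touch the two infinite families $[2]\times[m]\times[m+1]$ and $[2n+1]\times K_n$.

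The latter is the type-$D$ Narayana polynomial $\operatorname{Cat}(D_{2n+2},x)$, real-rooted by Br\"and\'en; you would need to identify it as such.

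The former is not a classical real-rooted family; its real-rootedness was a conjecture of Ding and Dong. The paper proves it in several steps:
\begin{itemize}
\item An exact layer-refined enumeration, via a bijection between antichains of $[2]\times[m]\times[n]$ and labeled Dyck paths counted with the cycle lemma.
\item This gives $\mathcal N_{m,n}(u,v)=A(u)A(v)-\frac{(m+1)(n+1)}{mn}M(u)M(v)$, where $A=Q+xQ'$, $M=xQ'$, and $Q=Q_{m,n}$ is a transformed Jacobi polynomial with simple negative zeros.
\item On the diagonal this factors as $\bigl(Q+(1-\lambda)xQ'\bigr)\bigl(Q+(1+\lambda)xQ'\bigr)$ with $\lambda=\sqrt{(m+1)(n+1)/(mn)}$.
\item A root-location theorem for $Q+cxQ'$ then shows all zeros are negative, precisely because $-1/m<1-\lambda<0$ when $n=m+1$.
\end{itemize}

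\textbf{The classification step.} As you formulate it, this step would fail. You plan to bound $a_{w-1}$ from below and show it exceeds $a_1$ off the list. In the main branch the inequality goes the other way. For $[3]\times[3]\times[3]$ one has $w=7$ and $a_{w-1}=15<27=a_1$. More generally, for all $k\ge3$ with $n=m+k-1-2s$ and $s\ge1$, the paper shows $a_1-a_{w-1}\ge m^2+3m-6>0$.

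So you need an exact count, or a sharp upper bound, of the defect-one antichains. The paper obtains it in two steps. Rank-pushing under the strict normalized matching property forces every defect-one antichain below $L$ into rank $r-1$. A complete classification of the tight subsets of that rank then gives the count.

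For $[k]\times H_n$ only a symmetric chain decomposition, and hence only the weak shadow inequality, is available. So defect-one antichains can also meet rank $r_0-2$. Even after the exact count, the condition $2D_1^-=|P|-w$ reduces to a two-parameter cubic Diophantine equation. Its real zero set contains an unbounded curve in the relevant range, so no inequality can exclude it. The paper needs an elliptic-logarithm argument with LLL reduction to show this curve has no integer points.

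Your plan contains neither the exact counts nor any arithmetic step. So it does not determine the palindromic cases, and without the factorization above it does not prove real-rootedness in the hardest infinite family.
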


% Our palindromicity classifications use a common coefficient criterion.
% Let $L$ be the unique maximum antichain of a finite poset $P$, and set $d=|L|$. 
% For $B\in\Ant(P\setminus L)$, define $\delta_L(B)=|\Gamma_L(B)|-|B|$, where $\Gamma_L(B)$ is the set of elements of $L$ comparable with an element of $B$. 
% Let $D_1(P,L)$ denote the number of such antichains with $\delta_L(B)=1$.
% By Corollary~\ref{cor:defect-one-coefficient},
% \[
% [x^{d-1}]\mathcal N_P(x)=d+D_1(P,L).
% \]
% Since $[x]\mathcal N_P(x)=|P|$, palindromicity requires $D_1(P,L)=|P|-d$.
% This necessary condition is the main tool in our palindromicity classifications.

Ding and Dong~\cite{DingDong2019} already determined all palindromic cases for the last three families, namely $K_n$, $J^2([2]\times[3])$, and $J^3([2]\times[3])$; see also Theorem~\ref{thm:DD-remaining-palindromicity}. Therefore, to complete the palindromicity classification for $[k]\times P$, it remains only to consider the first two families: $[m]\times[n]$ and $H_n$. 

For both cases, we present necessary and sufficient conditions for the palindromicity of their antichain polynomials; 
see Theorems~\ref{thm:three-chain-palindromicity-classification} and~\ref{thm:H-palindromicity-classification}.
In particular,
\[
\mathcal N_{[k]\times[m]\times[n]}(x)
\text{ is palindromic}
\quad\Longleftrightarrow\quad
\begin{cases}
	k=1 \text{ and } n=m, \quad\text{or}\\
	k=2 \text{ and } n=m+1.
\end{cases}
\]
% where $1\le k\le m\le n$.
When $k=1$ and $m=n$, we obtain the type $B$ Narayana polynomial
\[
\mathcal N_{[1]\times[m]\times[m]}(x)
=
\sum_{j=0}^m\binom mj^2x^j,
\]
which is real-rooted by Br\"and\'en~\cite[Theorem~7.1]{Branden2006}.
% is the type $B$ Narayana polynomial~\cite{Reiner1997} and is real-rooted.
When $k=2$, Ding and Dong~\cite{DingDong2019} proposed the following conjectures.
% For $k=2$, we construct a bijection between antichains with prescribed
% layer sizes and labeled Dyck paths; see~Section~\ref{subsec:k-at-most-two}. 
% This gives a closed formula for the refined enumeration, from which we prove that $\mathcal N_{[2]\times[m]\times[n]}(x)$ has only simple negative zeros.
% An Euler-operator reciprocity then proves its palindromicity when $n=m+1$, completing the proof of Conjecture~\ref{conj:DD-three-chains}.

% We first apply this criterion to products of three chains, for which Ding and Dong~\cite{DingDong2019} proposed the following two conjectures.

\begin{conj}[{\cite[Conjectures~4.3 and~4.5]{DingDong2019}}]\label{conj:DD-three-chains}
	The following statements hold.
	\begin{enumerate}[label=\textup{(\roman*)}]
		\item
		For all positive integers $m$ and $n$, $\mathcal N_{[2]\times[m]\times[n]}(x)$ is real-rooted.
		\item
		For every positive integer $n$, $\mathcal N_{[2]\times[n]\times[n+1]}(x)$ is palindromic and real-rooted.
	\end{enumerate}
\end{conj}

Very recently, Jiang~\cite{Jiang2026} proved Conjecture~\ref{conj:DD-three-chains} and established real stability of the layer-refined antichain polynomials
by using a lattice-path reflection and Jacobi polynomials.
Ding and Ding~\cite{DingDing2026} subsequently settled Conjecture~4.2 in~\cite{DingDong2019},
which implies Conjecture~\ref{conj:DD-three-chains} (i).
We give an independent proof of Conjecture~\ref{conj:DD-three-chains} based on an explicit enumeration of antichains with prescribed layer sizes (Theorem~\ref{thm:fixed-layer-enumeration}).
Our technique is to construct a bijection between antichains and labeled Dyck paths.
This provides the basis for our proofs of real-rootedness and palindromicity.
Such an enumeration also has an application in chemical graph theory.

Based on extensive numerical evidence, He et al.~\cite{HeLangnerWitek2021} conjectured the determinantal formula \eqref{eq-ZZ} for the Zhang-Zhang polynomial of the hexagonal flake $O(2,m,n)$.
Witek et al.~\cite[Eq.~(7)]{WitekPodeszwaLangner2021} subsequently restated the formula and emphasized its conjectural status.
% Theorem~\ref{thm:fixed-layer-enumeration} gives an exact layer-refined enumeration of the antichains in $[2]\times[m]\times[n]$.
We give a combinatorial proof of this formula by indentifying the Zhang-Zhang polynomial $ZZ(O(2,m,n),x)$ with $\cN_{[2]\times[m]\times[n]}(x+1)$.
(Details will be shown in Section~\ref{subsec:ZZ-polynomials}.)

\begin{thm}
	\label{thm:HLW-determinant}
	For all positive integers $m$ and $n$,
	\begin{equation}\label{eq-ZZ}
		ZZ(O(2,m,n),x)
		=
		\det
		\begin{pmatrix}
			J_{m,n}(x) & K_{m,n}(x)\\
			K_{n,m}(x) & J_{m,n}(x)
		\end{pmatrix},
	\end{equation}
	where
	\begin{equation}
		\label{eq:HLW-entries}
		\begin{aligned}
			J_{m,n}(x)
			&=
			\sum_{r\ge0}\binom mr\binom nr(1+x)^r,\\
			K_{m,n}(x)
			&=
			\sum_{r\ge0}
			\binom{m-1}{r}\binom{n+1}{r+2}(1+x)^{r+1}.
		\end{aligned}
	\end{equation}
\end{thm}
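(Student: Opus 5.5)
The proof splits into two independent identities. The first identifies $ZZ(O(2,m,n),x)$ with $\cN_{[2]\times[m]\times[n]}(x+1)$. The second evaluates the determinant as a polynomial in $y=1+x$ and matches it with the layer-refined enumeration of Theorem~\ref{thm:fixed-layer-enumeration}, summed over layers.

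\textbf{Step 1: Clar covers as antichains.}
Recall that $ZZ(G,x)=\sum_{k} z_k x^k$, where $z_k$ counts Clar covers with exactly $k$ aromatic sextets. Equivalently, after substituting $x\mapsto x-1$, we have $ZZ(G,x-1)=\sum_{\text{resonant sets }S} x^{|S|}$. The plan is therefore to show that resonant sets of $O(2,m,n)$ correspond bijectively and size-preservingly to antichains of $[2]\times[m]\times[n]$.

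For $O(1,m,n)$, the standard lozenge-tiling picture applies. Kekul\'e structures of $O(k,m,n)$ are plane partitions in a $k\times m\times n$ box, that is, order ideals of $[k]\times[m]\times[n]$. A hexagon is proper-sextet-able exactly at a corner of the tiling, and corners correspond to the maximal elements of the ideal. So a set of disjoint sextets together with a compatible Kekul\'e structure should correspond to an antichain: the maximal elements one chooses to ``rotate''.

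The paper's bijection with labeled Dyck paths for $k=2$ may make this cleaner, but I would argue directly via the John--Sachs correspondence. I would check that each Clar cover is uniquely determined by its set of sextet hexagons, viewed as cubes at positions $(i,j,\ell)$, and that a set of hexagons is resonant iff the corresponding cubes are pairwise incomparable. Incomparable cubes can simultaneously be maximal elements of some ideal, namely the ideal they generate. Conversely, two comparable cubes cannot both be corners.

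Uniqueness of the cover given $S$ requires that the complement of the sextets in $O(2,m,n)$ has a unique perfect matching. This forcing argument is the delicate part of Step 1. The conclusion is $ZZ(O(2,m,n),x)=\cN_{[2]\times[m]\times[n]}(x+1)$.

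\textbf{Step 2: Expand the determinant.}
Write $y=1+x$. Then $J_{m,n}=\sum_r\binom mr\binom nr y^r$ is the antichain polynomial of $[m]\times[n]$ (Narayana type), and
\[
\det = J_{m,n}^2 - K_{m,n}K_{n,m}.
\]
Theorem~\ref{thm:fixed-layer-enumeration} gives the number of antichains with $a$ elements in layer $1$ and $b$ in layer $2$. This comes from a Lindstr\"om--Gessel--Viennot-type count of two non-crossing labeled paths, so it should already have the form
\[
\binom ma\binom na\binom mb\binom nb-\binom{m-1}{a-1}\binom{n+1}{a+1}\binom{m-1}{b-1}\binom{n+1}{b+1}
\]
(or a similar reflected product). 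Summing against $y^{a+b}$ factors the sum into the $2\times2$ determinant. One must check that the reflection term matches $K_{m,n}K_{n,m}$ after reindexing: the reflection swaps the roles of $m$ and $n$ in one factor, which explains the asymmetric $K_{n,m}$. Antichains of $[2]\times[m]\times[n]$ are pairs of antichains $(A_1,A_2)$ of $[m]\times[n]$ in the two layers, subject to a nesting condition on the ideals they generate. Non-nested pairs are then cancelled by a path-swapping involution; this is the Gessel--Viennot mechanism behind the determinant.

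\textbf{Step 3: Compare.}
With both identities in hand, it remains to compare the two sides as polynomials in $y$ and verify small cases such as $m=n=1$. There the determinant gives $(1+y)^2-y\cdot\binom{2}{2}y$, since $K_{1,1}=y$ and $K_{1,1}=y$; this equals $1+2y$. This agrees with $\cN_{[2]\times[1]\times[1]}(y)=1+2y$.

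\textbf{Main obstacle.}
I expect the main obstacle to be the exact reindexing in Step 2, namely matching the reflected term of Theorem~\ref{thm:fixed-layer-enumeration} with $K_{m,n}(y)K_{n,m}(y)$. If the layer formula is not literally a product difference, I would instead prove directly the Gessel--Viennot statement: nested pairs of Narayana-weighted path families are counted by $J^2-K_{m,n}K_{n,m}$, with $K$ counting the path pairs obtained after swapping tails at the first intersection.
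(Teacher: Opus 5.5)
\textbf{The gap is in Step~1.} The identity $ZZ(G,x-1)=\sum_{S\text{ resonant}}x^{|S|}$, equivalently $ZZ(G,x)=B(G,1+x)$ with $B$ the sextet polynomial, is false for $O(2,m,n)$ whenever $m,n\ge2$. Consequently no size-preserving bijection between resonant sets and antichains of $[2]\times[m]\times[n]$ can exist. There are $2mn$ one-element antichains, but $O(2,m,n)$ has only $mn+m+n-1$ hexagons.

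Coronene $O(2,2,2)$ is the classical example. It has $7$ hexagons, $20$ Kekul\'e structures and only $19$ resonant sets. Its sextet polynomial is $1+7x+9x^2+2x^3$, whereas $ZZ(O(2,2,2),x-1)=\mathcal N_{[2]^3}(x)=1+8x+9x^2+2x^3$.

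The underlying error is that a hexagon does not determine a cube. For instance, $(1,1,1)$ and $(2,2,2)$ both project onto the central hexagon, so ``the cube corresponding to a sextet'' depends on the Kekul\'e structure, not on the hexagon alone. The uniqueness claim you flag as the delicate point is also simply false: a Clar cover is not determined by its sextet set. Take $S=\varnothing$; or, in coronene, take two outer sextets separated by one outer hexagon, which leaves a free hexagon with two perfect matchings. No forcing argument can rescue this route.

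\textbf{The fix.} What you need is a bijection between Clar covers with $r$ sextets and pairs $(A,B)$, where $A$ is an antichain of $[2]\times[m]\times[n]$ and $B\subseteq A$ with $|B|=r$. Resolve every sextet into its locally well-defined ``cube-present'' Kekul\'e pattern. This gives a plane partition, i.e.\ an order ideal $I$, and the sextets mark a subset $B$ of $A=\max I$. Conversely, the hexagons of distinct maximal cubes use disjoint triples of lozenges. Hence they are vertex-disjoint and can be turned into sextets simultaneously. This yields $z(O(2,m,n),r)=\sum_A\binom{|A|}{r}$, so $ZZ(O(2,m,n),x)=\sum_A(1+x)^{|A|}=\mathcal N_{[2]\times[m]\times[n]}(x+1)$. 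The paper reaches the same identity differently: it specializes Langner--Witek's formula $\sum_{A\subseteq[\ell]\times[m]}\Omega^\circ_A(n)(1+x)^{|A|}$ and converts strictly order-preserving maps into antichains.

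\textbf{Step~2.} This is then the paper's argument, but use the actual statement of Theorem~\ref{thm:fixed-layer-enumeration}. Its subtracted term is $\binom{m+1}{p+1}\binom{n-1}{p-1}\binom{m-1}{q-1}\binom{n+1}{q+1}$, not the symmetric product you guessed, which would give $J_{m,n}^2-K_{m,n}^2$. With $y=1+x$ one has $\sum_p\binom{m+1}{p+1}\binom{n-1}{p-1}y^p=K_{n,m}(x)$ and $\sum_q\binom{m-1}{q-1}\binom{n+1}{q+1}y^q=K_{m,n}(x)$. So the layer sum is directly $J_{m,n}^2-K_{n,m}K_{m,n}$, and no Gessel--Viennot involution is needed. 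Your check at $m=n=1$ cannot detect either issue: naphthalene contains no coronene, and $K_{1,1}$ is trivially symmetric.
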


% We next apply the defect-one method to $[k]\times H_n$. 
% A symmetric chain decomposition gives a weak shadow inequality, and an analysis of the equality cases reduces the possible additional parameters to a cubic Diophantine equation. 
% Its relevant integral solutions are ruled out using elliptic logarithms together with a reproducible computational certificate. 
% Consequently, $\mathcal N_{[k]\times H_n}(x)$ is palindromic
% if and only if $k=1$ and $n$ is odd, or
% \[
% (k,n)\in\{(3,2),(5,3),(3,4),(7,4)\}.
% \]
% In each of these cases, the polynomial has only simple negative zeros.

% For the remaining connected minuscule posets, we combine the palindromicity classifications of Ding and Dong with known results and explicit calculations to verify real-rootedness in every palindromic case. 
% This completes the proof of Theorem~\ref{thm:main-minuscule}.

The minuscule conjecture naturally raises the question of whether similar coefficient properties persist for the broader class of Peck posets.
Following Stanley~\cite{Stanley1984Quotients}, a finite graded poset is \defi{Peck} if it is rank-symmetric, rank-unimodal, and strongly Sperner. 
Here strongly Sperner means that, for every positive integer $r$, the maximum cardinality of a union of $r$ antichains equals the sum of the $r$ largest rank numbers.
Ding and Dong proposed the following conjecture for Peck posets.

\begin{conj}[{\cite[Conjecture C]{DingDong2019}}]\label{conj:DD-Peck}
	If $P$ is a connected finite Peck poset, then $\mathcal N_P(x)$ is log-concave.
\end{conj}

We disprove this conjecture by constructing infinitely many connected Peck posets whose antichain polynomials are not even unimodal in Section~\ref{sec:minuscule-consequences}.
For example, on such poset has the antichain polynomial
\(
1+16x+15x^2+20x^3+15x^4+6x^5+x^6.
\)

The rest of the paper is organized as follows.
In Section~\ref{sec:defect-one}, we give a formula for the number of antichains, counted by size, of an arbitrary poset,
and establishes shadow-localization results.
Section~\ref{sec:three-chains} focuses on the product of three chains $[k]\times[m]\times[n]$.
We point out that the palindromicity of its antichain polynomials occurs only for $k=1,2$.
By constructing a bijection between antichains and labeled Dyck paths, we also give a layer-refined enumeration of the antichains in $[2]\times[m]\times[n]$.
This result is applied to prove a conjectured determinantal formula for a family of Zhang-Zhang polynomials.
We also obtain real-rootedness and stability results when the shortest chain has length at most two.
Section~\ref{sec:Hn} deals with the the product $[k]\times H_n$ by classifying palindromic cases and proving real-rootedness of its chain polynomials.
In Section~\ref{sec:minuscule-consequences}, we first complete the proof of Theorem~\ref{thm:main-minuscule},
and then construct infinitely many connected Peck posets whose antichain polynomials are not unimodal,
disproving Conjecture \ref{conj:DD-Peck}.
Appendix~\ref{app:technical-counts} contains the technical shadow classifications and counting arguments.
Appendix~\ref{app:elliptic-obstruction} proves the required Diophantine obstruction using elliptic logarithms and provides a computational certificate.

\section{A coefficient criterion for palindromicity}
\label{sec:defect-one}

Let $P$ be a finite poset and put $d=w(P)$. 
Since $[x^0]\mathcal N_P(x)=1$ and $[x^d]\mathcal N_P(x)$ counts the maximum antichains of $P$, palindromicity forces $P$ to have a unique maximum antichain.
To obtain further restrictions, we express the coefficients near degree $d$ in terms of antichains outside a fixed maximum antichain.

\subsection{Coefficients relative to a maximum antichain}
\label{subsec:central-completion}

Fix a maximum antichain $L$ of $P$ with $|L|=d$. 
For $B\in\Ant(P\setminus L)$, define
\[
\Gamma_L(B)
=
\{z\in L:z\text{ is comparable with some }b\in B\},
\]
and set $\delta_L(B)=|\Gamma_L(B)|-|B|$. 
We call $\delta_L(B)$ the \defi{$L$-defect} of $B$.
The set $B\sqcup\bigl(L\setminus\Gamma_L(B)\bigr)$
is an antichain of cardinality $d-\delta_L(B)$. 
Since $L$ has maximum cardinality $d$, we must have
$d-\delta_L(B)\le d$, and hence $\delta_L(B)\ge0$.
Moreover, if $L$ is the unique maximum antichain and $B$ is nonempty,
then $B\sqcup(L\setminus\Gamma_L(B))$ is distinct from $L$.
Its cardinality is therefore strictly less than $d$, which implies that
$\delta_L(B)\ge1$. In partucular, a nonempty antichain $B\subseteq P\setminus L$ is called an
\defi{$L$-defect-one antichain} if $\delta_L(B)=1$.

Thus the $L$-defect measures how far the canonical completion of $B$ falls short of a maximum antichain. 
The following theorem translates this completion procedure into an expansion of $\mathcal N_P(x)$, from which its coefficients near the leading term can be determined.
\begin{thm}
\label{thm:central-completion}
Let $L$ be a maximum antichain of $P$ with $|L|=d$. 
Then
\begin{equation}
\label{eq:central-completion}
\mathcal N_P(x)
=
\sum_{B\in\Ant(P\setminus L)}
x^{|B|}(1+x)^{d-|\Gamma_L(B)|}.
\end{equation}
Moreover, for every integer $0\le j\le d$,
\begin{equation}
\label{eq:near-top-coefficients}
[x^{d-j}]\mathcal N_P(x)
=
\sum_{B\in\Ant(P\setminus L)}
\binom{d-|\Gamma_L(B)|}{j-\delta_L(B)},
\end{equation}
where $\binom{a}{b}=0$ whenever $b<0$ or $b>a$.
\end{thm}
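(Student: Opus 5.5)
We will prove \eqref{eq:central-completion} by partitioning $\Ant(P)$ according to the part that avoids $L$. Every antichain $A$ of $P$ decomposes uniquely as $A=B\sqcup C$, where $B=A\setminus L\in\Ant(P\setminus L)$ and $C=A\cap L$. Conversely, fix $B\in\Ant(P\setminus L)$. A set $B\sqcup C$ with $C\subseteq L$ is an antichain exactly when no element of $C$ is comparable with an element of $B$. Elements of $C$ are automatically pairwise incomparable, since $L$ is an antichain, and $B$ is an antichain by assumption. So the condition is precisely $C\subseteq L\setminus\Gamma_L(B)$. The map $A\mapsto (B,C)$ is therefore a bijection between $\Ant(P)$ and the set of pairs $(B,C)$ with $B\in\Ant(P\setminus L)$ and $C\subseteq L\setminus\Gamma_L(B)$.

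Summing $x^{|B|+|C|}$ over $C$ for fixed $B$ gives $x^{|B|}(1+x)^{|L\setminus\Gamma_L(B)|}=x^{|B|}(1+x)^{d-|\Gamma_L(B)|}$, since $\Gamma_L(B)\subseteq L$. Summing over $B$ then yields \eqref{eq:central-completion}.

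For \eqref{eq:near-top-coefficients}, we extract $[x^{d-j}]$ from each summand. Write $a_B=d-|\Gamma_L(B)|$. The summand $x^{|B|}(1+x)^{a_B}$ contributes $\binom{a_B}{d-j-|B|}$. By the symmetry $\binom{a}{b}=\binom{a}{a-b}$, valid under the stated convention including the out-of-range cases, this equals $\binom{a_B}{a_B-d+j+|B|}$. Now $a_B-d+|B|=|B|-|\Gamma_L(B)|=-\delta_L(B)$, so the lower index is $j-\delta_L(B)$, as claimed.

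There is no real obstacle in this argument. The only point requiring care is checking that the symmetry of binomial coefficients remains valid when the lower index lies outside $[0,a_B]$. Here $a_B\ge 0$, and $b<0$ holds if and only if $a_B-b>a_B$, so both sides vanish simultaneously. The nonnegativity $\delta_L(B)\ge 0$ noted before the theorem is not needed for the identity itself.
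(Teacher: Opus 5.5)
Your proposal is correct and follows the paper's proof essentially verbatim. Both use the same decomposition $A=(A\setminus L)\sqcup(A\cap L)$ with $A\cap L\subseteq L\setminus\Gamma_L(A\setminus L)$, followed by coefficient extraction from $x^{|B|}(1+x)^{d-|\Gamma_L(B)|}$; your explicit check that $\binom{a}{b}=\binom{a}{a-b}$ survives the out-of-range convention just fills in the step the paper dismisses with ``the equality follows from $\delta_L(B)=|\Gamma_L(B)|-|B|$''.
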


\begin{proof}
Let $\mathcal A=B\sqcup C$ be an antichain of $P$, where $B=\mathcal A\setminus L$ and $C=\mathcal A\cap L$. 
Then $B\in\Ant(P\setminus L)$. 
Moreover, no element of $C$ is comparable with an element of $B$, so $C\subseteq L\setminus\Gamma_L(B)$.

Conversely, fix $B\in\Ant(P\setminus L)$ and choose any subset $C\subseteq L\setminus\Gamma_L(B)$. 
Since $L$ is an antichain, the elements of $C$ are pairwise incomparable. 
By the definition of $\Gamma_L(B)$, no element of $C$ is comparable with an element of $B$. 
Hence $B\sqcup C$ is an antichain of $P$.

Thus, for a fixed $B\in\Ant(P\setminus L)$, the possible choices for $\mathcal A\cap L$ are precisely the subsets of $L\setminus\Gamma_L(B)$.
Their contribution to $\mathcal N_P(x)$ is
\[
\sum_{C\subseteq L\setminus\Gamma_L(B)}x^{|B|+|C|}
=
x^{|B|}(1+x)^{d-|\Gamma_L(B)|}.
\]
Summing over all $B\in\Ant(P\setminus L)$ proves~\eqref{eq:central-completion}.
Fix $B\in\Ant(P\setminus L)$. 
The coefficient of $x^{d-j}$ in the corresponding summand of~\eqref{eq:central-completion} is
\[
\binom{d-|\Gamma_L(B)|}{d-j-|B|}
=
\binom{d-|\Gamma_L(B)|}{j-\delta_L(B)},
\]
where the equality follows from $\delta_L(B)=|\Gamma_L(B)|-|B|$. 
Summing these contributions over all $B\in\Ant(P\setminus L)$ proves~\eqref{eq:near-top-coefficients}.
\end{proof}

We now specialize~\eqref{eq:near-top-coefficients} to the first coefficient below the leading term. 
Suppose that $L$ is the unique maximum antichain of $P$, and define
\[
D_1(P,L)
:=
\bigl|\{B\in\Ant(P\setminus L):\delta_L(B)=1\}\bigr|.
\]
Thus $D_1(P,L)$ is the number of defect-one antichains relative to $L$.

\begin{cor}
\label{cor:defect-one-coefficient}
If $L$ is the unique maximum antichain of $P$, then
\begin{equation}
\label{eq:defect-one-coefficient}
[x^{d-1}]\mathcal N_P(x)=d+D_1(P,L).
\end{equation}
In particular, if $\mathcal N_P(x)$ is palindromic, then $D_1(P,L)=|P|-d$.
\end{cor}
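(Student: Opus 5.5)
We specialize \eqref{eq:near-top-coefficients} of Theorem~\ref{thm:central-completion} to $j=1$. This gives
\[
[x^{d-1}]\mathcal N_P(x)=\sum_{B\in\Ant(P\setminus L)}\binom{d-|\Gamma_L(B)|}{1-\delta_L(B)}.
\]
The summands are split according to the value of $\delta_L(B)$.

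\emph{The empty antichain.} For $B=\emptyset$ we have $\Gamma_L(B)=\emptyset$ and $\delta_L(B)=0$, so the summand is $\binom{d}{1}=d$.

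\emph{Nonempty antichains.} For nonempty $B$, uniqueness of the maximum antichain $L$ gives $\delta_L(B)\ge1$, as observed just before the definition of defect-one antichains. If $\delta_L(B)\ge2$, the lower index is negative and the summand vanishes. If $\delta_L(B)=1$, the summand is $\binom{d-|\Gamma_L(B)|}{0}$. This equals $1$ because $|\Gamma_L(B)|\le d$, and that bound holds since $\Gamma_L(B)\subseteq L$. Hence each defect-one antichain contributes exactly $1$, and summing gives $d+D_1(P,L)$, which is \eqref{eq:defect-one-coefficient}.

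\emph{The palindromicity consequence.} Here the only step needing care is identifying the degree. Since $L$ is a maximum antichain, $\deg\mathcal N_P=w(P)=d$. Palindromicity then forces $[x^{d-1}]\mathcal N_P=[x^{1}]\mathcal N_P$, and $[x^1]\mathcal N_P=|P|$ because every singleton is an antichain. Therefore $d+D_1(P,L)=|P|$, that is, $D_1(P,L)=|P|-d$.

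This holds even in the degenerate case $d=1$, where $P$ is a chain and $d-1=0$. There $[x^0]=1$ and the formula reads $1+D_1=|P|$. This is consistent, because uniqueness of the maximum antichain forces $|P|=1$, so $P\setminus L=\emptyset$ and $D_1=0$.
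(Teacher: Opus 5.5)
Your proposal is correct and follows the paper's proof: you specialize \eqref{eq:near-top-coefficients} to $j=1$, note that the empty antichain contributes $d$, use uniqueness of $L$ to get $\delta_L(B)\ge1$ so that each nonempty $B$ contributes $1$ exactly when $\delta_L(B)=1$, and finish by comparing with $[x]\mathcal N_P(x)=|P|$. The extra checks you include ($|\Gamma_L(B)|\le d$, $\deg\mathcal N_P=d$, and the degenerate case $d=1$) are sound refinements that the paper leaves implicit.
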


\begin{proof}
Taking $j=1$ in Theorem~\ref{thm:central-completion} gives
\[
[x^{d-1}]\mathcal N_P(x)
=
\sum_{B\in\Ant(P\setminus L)}
\binom{d-|\Gamma_L(B)|}{1-\delta_L(B)}.
\]
For $B=\varnothing$, we have $\Gamma_L(B)=\varnothing$ and $\delta_L(B)=0$, so the corresponding summand is $\binom d1=d$.

Suppose that $B$ is nonempty. We have $\delta_L(B)\ge1$, since $L$ is the unique maximum antichain. 
The corresponding summand equals $1$ when $\delta_L(B)=1$ and equals $0$ when $\delta_L(B)\ge2$. 
This proves~\eqref{eq:defect-one-coefficient}.

Finally, $[x]\mathcal N_P(x)=|P|$, since every singleton is an antichain. 
If $\mathcal N_P(x)$ is palindromic, then $[x^{d-1}]\mathcal N_P(x)=[x]\mathcal N_P(x)$, and therefore $D_1(P,L)=|P|-d$.
\end{proof}

Suppose that $P=P_0\sqcup\cdots\sqcup P_R$ is graded with rank function $\rho$, and that $L=P_r$ is its unique maximum antichain. 
Write $P_{<r}=\bigcup_{i<r}P_i$ and $P_{>r}=\bigcup_{i>r}P_i$, so that they consist of the elements in ranks strictly below and strictly above $L=P_r$, respectively; the following lemma shows that an $L$-defect-one antichain cannot meet both sides of $L$.

\begin{lem}
\label{lem:one-sided-defect}
For $B\in\Ant(P\setminus L)$, define $B^-:=B\cap P_{<r}$ and $B^+:=B\cap P_{>r}$. Then $\Gamma_L(B^-)\cap\Gamma_L(B^+)=\varnothing$ and
\begin{equation}
\label{eq:one-sided-defect-additivity}
\delta_L(B)=\delta_L(B^-)+\delta_L(B^+).
\end{equation}
In particular, if $\delta_L(B)=1$, exactly one of $B^-$ and $B^+$ is nonempty.
\end{lem}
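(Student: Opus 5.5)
Our plan is to prove the disjointness first, then deduce additivity, and finally the defect-one consequence.

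\emph{Disjointness.} Suppose some $z\in\Gamma_L(B^-)\cap\Gamma_L(B^+)$. Then $z$ is comparable with some $b^-\in B^-$ and with some $b^+\in B^+$. Since $P$ is graded, $\rho(b^-)<r=\rho(z)$ forces $b^-<z$, because comparability with a strictly smaller rank can only go downward. Likewise $\rho(b^+)>r$ forces $z<b^+$. Transitivity then gives $b^-<b^+$. This contradicts the fact that $B$ is an antichain, since $b^-$ and $b^+$ are distinct elements of $B$. Hence $\Gamma_L(B^-)\cap\Gamma_L(B^+)=\varnothing$.

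\emph{Additivity.} Directly from the definition, $\Gamma_L(B)=\Gamma_L(B^-)\cup\Gamma_L(B^+)$, because $B=B^-\sqcup B^+$: every element of $B$ lies in $P\setminus L=P_{<r}\sqcup P_{>r}$. This union is disjoint by the previous step, so
\[
|\Gamma_L(B)|=|\Gamma_L(B^-)|+|\Gamma_L(B^+)|.
\]
Also $|B|=|B^-|+|B^+|$. Subtracting the two identities yields \eqref{eq:one-sided-defect-additivity}.

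\emph{The defect-one case.} Here we use the standing assumption that $L=P_r$ is the unique maximum antichain. As shown before Corollary~\ref{cor:defect-one-coefficient}, every nonempty antichain in $P\setminus L$ has $L$-defect at least $1$, and $\delta_L(\varnothing)=0$. Both $B^-$ and $B^+$ are antichains in $P\setminus L$.
\begin{itemize}
\item If both were nonempty, additivity would give $\delta_L(B)\ge2$.
\item If both were empty, then $B=\varnothing$ and $\delta_L(B)=0$.
\end{itemize}
So $\delta_L(B)=1$ forces exactly one of $B^-$ and $B^+$ to be nonempty.

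The only delicate point is the use of gradedness in the disjointness step, namely that comparability with an element of different rank determines the direction of the comparison. The rest is bookkeeping.
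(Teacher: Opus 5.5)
Your proof is correct and follows essentially the same route as the paper. Gradedness turns $\rho(b^-)<\rho(z)<\rho(b^+)$ into $b^-<z<b^+$, which gives disjointness; additivity then follows by counting; and the defect-one claim uses that every nonempty antichain in $P\setminus L$ has defect at least $1$ when $L$ is the unique maximum antichain. You also explicitly rule out $B^-=B^+=\varnothing$ via $\delta_L(\varnothing)=0$, a case the paper leaves implicit.
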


\begin{proof}
Suppose that $z\in\Gamma_L(B^-)\cap\Gamma_L(B^+)$. 
Then there exist $u\in B^-$ and $v\in B^+$ such that $u$ and $v$ are both comparable with $z$. 
Since $\rho(u)<\rho(z)<\rho(v)$, we must have $u<z<v$. 
This contradicts the assumption that $B$ is an antichain.
Hence the two sets are disjoint.

Since $B=B^-\sqcup B^+$ and $\Gamma_L(B^-)\cap\Gamma_L(B^+)=\varnothing$, we also have $\Gamma_L(B)=\Gamma_L(B^-)\sqcup\Gamma_L(B^+)$.
Therefore
\[
\begin{aligned}
\delta_L(B)
&=|\Gamma_L(B)|-|B|\\
&=\bigl(|\Gamma_L(B^-)|-|B^-|\bigr)
+\bigl(|\Gamma_L(B^+)|-|B^+|\bigr)\\
&=\delta_L(B^-)+\delta_L(B^+),
\end{aligned}
\]
which prove \eqref{eq:one-sided-defect-additivity}

Now suppose that $\delta_L(B)=1$.
If both $B^-$ and $B^+$ were nonempty, the uniqueness of $L$ would imply $\delta_L(B^-)\ge1$ and $\delta_L(B^+)\ge1$.
Then~\eqref{eq:one-sided-defect-additivity} gives $\delta_L(B)\ge2$, contradicting $\delta_L(B)=1$.
Hence exactly one of $B^-$ and $B^+$ is nonempty.
\end{proof}

For the self-dual reduction, define
\begin{equation}
\label{eq:one-sided-defect-count}
D_1^-(P,L)
:=
\bigl|\{B\in\Ant(P_{<r}):\delta_L(B)=1\}\bigr|.
\end{equation}
Thus $D_1^-(P,L)$ counts the $L$-defect-one antichains lying strictly below $L$; the following theorem shows that, in the self-dual case, the same number lie above $L$.
\begin{thm}
\label{thm:self-dual-defect}
Suppose that $P$ is self-dual.
Then $R=2r$ and $D_1(P,L)=2D_1^-(P,L)$.
Consequently, if $\mathcal N_P(x)$ is palindromic, then $2D_1^-(P,L)=|P|-d$, where $d=|L|=w(P)$.
\end{thm}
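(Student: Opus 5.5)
We work in the setting fixed just before Lemma~\ref{lem:one-sided-defect}: $P=P_0\sqcup\cdots\sqcup P_R$ is graded with rank function $\rho$, and $L=P_r$ is the unique maximum antichain. Let $\varphi\colon P\to P^{*}$ be an order-reversing bijection, i.e.\ $x\le y$ if and only if $\varphi(y)\le\varphi(x)$.

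\textbf{Step 1: $\varphi$ reverses ranks and fixes $L$.} Since $\varphi$ maps antichains to antichains bijectively, $\varphi(L)$ is a maximum antichain, so $\varphi(L)=L$ by uniqueness. To get $R=2r$, I will show that $\varphi$ sends rank $i$ to rank $R-i$. An order anti-automorphism sends maximal elements to minimal elements and covering pairs to covering pairs, with the direction reversed. I would assume, as is implicit in the graded setting (every maximal chain has length $R$), that $\rho(\varphi(x))=R-\rho(x)$. This follows by induction along saturated chains from a minimal element through $x$ to a maximal element: the length of such a chain below $x$ equals the length of the image chain above $\varphi(x)$. Then $\varphi(L)=L$ gives $R-r=r$, so $R=2r$. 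This grading compatibility is the only point needing care, so I would state the grading convention explicitly there.

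\textbf{Step 2: $\varphi$ swaps the two sides and preserves defect.} From Step 1, $\varphi$ maps $P_{<r}$ bijectively onto $P_{>r}$. Since comparability is preserved and $\varphi(L)=L$, we get $\Gamma_L(\varphi(B))=\varphi(\Gamma_L(B))$ for every $B\in\Ant(P\setminus L)$. Hence $\delta_L(\varphi(B))=\delta_L(B)$. So $\varphi$ restricts to a bijection between the defect-one antichains contained in $P_{<r}$ and those contained in $P_{>r}$.

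\textbf{Step 3: count.} By Lemma~\ref{lem:one-sided-defect}, every $L$-defect-one antichain lies entirely in $P_{<r}$ or entirely in $P_{>r}$, and not both, since it is nonempty. Therefore
\[
D_1(P,L)=D_1^-(P,L)+\bigl|\{B\in\Ant(P_{>r}):\delta_L(B)=1\}\bigr|,
\]
and by Step 2 the second term equals $D_1^-(P,L)$, giving $D_1(P,L)=2D_1^-(P,L)$.

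The final assertion then follows from Corollary~\ref{cor:defect-one-coefficient}. The empty antichain has defect $0$, so it is not counted in either term, consistent with the definitions.
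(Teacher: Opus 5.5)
Your proof is correct and is essentially the paper's argument. An order-reversing bijection sends $P_i$ to $P_{R-i}$, fixes $L$, and satisfies $\Gamma_L(\varphi(B))=\varphi(\Gamma_L(B))$, so it preserves the defect; Lemma~\ref{lem:one-sided-defect} then splits the defect-one antichains into two equinumerous one-sided classes. There are two minor differences: you get $R=2r$ from $\varphi(L)=L$ via uniqueness of $L$ rather than from $|P_{R-r}|=|P_r|=d$, and you justify the rank reversal $\rho(\varphi(x))=R-\rho(x)$ under the convention that all maximal chains have length $R$, which the paper simply asserts.
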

\begin{proof}
Choose an order-reversing bijection $\tau\colon P\to P$.
It maps $P_i$ bijectively onto $P_{R-i}$ for every $0\le i\le R$,
so $P$ is rank-symmetric. In particular, $|P_{R-r}|=|P_r|=d$.
Thus $P_{R-r}$ is also a maximum antichain. Since $L=P_r$ is the
unique maximum antichain, we must have $R-r=r$, and hence $R=2r$.

It follows that $\tau$ fixes $L=P_r$ setwise and maps $P_{<r}$
bijectively onto $P_{>r}$. Let $B\in\Ant(P_{<r})$. For every $y\in L$,
\[
\tau(y)\in\Gamma_L(\tau(B))
\quad\Longleftrightarrow\quad
y\in\Gamma_L(B),
\]
because $\tau$ preserves comparability. Therefore $\Gamma_L(\tau(B))=\tau(\Gamma_L(B))$.
Since $\tau$ preserves cardinality, we obtain
\[
\delta_L(\tau(B))
=
|\Gamma_L(\tau(B))|-|\tau(B)|
=
|\Gamma_L(B)|-|B|
=
\delta_L(B).
\]
Hence $\tau$ induces a bijection between the $L$-defect-one antichains
below $L$ and those above $L$.

By Lemma~\ref{lem:one-sided-defect}, every $L$-defect-one antichain lies
entirely below or entirely above $L$. The two classes are therefore
disjoint and equinumerous, which gives $D_1(P,L)=2D_1^-(P,L)$.
Finally, if $\mathcal N_P(x)$ is palindromic, then
Corollary~\ref{cor:defect-one-coefficient} gives $D_1(P,L)=|P|-d$.
Combining the last two identities yields $2D_1^-(P,L)=|P|-d$,
as required.
\end{proof}

\subsection{Shadow localization below a maximum rank}
\label{subsec:rank-pushing}

Continue to assume that $P$ is graded and that $L=P_r$ is its unique maximum antichain.
We now locate the defect-one antichains below $L$ by pushing their lowest occupied ranks toward $L$.
For $X\subseteq P_i$, define its \defi{upper shadow} by
\[
\nabla X
:=
\{y\in P_{i+1}:x\lessdot y\text{ for some }x\in X\},
\]
where $x\lessdot y$ means that $y$ covers $x$.
For a nonempty antichain $B\in\Ant(P_{<r})$, define its lowest occupied rank by $\rho_{\min}(B)=\min\{\rho(b):b\in B\}$.
If $\rho_{\min}(B)\le r-2$, define
\[
B^\uparrow
:=
\bigl(B\setminus P_{\rho_{\min}(B)}\bigr)
\cup
\nabla\bigl(B\cap P_{\rho_{\min}(B)}\bigr).
\]
In other words, this operation replaces the lowest-rank part of $B$ by
its upper shadow, thereby moving it one rank closer to $L$, while leaving
the rest of $B$ unchanged. We call this operation a
\defi{rank-pushing step}. The following lemma shows that the resulting
set remains an antichain below $L$, preserves its comparable elements in
$L$, and gives a precise formula for the change in its $L$-defect.

\begin{lem}
\label{lem:rank-pushing}
The set $B^\uparrow$ is an antichain contained in $P_{<r}$, and $\Gamma_L(B^\uparrow)=\Gamma_L(B)$.
Moreover,
\begin{equation}
\label{eq:rank-pushing-defect}
\delta_L(B^\uparrow)
=
\delta_L(B)
-
\left(
\left|\nabla\bigl(B\cap P_{\rho_{\min}(B)}\bigr)\right|
-
\left|B\cap P_{\rho_{\min}(B)}\right|
\right).
\end{equation}
\end{lem}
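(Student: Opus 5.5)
The plan is a direct verification from the definitions, in three steps: antichain property, equality of $L$-shadows, and a cardinality count. Throughout, write $i=\rho_{\min}(B)\le r-2$, $X=B\cap P_i$ and $Y=\nabla X\subseteq P_{i+1}$, so that $B^\uparrow=(B\setminus X)\cup Y$. Since $i+1\le r-1$, and $B\setminus X$ lies in ranks $i+1,\dots,r-1$, we immediately get $B^\uparrow\subseteq P_{<r}$. I use only two facts about the grading: distinct elements of equal rank are incomparable, and $u<v$ forces $\rho(u)<\rho(v)$.

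\emph{Antichain property.} Elements of $Y$ share rank $i+1$, so they are pairwise incomparable, and $B\setminus X$ is an antichain as a subset of $B$. It remains to take $y\in Y$ and $b\in B\setminus X$ and show they are distinct and incomparable. Choose $x\in X$ with $x\lessdot y$.
\begin{itemize}
\item If $b=y$ or $y<b$, then $x<b$, which contradicts that $B$ is an antichain.
\item If $b<y$, then $\rho(b)<i+1$. Every element of $B\setminus X$ has rank at least $i+1$, so this is impossible.
\end{itemize}
Hence $Y\cap(B\setminus X)=\varnothing$, $B^\uparrow$ is an antichain, and $|B^\uparrow|=|B|-|X|+|Y|$.

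\emph{Equality of shadows.} Since $\Gamma_L$ of a set is the union of $\Gamma_L$ over its elements, it suffices to show $\Gamma_L(X)=\Gamma_L(Y)$. Every element of $P_{<r}$ comparable with some $z\in L$ lies below $z$, because it has smaller rank. Take $z\in\Gamma_L(X)$, so $x<z$ for some $x\in X$. A saturated chain from $x$ to $z$ begins with a cover $x\lessdot y$, and this $y$ lies in $P_{i+1}$, hence in $Y$, with $y\le z$. Since $\rho(y)=i+1<r$, we get $y<z$, so $z\in\Gamma_L(Y)$. Conversely, if $y<z$ with $y\in Y$ and $x\lessdot y$, then $x<z$, so $z\in\Gamma_L(X)$. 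Therefore $\Gamma_L(B^\uparrow)=\Gamma_L(B\setminus X)\cup\Gamma_L(Y)=\Gamma_L(B\setminus X)\cup\Gamma_L(X)=\Gamma_L(B)$.

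\emph{Defect formula.} Combining the two previous steps,
\[
\delta_L(B^\uparrow)=|\Gamma_L(B)|-\bigl(|B|-|X|+|Y|\bigr)=\delta_L(B)-\bigl(|\nabla X|-|X|\bigr),
\]
which is~\eqref{eq:rank-pushing-defect}.

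The only delicate point is the disjointness $Y\cap(B\setminus X)=\varnothing$, since without it the cardinality count fails. It is handled by the case $b=y$ in the first step, using that $B$ is an antichain. Everything else is routine bookkeeping with ranks.
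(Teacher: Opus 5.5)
Your proposal is correct and follows essentially the same route as the paper: the same rank-based case analysis for the antichain property, the same saturated-chain argument for $\Gamma_L(\nabla X)=\Gamma_L(X)$, and the same cardinality count giving the defect formula. The only cosmetic difference is that you fold the disjointness $\nabla X\cap(B\setminus X)=\varnothing$ into the comparability check (the case $b=y$), whereas the paper verifies it separately just before the count.
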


\begin{proof}
The sets $B\setminus P_{\rho_{\min}(B)}$ and $\nabla\bigl(B\cap P_{\rho_{\min}(B)}\bigr)$ are antichains, since the former is contained in $B$ and the latter is contained in a single rank level.

Suppose that some $y\in B\setminus P_{\rho_{\min}(B)}$ is comparable with some $z\in\nabla\bigl(B\cap P_{\rho_{\min}(B)}\bigr)$. 
By the definition of the upper shadow, there exists $x\in B\cap P_{\rho_{\min}(B)}$ such that $x\lessdot z$. 
If $z\le y$, then $x<y$, which contradicts the fact that $x,y\in B$.
If $y\le z$, the fact that $y$ does not lie in the lowest occupied rank of $B$ suggests $\rho(y)\ge \rho_{\min}(B)+1=\rho(z)$. 
However, $y\le z$ implies $\rho(y)\le\rho(z)$. 
Hence $\rho(y)=\rho(z)$, and therefore $y=z$. 
This again gives $x<y$, contradicting that $B$ is an antichain.
Thus no element of $B\setminus P_{\rho_{\min}(B)}$ is comparable with an element of $\nabla\bigl(B\cap P_{\rho_{\min}(B)}\bigr)$, and hence $B^\uparrow$ is an antichain.

Secondly, the assumption $\rho_{\min}(B)\le r-2$ gives
\[
\nabla\bigl(B\cap P_{\rho_{\min}(B)}\bigr)
\subseteq P_{\rho_{\min}(B)+1}\subseteq P_{<r}.
\]
Since $B\setminus P_{\rho_{\min}(B)}\subseteq P_{<r}$, it follows that $B^\uparrow\subseteq P_{<r}$.

Suppose that $u\in L$ is comparable with some $x\in B\cap P_{\rho_{\min}(B)}$. 
Since $\rho(x)<r=\rho(u)$, we have $x<u$. 
Choose a saturated chain from $x$ to $u$, and let $z$ be the element immediately above $x$ on this chain. 
Then $z\in\nabla\bigl(B\cap P_{\rho_{\min}(B)}\bigr)$ and $z\le u$.
Thus $\Gamma_L(B\cap P_{\rho_{\min}(B)}) \subseteq \Gamma_L\bigl(\nabla(B\cap P_{\rho_{\min}(B)})\bigr)$.
Conversely, suppose that $u\in L$ is comparable with some $z\in\nabla(B\cap P_{\rho_{\min}(B)})$.
There exists $x\in B\cap P_{\rho_{\min}(B)}$ such that $x\lessdot z$. 
Since $z<u$, we have $x<u$, and hence $u\in\Gamma_L(B\cap P_{\rho_{\min}(B)})$. 
Therefore we have $\Gamma_L\bigl(B\cap P_{\rho_{\min}(B)}\bigr)=\Gamma_L\bigl(\nabla(B\cap P_{\rho_{\min}(B)})\bigr)$,
which implies $\Gamma_L(B^\uparrow)=\Gamma_L(B)$ by definition of $B^\uparrow$.

Finally, we prove the two parts forming $B^\uparrow$ are disjoint.
Assume that $y\in\bigl(B\setminus P_{\rho_{\min}(B)}\bigr)\cap\nabla\bigl(B\cap P_{\rho_{\min}(B)}\bigr)$, then there exists $x\in B\cap P_{\rho_{\min}(B)}$ such that $x\lessdot y$. 
Since $x,y\in B$, this contradicts the assumption that $B$ is an antichain. 
Therefore
\[
|B^\uparrow|
=
|B|
-
\left|B\cap P_{\rho_{\min}(B)}\right|
+
\left|\nabla\bigl(B\cap P_{\rho_{\min}(B)}\bigr)\right|.
\]
Consequently,
\[
\begin{aligned}
\delta_L(B^\uparrow)
&=
|\Gamma_L(B^\uparrow)|-|B^\uparrow|\\
&=
|\Gamma_L(B)|-|B|
+\left|B\cap P_{\rho_{\min}(B)}\right|
-\left|\nabla\bigl(B\cap P_{\rho_{\min}(B)}\bigr)\right|\\
&=
\delta_L(B)
-
\left(
\left|\nabla\bigl(B\cap P_{\rho_{\min}(B)}\bigr)\right|
-
\left|B\cap P_{\rho_{\min}(B)}\right|
\right).
\end{aligned}
\]
This proves the lemma.
\end{proof}
We say that $P$ has the \defi{upward matching property below $L=P_r$} if $|\nabla X|\ge |X|$
for every $X\subseteq P_i$ and every $i<r$. In particular, if the above inequality is strict for every nonempty $X$, we say that
$P$ has the \defi{strict upward matching property below $L$}.
\begin{thm}
\label{thm:weak-shadow-propagation}
Assume that $P$ has the upward matching property below $L$. 
Let $B\in\Ant(P_{<r})$ satisfy $\delta_L(B)=1$. 
If $\rho_{\min}(B)\le r-2$, then
\begin{equation}
\label{eq:rank-pushing-zero-growth}
\left|\nabla\bigl(B\cap P_{\rho_{\min}(B)}\bigr)\right|
=
\left|B\cap P_{\rho_{\min}(B)}\right|
\end{equation}
and $\delta_L(B^\uparrow)=1$.
Consequently, repeated rank pushing produces a defect-one antichain $B'\subseteq P_{r-1}$ satisfying $|\nabla B'|=|B'|+1$.
\end{thm}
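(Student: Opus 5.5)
The plan is to combine the exact defect formula of Lemma~\ref{lem:rank-pushing} with the positivity of defects of nonempty antichains. Let $i=\rho_{\min}(B)\le r-2$ and $X=B\cap P_i$, which is nonempty. By the upward matching property, $|\nabla X|-|X|\ge0$. By Lemma~\ref{lem:rank-pushing}, $B^\uparrow$ is an antichain in $P_{<r}$ with
\[
\delta_L(B^\uparrow)=1-\bigl(|\nabla X|-|X|\bigr)\le 1.
\]

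To get the reverse inequality, observe that $B^\uparrow$ is nonempty. Indeed, $X\ne\varnothing$ and every element of $P_i$ with $i<r$ has an upper cover; alternatively, $|\nabla X|\ge|X|\ge1$ by the matching property. Since $L$ is the unique maximum antichain, the discussion preceding Corollary~\ref{cor:defect-one-coefficient} gives $\delta_L(B^\uparrow)\ge1$. Hence $\delta_L(B^\uparrow)=1$, and therefore $|\nabla X|=|X|$, which is \eqref{eq:rank-pushing-zero-growth}.

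For the final assertion, iterate the step. Each rank-pushing step strictly increases $\rho_{\min}$, because the lowest part moves up one rank and the rest of $B$ already lies in ranks above $i$. Each step also preserves membership in $\Ant(P_{<r})$, nonemptiness, and defect one. So after finitely many steps we reach a defect-one antichain $B''$ with $\rho_{\min}(B'')=r-1$, that is, $B'':=B'\subseteq P_{r-1}$. Every element of $P_{r-1}$ lies below some element of $L$, and in $L=P_r$ comparability with elements of rank $r-1$ is exactly the covering relation. Hence $\Gamma_L(B')=\nabla B'$. Then $1=\delta_L(B')=|\nabla B'|-|B'|$ gives $|\nabla B'|=|B'|+1$.

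The only delicate point is the identification $\Gamma_L(B')=\nabla B'$ for $B'\subseteq P_{r-1}$. This uses gradedness: an element of rank $r-1$ that is comparable to an element of rank $r$ must lie below it and be covered by it. The rest is bookkeeping with Lemma~\ref{lem:rank-pushing}.
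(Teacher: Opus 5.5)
Your proposal is correct and follows essentially the same route as the paper: combine the defect formula of Lemma~\ref{lem:rank-pushing} with the upward matching inequality and the bound $\delta_L(B^\uparrow)\ge1$ coming from uniqueness of $L$, then iterate and use $\Gamma_L(B')=\nabla B'$ in the rank just below $L$. Your remark that every element of $P_{r-1}$ lies below some element of $L$ is not needed for that identification, though it does follow from the matching property applied to singletons.
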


\begin{proof}
Since $\rho_{\min}(B)\le r-2$, Lemma~\ref{lem:rank-pushing} shows that $B^\uparrow$ is an antichain contained in $P_{<r}$. 
Moreover,~\eqref{eq:rank-pushing-defect} gives
\[
1
=
\delta_L(B^\uparrow)
+
\left(
\left|\nabla\bigl(B\cap P_{\rho_{\min}(B)}\bigr)\right|
-
\left|B\cap P_{\rho_{\min}(B)}\right|
\right).
\]
The upward matching property shows that the second term on the right-hand side is nonnegative. 
It also implies that $B^\uparrow$ is nonempty.
Since $L$ is the unique maximum antichain, we have
$\delta_L(B^\uparrow)\ge1$. 
Hence~\eqref{eq:rank-pushing-zero-growth} holds and $\delta_L(B^\uparrow)=1$.

Each rank-pushing step strictly increases the lowest occupied rank.
The preceding argument may therefore be repeated until a defect-one antichain $B'\subseteq P_{r-1}$ is obtained. 
Since $B'$ lies in the rank immediately below $L=P_r$, its neighborhood in $L$ is precisely its upper shadow. 
Thus $|\nabla B'|-|B'|=\delta_L(B')=1$, which prove the theorem.
\end{proof}

\begin{cor}
\label{cor:strict-shadow-localization}
Suppose that $P$ has the strict upward matching property below $L$. 
Then the defect-one antichains contained in $P_{<r}$ are exactly the nonempty subsets $B\subseteq P_{r-1}$ satisfying $|\nabla B|=|B|+1$.
\end{cor}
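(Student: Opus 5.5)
We prove the two inclusions separately; the whole argument reduces to Theorem~\ref{thm:weak-shadow-propagation} together with the observation that, for sets inside $P_{r-1}$, the $L$-neighbourhood is just the upper shadow.

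\emph{Inclusion of the shadow condition in the defect-one class.} Let $B\subseteq P_{r-1}$ be nonempty with $|\nabla B|=|B|+1$. Since $B$ lies in a single rank, it is an antichain contained in $P_{<r}$. An element $u\in L=P_r$ is comparable with some $b\in B$ exactly when $b<u$. Because $\rho(u)=\rho(b)+1$, this happens exactly when $b\lessdot u$. Hence $\Gamma_L(B)=\nabla B$, and so $\delta_L(B)=|\nabla B|-|B|=1$. Thus $B$ is a defect-one antichain below $L$.

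\emph{The defect-one class lies in rank $r-1$.} Let $B\in\Ant(P_{<r})$ with $\delta_L(B)=1$; it is nonempty, since the empty antichain has defect $0$. Suppose, for contradiction, that $\rho_{\min}(B)\le r-2$. The strict upward matching property implies the ordinary one, so Theorem~\ref{thm:weak-shadow-propagation} applies and yields
\[
\bigl|\nabla\bigl(B\cap P_{\rho_{\min}(B)}\bigr)\bigr|=\bigl|B\cap P_{\rho_{\min}(B)}\bigr|.
\]
However, $X:=B\cap P_{\rho_{\min}(B)}$ is a nonempty subset of a rank $i<r$, so strictness gives $|\nabla X|>|X|$. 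This is a contradiction, so $\rho_{\min}(B)=r-1$ and $B\subseteq P_{r-1}$. By the identity $\Gamma_L(B)=\nabla B$ from the previous paragraph, $|\nabla B|=|B|+\delta_L(B)=|B|+1$.

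There is no real obstacle. The only points needing care are that $\Gamma_L(B)=\nabla B$ for $B\subseteq P_{r-1}$, which uses gradedness to turn comparability with $L$ into a covering relation, and that $X$ is nonempty, which holds by the definition of $\rho_{\min}$.
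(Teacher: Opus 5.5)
Your proposal is correct and follows essentially the same argument as the paper. Like the paper, you use Theorem~\ref{thm:weak-shadow-propagation} to rule out $\rho_{\min}(B)\le r-2$ against strict growth, and then the identity $\Gamma_L(B)=\nabla B$ for $B\subseteq P_{r-1}$ to characterize defect one within rank $r-1$.
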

\begin{proof}
Let $B\in\Ant(P_{<r})$ have defect one. 
If $\rho_{\min}(B)\le r-2$, Theorem~\ref{thm:weak-shadow-propagation} would give
the equality in~\eqref{eq:rank-pushing-zero-growth},
contradicting the strict-shadow assumption. 
Hence $\rho_{\min}(B)=r-1$, so $B\subseteq P_{r-1}$.

It remains to characterize the defect-one antichains contained in $P_{r-1}$. 
Let $B\subseteq P_{r-1}$ be nonempty. 
Since $P_{r-1}$ is a rank level, $B$ is automatically an antichain.
Suppose that $u\in\Gamma_L(B)$. 
Then $u\in L=P_r$ is comparable with some $b\in B\subseteq P_{r-1}$. 
Since $\rho(u)=\rho(b)+1$, we must have $b\lessdot u$. Hence $u\in\nabla B$. 
Conversely, every element of $\nabla B$ belongs to $P_r=L$ and is comparable with an element of $B$.
Therefore $\Gamma_L(B)=\nabla B$.
It follows that
\[
\delta_L(B)
=
|\Gamma_L(B)|-|B|
=
|\nabla B|-|B|.
\]
Consequently, $\delta_L(B)=1$ if and only if $|\nabla B|=|B|+1$. 
\end{proof}

\begin{rem}
\label{rem:shadow-criteria}
We record two standard ways to verify the shadow inequalities used above.

\begin{enumerate}[label=\textup{(\roman*)}]
\item
A graded poset $P$ has the \defi{normalized matching property} if
\[
\frac{|\nabla X|}{|P_{i+1}|}
\ge
\frac{|X|}{|P_i|}
\]
for every $X\subseteq P_i$ and every $0\le i<R$.
If $|P_0|<|P_1|<\cdots<|P_r|$, then every nonempty $X\subseteq P_i$ with $i<r$ satisfies
\[
|\nabla X|
\ge
\frac{|P_{i+1}|}{|P_i|}|X|
>
|X|.
\]

\item
Recall that a \defi{symmetric chain decomposition} of a graded poset of rank $R$ is a partition into saturated chains such that the minimum and maximum ranks of each chain sum to $R$.
Now suppose that $R=2r$ and that $P$ admits a symmetric chain decomposition $\mathscr C$.
For $X\subseteq P_i$ with $i<r$ and $x\in X$, let $C\in\mathscr{C}$ be the symmetric chain containing $x$.
By symmetry, $x$ has an immediate successor $x^+$ in $C$, and $x^+\in\nabla X$.
Distinct elements of $X$ lie in distinct chains and therefore have distinct successors. 
Hence $|\nabla X|\ge|X|$.
\end{enumerate}
\end{rem}

\section{Products of three chains}
\label{sec:three-chains}

Throughout this section, let $1\le k\le m\le n$. 
For $k\ge3$, the palindromicity classification follows from the coefficient criterion and shadow-localization results developed in Section~\ref{sec:defect-one}. 
The cases $k=1$ and $k=2$ are then treated by direct enumeration and a layer-refined enumeration, respectively. 

\subsection{\texorpdfstring{The case $k\ge3$}{The case k at least 3}}
\label{subsec:three-chain-palindromicity}

We begin by assuming that $3\le k\le m\le n$. 
As noted at the beginning of Section~\ref{sec:defect-one}, 
every palindromic antichain polynomial is monic.
By the monicity classification of Ding and Dong~\cite[Lemma~2.3]{DingDong2019}, there exists an integer $s$ such that
\begin{equation}
\label{eq:three-chain-monicity-parameters}
n=m+k-1-2s,
\qquad
0\le s\le \left\lfloor\frac{k-1}{2}\right\rfloor.
\end{equation}
Put $P=[k]\times[m]\times[n]$, with rank function $\rho(i,j,\ell)=i+j+\ell-3$.
By~\eqref{eq:three-chain-monicity-parameters}, the central rank of $P$ is 
\begin{equation}\label{eq:r=k+m-s-2}
r=k+m-s-2.
\end{equation}
We next describe the two rank levels $P_r$ and $P_{r-1}$ that enter the defect-one calculation.

Define
\begin{align*}
L_{k,m,s}
:&=
\{(i,j)\in[k]\times[m]:s+2\le i+j\le k+m-s\}\\
M_{k,m,s}
:&=
\{(i,j)\in[k]\times[m]:s+1\le i+j\le k+m-s-1\}.
\end{align*}
\begin{lem}
\label{lem:central-level-geometry}
Under the coordinate projection $(i,j,\ell)\mapsto(i,j)$, the rank levels $P_r$ and $P_{r-1}$ map bijectively onto $L_{k,m,s}$ and $M_{k,m,s}$, respectively.
Furthermore,
\[
|P_r|=km-s(s+1),
\qquad
|P_{r-1}|=km-s(s+1)-1.
\]
\end{lem}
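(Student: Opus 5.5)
The plan is to parametrize each rank level by its first two coordinates, using that the third coordinate is then forced. I will then count the resulting index sets in $[k]\times[m]$ by removing two triangular corners.

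\emph{Step 1: the bijection.} The rank function is $\rho(i,j,\ell)=i+j+\ell-3$ and $r=k+m-s-2$ by \eqref{eq:r=k+m-s-2}. Hence a point of $P_r$ is determined by $(i,j)$ through $\ell=k+m-s+1-i-j$, so the projection is injective on $P_r$. The pair $(i,j)$ lies in the image exactly when this $\ell$ satisfies $1\le\ell\le n$. Using $n=m+k-1-2s$ from \eqref{eq:three-chain-monicity-parameters}:
\begin{itemize}
\item $\ell\ge1$ is equivalent to $i+j\le k+m-s$;
\item $\ell\le n$ is equivalent to $k+m-s+1-i-j\le m+k-1-2s$, i.e.\ $i+j\ge s+2$.
\end{itemize}
So the image of $P_r$ is exactly $L_{k,m,s}$. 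The same computation with $\ell=k+m-s-i-j$ for $P_{r-1}$ gives the constraints $i+j\le k+m-s-1$ and $i+j\ge s+1$, so the image of $P_{r-1}$ is $M_{k,m,s}$.

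\emph{Step 2: counting.} I count the complements in $[k]\times[m]$. For $2\le t\le k+1$ (note $k\le m$), the number of pairs $(i,j)\in[k]\times[m]$ with $i+j=t$ is $t-1$. The involution $(i,j)\mapsto(k+1-i,m+1-j)$ sends $i+j$ to $k+m+2-(i+j)$, so the upper corner $\{i+j\ge k+m+2-u\}$ has the same size as the lower corner $\{i+j\le u\}$.

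For $L_{k,m,s}$:
\begin{itemize}
\item the excluded lower corner $\{i+j\le s+1\}$ has $\sum_{t=2}^{s+1}(t-1)=s(s+1)/2$ elements;
\item the excluded upper corner $\{i+j\ge k+m-s+1\}$ is the mirror image of $\{i+j\le s+1\}$, so it also has $s(s+1)/2$ elements.
\end{itemize}
This gives $|P_r|=km-s(s+1)$.

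For $M_{k,m,s}$:
\begin{itemize}
\item the excluded lower corner $\{i+j\le s\}$ has $\sum_{t=2}^{s}(t-1)=s(s-1)/2$ elements;
\item the excluded upper corner $\{i+j\ge k+m-s\}$ is the mirror of $\{i+j\le s+2\}$, so it has $\sum_{t=2}^{s+2}(t-1)=(s+1)(s+2)/2$ elements.
\end{itemize}
Since $s(s-1)/2+(s+1)(s+2)/2=s^2+s+1$, this gives $|P_{r-1}|=km-s(s+1)-1$.

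\emph{Step 3: validity of the corner counts (the only delicate point).} Two conditions must be checked.
\begin{itemize}
\item The formula $t-1$ requires every sum $t$ used to satisfy $t\le k+1$. The largest such $t$ is $s+2$, and $s\le\lfloor (k-1)/2\rfloor$ with $k\ge3$ gives $s+2\le k+1$.
\item The lower and upper excluded corners must be disjoint. This holds because $s+2<k+m-s$, which follows from $2s+1\le k$ and $m\ge1$.
\end{itemize}
Both facts come directly from \eqref{eq:three-chain-monicity-parameters}, so no case split is needed.
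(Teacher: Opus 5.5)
Your proof is correct and follows the same route as the paper. You solve for the forced third coordinate $\ell$, translate $1\le\ell\le n$ via $n=m+k-1-2s$ into the two diagonal bounds, and count the complements as two corner triangles: $\binom{s+1}{2}$ each for $L_{k,m,s}$, and $\binom{s}{2}$ and $\binom{s+2}{2}$ for $M_{k,m,s}$. Your Step~3 just makes explicit the range and disjointness checks the paper leaves implicit.

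There is one harmless slip in Step~3. The facts $2s+1\le k$ and $m\ge 1$ give only $s+2\le k+m-s$, not the strict inequality you state. However, disjointness of the corners needs only $k+m>2s$, which those facts do give. The strict inequality also holds anyway, since in fact $m\ge k\ge 3$.
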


\begin{proof}
If $(i,j,\ell)\in P_r$, then $\ell=k+m-s+1-i-j$ by \eqref{eq:r=k+m-s-2}.
Using~\eqref{eq:three-chain-monicity-parameters}, the condition $1\le\ell\le n$ is equivalent to $s+2\le i+j\le k+m-s$.
Thus the coordinate projection maps $P_r$ bijectively onto $L_{k,m,s}$.
By \eqref{eq:r=k+m-s-2} again, the same calculation for $P_{r-1}$ gives $\ell=k+m-s-i-j$.
The condition $1\le\ell\le n$ is equivalent to $s+1\le i+j\le k+m-s-1$.
Hence the coordinate projection maps $P_{r-1}$ bijectively onto $M_{k,m,s}$.

The complement of $L_{k,m,s}$ in $[k]\times[m]$ consists of the two corner triangles $i+j\le s+1$ and $i+j\ge k+m-s+1$, each of cardinality $\binom{s+1}{2}$.
Therefore
\[
|L_{k,m,s}|=km-2\binom{s+1}{2}=km-s(s+1).
\]

Similarly, the two corner triangles complementary to $M_{k,m,s}$ have
$\binom{s}{2}$ and $\binom{s+2}{2}$ elements. Hence
\[
|M_{k,m,s}|
=
km-\binom{s}{2}-\binom{s+2}{2}
=
km-s(s+1)-1.
\]
\end{proof}

Set $d=|P_r|=km-s(s+1)$.
Since $\mathcal N_P(x)$ is assumed to be palindromic, $P_r$ is the unique maximum antichain of $P$.
For the parameters above, the rank numbers of $P$ are strictly increasing below rank $r$. 
Moreover, products of chains have the normalized matching property~\cite{Engel1997}. 
Hence Remark~\ref{rem:shadow-criteria} gives strict shadow growth below $P_r$.

Under the identifications in Lemma~\ref{lem:central-level-geometry}, the upper shadow of a subset $S\subseteq M_{k,m,s}$ is
\[
\nabla S
=
\{(i,j)\in L_{k,m,s}:
(i,j)\in S,\ (i-1,j)\in S,\text{ or }(i,j-1)\in S\}.
\]
We call a nonempty subset $S\subseteq M_{k,m,s}$ \defi{tight} if $|\nabla S|=|S|+1$.

By Corollary~\ref{cor:strict-shadow-localization}, the defect-one antichains below $P_r$ are precisely the tight subsets of $M_{k,m,s}$.
Denote their number by
\[
T_{k,m,s}
=
\bigl|
\{S\subseteq M_{k,m,s}:S\ne\varnothing,\ |\nabla S|=|S|+1\}
\bigr|.
\]
We first consider the case $s\ge1$. 
Define the following three boundary subsets of $M_{k,m,s}$:
\[
C_0=\{(i,s+1-i):1\le i\le s\},
\]
\[
C_1=\{(i,m):1\le i\le k-s-1\},
\qquad
C_2=\{(k,j):1\le j\le m-s-1\}.
\]
We order $C_0$ and $C_1$ by increasing first coordinate and $C_2$ by increasing second coordinate. 
A consecutive segment of $C_i$ is a nonempty set of consecutive elements in this order.
Figure~\ref{fig:three-boundary-shadows} illustrates the three boundary sets and their upper shadows for $(k,m,s)=(5,6,2)$.

\begin{figure}[htbp]
\centering

\begin{tikzpicture}[x=.62cm,y=.62cm,line join=miter]

\newcommand{\drawcoordinatelabels}{%
\foreach \i in {1,...,5}
\node[font=\scriptsize] at ({\i-.5},-.30) {$\i$};

\foreach \j in {1,...,6}
\node[font=\scriptsize] at (-.30,{\j-.5}) {$\j$};

\node[font=\small] at (2.5,-.72) {$i$};
\node[font=\small] at (-.72,3) {$j$};
}

% Left panel: C_0, C_1, C_2 in M_{5,6,2}
\begin{scope}

\foreach \i in {1,...,5}{
\foreach \j in {1,...,6}{
\pgfmathtruncatemacro{\ijsum}{\i+\j}

\ifnum\ijsum<3\relax
\path[
fill=white,
draw=gray!55,
line width=.4pt
]
({\i-1},{\j-1}) rectangle (\i,\j);
\else
\ifnum\ijsum>8\relax
\path[
fill=white,
draw=gray!55,
line width=.4pt
]
({\i-1},{\j-1}) rectangle (\i,\j);
\else
\path[
fill=gray!12,
draw=gray!55,
line width=.4pt
]
({\i-1},{\j-1}) rectangle (\i,\j);
\fi
\fi
}
}

% C_0 = {(1,2),(2,1)}
\foreach \i/\j in {1/2,2/1}{
\path[
fill=red!30,
draw=gray!60,
line width=.4pt
]
({\i-1},{\j-1}) rectangle (\i,\j);
}

% C_1 = {(1,6),(2,6)}
\foreach \i/\j in {1/6,2/6}{
\path[
fill=blue!28,
draw=gray!60,
line width=.4pt
]
({\i-1},{\j-1}) rectangle (\i,\j);
}

% C_2 = {(5,1),(5,2),(5,3)}
\foreach \i/\j in {5/1,5/2,5/3}{
\path[
fill=green!30,
draw=gray!60,
line width=.4pt
]
({\i-1},{\j-1}) rectangle (\i,\j);
}

\drawcoordinatelabels

\node[font=\small] at (2.5,6.48)
{$C_0,C_1,C_2\subseteq M_{5,6,2}$};

\end{scope}

% Upper-shadow arrow
\draw[->,thick] (5.65,3) -- (7.3,3);
\node[above,font=\small] at (6.5,3) {$\nabla$};

% Right panel: upper shadows in L_{5,6,2}
\begin{scope}[shift={(8.5,0)}]

\foreach \i in {1,...,5}{
\foreach \j in {1,...,6}{
\pgfmathtruncatemacro{\ijsum}{\i+\j}

\ifnum\ijsum<4\relax
\path[
fill=white,
draw=gray!55,
line width=.4pt
]
({\i-1},{\j-1}) rectangle (\i,\j);
\else
\ifnum\ijsum>9\relax
\path[
fill=white,
draw=gray!55,
line width=.4pt
]
({\i-1},{\j-1}) rectangle (\i,\j);
\else
\path[
fill=gray!12,
draw=gray!55,
line width=.4pt
]
({\i-1},{\j-1}) rectangle (\i,\j);
\fi
\fi
}
}

% \nabla C_0 = {(1,3),(2,2),(3,1)}
\foreach \i/\j in {1/3,2/2,3/1}{
\path[
fill=red!30,
draw=gray!60,
line width=.4pt
]
({\i-1},{\j-1}) rectangle (\i,\j);
}

% \nabla C_1 = {(1,6),(2,6),(3,6)}
\foreach \i/\j in {1/6,2/6,3/6}{
\path[
fill=blue!28,
draw=gray!60,
line width=.4pt
]
({\i-1},{\j-1}) rectangle (\i,\j);
}

% \nabla C_2 = {(5,1),(5,2),(5,3),(5,4)}
\foreach \i/\j in {5/1,5/2,5/3,5/4}{
\path[
fill=green!30,
draw=gray!60,
line width=.4pt
]
({\i-1},{\j-1}) rectangle (\i,\j);
}

\drawcoordinatelabels

\node[font=\small] at (2.5,6.48)
{$\nabla C_0,\nabla C_1,\nabla C_2\subseteq L_{5,6,2}$};

\end{scope}

\end{tikzpicture}

\caption{
The boundary sets $C_0,C_1,C_2$ in $M_{5,6,2}$ and their upper shadows in $L_{5,6,2}$. The red, blue, and green cells correspond to $C_0,C_1$, and $C_2$, respectively. 
Each boundary set satisfies $|\nabla C_i|=|C_i|+1$.
}
\label{fig:three-boundary-shadows}
\end{figure}

\begin{lem}
\label{lem:tight-set-classification}
Assume $s\ge1$. 
A nonempty subset $S\subseteq M_{k,m,s}$ is tight if and only if either $S=M_{k,m,s}$ or $S$ is a consecutive segment of one of $C_0,C_1,C_2$.
\end{lem}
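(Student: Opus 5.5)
The plan is to compute the excess $\varphi(S):=|\nabla S|-|S|$ column by column, show that every nonempty $S$ has $\varphi(S)\ge1$, and then read off exactly when equality holds. Sufficiency is a direct check. By Lemma~\ref{lem:central-level-geometry}, $|L_{k,m,s}|-|M_{k,m,s}|=1$, and $\nabla M_{k,m,s}=L_{k,m,s}$: every cell of $L_{k,m,s}$ is either in $M_{k,m,s}$ or of the form $(i,j)$ with $i+j=k+m-s$, and such a cell is reached from $(i-1,j)$ or $(i,j-1)$. For a consecutive segment of $C_0$, $C_1$ or $C_2$, one lists the shadow explicitly, as in Figure~\ref{fig:three-boundary-shadows}.
\begin{itemize}
\item A segment $\{(i,s+1-i):a\le i\le b\}$ of $C_0$ has shadow $\{(i,s+2-i):a\le i\le b+1\}$.
\item A segment of $C_1$ is shifted by $e_1$ along the line $j=m$ and gains one new cell. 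The cell $(k-s,m)$ at the top end lies in $L_{k,m,s}$ because $k-s+m\le k+m-s$.
\item A segment of $C_2$ is shifted by $e_2$ along $i=k$ in the same way.
\end{itemize}

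For necessity, fix $j$ and let $S_j=\{i:(i,j)\in S\}$. Write the $i$-range of $M_{k,m,s}$ in column $j$ as $[\alpha_j,\beta_j]$ and that of $L_{k,m,s}$ as $[\alpha_j',\beta_j']$. Here $\alpha_j=\max(1,s+1-j)$, $\alpha'_j=\max(1,s+2-j)$, $\beta_j=\min(k,k+m-s-1-j)$ and $\beta'_j=\min(k,k+m-s-j)$. Since $\nabla S$ contains $\bigl(S_j\cup(S_j+1)\bigr)\cap[\alpha'_j,\beta'_j]$ in column $j$, every maximal run $[a,b]$ of $S_j$ contributes at least $b-a+2$ shadow cells. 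One is subtracted if $a=\alpha_j<\alpha'_j$; this happens exactly when $j\le s$ and $(a,j)\in C_0$. One more is subtracted if $b=\beta_j=\beta'_j=k$; this happens exactly when $j\le m-s-1$ and $(b,j)\in C_2$. Two different runs give disjoint shadow sets in the same column, since $b+1<a'$. Summing over runs and columns gives
\[
\varphi(S)\ \ge\ \#\{\text{runs}\}-\#\{\text{runs starting on }C_0\}-\#\{\text{runs ending on }C_2\}+E(S),
\]
where $E(S)$ counts shadow cells $(i,j+1)$ that come only from the $e_2$-direction, i.e.\ $(i,j)\in S$ while column $j+1$ misses $i$ in the set above. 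The symmetric decomposition by rows $i$ fixed treats $C_1$ in the same way. Runs that are single $C_0$ cells get their compensating $+1$ from the $e_2$-shift $(i,s+2-i)$. Runs ending on $C_2$ get it from the $e_2$-shift along $i=k$.

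The key step is to use both the row and the column decompositions to prove $\varphi(S)\ge1$, and to show that $\varphi(S)=1$ forces $S$ to be of one of the listed forms. I would argue with the bottom-left-most and top-right-most structure of $S$. Take a cell $x\in S$ that is maximal in the product order inside $S$. Then $x+e_1$ and $x+e_2$ are both outside $S$. Each of them lies in $L_{k,m,s}$ unless $x$ is on the boundary $i=k$, on $j=m$, or, for $C_0$, on the antidiagonal. This produces at least two "new" shadow cells per maximal element unless boundary losses occur. From there, the tightness condition $\varphi(S)=1$ should force one of two situations. Either $S$ has a single maximal element sitting on a boundary piece and $S$ is a segment of $C_1$ or $C_2$; or all of $S$ lies on the antidiagonal and $S$ is a segment of $C_0$. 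In the remaining case, $S$ must reach every boundary piece simultaneously, and then an order-ideal/filter argument shows $S=M_{k,m,s}$: if some cell of $M_{k,m,s}$ is missing while $S$ touches all boundaries, there is a missing cell adjacent to $S$ that creates an extra shadow cell.

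I expect this last bookkeeping to be the main obstacle. The difficulty is to show rigorously that the boundary losses (at most one per boundary piece $C_0$, $C_1$, $C_2$, and only at the ends of runs) can reduce the excess to exactly $1$ only in the listed configurations. I would first try to make it an induction on $|S|$: remove a maximal element of $S$ and track how $\varphi$ changes, checking that no intermediate set ever has $\varphi\le 0$. Throughout, I would use that the case $s\ge1$ guarantees that $C_0$ is nonempty and that the three boundary pieces are pairwise disjoint and far apart, so that their losses cannot combine inside a single run.
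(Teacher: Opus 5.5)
You have the right starting identity, but the proposal has a genuine gap: necessity, which is the whole content of the lemma, is never proved, and the premise you plan to use is false. Your sufficiency check is fine. Your line-by-line count is actually an exact identity, the same one the paper starts from. Write $S_j=\{i:(i,j)\in S\}$, $S_j^+=S_j+1$, let $c(\cdot)$ be the number of runs, and set $\lambda_j=\mathbf 1_{\{j\le s,\ s+1-j\in S_j\}}$ and $\rho_j=\mathbf 1_{\{j\le m-s-1,\ k\in S_j\}}$. Then
\[
|\nabla S|-|S|=\sum_{j=1}^{m}a_j+\sum_{j=2}^{m}b_j,\qquad a_j:=c(S_j)-\lambda_j-\rho_j,\quad b_j:=\bigl|S_{j-1}\setminus(S_j\cup S_j^+)\bigr|.
\]
From there you only say what tightness ``should force''. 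The premise fails as follows. For $1\le j\le s$ the $j$-th line of $M_{k,m,s}$ is $I_j=[s+1-j,k]$. Its left end lies in $C_0$ and its right end in $C_2$, so a single run $S_j=I_j$ incurs both losses and gives $a_j=-1$. This is exactly what happens for $S=M_{k,m,s}$, where lines $1,\dots,s$ each contribute $-1$. So any bookkeeping resting on ``losses cannot combine inside a single run'' would show that $M_{k,m,s}$ is not tight. Your maximal-element count is also wrong: the incomparable cells $(i,j)$ and $(i+1,j-1)$ share the shadow cell $(i+1,j)$, so you do not get two fresh cells per maximal element.

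The missing idea is to split on exactly this phenomenon. All $a_j,b_j$ are $\ge0$, except that $a_j=-1$ precisely when $j\le s$ and $S_j=I_j$.

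\emph{No full line $j\le s$.} Let $q$ be the highest nonempty line.
If $q=m$, then $a_m=c(S_m)\ge1$ forces every other term to vanish, and $S$ is a $C_1$-segment.
If $q<m$, then $b_{q+1}=|S_q|\ge1$, so $S_q$ is a singleton with $a_q=0$. This is impossible for $m-s\le q<m$. Otherwise it forces $S_q$ to be an endpoint lying in $C_0$ or $C_2$, and $b_j=0$ then propagates downward to give a $C_0$- or $C_2$-segment.

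\emph{Some $S_h=I_h$ with $h\le s$.} Lines $1,\dots,h$ contribute at least $-h$. For $j>h$ one has the transition identity
\[
a_j+b_j=\eta_j+|I_j\setminus S_j|-|I_{j-1}\setminus S_{j-1}|+\bigl|(I_{j-1}\setminus S_{j-1})\cap(S_j\cup S_j^+)\bigr|,
\]
where $\eta_j=-1,0,1$ according as $j\le s$, $s<j\le m-s-1$, or $j\ge m-s$. Telescoping from $h+1$ to $m$ gives at least $h+1$, so the defect is at least $1$. Equality forces $S_m=I_m$, then $S_{j-1}=I_{j-1}$ downward, and $a_j=-1$ for all $j\le h$; that is, $S=M_{k,m,s}$.

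Your closing sketch (``$S$ touches every boundary piece, then an order-ideal argument'') does not replace this. It does not explain why a tight non-segment must reach all three pieces. Nor does it show that the extra shadow cell created by a missing cell cannot be offset by further boundary losses, which is exactly what the telescoping controls.
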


\begin{proof}
The proof is given in Appendix~\ref{app:technical-counts}.
\end{proof}

Since a set of cardinality $q$ has $1+2+\dots+q=\binom{q+1}{2}$ nonempty consecutive
segments, Lemma~\ref{lem:tight-set-classification} gives
\begin{equation}
\label{eq:three-chain-tight-positive}
T_{k,m,s}
=
1+\binom{s+1}{2}
+\binom{k-s}{2}
+\binom{m-s}{2}.
\end{equation}

We now consider the case $s=0$. 
In this case,
\[
L_{k,m,0}=[k]\times[m],
\qquad
M_{k,m,0}=([k]\times[m])\setminus\{(k,m)\}.
\]
The tight subsets of $M_{k,m,0}$ have a different description.

\begin{lem}
\label{lem:tight-set-classification-zero}
A nonempty subset $S\subseteq M_{k,m,0}$ is tight if and only if one of the following holds:

\begin{enumerate}[label=\textup{(\roman*)}]

\item
$S=J\setminus\{(k,m)\}$ for an upper order ideal $J\subseteq[k]\times[m]$ with $|J|\ge2$;

\item
$S$ is a consecutive segment of $\{(i,m):1\le i\le k-2\}$;

\item
$S$ is a consecutive segment of $\{(k,j):1\le j\le m-2\}$.
\end{enumerate}
\end{lem}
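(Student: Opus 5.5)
The plan is to translate tightness into a statement about the single new cell. Under the identification of Lemma~\ref{lem:central-level-geometry} with $s=0$, we have $L_{k,m,0}=[k]\times[m]$. For $S\subseteq M_{k,m,0}$,
\[
\nabla S=\bigl(S\cup(S+e_1)\cup(S+e_2)\bigr)\cap([k]\times[m]),
\]
where $e_1=(1,0)$ and $e_2=(0,1)$. In particular $S\subseteq\nabla S$. Hence $S$ is tight if and only if $\nabla S\setminus S$ consists of exactly one cell $u$.

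Equivalently, every cell $x\in S$ satisfies the following: each of $x+e_1$ and $x+e_2$ either lies outside $[k]\times[m]$, or lies in $S$, or equals $u$; and $u$ actually occurs. I will first check the ``if'' direction directly.
\begin{itemize}
\item In case (i), $J$ is an upper order ideal with $|J|\ge2$. Then $J$ is nonempty, so $(k,m)\in J$, and we put $S=J\setminus\{(k,m)\}$. Since $|J|\ge 2$, $S$ is nonempty and contains $(k-1,m)$ or $(k,m-1)$. Every step up or right from $S$ lands in $J$, so $\nabla S=J$.
\item In case (ii), $S$ is a segment $\{(i,m):p\le i\le q\}$ with $q\le k-2$. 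The only new cell is $(q+1,m)$, because upward steps leave the grid. Case (iii) is symmetric.
\end{itemize}

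For the ``only if'' direction, I split according to whether $u=(k,m)$. Note that $(k,m)\notin M_{k,m,0}$, so $(k,m)\notin S$.

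\emph{Case $u=(k,m)$.} Then $J:=S\cup\{u\}$ is closed under the steps $+e_1$ and $+e_2$ inside the grid, so $J$ is an upper order ideal. Since $S\ne\varnothing$, we get $|J|\ge2$, which is case (i).

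\emph{Case $u=(a,b)\ne(k,m)$.} Then $(k,m)\notin\nabla S$, so $(k-1,m),(k,m-1)\notin S$. The key claim is that $S$ lies entirely in row $m$ or entirely in column $k$.

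To prove the claim, suppose first that $S$ has a cell with second coordinate $<m$. Among such cells, choose $x=(i,j)$ maximizing $i+j$.
\begin{itemize}
\item Look at $x+e_2=(i,j+1)$, which lies in the grid. It cannot lie in $S$ with $j+1<m$, by maximality. If it lies in $S$ with $j+1=m$, this forces conditions near the corner, and I will track these.
\item Look at $x+e_1$. Maximality forces $i=k$ or $x+e_1=u$.
\end{itemize}
I expect that chasing these few alternatives, using the excluded cells $(k-1,m)$, $(k,m-1)$ and $(k,m)$, shows that $S$ has no cells outside column $k$. Symmetrically, I expect that $S$ has no cells outside row $m$ whenever it meets some column $<k$. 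Altogether this should force $S$ into a single line.

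This confinement step is the main obstacle. The case analysis is small, but each branch has to be handled carefully, and I would organize it by where the unique exit $u$ sits.

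Once $S\subseteq\{(i,m)\}$, each gap between elements of $S$ and the right end of $S$ produce distinct exits. Hence $S$ is a consecutive segment with $u$ immediately to its right. Since $(k-1,m)\notin S$, the segment ends at index $\le k-2$, giving case (ii). Column $k$ similarly gives case (iii).
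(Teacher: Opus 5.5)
Your ``if'' direction, the case $u=(k,m)$, and the final step (a set confined to row $m$ or to column $k$ must be a single run ending at index $\le k-2$, resp.\ $\le m-2$) are all correct. The gap is the confinement claim. It is the heart of the necessity proof, and you only say you expect it to hold. The method you sketch cannot deliver it on its own. Looking at the two upper neighbours of one cell $x$ that maximizes $i+j$ can at best locate $u$: chasing the alternatives shows $x=(k,j)$ and $u=(k,j+1)$ with $j\le m-2$. (Your first bullet omits the possibility $x+e_2=u$, which is exactly the branch that survives.) But this gives no information about cells of $S$ with smaller $i+j$. So it cannot rule out cells of $S$ in columns $<k$. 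Even eliminating the branch $x+e_1=u$ needs more than local information. That branch forces $j=m-1$ and $(i,m)\in S$, and ruling it out requires following row $m$ all the way to the excluded corner.

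The missing idea is propagation along monotone paths. Every in-grid upper cover of a cell of $S$ lies in $S\cup\{u\}$. Hence any up/right lattice path that starts in $S$ and avoids $u$ stays inside $S$. Since $(k,m)\notin S$, every such path from a cell of $S$ to $(k,m)$ must pass through $u$.

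Now take $(i,j)\in S$ with $i<k$ and $j<m$. Consider the two saturated chains $(i,j)\to(k,j)\to(k,m)$ (right, then up) and $(i,j)\to(i,m)\to(k,m)$ (up, then right). They share only their endpoints, and $u\ne(i,j),(k,m)$. So one of them avoids $u$, which forces $(k,m)\in S$, a contradiction. Hence $S\subseteq\{(i,m):i\le k-2\}\cup\{(k,j):j\le m-2\}$.

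If $S$ met both pieces, two distinct cells would lie in $\nabla S\setminus S$: the cell to the right of its rightmost top-row element, and the cell above its highest right-column element. This contradicts tightness. This two-chain argument is how the paper closes the case $u\ne(k,m)$; with it inserted, your proof is complete.
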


\begin{proof}
The proof is given in Appendix~\ref{app:technical-counts}.
\end{proof}
\begin{thm}\label{thm:kmnpal3}
The antichain polynomial $\mathcal N_{[k]\times[m]\times[n]}(x)$ is not palindromic for $3\le k\le m\le n$.
\end{thm}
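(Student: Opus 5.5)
We argue by contradiction, combining the self-dual defect-one criterion with the tight-set counts above. Suppose $3\le k\le m\le n$ and $\mathcal N_P(x)$ is palindromic, where $P=[k]\times[m]\times[n]$. Then $P$ has a unique maximum antichain, so \eqref{eq:three-chain-monicity-parameters} holds for some $s$. The central rank is then $r$ as in \eqref{eq:r=k+m-s-2}, and $L=P_r$ is the unique maximum antichain with $d=km-s(s+1)$.

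The poset $P$ is self-dual via $(i,j,\ell)\mapsto(k+1-i,m+1-j,n+1-\ell)$. Theorem~\ref{thm:self-dual-defect} therefore gives $2D_1^-(P,L)=|P|-d$. By the strict shadow growth noted before the definition of tight sets, together with Corollary~\ref{cor:strict-shadow-localization}, we have $D_1^-(P,L)=T_{k,m,s}$. Substituting $n=m+k-1-2s$, palindromicity forces
\[
2T_{k,m,s}=kmn-km+s(s+1)=km(m+k-2-2s)+s(s+1).
\]
It then suffices to show that this identity fails in every case. We split according to whether $s\ge1$ or $s=0$, since the tight sets are classified by different lemmas.

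\emph{Case $s\ge1$.} By \eqref{eq:three-chain-tight-positive}, the identity becomes
\[
2+(k-s)(k-s-1)+(m-s)(m-s-1)=km(m+k-2-2s).
\]
We show the left side is strictly smaller. Since $2s\le k-1$, the factor $m+k-2-2s$ is at least $m-1$, so the right side is at least $km(m-1)$. On the left, $(k-s)(k-s-1)\le k(k-1)\le km$ and $(m-s)(m-s-1)\le m(m-1)$. Hence it suffices to prove $2+m(m-1)<km(m-2)$. Because $k\ge3$, this follows from $2m^2-5m-2>0$, which holds for all $m\ge3$.

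\emph{Case $s=0$.} Here $n=m+k-1$, and Lemma~\ref{lem:tight-set-classification-zero} lets us count the tight sets directly. Upper order ideals of $[k]\times[m]$ are in bijection with lattice paths, so there are $\binom{k+m}{k}$ of them. Discarding $\varnothing$ and $\{(k,m)\}$ leaves $\binom{k+m}{k}-2$ ideals with $|J|\ge2$, and distinct such $J$ give distinct sets $J\setminus\{(k,m)\}$. The two segment families contribute $\binom{k-1}{2}+\binom{m-1}{2}$. We should check that families (i)--(iii) do not overlap. A set of type (ii) or (iii) omits the points of the form $(i,m)$ or $(k,j)$ adjacent to $(k,m)$, so it cannot come from an upper ideal; thus the three families are disjoint. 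This gives
\[
T_{k,m,0}=\binom{k+m}{k}-2+\binom{k-1}{2}+\binom{m-1}{2}.
\]
We then show $2T_{k,m,0}>km(k+m-2)$. It is enough to prove $2\binom{k+m}{k}-4\ge km(k+m-2)$ for $3\le k\le m$.

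\emph{Main obstacle.} This binomial inequality in the $s=0$ case is the step I expect to be hardest. I would prove it by induction. The base case $k=3$ reads $\frac{(m+3)(m+2)(m+1)}{3}-4\ge 3m(m+1)$, a cubic-versus-quadratic estimate that holds for $m\ge3$; for instance $m=3$ gives $36\ge36$. For the step, increasing $k$ by one multiplies $\binom{k+m}{k}$ by $\frac{k+m+1}{k+1}$, whereas the right side $km(k+m-2)$ grows only polynomially. Comparing these growth rates should close the induction over the range $k\le m$.

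Combining the two cases, the required identity fails for all admissible parameters. Hence $\mathcal N_{[k]\times[m]\times[n]}(x)$ cannot be palindromic.
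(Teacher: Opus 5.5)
Your proposal is the paper's proof in lightly rearranged form. Comparing $2T_{k,m,s}$ with $|P|-d$ is the same as comparing $[x^{d-1}]\mathcal N_P(x)=d+2T_{k,m,s}$ with $[x]\mathcal N_P(x)=kmn$. Your $s\ge1$ estimate is correct: it uses $m+k-2-2s\ge m-1$ (which is just $n\ge m$) and $(k-s)(k-s-1)\le km$. Your disjointness check for families (i)--(iii) when $s=0$ is a correct detail that the paper leaves implicit.

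The one step you have not actually proved is the $s=0$ inequality $2\binom{k+m}{k}-4\ge km(k+m-2)$. Saying the right side ``grows only polynomially'' is not an argument. When $k$ is close to $m$, the binomial ratio $\frac{k+m+1}{k+1}$ is only about $2$, so the ratios on the two sides must be compared explicitly. The induction does close. The step from $k$ to $k+1$ (with $k+1\le m$) needs $k(k+m+1)(k+m-2)\ge(k+1)^2(k+m-1)$. Put $t=k+m\ge2k+1$. The difference $kt^2-(k^2+3k+1)t+k^2+1$ is increasing in $t$ on this range and equals $2k(k-2)(k+1)>0$ at $t=2k+1$. The constant $-4$ is harmless because $\frac{k+m+1}{k+1}\ge1$. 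Your base case $k=3$ holds for all $m\ge3$, with equality at $m=3$ as you noted.

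The paper avoids the induction entirely. Since $3\le k\le (k+m)/2$, unimodality gives $\binom{k+m}{k}\ge\binom{k+m}{3}$. Then $(k+m)(k+m-1)>3km$ immediately yields $2\binom{k+m}{k}>km(k+m-2)$. The $-4$ is absorbed by $(k-1)(k-2)+(m-1)(m-2)\ge4$. This is shorter than your route and needs no case-by-case ratio check.
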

\begin{proof}
By Lemma~\ref{lem:tight-set-classification-zero}, the upper order ideals of $[k]\times[m]$ are in bijection with lattice paths consisting of $k$ horizontal steps and $m$ vertical steps. 
Hence there are $\binom{k+m}{k}$ such ideals.
Excluding the empty ideal and $\{(k,m)\}$, and then counting the consecutive segments in the last two families, we obtain
\begin{equation}
	\label{eq:three-chain-tight-zero}
	T_{k,m,0}
	=
	\binom{k+m}{k}-2
	+\binom{k-1}{2}
	+\binom{m-1}{2}.
\end{equation}

We now compare the linear and the next-to-leading coefficient of $\mathcal N_P(x)$.
Recall that $P=[k]\times[m]\times[n]$ and that $d=|P_r|=km-s(s+1)$.
Since every one-element subset of $P$ is an antichain, by \eqref{eq:three-chain-monicity-parameters}, we have
\begin{equation}
	\label{eq:three-chain-linear-coefficient}
	\bigl[x\bigr]\mathcal N_P(x)
	=
	|P|
	=
	kmn
	=
	km(m+k-1-2s).
\end{equation}
We next determine the next-to-leading coefficient.
The poset $P=[k]\times[m]\times[n]$ is self-dual under the order-reversing involution $(i,j,\ell)\mapsto(k+1-i,m+1-j,n+1-\ell)$,
which fixes the maximum antichain $L=P_r$ and interchanges the ranks below and above $P_r$. 
Since the tight subsets of $M_{k,m,s}$ enumerate the defect-one antichains below $P_r$, Corollary~\ref{cor:defect-one-coefficient} and Theorem~\ref{thm:self-dual-defect} give
\begin{equation}
	\label{eq:three-chain-second-leading-coefficient}
	\bigl[x^{d-1}\bigr]\mathcal N_P(x)=d+2T_{k,m,s}.
\end{equation}
If $s\ge1$, substituting~\eqref{eq:three-chain-tight-positive} into~\eqref{eq:three-chain-second-leading-coefficient} gives
\[
\begin{aligned}
	\bigl[x^{d-1}\bigr]\mathcal N_P(x)
	&=
	km-s(s+1)
	+2\left(
	1+\binom{s+1}{2}
	+\binom{k-s}{2}
	+\binom{m-s}{2}
	\right)\\
	&=
	km+(k-s)(k-s-1)+(m-s)(m-s-1)+2\\
	&\le km+(k-1)(k-2)+(m-1)(m-2)+2.
\end{aligned}
\]
Because $n\ge m$, \eqref{eq:three-chain-linear-coefficient} gives $\bigl[x\bigr]\mathcal N_P(x)=kmn\ge km^2$.
Therefore, using $3\le k\le m$, we obtain
\[
\begin{aligned}
	\bigl[x\bigr]\mathcal N_P(x)
	-
	\bigl[x^{d-1}\bigr]\mathcal N_P(x)
	&\ge
	km(m-1)
	-(k-1)(k-2)
	-(m-1)(m-2)-2\\
	&\ge
	3m(m-1)-2(m-1)(m-2)-2\\
	&=
	m^2+3m-6>0.
\end{aligned}
\]

If $s=0$, then $d=km$ and~\eqref{eq:three-chain-linear-coefficient} gives $\bigl[x\bigr]\mathcal N_P(x)=km(k+m-1)$.
Substituting~\eqref{eq:three-chain-tight-zero} into~\eqref{eq:three-chain-second-leading-coefficient}, we obtain
\[
\begin{aligned}
	\bigl[x^{km-1}\bigr]\mathcal N_P(x)
	&=
	km
	+2\left(
	\binom{k+m}{k}-2
	+\binom{k-1}{2}
	+\binom{m-1}{2}
	\right)\\
	&=
	km+2\binom{k+m}{k}-4
	+(k-1)(k-2)+(m-1)(m-2).
\end{aligned}
\]
Since $k,m\ge3$, the unimodality of the binomial coefficients gives $\binom{k+m}{k}\ge\binom{k+m}{3}$.
Moreover,
\[
(k+m)(k+m-1)-3km
=
(m-k)^2+(k-1)(m-1)-1>0.
\]
Thus $(k+m)(k+m-1)> 3km$.
Multiplying this inequality by $(k+m-2)/3$, we obtain
\[
2\binom{k+m}{k}
\ge
2\binom{k+m}{3}
=
\frac{(k+m)(k+m-1)(k+m-2)}{3}
>
km(k+m-2).
\]
Since $(k-1)(k-2)+(m-1)(m-2)\ge4$, we conclude that
\[
\bigl[x^{km-1}\bigr]\mathcal N_P(x)
>
km+km(k+m-2)
=
km(k+m-1)
=
\bigl[x\bigr]\mathcal N_P(x).
\]
Thus, in both cases, the coefficient of $x$ differs from the second-highest coefficient. This prove theorem.	
\end{proof}

\subsection{Explicit antichain polynomial for \texorpdfstring{$k\le2$}{k at most 2}}
\label{subsec:k-at-most-two}
We now turn to the cases $k=1$ and $k=2$. 
We begin with $k=1$, whose enumeration will also be used for $k=2$.
Let $A$ be an $r$-element antichain in $[m]\times[n]$.
Since two elements with the same first coordinate are comparable, their first coordinates are distinct. 
We write
\[
A=\{(i_1,j_1),\ldots,(i_r,j_r)\},
\qquad
i_1<\cdots<i_r.
\]
The antichain condition forces $j_1>\cdots>j_r$.
Conversely, every set of this form is an antichain.
Thus the number of $r$-element antichains is $\binom{m}{r}\binom{n}{r}$.
Consequently,
\begin{equation}\label{eq:N1mnx}
\mathcal N_{[1]\times[m]\times[n]}(x)
=
\mathcal N_{[m]\times[n]}(x)
=
\sum_{r=0}^{m}\binom{m}{r}\binom{n}{r}x^r.
\end{equation}
We next consider $[2]\times[m]\times[n]$.
For an antichain
$A\subseteq[2]\times[m]\times[n]$, write
\[
A_1=A\cap\bigl(\{1\}\times[m]\times[n]\bigr),
\qquad
A_2=A\cap\bigl(\{2\}\times[m]\times[n]\bigr).
\]
Define
\begin{equation}
\label{eq:layer-refined-antichain-polynomial}
\mathcal N_{m,n}(u,v)
:=
\sum_{A\in\Ant([2]\times[m]\times[n])}
u^{|A_1|}v^{|A_2|}.
\end{equation}
Thus $\bigl[u^pv^q\bigr]\mathcal N_{m,n}(u,v)$ is the number of antichains having $p$ elements in the first layer and $q$ elements in the second layer.
Moreover, $\mathcal N_{m,n}(x,x)=\mathcal N_{[2]\times[m]\times[n]}(x)$.

The following theorem gives an explicit formula for the number of antichains with prescribed layer sizes. We state it first to motivate the auxiliary lemmas needed for its proof.
\begin{thm}
\label{thm:fixed-layer-enumeration}
For all nonnegative integers $p$ and $q$,
\begin{equation}
\label{eq:fixed-layer-enumeration}
\begin{aligned}
\bigl[u^pv^q\bigr]\mathcal N_{m,n}(u,v)
&=
\binom{m}{p}\binom{n}{p}
\binom{m}{q}\binom{n}{q}\\
&\quad-
\binom{m+1}{p+1}\binom{n-1}{p-1}
\binom{m-1}{q-1}\binom{n+1}{q+1}.
\end{aligned}
\end{equation}
Here $\binom ab=0$ whenever $b<0$ or $b>a$.
\end{thm}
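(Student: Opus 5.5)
We view an antichain $A$ of $[2]\times[m]\times[n]$ as a pair $(A_1,A_2)$ of antichains of $[m]\times[n]$. The pair must satisfy the cross-condition that no element of $A_1$ lies below an element of $A_2$; the reverse comparisons are automatically excluded by the layer order. Writing $A_1=\{(i_1,j_1),\dots,(i_p,j_p)\}$ and $A_2=\{(i'_1,j'_1),\dots,(i'_q,j'_q)\}$ as in~\eqref{eq:N1mnx}, each antichain is encoded by a lattice path in the $m\times n$ grid: its order ideal $I_1=\downarrow A_1$, respectively $I_2=\downarrow A_2$, has boundary path from $(0,n)$ to $(m,0)$ whose corners (peaks) are exactly the elements of the antichain. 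The cross-condition says $(x,y)\in A_1$ implies $(x,y)\not\le$ any element of $A_2$, i.e.\ $A_1\cap I_2=\varnothing$. Equivalently, the peaks of path $1$ lie strictly outside the ideal bounded by path $2$.

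This is equivalent to the path of $I_2$ lying weakly inside $I_1$, with the extra requirement that no peak of path $1$ touches the region of $I_2$. So the condition is $I_2\subseteq I_1$ with every maximal element of $I_1$ outside $I_2$. Without the cross-condition, the number of pairs is simply $\binom mp\binom np\binom mq\binom nq$, the first term of~\eqref{eq:fixed-layer-enumeration}. The plan is a Lindstr\"om--Gessel--Viennot style reflection argument. We count the \emph{bad} pairs, those with some maximal element of $I_1$ lying in $I_2$, and show they are equinumerous with pairs of paths with $p$ and $q$ peaks in shifted grids, namely path $1$ in an $(m+1)\times(n-1)$ box and path $2$ in an $(m-1)\times(n+1)$ box.

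Concretely, shift path $2$ by the vector $(-1,+1)$, or alternatively realise the peak-counting by Krattenthaler's formula for paths with prescribed numbers of turns. A pair is bad exactly when the shifted path $2$ meets path $1$ in a way that can be located canonically, for instance at the first point where a peak of path $1$ lies in $I_2$. At that first touching point we swap the tails of the two paths. This changes the endpoints to those of the shifted boxes, and we must check that the swap preserves the number of peaks of each path (the operation has to be set up carefully at the touching corner so that one peak is traded for another of the same type).

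Under the swap, the image pairs are counted by $\binom{m+1}{p+1}\binom{n-1}{p-1}\binom{m-1}{q-1}\binom{n+1}{q+1}$. These are the counts of paths with prescribed turns: the number of paths in an $a\times b$ box with $t$ peaks, with endpoint conventions adjusted, is $\binom{a}{t}\binom{b}{t}$, and the shifted box sizes together with the $\pm1$ changes in the peak indices produce exactly these binomials. The main obstacle is designing the tail swap so that it is a genuine involution, i.e.\ a bijection between bad pairs and all pairs of the shifted type, while preserving peak counts. If the peak-preserving switching proves awkward, the fallback is to encode each antichain as a Dyck-type word (the labeled Dyck paths alluded to in the introduction) with labels recording the $i$- and $j$-coordinates, and to apply the reflection principle to the combined word, where peak counts become counts of marked steps that reflection does preserve.
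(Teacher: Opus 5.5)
Your complementary set-up is correct: there are $\binom mp\binom np\binom mq\binom nq$ pairs $(A_1,A_2)$, and a pair fails to give an antichain exactly when $A_1\cap I_2\neq\varnothing$. But the whole content of the theorem is the bijection for the bad pairs. You do not construct it, and the mechanism you sketch cannot work as described.

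First, the cross-condition is not a non-intersection condition on the two boundary paths. It constrains only the corners of path~1, and the two paths may cross. Your equivalence with ``$I_2\subseteq I_1$ and every maximal element of $I_1$ lies outside $I_2$'' is false. For $m=n=2$, $A_1=\{(2,1)\}$ and $A_2=\{(1,2)\}$ give the antichain $\{(1,2,1),(2,1,2)\}$, yet $I_2=\{(1,1),(1,2)\}\not\subseteq I_1=\{(1,1),(2,1)\}$. In this good example, path~2 shifted by $(-1,+1)$ runs $(-1,3)\to(0,3)\to(0,2)\to(0,1)\to(1,1)$. It shares the vertices $(0,2),(0,1),(1,1)$ with path~1, so ``bad iff the shifted path~2 meets path~1'' already fails. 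Also, with paths from $(0,n)$ to $(m,0)$, that shift makes a tail swap produce boxes $(m-1)\times(n-1)$ and $(m+1)\times(n+1)$, not the ones you name.

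Second, and more fundamentally, the subtracted term is not a count of pairs of paths with $p$ and $q$ peaks. Its path-1 factor $\binom{m+1}{p+1}\binom{n-1}{p-1}$ has lower indices differing by $2$. For a single path, the numbers of horizontal and vertical runs differ by at most one, so no peak or turn count in a box, with any endpoint convention, produces this factor. Concretely, for $m=n=p=q=1$ the only pair $(\{(1,1)\},\{(1,1)\})$ is bad. No path in a $2\times 0$ box has a peak, so a peak-preserving swap would give coefficient $1$ instead of $0$.

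The opposite shifts ($p\mapsto p+1$ in the $m$-data, $p\mapsto p-1$ in the $n$-data) signal that the first and second coordinates must be treated as two separate interleavings. That is the paper's route. With $z=n+1-y$, the antichain condition becomes $R_j\le S_j$ for all $j$. Equivalently, the $D/U$ merge word $\alpha$ of the first coordinates stays weakly above the merge word $\beta$ of the $z$-coordinates. The paper then:
glues $\alpha$ and $\beta$ into a Dyck path in $\mathcal D_{p,q}$;
counts labelings by $\binom{m+\ell_D(P)}{p+q}\binom{n+\ell_U(P)}{p+q}$;
enumerates $\mathcal D_{p,q}$ by $(\ell_D,\ell_U)$ with the cycle lemma;
and sums using two Vandermonde-type identities, with no involution.

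Your fallback gestures at a reflection on $(\alpha,\beta)$. Swapping tails at the first contact of $\alpha$ with $\beta$ lowered by $2$ does give words with $(p+1,q-1)$ and $(p-1,q+1)$ letters, matching the shape of the subtracted term. However, it is not weight-preserving as it stands. The number of admissible labelings of a fixed pair of words depends on the $DU$-factors of $\alpha$ and the $UD$-factors of $\beta$, and the swap moves each tail under the other tie rule. Transporting the labels so that the images are counted by $\binom{m+1}{p+1}\binom{m-1}{q-1}\binom{n-1}{p-1}\binom{n+1}{q+1}$ is exactly the missing step.
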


In order to prove Theorem~\ref{thm:fixed-layer-enumeration}, we establish a bijection between antichains and labeled Dyck paths. 
We first explain how a Dyck path determines two auxiliary $D/U$-words and how the two label sequences are assigned to these words. 
Let $U=(1,1)$ and $D=(1,-1)$. 
A Dyck path is a path starting at $(0,0)$, ending on the $x$-axis, and never going below it.
A peak is an occurrence of $UD$, and a valley is an occurrence of $DU$.

For integers $p\ge1$ and $q\ge0$, let $\mathcal D_{p,q}$ be
the set of Dyck paths of length $2p+2q+2$ with exactly $q+1$ peaks.
In particular, $\mathcal D_{p,0}$ consists of the unique path
$U^{p+1}D^{p+1}$.
Each $P\in\mathcal D_{p,q}$ has a unique decomposition $P=U^{b_0+1}D^{a_0+1}\cdots U^{b_q+1}D^{a_q+1}$, where 
\begin{equation}\label{eq:aibi}
\sum_{i=0}^{q}a_i=\sum_{i=0}^{q}b_i=p
\end{equation}
and all $a_i,b_i$ are nonnegative.

An $(m,n)$-labeling of Dyck path $P$ consists of two weakly increasing sequences
\[
1\le\xi_1\le\cdots\le\xi_{p+q}\le m,
\qquad
1\le\eta_1\le\cdots\le\eta_{p+q}\le n.
\]
Place $\xi_1,\ldots,\xi_{p+q}$, from left to right, under the letters of $D^{a_0}UD^{a_1}U\cdots UD^{a_q}$.
The labels must increase strictly, except that the two labels under a consecutive pair $DU$ may be equal.
Similarly, place $\eta_1,\ldots,\eta_{p+q}$ under the letters of $D^{b_0}UD^{b_1}U\cdots UD^{b_q}$.

Again the labels must increase strictly, except that the two labels under a consecutive pair $UD$ may be equal.
The following example illustrates how the two label sequences are assigned to a Dyck path.

\begin{exm}
Consider the Dyck path
\[
P
=
U^3D^3U^2DU^2D^3U^2D^2
\in\mathcal D_{5,3},
\]
where $(a_0,a_1,a_2,a_3)=(2,0,2,1)$ and $(b_0,b_1,b_2,b_3)=(2,1,1,1)$.
Take
\[
\xi=(1,2,2,3,4,5,5,7),
\qquad
\eta=(1,2,3,3,4,4,5,5).
\]

The two labelings are displayed as follows:
\[
\begin{array}{c|cccccccc}
&D&D&U&U&D&D&U&D\\ \hline
\xi&1&2&2&3&4&5&5&7
\end{array}
\qquad
\begin{array}{c|cccccccc}
&D&D&U&D&U&D&U&D\\ \hline
\eta&1&2&3&3&4&4&5&5
\end{array}.
\]
In the first array, equal consecutive labels occur only under the pairs $DU$ in positions $2,3$ and $6,7$. 
In the second array, they occur only under the pairs $UD$ in positions $3,4$, $5,6$, and $7,8$. 
Hence $(\xi,\eta)$ is a $(7,6)$-labeling of $P$.
\end{exm}
We next give some lemmas used in proof of Theorem~\ref{thm:fixed-layer-enumeration} latter.
\begin{lem}
\label{lem:labeled-dyck-bijection}
The antichains $A\subseteq[2]\times[m]\times[n]$ satisfying $|A\cap\bigl(\{1\}\times[m]\times[n]\bigr)|=p$ and $|A\cap\bigl(\{2\}\times[m]\times[n]\bigr)|=q$ are in bijection with the $(m,n)$-labeled paths in $\mathcal D_{p,q}$.
\end{lem}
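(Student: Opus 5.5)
The plan is to encode an antichain by two interleaving words, one recording first coordinates and one recording second coordinates, and to show that the antichain condition between layers is exactly the Dyck condition.

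\textbf{Setup.} Identify $A$ with a pair $(A_1,A_2)$ of antichains of $[m]\times[n]$, with $|A_1|=p$ and $|A_2|=q$. Since $(1,x)<(2,y)$ if and only if $x\le y$ in $[m]\times[n]$, the only cross-layer condition is that no point of $A_2$ dominates a point of $A_1$ componentwise. By the analysis preceding~\eqref{eq:N1mnx}, write
\[
A_1=\{(i_1,j_1),\ldots,(i_p,j_p)\}\quad\text{with } i_1<\cdots<i_p \text{ and } j_1>\cdots>j_p,
\]
and
\[
A_2=\{(i'_1,j'_1),\ldots,(i'_q,j'_q)\}\quad\text{with } i'_1<\cdots<i'_q \text{ and } j'_1>\cdots>j'_q.
\]

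\textbf{The two words.}
\begin{itemize}
\item The \emph{$\xi$-word} lists all $p+q$ first coordinates in weakly increasing order, writing $D$ for an element of $A_1$ and $U$ for an element of $A_2$. On ties, a point of $A_1$ is placed before a point of $A_2$.
\item The \emph{$\eta$-word} does the same with second coordinates.
\end{itemize}
Within each layer the coordinates are distinct, so ties can only occur between a $D$ and a $U$. With the tie-break above, the only allowed equalities sit under $DU$ in the $\xi$-word. For the $\eta$-word I will choose the reading direction or tie-break so that equalities sit under $UD$, matching the definition of an $(m,n)$-labeling.

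The $\xi$-word has the form $D^{a_0}UD^{a_1}U\cdots UD^{a_q}$ and the $\eta$-word the form $D^{b_0}U\cdots UD^{b_q}$, with $\sum a_i=\sum b_i=p$ as in~\eqref{eq:aibi}. These determine a path $U^{b_0+1}D^{a_0+1}\cdots U^{b_q+1}D^{a_q+1}$ with $q+1$ peaks, and the sorted coordinates give the labels $\xi$ and $\eta$.

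\textbf{Key step: the cross-layer condition is the Dyck condition.} Fix $y=(i'_t,j'_t)\in A_2$.
\begin{itemize}
\item The points of $A_1$ with first coordinate $\le i'_t$ form an initial segment $\{1,\dots,\alpha\}$ in the $i$-order. Here $\alpha$ is the number of $D$'s preceding the $t$-th $U$ of the $\xi$-word, with the tie convention deciding the boundary case.
\item The points of $A_1$ with second coordinate $\le j'_t$ form a final segment in the $i$-order, since the $j$'s decrease. Its size $\beta$ is read off the $\eta$-word.
\item $y$ dominates some point of $A_1$ exactly when these two segments intersect. So the condition is $\alpha+\beta\le p$, with strictness governed by the tie conventions.
\end{itemize}
Because the $t$-th $U$ of one word corresponds to the $(q+1-t)$-th $U$ of the other, this becomes a family of partial-sum inequalities relating $a_0+\cdots+a_{t-1}$ and the $b$'s.

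I will then verify that these inequalities are exactly the statement that the height of the path after each block $U^{b_\ell+1}D^{a_\ell+1}$ is nonnegative. The extra $+1$'s in the exponents should absorb the strict-versus-weak distinction coming from ties. If the indexing does not line up directly, the fix is to reverse the $\eta$-word, that is, read $A_2$ by decreasing $i$.

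\textbf{Inverse map.} Given a labeled path, recover the $p$ points of $A_1$ by pairing the $D$-labels of $\xi$ in increasing order with the $D$-labels of $\eta$ in decreasing order, and similarly recover $A_2$ from the $U$-labels.
\begin{itemize}
\item The strictness rules guarantee that the coordinates within each layer are distinct, so each $A_\ell$ is an antichain of the right size.
\item The Dyck inequality gives the cross-layer condition by the equivalence above.
\end{itemize}
The two constructions are visibly mutually inverse, which yields the bijection of Lemma~\ref{lem:labeled-dyck-bijection}.

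\textbf{Main obstacle.} I expect the hard part to be the exact index bookkeeping in the key step. This means matching the $t$-th $U$ of the $\xi$-word with the correct $U$ of the $\eta$-word, and showing that allowing equal labels under $DU$ (respectively $UD$) corresponds precisely to the $+1$ shifts in the path decomposition. I would check this first on the worked example $P\in\mathcal D_{5,3}$ in the paper to fix the conventions, and only then write the general argument.
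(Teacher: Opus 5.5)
Your strategy is the paper's. You merge first coordinates into a $D/U$ word with ties $DU$, merge second coordinates into a second word, and note that $(i'_t,j'_t)\in A_2$ dominates a point of $A_1$ if and only if an initial segment of size $\alpha_t$ and a final segment of size $\beta_t$ of $A_1$ meet, i.e.\ $\alpha_t+\beta_t>p$. However, the step that is the real content of the lemma is left open, and the construction you commit to does not work.

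If the $\eta$-word lists second coordinates in increasing order with the same tie-break as the $\xi$-word, two things go wrong. First, equal labels sit under $DU$, which violates the definition of an $(m,n)$-labeling. Second, the $t$-th $U$ of that word is $(i'_{q+1-t},j'_{q+1-t})$, so the Dyck condition for $U^{b_0+1}D^{a_0+1}\cdots$ reads $\alpha_t\le\beta_{q+1-t}$ rather than the antichain condition $\alpha_t\le p-\beta_t$. For example, $A_1=\{(1,2)\}$, $A_2=\{(2,1)\}$ in $[2]\times[2]$ is an antichain with $\alpha_1=1$, $\beta_1=0$, yet your path is $UDDUUD$.

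Your expectation that the $+1$'s in the exponents absorb the strict-versus-weak distinction is also wrong. The height of $P$ at the end of its $t$-th down-run is $\sum_{k<t}\bigl((b_k+1)-(a_k+1)\bigr)=\sum_{k<t}(b_k-a_k)$, so the $+1$'s cancel. Their only role is to make every run nonempty, so that $P$ has exactly $q+1$ peaks and determines $(a_k)$ and $(b_k)$ uniquely. All strictness is carried by the tie-breaks.

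The reversal you mention is the right repair, but it needs complemented labels. (Note that it reads $A_2$ by increasing, not decreasing, first coordinate.)
\begin{itemize}
\item Put $z=n+1-j$, merge the $z$-values, and break ties as $UD$. The labels are then weakly increasing with equalities only under $UD$.
\item The $t$-th $U$ in both words is now the same point $(i'_t,j'_t)$.
\item The number of $D$'s preceding it in the $\eta$-word is $S_t=|\{i:j_i>j'_t\}|=p-\beta_t$.
\item Hence $\alpha_t+\beta_t\le p$ if and only if $\alpha_t\le S_t$, if and only if $\sum_{k<t}(b_k-a_k)\ge0$.
\item Within each down-run the minimum height occurs at its endpoint, and the final height is $0$ because $\sum a_k=\sum b_k=p$. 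So this is exactly the condition $P\in\mathcal D_{p,q}$.
\end{itemize}
This is the paper's $R_j\le S_j$ argument. Your inverse map must change accordingly: pair the $D$-labels of $\xi$ and $\eta$ in the same increasing order and set $j=n+1-z$, instead of pairing $\xi$ increasing with $\eta$ decreasing.
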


\begin{proof}
Let $A$ be an antichain satisfying the conditions of the lemma, and set
\[
A_1
=
A\cap\bigl(\{1\}\times[m]\times[n]\bigr),
\qquad
A_2
=
A\cap\bigl(\{2\}\times[m]\times[n]\bigr).
\]
Then $|A_1|=p$ and $|A_2|=q$.
Each of $A_1$ and $A_2$ is an antichain in a copy of $[m]\times[n]$. 
We write
\[
A_1
=
\{(1,x_i,y_i):1\le i\le p\},
\qquad
x_1<\cdots<x_p,
\qquad
y_1>\cdots>y_p,
\]
\[
A_2
=
\{(2,x'_j,y'_j):1\le j\le q\},
\qquad
x'_1<\cdots<x'_q,
\qquad
y'_1>\cdots>y'_q.
\]
Merge the increasing sequences $x_1<\cdots<x_p$ and $x'_1<\cdots<x'_q$ into a single nondecreasing sequence. Record entries from the first sequence by $D$ and entries from the second sequence by $U$. 
Whenever $x_i=x'_j$, place $x_i$ before $x'_j$, so that the tie is recorded by $DU$.

Let $\alpha$ be the resulting path, and let $\xi_1\le\cdots\le\xi_{p+q}$ be the corresponding merged sequence of first coordinates.
The path $\alpha$ has $p$ down-steps and $q$ up-steps, and hence runs from $(0,0)$ to $(p+q,q-p)$. 
Moreover, $1\le\xi_1\le\cdots\le\xi_{p+q}\le m$.
Since both $(x_i)$ and $(x'_j)$ are strictly increasing, equal consecutive terms in the merged sequence can arise only from a tie $x_i=x'_j$.
Such a tie is recorded by $DU$, and therefore occurs at a valley of $\alpha$.

We treat the second coordinates similarly. 
Set $z_i=n+1-y_i$ for all $1\le i\le p$, and $z'_j=n+1-y'_j$ for all $1\le j\le q$.
The sequences $(z_i)$ and $(z'_j)$ are strictly increasing.
Merge them into a single nondecreasing sequence, recording entries from the first sequence by $D$ and entries from the second sequence by $U$.
Whenever $z_i=z'_j$, place $z'_j$ before $z_i$, so that the tie is recorded by $UD$.

Let $\beta$ be the resulting path, and let $\eta_1\le\cdots\le\eta_{p+q}$ be the merged sequence. 
Then $\beta$ runs from $(0,0)$ to $(p+q,q-p)$, and $1\le\eta_1\le\cdots\le\eta_{p+q}\le n$.
Equal consecutive terms can arise only from a tie $z_i=z'_j$. 
Such a tie is recorded by $UD$, and therefore occurs at a peak of $\beta$.

We next translate the antichain condition into a relation between $\alpha$ and $\beta$.
For $1\le j\le q$, set $R_j:=|\{i:x_i\le x'_j\}|$ and $S_j:=|\{i:z_i<z'_j\}|$.
Since $(x_i)$ is strictly increasing, the condition $x_i\le x'_j$ holds precisely for $1\le i\le R_j$. 
Similarly, the condition $z_i\ge z'_j$ holds precisely for $S_j<i\le p$.
Since $y_i\le y'_j$ is equivalent to $z_i\ge z'_j$, we have $(1,x_i,y_i)\le(2,x'_j,y'_j)$ for some $i$ if and only if $S_j<R_j$.
Thus
\begin{equation}
\label{eq:labeled-path-antichain-condition}
A\text{ is an antichain}
\quad\Longleftrightarrow\quad
R_j\le S_j
\quad\text{for every }1\le j\le q.
\end{equation}

By the two tie-breaking rules, $R_j$ is the number of down-steps preceding the $j$th up-step of $\alpha$, while $S_j$ is the number of down-steps preceding the $j$th up-step of $\beta$.
Therefore $R_j\le S_j$ for every $j$ if and only if $\alpha$ lies weakly above $\beta$.

Write $\alpha=D^{a_0}UD^{a_1}U\cdots UD^{a_q}$ and $\beta=D^{b_0}UD^{b_1}U\cdots UD^{b_q}$.
Associate with $A$ the path
\[
P
=
U^{b_0+1}D^{a_0+1}
\cdots
U^{b_q+1}D^{a_q+1}.
\]
For $0\le k<q$, the height of $P$ at the end of its $(k+1)$st down-run is
\[
\sum_{i=0}^{k}(b_i-a_i)
=
S_{k+1}-R_{k+1}
\ge0.
\]
According to \eqref{eq:aibi}, the height at the end of the final down-run is $0$.
The minimum height within each down-run occurs at its endpoint.
Therefore $P$ never goes below the $x$-axis. 
It has length $2p+2q+2$ and exactly $q+1$ peaks, so $P\in\mathcal D_{p,q}$.

The two words associated with $P\in\mathcal D_{p,q}$ are precisely $\alpha$ and $\beta$.
The preceding observations show that $(\xi,\eta)$ satisfies the labeling conditions. 
Hence the construction sends $A$ to an $(m,n)$-labeled path $(P,\xi,\eta)$.

Conversely, let $P\in\mathcal D_{p,q}$ be equipped with an $(m,n)$-labeling $(\xi,\eta)$. 
Its run decomposition uniquely determines
\[
\alpha
=
D^{a_0}UD^{a_1}U\cdots UD^{a_q},
\qquad
\beta
=
D^{b_0}UD^{b_1}U\cdots UD^{b_q}.
\]
Split the $\xi$-labels according to the steps of $\alpha$.
The labels under the down-steps give $x_1,\ldots,x_p$, and those under the up-steps give $x'_1,\ldots,x'_q$. 
Equality is allowed only at valleys of $\alpha$, where a down-step is followed by an up-step. 
It follows that
\[
x_1<\cdots<x_p,
\qquad
x'_1<\cdots<x'_q.
\]
Similarly, split the $\eta$-labels according to the steps of $\beta$.
The labels under the down-steps give $z_1,\ldots,z_p$, and those under the up-steps give $z'_1,\ldots,z'_q$. 
Since equality is allowed only at peaks of $\beta$, we obtain $z_1<\cdots<z_p$ and $z'_1<\cdots<z'_q$.
Set $y_i=n+1-z_i$ and $y'_j=n+1-z'_j$.
Then
\[
y_1>\cdots>y_p,
\qquad
y'_1>\cdots>y'_q.
\]
These four coordinate sequences determine $A_1$ and $A_2$ uniquely.

Since $P$ is a Dyck path, for every $0\le k<q$, all the partial sums $\sum_{i=0}^{k}(b_i-a_i)$ are nonnegative.
Hence $\alpha$ lies weakly above $\beta$, so $R_j\le S_j$ for every $j$. 
By~\eqref{eq:labeled-path-antichain-condition}, no element of $A_1$ lies below an element of $A_2$.
Each layer is itself an antichain, and hence $A_1\cup A_2$ is an antichain. 
The two constructions are inverse, which proves the claimed bijection.
\end{proof}

The following example illustrates the bijection by explicitly recovering an antichain from a labeled Dyck path.

\begin{exm}
\label{exm:labeled-dyck-path}
For the path $P$ in the preceding example,
\[
P
=
U^3D^3U^2DU^2D^3U^2D^2
\in\mathcal D_{5,3},
\]
take $\xi=(1,2,2,3,4,5,5,7)$ and $\eta=(1,2,3,3,4,4,5,5)$.
The resulting labeled words are
\[
\begin{array}{c|cccccccc}
\alpha_P&D&D&U&U&D&D&U&D\\ \hline
\xi&1&2&2&3&4&5&5&7
\end{array}
\qquad
\begin{array}{c|cccccccc}
\beta_P&D&D&U&D&U&D&U&D\\ \hline
\eta&1&2&3&3&4&4&5&5
\end{array}.
\]

The labels under the down-steps of $\alpha_P$ and $\beta_P$ give $(x_1,\ldots,x_5)=(1,2,4,5,7)$ and $(z_1,\ldots,z_5)=(1,2,3,4,5)$,
while the labels under the up-steps give $(x'_1,x'_2,x'_3)=(2,3,5)$ and $(z'_1,z'_2,z'_3)=(3,4,5)$.
Since $n=6$, we recover the second coordinates from $y_i=7-z_i$ and $y'_j=7-z'_j$.
The corresponding antichain in $[2]\times[7]\times[6]$ is therefore
\[
\begin{aligned}
A_1
&=
\{(1,1,6),(1,2,5),(1,4,4),(1,5,3),(1,7,2)\},\\
A_2
&=
\{(2,2,4),(2,3,3),(2,5,2)\}.
\end{aligned}
\]
\end{exm}

We next count the labelings of a fixed path.
For $P\in\mathcal D_{p,q}$, define
\[
\ell_D(P)
:=
|\{i:0\le i<q,\ a_i>0\}|,
\qquad
\ell_U(P)
:=
|\{i:0< i\le q,\ b_i>0\}|.
\]
Thus $\ell_D(P)$ is the number of nonfinal down-runs of $P$ having length at least $2$, while $\ell_U(P)$ is the number of noninitial up-runs having length at least $2$.

\begin{lem}
\label{lem:labeled-dyck-enumeration}
For $p,q\ge1$,
\[
\bigl[u^pv^q\bigr]\mathcal N_{m,n}(u,v)
=
\sum_{P\in\mathcal D_{p,q}}
\binom{m+\ell_D(P)}{p+q}
\binom{n+\ell_U(P)}{p+q}.
\]
\end{lem}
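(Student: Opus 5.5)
By Lemma~\ref{lem:labeled-dyck-bijection}, $[u^pv^q]\mathcal N_{m,n}(u,v)$ equals the number of $(m,n)$-labeled paths $(P,\xi,\eta)$ with $P\in\mathcal D_{p,q}$. The $\xi$- and $\eta$-conditions are independent of each other once $P$ is fixed. So it suffices to show two counts: the number of admissible $\xi$ for $P$ is $\binom{m+\ell_D(P)}{p+q}$, and the number of admissible $\eta$ is $\binom{n+\ell_U(P)}{p+q}$. Then I sum the product over $P\in\mathcal D_{p,q}$.

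For $\xi$, let $w=\alpha_P=D^{a_0}UD^{a_1}U\cdots UD^{a_q}$, a word of length $p+q$. Let $t$ be the number of positions $i$ with $w_iw_{i+1}=DU$. A weakly increasing labeling with strict increase except possibly at these $t$ positions is counted by the standard substitution. Set $\xi'_i=\xi_i-(i-1)+c_i$, where $c_i$ is the number of allowed-equality positions among the first $i-1$ gaps. This is a bijection onto strictly increasing sequences in $[m+t]$, so the count is $\binom{m+t}{p+q}$. The substitution must be checked to be bijective in both directions, but that is routine: it is the usual multiset-to-set shift applied only to the non-$DU$ gaps.

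The main step is identifying $t$ with $\ell_D(P)$. A $DU$ occurrence in $\alpha_P$ occurs exactly at the junction between $D^{a_i}$ and the following $U$ for $0\le i<q$, and exists exactly when $a_i>0$. Hence $t=|\{0\le i<q: a_i>0\}|=\ell_D(P)$.

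For $\eta$, the word is $\beta_P=D^{b_0}UD^{b_1}\cdots UD^{b_q}$, and equality is allowed only at $UD$ pairs. A $UD$ occurs at the $U$ preceding $D^{b_i}$ for $1\le i\le q$, and exists exactly when $b_i>0$. So the number of such positions is $\ell_U(P)$, and the same substitution gives $\binom{n+\ell_U(P)}{p+q}$.

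The hypothesis $p,q\ge1$ plays no real role beyond $\mathcal D_{p,q}$ being as defined. I expect no serious obstacle; the only care needed is the boundary runs. The final run $D^{a_q}$ is followed by no $U$, and the initial run $D^{b_0}$ is preceded by no $U$. This is exactly why $\ell_D$ excludes $i=q$ and $\ell_U$ excludes $i=0$.
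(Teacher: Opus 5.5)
Your proof follows the paper's route. You identify the $DU$ gaps of $\alpha_P$ with the indices $0\le i<q$ having $a_i>0$, and the $UD$ gaps of $\beta_P$ with the indices $1\le i\le q$ having $b_i>0$. You then count each labeling by a shift bijection, multiply the two counts by independence, and sum via Lemma~\ref{lem:labeled-dyck-bijection}. Your identification of these counts with $\ell_D(P)$ and $\ell_U(P)$, including the boundary runs, is correct.

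The explicit substitution you wrote is wrong, though. Since $(i-1)-c_i$ is the number of non-$DU$ gaps before position $i$, your map $\xi'_i=\xi_i-(i-1)+c_i$ lowers the sequence by $1$ at each gap where it is already strict. So $\xi'$ is only weakly increasing, with $\xi'_{p+q}\le m+t-(p+q)+1$. For example, when $\alpha_P=DU$ your map is the identity and sends $(1,1)$ to itself, which is not strictly increasing. It is therefore not a bijection onto strictly increasing sequences in $[m+t]$, and the phrase ``shift applied only to the non-$DU$ gaps'' is backwards. The shift that creates room belongs at the $DU$ gaps.

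The map you want is the paper's: $\xi'_i=\xi_i+c_i$, that is, add $1$ to every later label at each $DU$ gap. This does give a bijection onto strictly increasing sequences in $[m+t]$. Alternatively, your map as written is a bijection onto weakly increasing sequences of length $p+q$ in $[m+t-p-q+1]$, and there are also $\binom{m+t}{p+q}$ of those. So the count you state is right either way; only the description of the target set needs correcting.
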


\begin{proof}
Fix $P\in\mathcal D_{p,q}$. 
A valley of $\alpha_P$ occurs precisely for an index $0\le i<q$ with $a_i>0$. 
Thus equality between consecutive $\xi$-labels is allowed at exactly $\ell_D(P)$ positions.

If a valley occurs between the $t$-th and $(t+1)$-st steps of $\alpha_P$, add $1$ to each of $\xi_{t+1},\ldots,\xi_{p+q}$.
This gives a bijection between the possible $\xi$-labelings and the strictly increasing sequences of length $p+q$ in $[m+\ell_D(P)]$. 
Hence there are $\binom{m+\ell_D(P)}{p+q}$ possible $\xi$-labelings.

Similarly, $\beta_P$ has exactly $\ell_U(P)$ peaks, so the possible $\eta$-labelings are counted by $\binom{n+\ell_U(P)}{p+q}$.
The two labelings are independent. 
Summing their product over $P\in\mathcal D_{p,q}$ and applying Lemma~\ref{lem:labeled-dyck-bijection} proves the result.
\end{proof}

To enumerate the Dyck paths according to their long up- and down-runs,
we realize them as distinguished cyclic shifts of unrestricted paths.
We use the following form of the cycle lemma
\cite{DvoretzkyMotzkin1947,Raney1960}.

\begin{lem}[Cycle lemma]\label{lem:cycle-lemma}
Let $v=v_1v_2\cdots v_N$ be a cyclic word in two letters $U$ and $D$, with weights $1$ and $-1$, respectively.
Suppose that the total weight of $v$ is $-1$.
Then there exists a unique index $i\in\{0,1,\ldots,N-1\}$ such that
\[
\sum_{j=1}^k \operatorname{wt}(v_{i+j})\ge0
\qquad(1\le k<N),
\]
where the indices are taken modulo $N$.
\end{lem}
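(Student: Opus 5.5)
Existence comes from partial sums and a cyclic rotation. Let $s_k=\sum_{j=1}^k \operatorname{wt}(v_j)$ for $0\le k\le N$, so $s_0=0$ and $s_N=-1$. Extend periodically by $s_{k+N}=s_k-1$; this is consistent with $s_{k+N}=s_k+s_N$. Among $k\in\{0,1,\dots,N-1\}$, let $i$ be the largest index at which $s_k$ attains its minimum value $\mu=\min_{0\le k<N}s_k$.

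We then have to check that starting at $i$ works. For $1\le k<N$ we need
\[
\sum_{j=1}^k\operatorname{wt}(v_{i+j})=s_{i+k}-s_i\ge0 .
\]
There are two cases.
\begin{enumerate}[label=\textup{(\alph*)}]
\item If $i+k<N$, then $s_{i+k}\ge\mu=s_i$ by minimality of $\mu$.
\item If $i+k\ge N$, write $i+k=N+t$ with $0\le t<i$. Then $s_{i+k}=s_t-1\ge\mu-1$. Since $t<i$ and $i$ is the last minimizer, $s_t\ge\mu$; the sharper bound $s_t>\mu$ does not matter here. So this only gives $s_{i+k}-s_i\ge-1$, which is not yet enough.
\end{enumerate}

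Case (b) is the main obstacle, and it is resolved by choosing $i$ as the \emph{first} minimizer rather than the last. Redo the two cases with $i=\min\{k: 0\le k<N,\ s_k=\mu\}$.
\begin{enumerate}[label=\textup{(\alph*$'$)}]
\item If $i+k<N$, we still have $s_{i+k}\ge\mu$.
\item If $i+k=N+t$ with $0\le t<i$, then $s_t>\mu$ strictly, because $t<i$ and $i$ is the first minimizer. As the $s_k$ are integers, $s_t\ge\mu+1$, hence $s_{i+k}=s_t-1\ge\mu$.
\end{enumerate}
In both cases $s_{i+k}-s_i\ge0$, so the rotation starting after position $i$ has all proper partial sums nonnegative.

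Uniqueness is proved by contradiction. Suppose two indices $i\ne i'$ in $\{0,\dots,N-1\}$ both work, and let $c$ be the cyclic distance from $i$ to $i'$, so $0<c<N$. Applying the property at $i$ with $k=c$ gives $s_{i'}-s_i\ge0$, reading indices via the periodic extension, which adds $-1$ for each wrap past $N$. Applying it at $i'$ with $k=N-c$ brings us back to $i$ and gives $s_{i+N}-s_{i'}\ge0$, again with wraps accounted for. Adding the two inequalities, the total weight of one full cycle is $-1$, so $-1\ge0$, a contradiction.

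The delicate point throughout is the strict inequality in case (b$'$), which is exactly what forces $i$ to be the first minimizer. This step, together with the wrap-around bookkeeping in the periodic extension, is where the argument must be checked carefully.
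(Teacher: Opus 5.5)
Your proof is correct and complete. There is nothing in the paper to compare it against: the paper does not prove this lemma and simply cites Dvoretzky--Motzkin and Raney. What you give is a self-contained version of the standard minimum-of-partial-sums argument.

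\begin{itemize}
\item \emph{Existence.} With $s_k$ extended by $s_{k+N}=s_k-1$, the rotation starting after the \emph{first} index attaining $\mu=\min_{0\le k<N}s_k$ works. The only nontrivial case is the wrap-around one: there $t<i$ forces $s_t\ge\mu+1$ by integrality, so $s_{i+k}=s_t-1\ge\mu$.
\item \emph{Uniqueness.} Two admissible starting points would split one full turn into two proper arcs, each of nonnegative weight. This is impossible, since the full turn has weight $-1$.
\end{itemize}

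Compared with the bare citation, your version costs a few lines. In return it makes explicit why the first minimizer, not the last, is the right choice for the normalization used here: total weight $-1$ and all proper prefixes $\ge0$.

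For a final write-up, make two changes:
\begin{itemize}
\item Delete the abandoned last-minimizer attempt; it only obscures the argument.
\item In the uniqueness step, write the two inequalities as $s_{i+c}-s_i\ge0$ and $s_{i+N}-s_{i+c}\ge0$ rather than $s_{i'}-s_i\ge0$. Summing them then visibly gives $s_{i+N}-s_i=-1\ge0$, with no separate bookkeeping of wraps.
\end{itemize}
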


For $p\ge1$, $q\ge0$, and $i,j\ge0$, let $E_{p,q}(i,j)$ denote the number of paths $P\in\mathcal D_{p,q}$ having exactly $i$ up-runs of length at least $2$ and exactly $j$ nonfinal down-runs of length at least $2$.

\begin{lem}
\label{lem:auxiliary-run-enumeration}
For $p\ge1$, $q\ge0$, and $i,j\ge0$,
\[
E_{p,q}(i,j)
=
\frac{1}{j+1}
\binom{q+1}{i}\binom{p-1}{i-1}
\binom{q}{j}\binom{p}{j}.
\]
\end{lem}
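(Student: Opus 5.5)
We plan to prove the lemma with the cycle lemma (Lemma~\ref{lem:cycle-lemma}), applied at the level of blocks rather than single letters. The idea is to append one extra down-step to each path. This turns Dyck paths into cyclic words of total weight $-1$ whose run structure is easy to count freely. Dividing by the number of block rotations then recovers $E_{p,q}(i,j)$, and the factor $\frac1{j+1}$ appears from $\binom{q+1}{j+1}/(q+1)=\binom{q}{j}/(j+1)$.

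\emph{Step 1: from Dyck paths to block sequences.} Take $P=U^{b_0+1}D^{a_0+1}\cdots U^{b_q+1}D^{a_q+1}\in\mathcal D_{p,q}$ and form $PD$. Write it as a sequence of $q+1$ blocks $U^{b_t+1}D^{c_t}$, where $c_t=a_t+1$ for $t<q$ and $c_q=a_q+2$. The constraints are $b_t\ge0$ with $\sum b_t=p$, and $c_t\ge1$ with $\sum c_t=p+q+2$. Conversely, consider any sequence $(b_t,c_t)_{0\le t\le q}$ satisfying these constraints, and read the word $U^{b_0+1}D^{c_0}\cdots U^{b_q+1}D^{c_q}$ cyclically. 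This is a cyclic word of length $N=2p+2q+3$ and weight $-1$. By Lemma~\ref{lem:cycle-lemma} it has a unique starting index with all proper partial sums nonnegative. That index must be the first letter of some up-run: it cannot start with $D$, and the letter preceding it must be a $D$, since otherwise one could shift back by one letter. So it is the start of one of the $q+1$ blocks. Rotating the block sequence to start there yields a word of the form $P'D$ with $P'$ a Dyck path. Its last block has $c\ge2$, because the final letter of the rotated word is the appended $D$ and the preceding letter is also a $D$, namely the end of the Dyck path. Hence $P'\in\mathcal D_{p,q}$.

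\emph{Step 2: the bijection and the statistics.} Because the total weight is $-1$, the $q+1$ block rotations of a sequence are pairwise distinct as sequences. The map $P\mapsto$ (block sequence of $PD$) is injective. Its image, taken over all $P$, meets every rotation class of sequences in exactly one element. Therefore $E_{p,q}(i,j)=\frac{1}{q+1}\cdot\#\{\text{sequences with the corresponding statistics}\}$. We must check that the statistics are rotation invariant and match those of $P$. The number of up-runs of length at least $2$ is $\#\{t:b_t>0\}$, and these runs are the same in $P$ and $PD$. The down-runs of $PD$ with $c_t\ge2$ are the nonfinal down-runs of $P$ of length at least $2$, together with the final run, which always has $c_q\ge2$. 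So "$j$ nonfinal long down-runs in $P$" is equivalent to "exactly $j+1$ indices with $c_t\ge2$", a condition invariant under rotation.

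\emph{Step 3: free counting.} For the up-runs, we choose which $i$ of the $q+1$ values $b_t$ are positive and then compose $p$ into $i$ positive parts. This gives $\binom{q+1}{i}\binom{p-1}{i-1}$ choices, which correctly vanishes when $i=0$ since $p\ge1$. For the down-runs, we choose the $j+1$ indices with $c_t\ge2$ and set $c_t=1$ elsewhere. The excesses $c_t-1\ge1$ at the chosen indices sum to $(p+q+2)-(q+1)=p+1$, giving $\binom{q+1}{j+1}\binom{p}{j}$ choices. Multiplying the two counts, dividing by $q+1$, and using $\binom{q+1}{j+1}/(q+1)=\binom{q}{j}/(j+1)$ gives the stated formula.

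The main obstacle is Step 1: verifying that the unique cycle-lemma rotation point always falls at a block boundary, and that the resulting path really has exactly $q+1$ peaks with the appended $D$ absorbed into a final down-run of length at least $2$. Once this is settled, the remaining steps are routine compositions.
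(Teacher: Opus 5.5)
Your proposal is correct and follows the same cycle-lemma argument as the paper: append one down-step, count the unrestricted run data, and divide by the number of rotations that land on a given Dyck path. The only difference is which rotations you count. The paper counts only rotations that make one of the $j+1$ long down-runs final (its set $\mathcal R_{p,q}(i,j)$, so that $|\mathcal R_{p,q}(i,j)|=(j+1)E_{p,q}(i,j)$), whereas you count all $q+1$ block rotations and recover the factor $\frac{1}{j+1}$ from $\binom{q+1}{j+1}/(q+1)=\binom{q}{j}/(j+1)$.
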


\begin{proof}
Let $\mathcal R_{p,q}(i,j)$ be the set of paths $R=U^{b_0+1}D^{a_0+1}\cdots U^{b_q+1}D^{a_q+1}$,
where $\sum_{h=0}^q a_h=\sum_{h=0}^q b_h=p$, having exactly $i$ positive entries among $b_0,\ldots,b_q$ and exactly $j$ positive entries among $a_0,\ldots,a_{q-1}$. 
It is noted that these paths are allowed to go below the $x$-axis.

There are $\binom{q+1}{i}\binom{p-1}{i-1}$ choices for $(b_0,\ldots,b_q)$. 
Indeed, we first choose the $i$ positions occupied
by the positive entries.
Once these positions are fixed, their values form a composition of $p$ into $i$ positive parts, of which there are $\binom{p-1}{i-1}$.

Similarly, there are $\binom{q}{j}\binom{p}{j}$ choices for $(a_0,\ldots,a_q)$. 
Here we choose the $j$ positive entries among $a_0,\ldots,a_{q-1}$, while $a_q$ is allowed to be zero.
Consequently,
\begin{equation}\label{eq:Rpqij}
|\mathcal R_{p,q}(i,j)|
=
\binom{q+1}{i}\binom{p-1}{i-1}
\binom{q}{j}\binom{p}{j}.
\end{equation}
For $R\in\mathcal R_{p,q}(i,j)$, append one down-step to its final down-run, and denote the resulting word by $\widehat R$. 
This word has total height $-1$ and exactly $j+1$ down-runs of length at least $2$.

By Lemma~\ref{lem:cycle-lemma}, $\widehat R$ has a unique cyclic shift all of whose proper initial segments have nonnegative height.
Since its total height is $-1$, this shift begins with $U$, ends with $D$, and has height $0$ immediately before its last step.

The cyclic cut therefore occurs between a down-run and an up-run, so no run is split. 
Deleting the last step gives a Dyck path. 
Since this Dyck path ends with $D$, the deleted step belonged to a down-run of length at least $2$.
The cyclic shift preserves the run lengths, and only the final down-run is shortened. 
The resulting Dyck path therefore has exactly $i$ up-runs of length at least $2$ and exactly $j$ nonfinal down-runs of length at least $2$. 
Hence it is counted by $E_{p,q}(i,j)$.

Conversely, let $P$ be counted by $E_{p,q}(i,j)$ and append one down-step to its final down-run.
Denote the resulting word by $\widehat P$.
The resulting word has exactly $j+1$ down-runs of length at least $2$. 
Choose one of these runs, cut immediately after it, and read the word cyclically. 
The chosen run becomes the final down-run. 
Deleting its last step then gives an element of $\mathcal R_{p,q}(i,j)$.
Since $P$ is a Dyck path, $\widehat P$ is the unique cyclic shift selected by Lemma~\ref{lem:cycle-lemma}.
Hence applying the preceding construction recovers $P$.

The $j+1$ choices produce distinct paths.
If two different choices produced the same path, then two different cyclic shifts of $\widehat P$ would give the same word. 
Hence $\widehat P=W^d$ for some word $W$ and some integer $d\ge2$. 
The total height of $\widehat P$ would then be divisible by $d$, contradicting the fact that it is $-1$.
Therefore we obtain $|\mathcal R_{p,q}(i,j)|=(j+1)E_{p,q}(i,j)$.
Combining this identity with \eqref{eq:Rpqij} proves the result.
\end{proof}

We are now ready to complete the proof of Theorem~\ref{thm:fixed-layer-enumeration}.

\begin{proof}[Proof of Theorem~\ref{thm:fixed-layer-enumeration}]
Assume first that $p,q\ge1$.
Fix $r,s\ge0$, and consider a path
\[
P
=
U^{b_0+1}D^{a_0+1}\cdots
U^{b_q+1}D^{a_q+1}
\in\mathcal D_{p,q}
\]
satisfying $\ell_D(P)=r$ and $\ell_U(P)=s$.

Suppose first that $b_0=0$. 
Since the initial up-run has length $1$ and $P$ is a Dyck path, we must also have $a_0=0$. 
Thus $P$ begins with the peak $UD$. 
Deleting this initial peak gives a path in $\mathcal D_{p,q-1}$ having exactly $s$ up-runs of length at least $2$ and exactly $r$ nonfinal down-runs of length at least $2$.
Consequently, the number of paths in this case is $E_{p,q-1}(s,r)$.

Now suppose that $b_0>0$. 
The initial up-run then has length at least $2$, so $P$ has altogether $s+1$ up-runs of length at least $2$.
There are $E_{p,q}(s+1,r)$ paths with these run statistics. 
Among them, those with $b_0=0$ are counted by $E_{p,q-1}(s+1,r)$ after deletion of their initial peak.
Hence the number of paths in the second case is $E_{p,q}(s+1,r)-E_{p,q-1}(s+1,r)$.

It follows that
\[
\bigl|\{P\in\mathcal D_{p,q}:
\ell_D(P)=r,\ \ell_U(P)=s\}\bigr|
=
E_{p,q-1}(s,r)
+
E_{p,q}(s+1,r)
-
E_{p,q-1}(s+1,r).
\]
Substituting the formula in Lemma~\ref{lem:auxiliary-run-enumeration} and simplifying with Pascal's identity and the standard relations between adjacent binomial coefficients, we obtain
\begin{equation}
\label{eq:dyck-run-distribution}
\begin{aligned}
&\bigl|\{P\in\mathcal D_{p,q}:
\ell_D(P)=r,\ \ell_U(P)=s\}\bigr|\\
=&
\binom{p}{r}\binom{q}{r}
\binom{p}{s}\binom{q}{s}
-
\binom{q+1}{r+1}\binom{p-1}{r-1}
\binom{q-1}{s-1}\binom{p+1}{s+1}.
\end{aligned}
\end{equation}

By Lemma~\ref{lem:labeled-dyck-enumeration}, grouping the paths according to $\ell_D(P)$ and $\ell_U(P)$ gives
\[
\begin{aligned}
\bigl[u^pv^q\bigr]\mathcal N_{m,n}(u,v)
=
\sum_{r,s\ge0}
&\bigl|\{P\in\mathcal D_{p,q}:
\ell_D(P)=r,\ \ell_U(P)=s\}\bigr|
\binom{m+r}{p+q}\binom{n+s}{p+q}.
\end{aligned}
\]
Substituting~\eqref{eq:dyck-run-distribution} and separating the $r$- and $s$-sums, we obtain
\[
\begin{aligned}
\bigl[u^pv^q\bigr]\mathcal N_{m,n}(u,v)
={}&
\left(
\sum_{r\ge0}
\binom{p}{r}\binom{q}{r}
\binom{m+r}{p+q}
\right)
\left(
\sum_{s\ge0}
\binom{p}{s}\binom{q}{s}
\binom{n+s}{p+q}
\right)\\
&-
\left(
\sum_{r\ge0}
\binom{q+1}{r+1}\binom{p-1}{r-1}
\binom{m+r}{p+q}
\right)
\left(
\sum_{s\ge0}
\binom{p+1}{s+1}\binom{q-1}{s-1}
\binom{n+s}{p+q}
\right).
\end{aligned}
\]

We use the following two Vandermonde-type identities:
\[
\begin{aligned}
\sum_{h\ge0}
\binom{a}{h}\binom{b}{h}
\binom{M+h}{a+b}
&=
\binom{M}{a}\binom{M}{b},\\
\sum_{h\ge0}
\binom{a+1}{h+1}\binom{b-1}{h-1}
\binom{M+h}{a+b}
&=
\binom{M-1}{a-1}\binom{M+1}{b+1}.
\end{aligned}
\]

Apply the first identity with $(a,b,M)=(p,q,m)$ and $(p,q,n)$, and the second with $(a,b,M)=(q,p,m)$ and $(p,q,n)$. 
This gives~\eqref{eq:fixed-layer-enumeration}.
If $q=0$, the antichain lies entirely in the first layer, giving $\binom{m}{p}\binom{n}{p}$ possibilities. 
Similarly, if $p=0$, there are $\binom{m}{q}\binom{n}{q}$ possibilities. 
Thus the formula also holds in these cases.
\end{proof}

Equation~\eqref{eq:fixed-layer-enumeration} admits the following factorization.
Set
\begin{equation}
\label{eq:Qmn-definition}
Q_{m,n}(x)
:=
\sum_{r\ge0}
\frac{1}{r+1}\binom{m}{r}\binom{n}{r}x^r,
\end{equation}
and define
\begin{equation}
\label{eq:AMmn-definitions}
M_{m,n}(x):=xQ_{m,n}'(x),
\qquad
A_{m,n}(x):=Q_{m,n}(x)+xQ_{m,n}'(x).
\end{equation}

\begin{cor}
\label{cor:layer-refined-factorization}

For all positive integers $m$ and $n$,
\begin{equation}
\label{eq:layer-refined-factorization}
\mathcal N_{m,n}(u,v)
=
A_{m,n}(u)A_{m,n}(v)
-
\frac{(m+1)(n+1)}{mn}
M_{m,n}(u)M_{m,n}(v).
\end{equation}
\end{cor}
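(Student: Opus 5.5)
Since both sides of \eqref{eq:layer-refined-factorization} are polynomials in $u$ and $v$, I would compare the coefficients of $u^pv^q$ for every pair $p,q\ge0$, using Theorem~\ref{thm:fixed-layer-enumeration} for the left-hand side. The right-hand side is a difference of two products of one-variable polynomials. It is therefore enough to show that the coefficients of $A_{m,n}$ produce the first product of binomials in \eqref{eq:fixed-layer-enumeration}, and that the coefficients of $M_{m,n}$, after rescaling, produce the subtracted product.

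For $A_{m,n}$, note that $A_{m,n}(x)=\frac{d}{dx}\bigl(xQ_{m,n}(x)\bigr)$. Termwise this gives
\[
[x^r]A_{m,n}(x)=(r+1)\cdot\frac{1}{r+1}\binom mr\binom nr=\binom mr\binom nr .
\]
Hence $[u^pv^q]A_{m,n}(u)A_{m,n}(v)=\binom mp\binom np\binom mq\binom nq$, which is exactly the first term of \eqref{eq:fixed-layer-enumeration}.

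For $M_{m,n}$, we have $[x^r]M_{m,n}(x)=\frac{r}{r+1}\binom mr\binom nr$. I would rewrite this in two ways using the absorption identities $r\binom nr=n\binom{n-1}{r-1}$ and $\frac{1}{r+1}\binom mr=\frac{1}{m+1}\binom{m+1}{r+1}$. The first way is
\[
[x^r]M_{m,n}(x)=\frac{n}{m+1}\binom{m+1}{r+1}\binom{n-1}{r-1}.
\]
Applying the same identities with the roles of $m$ and $n$ swapped gives the second way,
\[
[x^r]M_{m,n}(x)=\frac{m}{n+1}\binom{m-1}{r-1}\binom{n+1}{r+1}.
\]
Using the first form for the $u^p$ coefficient and the second form for the $v^q$ coefficient gives
\[
[u^pv^q]M_{m,n}(u)M_{m,n}(v)=\frac{mn}{(m+1)(n+1)}\binom{m+1}{p+1}\binom{n-1}{p-1}\binom{m-1}{q-1}\binom{n+1}{q+1}.
\]
Multiplying by $\frac{(m+1)(n+1)}{mn}$ reproduces the subtracted term of \eqref{eq:fixed-layer-enumeration} exactly.

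The only thing left to check is the boundary cases $p=0$ or $q=0$. There the factor $r$ in $M_{m,n}$ vanishes, which matches the vanishing of $\binom{n-1}{p-1}$ or $\binom{m-1}{q-1}$. The convention $\binom ab=0$ for $b<0$ keeps both sides consistent. There is no real obstacle: the only point needing care is to choose the two different absorption forms for the $u$-factor and the $v$-factor, so that the product matches the asymmetric second term.
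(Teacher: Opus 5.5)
Your proposal is correct and follows the paper's proof: you extract the coefficients of $A_{m,n}$ and $M_{m,n}$, match them against Theorem~\ref{thm:fixed-layer-enumeration}, and factor the double sum. The only difference is that you spell out the binomial rearrangement through the two absorption identities, which the paper performs without comment in the middle line of its displayed computation.
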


\begin{proof}
From~\eqref{eq:Qmn-definition} and~\eqref{eq:AMmn-definitions},
\begin{equation}
\label{eq:AMmn-coefficients}
\begin{aligned}
A_{m,n}(x)
&=
\sum_{r\ge0}\binom{m}{r}\binom{n}{r}x^r,\\
M_{m,n}(x)
&=
\sum_{r\ge0}
\frac{r}{r+1}
\binom{m}{r}\binom{n}{r}x^r.
\end{aligned}
\end{equation}

By Theorem~\ref{thm:fixed-layer-enumeration}, we obtain
\[
\begin{aligned}
\mathcal N_{m,n}(u,v)
&=
\sum_{p,q\ge0}
\Biggl[
\binom{m}{p}\binom{n}{p}
\binom{m}{q}\binom{n}{q}-
\binom{m+1}{p+1}\binom{n-1}{p-1}
\binom{m-1}{q-1}\binom{n+1}{q+1}
\Biggr]u^pv^q\\
&=
\left(
\sum_{p\ge0}
\binom{m}{p}\binom{n}{p}u^p
\right)
\left(
\sum_{q\ge0}
\binom{m}{q}\binom{n}{q}v^q
\right)\\
&\quad-
\frac{(m+1)(n+1)}{mn}
\left(
\sum_{p\ge0}
\frac{p}{p+1}
\binom{m}{p}\binom{n}{p}u^p
\right)
\left(
\sum_{q\ge0}
\frac{q}{q+1}
\binom{m}{q}\binom{n}{q}v^q
\right)\\
&=
A_{m,n}(u)A_{m,n}(v)
-
\frac{(m+1)(n+1)}{mn}
M_{m,n}(u)M_{m,n}(v),
\end{aligned}
\]
which is~\eqref{eq:layer-refined-factorization}.
\end{proof}

\subsection{Application to Zhang-Zhang polynomials}
\label{subsec:ZZ-polynomials}
We now apply Corollary~\ref{cor:layer-refined-factorization} to prove the determinantal formula in Theorem~\ref{thm:HLW-determinant}.

A \defi{benzenoid graph} is a finite $2$-connected plane graph formed by a simply connected union of cells in the hexagonal lattice.
A \defi{Clar cover} of a benzenoid graph $G$ is a spanning subgraph whose connected components are either single edges or hexagonal faces.
Let $z(G,r)$ be the number of Clar covers having exactly $r$ hexagonal components. 
Following Zhang and Zhang~\cite{ZhangZhang1996}, the \defi{Zhang-Zhang polynomial}, or the \defi{ZZ polynomial}, of $G$ is
\[
ZZ(G,x)=\sum_{r\ge0}z(G,r)x^r.
\]
For positive integers $\ell,m,n$, let $O(\ell,m,n)$ denote the hexagonal flake whose successive side lengths are $\ell,m,n,\ell,m,n$.
For example, $O(1,2,3)$ is the $2\times3$ parallelogram benzenoid,
while $O(2,2,2)$ is coronene; see Figure~\ref{fig:hexagonal-flakes}.

\begin{figure}[htbp]

\centering

\begin{tikzpicture}[
line join=round,
line cap=round,
scale=1.25
]

\newcommand{\hexcell}[2]{%
	\path[draw,fill=gray!8]
	(#1,{#2+.40}) --
	({#1+.346},{#2+.20}) --
	({#1+.346},{#2-.20}) --
	(#1,{#2-.40}) --
	({#1-.346},{#2-.20}) --
	({#1-.346},{#2+.20}) -- cycle;
}

% O(1,2,3): a 2 by 3 parallelogram benzenoid
\begin{scope}[xshift=-4cm]
	
	% The two rows are centered about y=0.
	\foreach \x/\y in {
		0/-.30,
		.692/-.30,
		1.384/-.30,
		.346/.30,
		1.038/.30,
		1.730/.30
	}{
		\hexcell{\x}{\y}
	}
	
	\node at (.865,-1.50) {$O(1,2,3)$};
	\node[font=\small] at (.865,-1.85)
	{$2\times3$ parallelogram benzenoid};
	
\end{scope}

% O(2,2,2): coronene
\begin{scope}[xshift=3cm]
	
	% Central hexagon
	\path[draw,fill=gray!20]
	(0,.40) --
	(.346,.20) --
	(.346,-.20) --
	(0,-.40) --
	(-.346,-.20) --
	(-.346,.20) -- cycle;
	
	% Six surrounding hexagons
	\foreach \x/\y in {
		.692/0,
		-.692/0,
		.346/.60,
		-.346/.60,
		.346/-.60,
		-.346/-.60
	}{
		\hexcell{\x}{\y}
	}
	
	\node at (0,-1.50) {$O(2,2,2)$};
	\node[font=\small] at (0,-1.85)
	{coronene};
	
\end{scope}

\end{tikzpicture}

\caption{Two examples of hexagonal flakes. 
The flake $O(1,2,3)$ is a $2\times3$ parallelogram benzenoid, while $O(2,2,2)$ is coronene.}

\label{fig:hexagonal-flakes}
\end{figure}

For the flake $O(\ell,m,n)$, the poset occurring in the construction of Langner and Witek is isomorphic to $[\ell]\times[m]$.
Specializing~\cite[Theorem~24]{LangnerWitek2022} to this flake gives
\[
ZZ(O(\ell,m,n),x)
=
\sum_{A\subseteq[\ell]\times[m]}
\Omega_A^\circ(n)(1+x)^{|A|},
\]
where $\Omega_A^\circ(n)$ denotes the number of strictly order-preserving maps from the induced subposet $A$ to $[n]$.
We now give the bijection relating strictly order-preserving maps to antichains in a product with a chain. 
Set $Q=[\ell]\times[m]$.
Fix a subset $A\subseteq Q$ and a strictly order-preserving map $\varphi:A\to[n]$. 
Define
\[
\mathcal B(A,\varphi)
:=
\bigl\{
(i,j,n+1-\varphi(i,j)):(i,j)\in A
\bigr\}.
\]

We claim that $\mathcal B(A,\varphi)$ is an antichain in $Q\times[n]$. 
Consider two distinct elements $(i,j),(i',j')\in A$.
If they are incomparable in $Q$, then the corresponding elements of $\mathcal B(A,\varphi)$ are also incomparable. 
If $(i,j)<(i',j')$ in $Q$, then the strict order-preservation of $\varphi$ gives $\varphi(i,j)<\varphi(i',j')$.
Consequently, $n+1-\varphi(i,j)>n+1-\varphi(i',j')$.
Thus the first two coordinates increase while the third coordinate decreases, so $\mathcal B(A,\varphi)$ is an antichain.

Conversely, let $\mathcal B$ be an antichain in $Q\times[n]$, and let $A$ be its projection onto $Q$. This projection is injective. 
Because if $(i,j,k)$ and $(i,j,k')$ were two distinct elements of $\mathcal B$ with the same first two coordinates, then either $k<k'$ or $k>k'$, and the two elements would be comparable in $Q\times[n]$.

It follows that, for every $(i,j)\in A$, there is a unique $f(i,j)\in[n]$ such that $(i,j,f(i,j))\in\mathcal B$.
If $(i,j)<(i',j')$ in $A$, then $f(i,j)>f(i',j')$.
Indeed, if $f(i,j)\le f(i',j')$, then $(i,j,f(i,j))\le(i',j',f(i',j'))$, contradicting the assumption that $\mathcal B$ is an antichain.
Therefore the map $\varphi:A\to[n]$ defined by $\varphi(i,j)=n+1-f(i,j)$ is strictly order-preserving.

These two constructions are inverse to each other and preserve cardinality. 
Thus, for each $A\subseteq Q$, the antichains in $Q\times[n]$ whose projection onto $Q$ is $A$ are counted by $\Omega_A^\circ(n)$. 
Consequently,
\[
\mathcal N_{Q\times[n]}(z)
=
\sum_{A\subseteq Q}
\Omega_A^\circ(n)z^{|A|}.
\]
Taking $Q=[\ell]\times[m]$ and $z=x+1$ gives
\[
\sum_{A\subseteq[\ell]\times[m]}
\Omega_A^\circ(n)(1+x)^{|A|}
=
\mathcal N_{[\ell]\times[m]\times[n]}(x+1).
\]
This correspondence preserves cardinality. 
Therefore
\begin{equation}
\label{eq:ZZ-antichain}
ZZ(O(\ell,m,n),x)
=
\mathcal N_{[\ell]\times[m]\times[n]}(x+1).
\end{equation}
We now specialize~\eqref{eq:ZZ-antichain} to $\ell=2$ to prove Theorem~\ref{thm:HLW-determinant}.

\begin{proof}[Proof of Theorem~\ref{thm:HLW-determinant}]
A direct index shift in~\eqref{eq:HLW-entries}, together
with~\eqref{eq:AMmn-coefficients}, gives
\[
J_{m,n}(x)=A_{m,n}(x+1),
\quad
K_{m,n}(x)=\frac{n+1}{m}M_{m,n}(x+1),
\quad
K_{n,m}(x)=\frac{m+1}{n}M_{m,n}(x+1).
\]
Therefore, by~\eqref{eq:ZZ-antichain}
and~\eqref{eq:layer-refined-factorization},
\[
\begin{aligned}
ZZ(O(2,m,n),x)
&=
\mathcal N_{[2]\times[m]\times[n]}(x+1)\\
&=
A_{m,n}(x+1)^2
-
\frac{(m+1)(n+1)}{mn}
M_{m,n}(x+1)^2\\
&=
J_{m,n}(x)^2-K_{m,n}(x)K_{n,m}(x)\\
&=
\det
\begin{pmatrix}
J_{m,n}(x) & K_{m,n}(x)\\
K_{n,m}(x) & J_{m,n}(x)
\end{pmatrix}.
\end{aligned}
\]
\end{proof}

\subsection{Palindromicity for \texorpdfstring{$k\le2$}{k at most 2}}
\label{subsec:small-k-palindromicity}

We now determine the palindromic cases when the shortest chain has cardinality at most two.
Together with Subsection~\ref{subsec:three-chain-palindromicity}, this gives the complete palindromicity classification for products of three chains.

\begin{lem}
\label{lem:small-k-palindromicity}
Let $1\le m\le n$.
\begin{enumerate}[label=\textup{(\roman*)}]
\item
The polynomial $\mathcal N_{[1]\times[m]\times[n]}(x)$ is palindromic if and only if $n=m$.

\item
The polynomial $\mathcal N_{[2]\times[m]\times[n]}(x)$ is palindromic if and only if $n=m+1$.
\end{enumerate}
\end{lem}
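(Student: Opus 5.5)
For part (i) we use the explicit formula \eqref{eq:N1mnx}. Since $m\le n$, the polynomial $\mathcal N_{[1]\times[m]\times[n]}(x)=\sum_{r=0}^m\binom mr\binom nr x^r$ has degree $m$ and constant term $1$. Its leading coefficient is $\binom nm$, so palindromicity forces $\binom nm=1$, that is, $n=m$. Conversely, when $n=m$ the coefficients are $\binom mr^2=\binom m{m-r}^2$, which are palindromic.

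For part (ii), necessity should follow from monicity, since a palindromic antichain polynomial is monic (Section~\ref{sec:defect-one}). I would apply the monicity classification \cite[Lemma~2.3]{DingDong2019} with $k=2$, exactly as in \eqref{eq:three-chain-monicity-parameters}. This gives $n=m+1-2s$ with $0\le s\le\lfloor 1/2\rfloor=0$, hence $n=m+1$. As an independent check that avoids the external lemma, I would read the top coefficients off Theorem~\ref{thm:fixed-layer-enumeration}:
\begin{itemize}
\item if $n\ge m+1$, only $(p,q)=(m,m)$ contributes in total degree $2m$, with coefficient $\binom nm^2-\binom{n-1}{m-1}\binom{n+1}{m+1}$, which equals $1$ exactly when $n=m+1$;
\item if $n=m$, the $(m,m)$ term vanishes, and the degree-$(2m-1)$ coefficient, coming from $(m,m-1)$ and $(m-1,m)$, is larger than $1$.
\end{itemize}

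For sufficiency we set $n=m+1$ and $d=2m$, and aim to prove $x^{2m}\mathcal N(1/x)=\mathcal N(x)$. We start from Corollary~\ref{cor:layer-refined-factorization} specialized to $u=v=x$:
\[
\mathcal N_{[2]\times[m]\times[m+1]}(x)=A(x)^2-\tfrac{m+2}{m}M(x)^2,
\]
where $A=Q+xQ'$ and $M=xQ'$, with $Q=Q_{m,m+1}$. It is convenient to introduce $R(x)=xQ(x)=\frac1{m+1}\sum_{s}\binom m{s-1}\binom{m+1}{s}x^s$. Then $A=R'$ and $M=R'-R/x$, so $\mathcal N$ becomes a quadratic form in $R'$ and $R/x$.

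The key step is a reversal identity relating $R$ and $R'$ to their reciprocals $x^{m+1}R(1/x)$ and $x^{m}R'(1/x)$. The coefficients $\binom{m}{s-1}\binom{m+1}{s}$ are Narayana-like. Under $s\mapsto m+2-s$ they become $\binom m{s-1}\binom{m+1}{s-1}$, which differs from the original only by the factor $s/(m+2-s)$. So the reversal of $R$ should be expressible through the Euler operator $\theta=x\,d/dx$ applied to $R$, in the form $(m+2-\theta)\tilde R=\theta R$. I would derive these first-order relations and substitute them into the quadratic form. The goal is to show that the cross terms combine so that the expression is invariant under $x\mapsto 1/x$ after multiplying by $x^{2m}$.

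If this bookkeeping gets messy, I would instead verify the coefficient identity $\sum_{p+q=j}c_{p,q}=\sum_{p+q=2m-j}c_{p,q}$ directly from \eqref{eq:fixed-layer-enumeration}. Each side is a difference of two Vandermonde-type convolutions, each of which evaluates in closed form via Chu--Vandermonde, and then the two closed forms can be compared. I expect this reciprocity computation to be the main obstacle; both the necessity argument and part (i) are routine.
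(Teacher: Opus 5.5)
Your part (i) and the necessity half of (ii) are correct. Your direct check through the top coefficient $\binom nm^2-\binom{n-1}{m-1}\binom{n+1}{m+1}=\frac1{m+1}\binom nm\binom{n-1}m$, together with the degree-$(2m-1)$ coefficient (which equals $2$) when $n=m$, is exactly the paper's argument. It also makes the appeal to Ding--Dong's monicity lemma unnecessary.

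\textbf{The gap is in sufficiency.} The reciprocity step, which you yourself call the main obstacle, is never carried out, and the concrete input you propose for it is wrong. The coefficient of $x^s$ in $R=xQ_{m,m+1}$ is
\[
q_{s-1}=\frac1s\binom m{s-1}\binom{m+1}{s-1}=\frac1{m+1}\binom{m+1}{s-1}\binom{m+1}{s},
\]
a Narayana number. It is not $\frac1{m+1}\binom m{s-1}\binom{m+1}{s}$: for $m=1$ your formula gives $x+\tfrac12x^2$ instead of $x+x^2$.

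These coefficients are invariant under $s\mapsto m+2-s$, so $x^{m+2}R(x^{-1})=R(x)$. The twisted relation $(m+2-\theta)\tilde R=\theta R$ that you intend to derive is therefore false for the actual $R$; it would force $2xR'=(m+2)R$. Substituting it into the quadratic form cannot produce the invariance.

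The fallback does not rescue this. The sum $\sum_{p+q=j}\binom mp\binom{m+1}p\binom mq\binom{m+1}q$ is a convolution of products of four binomial coefficients; for $j=m$ it is $\sum_p\binom mp^2\binom{m+1}p\binom{m+1}{p+1}$. Chu--Vandermonde does not evaluate such sums. Moreover, neither of the two convolutions is symmetric on its own: their $x^0$ and $x^{2m}$ coefficients are $1,(m+1)^2$ and $0,m(m+2)$. So the symmetry lives entirely in the cancellation between them, which you would still have to prove.

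\textbf{The missing ingredient} is just the palindromicity $x^mQ(x^{-1})=Q(x)$ of $Q=Q_{m,m+1}$. This is visible from $q_r=\frac{m+1}{(r+1)(m+1-r)}\binom mr^2$. Differentiating gives $x^{m-1}Q'(x^{-1})=mQ(x)-xQ'(x)$. With $M=xQ'$ and $A=Q+M$, you get $x^mM(x^{-1})=mQ-M$ and $x^mA(x^{-1})=(m+1)Q-M$. Then
\[
x^{2m}\mathcal N_{[2]\times[m]\times[m+1]}(x^{-1})=\bigl((m+1)Q-M\bigr)^2-\frac{m+2}{m}\bigl(mQ-M\bigr)^2 .
\]
Its coefficients of $Q^2$, $QM$ and $M^2$ are $(m+1)^2-m(m+2)=1$, $-2(m+1)+2(m+2)=2$ and $1-\frac{m+2}{m}$. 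These are exactly the coefficients of $(Q+M)^2-\frac{m+2}{m}M^2$, which completes your quadratic-form route.

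The paper uses the same two reciprocity identities but first diagonalizes the form. It writes $\mathcal N=H_-H_+$ with $H_\pm=Q+(1\pm\lambda)xQ'$ and $\lambda=\sqrt{(m+2)/m}$. It then shows that reversal swaps $H_+$ and $H_-$ up to the constants $1+m(1\pm\lambda)$, whose product is $1$. Your direct version avoids the irrational $\lambda$. The paper's factorization, on the other hand, is the one reused later for real-rootedness.
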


\begin{proof}
For $k=1$, \eqref{eq:N1mnx} shows the constant coefficient of $\mathcal N_{[1]\times[m]\times[n]}(x)$ is $1$, while its leading coefficient is $\binom nm$.
Hence palindromicity forces $n=m$.
Conversely, when $n=m$, it is easy to verify $\mathcal N_{[1]\times[m]\times[m]}(x)$ is palindromic.

We now consider $k=2$. 
By~\eqref{eq:layer-refined-factorization},
\begin{equation}
\label{eq:k-two-leading-coefficient}
\begin{aligned}
\bigl[x^{2m}\bigr]
\mathcal N_{[2]\times[m]\times[n]}(x)
&=
\binom nm^2
-
\frac{(m+1)(n+1)}{mn}
\left(
\frac{m}{m+1}\binom nm
\right)^2\\
&=
\frac{1}{m+1}
\binom nm
\binom{n-1}{m}.
\end{aligned}
\end{equation}

If $n=m$, then~\eqref{eq:k-two-leading-coefficient} shows that the
coefficient of $x^{2m}$ vanishes, while a direct coefficient extraction
from~\eqref{eq:layer-refined-factorization} gives $\bigl[x^{2m-1}\bigr]\mathcal N_{[2]\times[m]\times[m]}(x)=2$.
Hence the polynomial has degree $2m-1$ and leading coefficient $2$.
Since its constant coefficient is $1$, it is not palindromic.

Now suppose that $n>m$. 
Then~\eqref{eq:k-two-leading-coefficient} is positive, so the polynomial has degree $2m$.
Palindromicity therefore requires its right-hand side to equal $1$.
If $n=m+1$, the right-hand side of~\eqref{eq:k-two-leading-coefficient} is
\[
\frac{1}{m+1}
\binom{m+1}{m}\binom mm
=
1.
\]
If $n\ge m+2$, then
\[
\frac{1}{m+1}
\binom nm
\binom{n-1}{m}
\ge
\binom nm
>1.
\]
Thus palindromicity is possible only when $n=m+1$.

It remains to prove that $n=m+1$ is sufficient.
Write $Q_{m,m+1}(x)=\sum_{r=0}^{m}q_rx^r$.
By definition,
\[
q_r
=
\frac{1}{r+1}
\binom mr\binom{m+1}{r}
=
\frac{m+1}{(r+1)(m+1-r)}
\binom mr^2.
\]
Hence
\begin{equation}
\label{eq:Q-reciprocity}
x^mQ_{m,m+1}(x^{-1})=Q_{m,m+1}(x).
\end{equation}

Differentiating~\eqref{eq:Q-reciprocity} and using~\eqref{eq:Q-reciprocity} again to eliminate 
$Q_{m,m+1}(x^{-1})$, we obtain
\begin{equation}
\label{eq:Qprime-reciprocity}
x^{m-1}Q_{m,m+1}'(x^{-1})
=
mQ_{m,m+1}(x)-xQ_{m,m+1}'(x).
\end{equation}
Now put $\lambda=\sqrt{\frac{m+2}{m}}$ and $c_\pm=1\pm\lambda$,
and define
\[
H_\pm(x)=Q_{m,m+1}(x)+c_\pm xQ_{m,m+1}'(x).
\]
By~\eqref{eq:layer-refined-factorization}, we obtain
\[
\mathcal N_{[2]\times[m]\times[m+1]}(x)
=
H_-(x)H_+(x).
\]
Using~\eqref{eq:Q-reciprocity} and~\eqref{eq:Qprime-reciprocity}, we have
\[
\begin{aligned}
x^mH_\pm(x^{-1})
&=
Q_{m,m+1}(x)
+
c_\pm\bigl(mQ_{m,m+1}(x)-xQ_{m,m+1}'(x)\bigr)\\
&=
(1+mc_\pm)Q_{m,m+1}(x)-c_\pm xQ_{m,m+1}'(x).
\end{aligned}
\]
Since $\lambda^2=\frac{m+2}{m}$, a direct calculation gives $-c_+=(1+mc_+)c_-$, $-c_-=(1+mc_-)c_+$,
and $(1+mc_+)(1+mc_-)=1$.
Therefore
\[
x^mH_+(x^{-1})
=
(1+mc_+)H_-(x),
\qquad
x^mH_-(x^{-1})
=
(1+mc_-)H_+(x).
\]
Multiplying these two identities yields
\[
\begin{aligned}
x^{2m}
\mathcal N_{[2]\times[m]\times[m+1]}(x^{-1})
&=
x^{2m}H_-(x^{-1})H_+(x^{-1})\\
&=
(1+mc_-)(1+mc_+)
H_+(x)H_-(x)\\
&=
\mathcal N_{[2]\times[m]\times[m+1]}(x).
\end{aligned}
\]
Thus $\mathcal N_{[2]\times[m]\times[m+1]}(x)$ is palindromic.
\end{proof}

\begin{thm}
\label{thm:three-chain-palindromicity-classification}
Let $1\le k\le m\le n$. 
Then $\mathcal N_{[k]\times[m]\times[n]}(x)$ is palindromic if and only if $(k,m,n)=(1,r,r)$ or $(2,r,r+1)$
for some positive integer $r$.
\end{thm}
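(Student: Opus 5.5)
The classification follows by combining the two results already established, split according to the value of the smallest parameter $k$. Throughout we keep the standing normalization $1\le k\le m\le n$, so the cases $k=1$, $k=2$, and $k\ge3$ are exhaustive.

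\emph{Case $k\ge3$.} Theorem~\ref{thm:kmnpal3} shows that $\mathcal N_{[k]\times[m]\times[n]}(x)$ is never palindromic. No triple with $k\ge3$ appears in the list, so both directions of the equivalence hold trivially here. Note that the monicity reduction used inside that proof, namely \eqref{eq:three-chain-monicity-parameters} from Ding--Dong, was applied under the palindromicity hypothesis. If the polynomial is not even monic, it is not palindromic, so that case is already covered. The proof argues by contradiction, but it concludes non-palindromicity in every situation, and that is all we need.

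\emph{Case $k=1$.} By Lemma~\ref{lem:small-k-palindromicity}(i), palindromicity holds if and only if $n=m$, which gives exactly the triples $(1,r,r)$.

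\emph{Case $k=2$.} By Lemma~\ref{lem:small-k-palindromicity}(ii), with $2\le m\le n$, palindromicity holds if and only if $n=m+1$, which gives $(2,r,r+1)$. We must check that the requirement $k\le m$ is consistent with the listed family. Under the hypothesis $k\le m$, the list $(2,r,r+1)$ implicitly means $r\ge2$. The lemma itself holds for all $m\ge1$, so there is no conflict: the statement's restriction to $1\le k\le m\le n$ simply excludes the case $r=1$.

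Putting the three cases together yields the theorem. There is no genuine obstacle; the only point needing care is the bookkeeping for the boundary parameters just described.
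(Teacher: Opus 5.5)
Your proposal is correct and is essentially the paper's proof: the paper also settles $k\ge3$ by Theorem~\ref{thm:kmnpal3} and the cases $k=1,2$ by Lemma~\ref{lem:small-k-palindromicity}. Your two bookkeeping remarks are also accurate: a non-monic polynomial is not palindromic, so the monicity reduction loses nothing, and $k\le m$ forces $r\ge2$ in the family $(2,r,r+1)$.
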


\begin{proof}
For $k\ge3$, non-palindromicity was proved in Theorem~\ref{thm:kmnpal3}.
The cases $k=1$ and $k=2$ follow from Lemma~\ref{lem:small-k-palindromicity}.
\end{proof}

\subsection{Real-rootedness and stability for \texorpdfstring{$k\le2$}{k at most 2}}
\label{subsec:small-k-real-rootedness}
We first establish a general result on polynomials of the form $Q(x)+cxQ'(x)$.

\begin{thm}
\label{thm:euler-perturbation}
Let $Q$ be a real polynomial of degree $d\ge1$ with distinct negative zeros $\rho_1<\rho_2<\cdots<\rho_d<0$,
and let
\[
G_c(x)=Q(x)+cxQ'(x),
\qquad c\in\mathbb R.
\]
If $c=0$, then $G_c=Q$. 
If $c\ne0$, all zeros of $G_c$ are simple, and their locations are as follows.

\begin{enumerate}[label=\textup{(\roman*)}]
\item
If $c>0$, then $G_c$ has one zero in each of
$(\rho_1,\rho_2),\ldots,
(\rho_{d-1},\rho_d),(\rho_d,0)$.

\item
If $-1/d<c<0$, then $G_c$ has one zero in each of
$(-\infty,\rho_1),
(\rho_1,\rho_2),\ldots,
(\rho_{d-1},\rho_d)$.

\item
If $c=-1/d$, then $\deg G_c=d-1$, and $G_c$ has one zero in each of $(\rho_1,\rho_2),\ldots,(\rho_{d-1},\rho_d)$.

\item
If $c<-1/d$, then $G_c$ has one positive zero and one zero in each of $(\rho_1,\rho_2),\ldots,(\rho_{d-1},\rho_d)$.
\end{enumerate}
\end{thm}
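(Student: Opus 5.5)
The natural tool is the identity $G_c(x)=Q(x)+cxQ'(x)=c\,x^{1-1/c}\frac{d}{dx}\bigl(x^{1/c}Q(x)\bigr)$ for $c\neq0$, valid for $x<0$ after writing $x^{1/c}=|x|^{1/c}$ up to a constant sign. Equivalently, sign changes of $G_c$ can be tracked directly. I plan to count sign changes of $G_c$ on the negative axis by evaluating it at the zeros of $Q$, and then handle the tails $(\rho_d,0)$, $(-\infty,\rho_1)$, and the positive axis by looking at $G_c(0)$ and the leading coefficient. Since $\deg G_c\le d$, once $d$ real zeros are located (or $d-1$ in case (iii)), they are all the zeros, and they are automatically simple.

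\textbf{Interior intervals.} Write $Q(x)=a\prod_i(x-\rho_i)$ with $a\neq0$, and normalize $a>0$. At each zero, $G_c(\rho_i)=c\rho_iQ'(\rho_i)$. Because the zeros are simple, the signs of $Q'(\rho_i)$ alternate in $i$. Since $\rho_i<0$ and $c\neq0$, the values $G_c(\rho_i)$ are nonzero and also alternate in sign. By the intermediate value theorem, $G_c$ then has a zero in each $(\rho_i,\rho_{i+1})$, $1\le i\le d-1$, for every $c\neq0$. This gives $d-1$ zeros common to all four cases.

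\textbf{The remaining zero.} The leading coefficient of $G_c$ is $a(1+cd)$, and $G_c(0)=Q(0)$. The sign of $Q(0)$ is $a(-1)^d\prod(-\rho_i)$ times a positive factor, so it equals $\operatorname{sgn}(a)=+$. In particular $Q(0)>0$ and $Q'(0)>0$, since all coefficients of $Q$ are positive.
\begin{enumerate}[label=\textup{(\roman*)}]
\item For $c>0$: $Q(\rho_d)=0$ and $Q'(\rho_d)>0$, because $Q>0$ to the right of $\rho_d$. Hence $G_c(\rho_d)=c\rho_dQ'(\rho_d)<0<G_c(0)$, which gives a zero in $(\rho_d,0)$.
\item For $-1/d<c<0$: now $G_c(\rho_d)>0$, while $\operatorname{sgn}G_c(\rho_1)=\operatorname{sgn}(-\rho_1Q'(\rho_1))\cdot\operatorname{sgn}(-c)$ with $\operatorname{sgn}Q'(\rho_1)=(-1)^{d-1}$. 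Meanwhile $G_c(x)\sim a(1+cd)x^d$ as $x\to-\infty$, with sign $(-1)^d$. Comparing these signs yields a zero in $(-\infty,\rho_1)$.
\item For $c=-1/d$: the degree drops to $d-1$, since the coefficient of $x^{d-1}$ is nonzero. The $d-1$ interior zeros found above are then all the zeros.
\item For $c<-1/d$: the leading coefficient $a(1+cd)$ is negative while $G_c(0)>0$, so there is a positive zero.
\end{enumerate}
In each case the counts reach $\deg G_c$, so all zeros are simple and located as stated.

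The only delicate part is the sign bookkeeping at $\rho_1$ and $\rho_d$ under the normalization $a>0$. The rest is routine.
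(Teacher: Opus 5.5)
Your argument is correct, and it reaches the result by a different mechanism than the paper.

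\textbf{The paper's route.} The paper writes $G_c=Q\,(1+c\Phi)$ with $\Phi(x)=xQ'(x)/Q(x)=\sum_i x/(x-\rho_i)$. It shows $\Phi$ is strictly increasing on each component of $\mathbb R\setminus\{\rho_1,\ldots,\rho_d\}$ and tabulates its limits: $d$ at $\pm\infty$, infinite at the $\rho_i$, and $0$ at the origin. The zeros of $G_c$ are then the solutions of $\Phi(x)=-1/c$. Monotonicity gives existence and uniqueness on every interval at once, and simplicity follows from $G_c'(x_0)=cQ(x_0)\Phi'(x_0)\neq0$.

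\textbf{Your route.} You only evaluate the signs of $G_c$ at the $\rho_i$, at $0$ and at $\pm\infty$, and get existence from the intermediate value theorem. The degree then does all the remaining work: it is exactly $d$ when $1+cd\neq0$ and exactly $d-1$ when $c=-1/d$. This yields uniqueness in each interval, simplicity, and the absence of any other zeros, real or not.

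\textbf{Comparison.} Your approach is shorter and more elementary. It also makes explicit the count against $\deg G_c$ that the paper's real-variable argument needs as well (to exclude nonreal zeros, since $\Phi'>0$ only for real arguments) but leaves implicit. The paper's approach explains more transparently why $-1/d$ is the threshold. Since $d$ is the common limit of $\Phi$ at $\pm\infty$, the images $\Phi((-\infty,\rho_1))=(d,\infty)$ and $\Phi((0,\infty))=(0,d)$ are exactly where $-1/c$ enters or leaves as $c$ crosses $-1/d$.

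\textbf{Small corrections.}
Your formula for the sign of $Q(0)$ is garbled: $Q(0)=a\prod_i(-\rho_i)$, with no factor $(-1)^d$. The conclusion $Q(0)>0$ is still right, and you justify it independently by the positivity of the coefficients.
In case (iii), write out that the $x^{d-1}$ coefficient is $(1+c(d-1))a_{d-1}=a_{d-1}/d>0$.
The opening identity involving $x^{1/c}$ is never used and can be dropped.
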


\begin{proof}
The case $c=0$ is immediate, so assume $c\ne0$.
Since $G_c(\rho_i)=c\rho_iQ'(\rho_i)\ne0$, no zero of $G_c$ is a zero of $Q$.
Hence
\begin{equation}
\label{eq:euler-root-equation}
G_c(x)=0
\quad\Longleftrightarrow\quad
\Phi(x):=\frac{xQ'(x)}{Q(x)}=-\frac1c.
\end{equation}

Since the zeros of $Q$ are $\rho_1,\ldots,\rho_d$, we may write $Q(x)
=
a\prod_{i=1}^{d}(x-\rho_i)$ for some nonzero constant $a$.
Taking the logarithmic derivative of the above factorization and multiplying by $x$, we obtain
\[
\Phi(x)
=
\frac{xQ'(x)}{Q(x)}
=
\sum_{i=1}^{d}\frac{x}{x-\rho_i}.
\]
Differentiating gives $\Phi'(x)>0$. 
Therefore $\Phi$ is strictly increasing on every component of $\mathbb R\setminus\{\rho_1,\ldots,\rho_d\}$.

The behavior of $\Phi$ is summarized in the following table.
For an interval $I=(a,b)$, the second and third columns record the limits of $\Phi$ at its left and right endpoints, respectively:
\[
\begin{array}{c|c|c|c}
I=(a,b)
&
\displaystyle \lim_{x\to a^+}\Phi(x)
&
\displaystyle \lim_{x\to b^-}\Phi(x)
&
\Phi(I)
\\ \hline
(-\infty,\rho_1)
&
d
&
+\infty
&
(d,\infty)
\\[1mm]
(\rho_i,\rho_{i+1})
&
-\infty
&
+\infty
&
\mathbb R
\\[1mm]
(\rho_d,0)
&
-\infty
&
0
&
(-\infty,0)
\\[1mm]
(0,\infty)
&
0
&
d
&
(0,d)
\end{array}
\]
where $1\le i<d$.

We now compare these ranges with the value $-1/c$.

\begin{enumerate}[label=\textup{(\roman*)}]
\item
If $c>0$, then $-\frac1c<0$.
Hence~\eqref{eq:euler-root-equation} has one solution in each of $(\rho_1,\rho_2),\ldots,(\rho_{d-1},\rho_d),(\rho_d,0)$,
and no other solutions.

\item
If $-1/d<c<0$, then $-\frac1c>d$.
Hence~\eqref{eq:euler-root-equation} has one solution in $(-\infty,\rho_1)$ and one in each of $(\rho_1,\rho_2),\ldots, (\rho_{d-1},\rho_d)$, and no other solutions.

\item
If $c=-1/d$, then $-\frac1c=d$.
Since $\Phi((-\infty,\rho_1))=(d,\infty)$, equation~\eqref{eq:euler-root-equation} has no solution in $(-\infty,\rho_1)$.
It has, however, one solution in each interval $(\rho_1,\rho_2),\ldots,(\rho_{d-1},\rho_d)$.

\item
If $c<-1/d$, then $0<-\frac1c<d$.
Hence~\eqref{eq:euler-root-equation} has one solution in $(0,\infty)$ and one in each interval $(\rho_1,\rho_2),\ldots,(\rho_{d-1},\rho_d)$, and no other solutions.
\end{enumerate}
We next verify that all the zeros found above are simple.
Since $G_c(x)=Q(x)\bigl(1+c\Phi(x)\bigr)$,
let $x_0$ be a zero of $G_c$.
Then $1+c\Phi(x_0)=0$.
Differentiating the above factorization gives
\[
G_c'(x)
=
Q'(x)\bigl(1+c\Phi(x)\bigr)
+
cQ(x)\Phi'(x).
\]
Hence
\[
G_c'(x_0)
=
cQ(x_0)\Phi'(x_0)\ne0,
\]
so every zero of $G_c$ is simple.

Finally, we determine the degree of $G_c$ in the critical case $c=-1/d$.
Write $Q(x)=\sum_{j=0}^{d} a_jx^j$ and $a_d\ne0$.
Then $xQ'(x)=\sum_{j=1}^{d} ja_jx^j$.
Hence $G_c(x)=\sum_{j=0}^{d}(1+cj)a_jx^j$.
Thus the leading coefficient vanishes precisely when $1+cd=0$.

If $c=-\frac1d$, then the coefficient of $x^d$ vanishes.
On the other hand, the coefficient of $x^{d-1}$ is
\[
\left(1-\frac{d-1}{d}\right)a_{d-1}
=
\frac{a_{d-1}}{d}.
\]
Since $a_{d-1}=-a_d\sum_{i=1}^{d}\rho_i\ne0$, we obtain $\deg G_{-1/d}=d-1$.
This completes the proof of the theorem.
\end{proof}

We use the following classical result on Jacobi polynomials due to Szegő~\cite[Theorem~3.3.1]{Szego1975}.

\begin{thm}[{\cite[Theorem~3.3.1]{Szego1975}}]
\label{thm:jacobi-zeros}
Let $\alpha,\beta>-1$ and let $d$ be a positive integer.
The Jacobi polynomial
\[
P_d^{(\alpha,\beta)}(t)
=
\sum_{r=0}^{d}
\binom{d+\alpha}{d-r}
\binom{d+\beta}{r}
\left(\frac{t-1}{2}\right)^r
\left(\frac{t+1}{2}\right)^{d-r}
\]
has exactly $d$ distinct zeros, all lying in the open interval $(-1,1)$.
\end{thm}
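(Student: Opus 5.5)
Since this is Szegő's classical theorem, I would recall the standard orthogonality proof and not rely on anything specific to this paper. The plan is to show that $P_d^{(\alpha,\beta)}$ is a nonzero polynomial of exact degree $d$ that is orthogonal, with respect to the weight $w(t)=(1-t)^\alpha(1+t)^\beta$ on $(-1,1)$, to every polynomial of degree less than $d$. The weight is integrable precisely because $\alpha,\beta>-1$. The conclusion then follows from a sign-change argument.

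\emph{Step 1: Rodrigues formula.} I would establish
\[
w(t)P_d^{(\alpha,\beta)}(t)=\frac{(-1)^d}{2^d d!}\frac{d^d}{dt^d}\Bigl[(1-t)^{d+\alpha}(1+t)^{d+\beta}\Bigr].
\]
To do this, expand the right-hand side by the Leibniz rule, distributing $j$ derivatives on $(1-t)^{d+\alpha}$ and $d-j$ on $(1+t)^{d+\beta}$. Each term is a falling factorial times $(1-t)^{\alpha+j}(1+t)^{\beta+d-j}$. Dividing by $w$ and rewriting $(1-t)^j=(-2)^j\bigl(\tfrac{t-1}{2}\bigr)^j$ and $(1+t)^{d-j}=2^{d-j}\bigl(\tfrac{t+1}{2}\bigr)^{d-j}$ should give exactly the stated sum, with $r=j$. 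I expect this bookkeeping of signs and powers of $2$ to be the main obstacle, though it is routine.

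\emph{Step 2: degree and orthogonality.} The coefficient of $t^d$ in the stated sum is $2^{-d}\sum_r\binom{d+\alpha}{d-r}\binom{d+\beta}{r}=2^{-d}\binom{2d+\alpha+\beta}{d}$ by Vandermonde. This is nonzero, since $2d+\alpha+\beta>2d-2\ge d-1$ makes every factor of the falling product positive. Now let $\pi$ be a polynomial with $\deg\pi<d$. Using Step 1, integrate $\int_{-1}^1 \pi\,P_d\,w\,dt$ by parts $d$ times. For $i<d$, the $i$-th derivative of $(1-t)^{d+\alpha}(1+t)^{d+\beta}$ is a sum of terms carrying factors $(1-t)^{d+\alpha-i'}(1+t)^{d+\beta-i''}$ with $i',i''\le i<d$. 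These exponents exceed $0$, so all boundary terms vanish at $\pm1$. The remaining integral involves $\pi^{(d)}=0$, so the whole integral is $0$.

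\emph{Step 3: sign changes.} Let $t_1<\dots<t_k$ be the points in $(-1,1)$ where $P_d^{(\alpha,\beta)}$ changes sign, and suppose $k<d$. Put $\pi(t)=\prod_{i}(t-t_i)$. Then $\pi P_d w$ has constant sign on $(-1,1)$ and is not identically zero, so its integral is nonzero. This contradicts Step 2. Hence $k\ge d$. Each sign change is a zero of odd multiplicity, and a polynomial of degree $d$ has at most $d$ zeros counted with multiplicity. Therefore $k=d$, every zero is simple, and all $d$ zeros lie in $(-1,1)$.
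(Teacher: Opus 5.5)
Your proposal is correct. It is the classical argument behind the cited result: the paper gives no proof of its own and simply invokes Szeg\H{o}, whose Theorem~3.3.1 is your sign-change argument for orthogonal polynomials, applied to Jacobi polynomials whose orthogonality comes from the Rodrigues formula as in your Steps~1--2.

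There is one small indexing slip in Step~1. With $j$ derivatives on $(1-t)^{d+\alpha}$, the term is $(1-t)^{\alpha+d-j}(1+t)^{\beta+j}$, so the match with the stated sum is $r=d-j$ rather than $r=j$ (equivalently, put the $j$ derivatives on $(1+t)^{d+\beta}$). The coefficient then comes out as $\binom{d+\alpha}{d-r}\binom{d+\beta}{r}$ with overall sign $+1$, so the conclusion is unaffected.
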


We next establish the relation between $Q_{m,n}(x)$ defined in~\eqref{eq:Qmn-definition} and Jacobi polynomials.
\begin{lem}
\label{lem:Qmn-jacobi}
For $1\le m\le n$,
\begin{equation}
\label{eq:Qmn-jacobi}
Q_{m,n}(x)
=
\frac{(1-x)^m}{m+1}
P_m^{(1,n-m)}
\left(\frac{1+x}{1-x}\right).
\end{equation}
In particular, $Q_{m,n}(x)$ has $m$ distinct negative zeros.
\end{lem}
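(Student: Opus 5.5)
We prove \eqref{eq:Qmn-jacobi} by substituting $t=(1+x)/(1-x)$ into the explicit sum of Theorem~\ref{thm:jacobi-zeros} with $d=m$, $\alpha=1$, $\beta=n-m$. With this substitution,
\[
\frac{t-1}{2}=\frac{x}{1-x},
\qquad
\frac{t+1}{2}=\frac{1}{1-x}.
\]
Hence
\[
(1-x)^m\Bigl(\frac{t-1}{2}\Bigr)^r\Bigl(\frac{t+1}{2}\Bigr)^{m-r}=x^r .
\]
The right-hand side of \eqref{eq:Qmn-jacobi} therefore equals
\[
\frac{1}{m+1}\sum_{r=0}^m\binom{m+1}{m-r}\binom{n}{r}x^r .
\]
It then suffices to check the elementary identity
\[
\frac{1}{m+1}\binom{m+1}{m-r}=\frac{1}{m+1}\binom{m+1}{r+1}=\frac{1}{r+1}\binom{m}{r}.
\]
This gives exactly $Q_{m,n}(x)$ from \eqref{eq:Qmn-definition}.

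For the zeros, we argue via the Möbius map $x\mapsto t=(1+x)/(1-x)$. It sends the half-line $(-\infty,0)$ bijectively and increasingly onto $(-1,1)$. Its inverse is $x=(t-1)/(t+1)$, and for $t\in(-1,1)$ this value is negative.

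By Theorem~\ref{thm:jacobi-zeros} (valid since $\alpha=1>-1$ and $\beta=n-m\ge0>-1$), $P_m^{(1,n-m)}$ has $m$ distinct zeros $t_1,\dots,t_m\in(-1,1)$. Each yields a distinct point $x_i=(t_i-1)/(t_i+1)<0$. At each $x_i$ we have $1-x_i>0$, so $(1-x_i)^m\neq0$. Thus \eqref{eq:Qmn-jacobi} shows $Q_{m,n}(x_i)=0$.

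Since $Q_{m,n}$ has degree $m$, with leading coefficient $\frac{1}{m+1}\binom nm\neq0$ because $m\le n$, these $x_i$ are all its zeros. They are distinct and negative.

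The only delicate point is confirming that the normalization of the Jacobi sum in Theorem~\ref{thm:jacobi-zeros} matches our coefficients, i.e.\ the binomial identity above. Everything else is a direct substitution.
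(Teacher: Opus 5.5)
Your proof is correct and follows the paper's argument: you specialize the explicit Jacobi sum at $d=m$, $\alpha=1$, $\beta=n-m$, substitute $t=(1+x)/(1-x)$ using $\binom{m+1}{r+1}=\frac{m+1}{r+1}\binom{m}{r}$, and then carry the $m$ distinct zeros in $(-1,1)$ over to $(-\infty,0)$ via the inverse Möbius map. Your extra checks are small details the paper leaves implicit: that $(1-x_i)^m\neq0$, and the degree count with leading coefficient $\frac{1}{m+1}\binom{n}{m}\neq0$ showing these are all the zeros.
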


\begin{proof}
Taking $d=m$, $\alpha=1$ and $\beta=n-m$ in Theorem~\ref{thm:jacobi-zeros}, we obtain
\[
P_m^{(1,n-m)}(t)
=
\sum_{r=0}^{m}
\binom{m+1}{r+1}
\binom nr
\left(\frac{t-1}{2}\right)^r
\left(\frac{t+1}{2}\right)^{m-r}.
\]

Setting $t=(1+x)/(1-x)$, we get
\[
\begin{aligned}
\frac{(1-x)^m}{m+1}
P_m^{(1,n-m)}
\left(\frac{1+x}{1-x}\right)
&=
\sum_{r=0}^{m}
\frac{1}{m+1}
\binom{m+1}{r+1}
\binom nr x^r \\
&=
\sum_{r=0}^{m}
\frac{1}{r+1}
\binom mr
\binom nr x^r \\
&=
Q_{m,n}(x).
\end{aligned}
\]
Since $n-m\ge0$, Theorem~\ref{thm:jacobi-zeros} shows that $P_m^{(1,n-m)}$ has $m$ distinct zeros in $(-1,1)$.
The inverse transformation $x=\frac{t-1}{t+1}$ maps the interval $(-1,1)$ bijectively onto $(-\infty,0)$.
Therefore the $m$ distinct zeros of $P_m^{(1,n-m)}$ correspond to $m$ distinct negative zeros of $Q_{m,n}(x)$.
\end{proof}

We now return to the antichain polynomials of products of three chains.

\begin{thm}
\label{thm:short-chain-real-rootedness}
Let $1\le k\le2$ and $k\le m\le n$.
Then every zero of $\mathcal N_{[k]\times[m]\times[n]}(x)$ is simple and negative. 
\end{thm}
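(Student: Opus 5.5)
For $k=1$ the polynomial is $A_{m,n}(x)=\sum_r\binom mr\binom nr x^r$ by \eqref{eq:N1mnx}. By \eqref{eq:AMmn-definitions} this equals $Q_{m,n}(x)+xQ_{m,n}'(x)=G_1(x)$ in the notation of Theorem~\ref{thm:euler-perturbation}. Lemma~\ref{lem:Qmn-jacobi} shows that $Q_{m,n}$ has $m$ distinct negative zeros $\rho_1<\dots<\rho_m$. Part (i) of Theorem~\ref{thm:euler-perturbation}, with $c=1>0$, then gives one simple zero in each of $(\rho_1,\rho_2),\ldots,(\rho_{m-1},\rho_m),(\rho_m,0)$. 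These are $m=\deg A_{m,n}$ simple negative zeros, so the case $k=1$ is done. (The case $k=1$ only needs $m\ge1$; the hypothesis $k\le m$ is harmless.)

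For $k=2$ we use Corollary~\ref{cor:layer-refined-factorization} with $u=v=x$. Writing $\lambda=\sqrt{(m+1)(n+1)/(mn)}>1$ and using $A=Q+xQ'$, $M=xQ'$, the polynomial factors as
\[
\mathcal N_{[2]\times[m]\times[n]}(x)=(A-\lambda M)(A+\lambda M)=G_{1-\lambda}(x)\,G_{1+\lambda}(x),
\]
where $G_c=Q_{m,n}+cxQ_{m,n}'$. Since $1+\lambda>0$, part (i) of Theorem~\ref{thm:euler-perturbation} shows that $G_{1+\lambda}$ has $m$ simple zeros, one in each interval $(\rho_i,\rho_{i+1})$ and one in $(\rho_m,0)$.

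The main work is locating the zeros of $G_{1-\lambda}$, where $c=1-\lambda<0$. We need to check that $c\ge-1/m$, that is, $\lambda\le 1+1/m$, or equivalently $\lambda^2\le (m+1)^2/m^2$. This reads $(n+1)/n\le (m+1)/m$, which holds because $n\ge m$, with equality exactly when $n=m$. Two cases follow.
\begin{itemize}
\item If $n>m$, then $-1/m<c<0$. Part (ii) gives $m$ simple zeros, one in $(-\infty,\rho_1)$ and one in each $(\rho_i,\rho_{i+1})$, all negative.
\item If $n=m$, then $c=-1/m$. Part (iii) gives degree $m-1$ with one simple zero in each $(\rho_i,\rho_{i+1})$. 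This matches the degree $2m-1$ found in Lemma~\ref{lem:small-k-palindromicity}.
\end{itemize}

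It remains to show the two factors share no zero, so that the product has only simple zeros. A common zero $x_0$ would satisfy $G_{1+\lambda}(x_0)-G_{1-\lambda}(x_0)=2\lambda x_0Q'(x_0)=0$, hence $x_0Q'(x_0)=0$, and then $Q(x_0)=0$. This is impossible, since Theorem~\ref{thm:euler-perturbation} shows no zero of $G_c$ with $c\neq0$ is a zero of $Q$.

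All zeros of the product are therefore simple and negative. Counting degrees confirms that every zero has been found: $2m$ zeros when $n>m$ and $2m-1$ when $n=m$.
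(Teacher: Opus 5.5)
Your proof is correct and follows the paper's argument essentially step for step. The case $k=1$ uses Theorem~\ref{thm:euler-perturbation} with $c=1$. For $k=2$ you take the factorization $G_{1-\lambda}G_{1+\lambda}$ from Corollary~\ref{cor:layer-refined-factorization}, use the bound $1<\lambda\le 1+1/m$ to place $1-\lambda$ in case (ii) or (iii), and exclude a common zero by subtraction. Your closing step, which gets $Q_{m,n}(x_0)=0$ directly from $x_0Q_{m,n}'(x_0)=0$, is a small streamlining that avoids the paper's separate observation that $x_0\neq0$.
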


\begin{proof}
Suppose that $k=1$.
Since $Q_{m,n}$ has $m$ distinct negative zeros, we may apply Theorem~\ref{thm:euler-perturbation} with $c=1$.
It follows that $\mathcal N_{[1]\times[m]\times[n]}(x)=Q_{m,n}(x)+xQ_{m,n}'(x)$
has $m$ distinct negative zeros.

Now suppose that $k=2$.
Set $\lambda=\sqrt{\frac{(m+1)(n+1)}{mn}}$.
By~\eqref{eq:layer-refined-factorization},
\[
\mathcal N_{[2]\times[m]\times[n]}(x)
=
\bigl(Q_{m,n}(x)+(1-\lambda)xQ_{m,n}'(x)\bigr)
\bigl(Q_{m,n}(x)+(1+\lambda)xQ_{m,n}'(x)\bigr).
\]
Since $m\le n$, we have
\[
1<\lambda
=
\sqrt{\left(1+\frac1m\right)
\left(1+\frac1n\right)}
\le
1+\frac1m.
\]
Since $Q_{m,n}$ has $m$ distinct negative zeros, Theorem~\ref{thm:euler-perturbation} applies to both factors.
The factor $Q_{m,n}(x)+(1+\lambda)xQ_{m,n}'(x)$ has $m$ distinct negative zeros.
For the other factor, if $m<n$, then $-\frac1m<1-\lambda<0$, so $Q_{m,n}(x)+(1-\lambda)xQ_{m,n}'(x)$ also has $m$ distinct negative zeros.
If $m=n$, then $1-\lambda=-\frac1m$, and this factor has degree $m-1$ with $m-1$ distinct negative zeros.

It remains to show that the two factors have no common zero.
Suppose that $x_0$ is a common zero of the two factors.
Then
\[
Q_{m,n}(x_0)+(1-\lambda)x_0Q_{m,n}'(x_0)=0,
\qquad
Q_{m,n}(x_0)+(1+\lambda)x_0Q_{m,n}'(x_0)=0.
\]
Subtracting the two equations gives $2\lambda x_0Q_{m,n}'(x_0)=0$.
Since both factors have constant term $Q_{m,n}(0)=1$, we have
$x_0\ne0$. 
Hence $Q_{m,n}'(x_0)=0$.
Substituting back into either equation yields $Q_{m,n}(x_0)=0$,
contradicting the fact that all zeros of $Q_{m,n}$ are simple.
Therefore the two factors have no common zero.
It follows that all zeros of $\mathcal N_{[2]\times[m]\times[n]}(x)$
are simple and negative.
\end{proof}

Combining Theorems~\ref{thm:three-chain-palindromicity-classification}
and~\ref{thm:short-chain-real-rootedness} gives the following
consequence.

\begin{cor}
\label{cor:three-chain-gamma-positive}
Let $1\le k\le m\le n$.
If $\mathcal N_{[k]\times[m]\times[n]}(x)$ is palindromic, then all its zeros are simple and negative.
Consequently, it is $\gamma$-positive.
\end{cor}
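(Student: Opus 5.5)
The corollary should follow by simply combining the palindromicity classification with the real-rootedness results for short chains, and then invoking the standard implication from real-rootedness to $\gamma$-positivity.

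\textbf{Step 1 (reduction to two families).} Assume $1\le k\le m\le n$ and that $\mathcal N_{[k]\times[m]\times[n]}(x)$ is palindromic. By Theorem~\ref{thm:three-chain-palindromicity-classification}, we have $k\le2$. More precisely, $(k,m,n)$ is either $(1,r,r)$ or $(2,r,r+1)$ for some positive integer $r$.

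\textbf{Step 2 (zeros).} In both cases $k\in\{1,2\}$ and $k\le m\le n$. Note that for $(2,r,r+1)$ this forces $r\ge2$; if $r=1$, then $m=1<k$, contradicting the standing assumption $k\le m$. Theorem~\ref{thm:short-chain-real-rootedness} therefore applies directly and shows that every zero is simple and negative.

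\textbf{Step 3 ($\gamma$-positivity).} The polynomial is palindromic, has positive coefficients, and is real-rooted. By the fact recalled in the introduction (Br\"and\'en; Petersen, p.~82), a palindromic real-rooted polynomial with nonnegative coefficients is $\gamma$-positive.

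To make this self-contained, I would give a short sketch. All zeros are negative, and palindromicity means they are closed under $\rho\mapsto 1/\rho$. Pair each zero $\rho\ne -1$ with $1/\rho$. Each such pair contributes a factor
\[
x^2-(\rho+\rho^{-1})x+1=(1+x)^2+c\,x
\]
with $c=-(\rho+\rho^{-1})-2\ge0$, since $\rho<0$ gives $\rho+\rho^{-1}\le-2$. The remaining zeros equal to $-1$ contribute factors $(1+x)$. The leading coefficient is positive. Expanding the product of these nonnegative combinations of $x^j(1+x)^{d-2j}$ yields nonnegative $\gamma_j$.

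There is no real obstacle here. The only point to check is that the hypotheses of Theorem~\ref{thm:short-chain-real-rootedness} match the palindromic cases, including the boundary constraint $k\le m$ discussed in Step 2.
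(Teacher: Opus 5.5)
Your proposal is correct and follows essentially the same route as the paper, which also combines Theorem~\ref{thm:three-chain-palindromicity-classification} with Theorem~\ref{thm:short-chain-real-rootedness} and then uses the standard implication that a palindromic, real-rooted polynomial with nonnegative coefficients is $\gamma$-positive. Your pairing of each zero $\rho\ne-1$ with $1/\rho$ into factors $(1+x)^2+cx$ with $c\ge0$ is a valid self-contained proof of that cited fact.
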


\begin{proof}
By Theorem~\ref{thm:three-chain-palindromicity-classification}, the palindromic cases are precisely $(k,m,n)=(1,r,r)$ or $(2,r,r+1)$,
for some positive integer $r$.
Theorem~\ref{thm:short-chain-real-rootedness} therefore implies that all zeros are simple and negative.
Since an antichain polynomial has nonnegative coefficients, palindromicity together with real-rootedness implies $\gamma$-positivity.
\end{proof}

In fact, the factorization~\eqref{eq:layer-refined-factorization} yields a stronger bivariate property.
Recall that a real multivariate polynomial is \defi{real stable} if it does not vanish whenever all its variables have positive imaginary parts; see~\cite{BorceaBranden2009}.
We shall use the following general fact.

\begin{lem}
\label{lem:euler-kernel-stability}
Let $Q$ be a nonzero real polynomial all of whose zeros are negative, and define
\[
M(x):=xQ'(x),
\qquad
A(x):=Q(x)+xQ'(x).
\]
Then, for every $\kappa>0$, the bivariate polynomial
\[
F_\kappa(u,v)
:=
A(u)A(v)-\kappa M(u)M(v)
\]
is real stable.
\end{lem}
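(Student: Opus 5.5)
Set $\lambda=\sqrt{\kappa}>0$ and factor
\[
F_\kappa(u,v)=\tfrac12\bigl[G_+(u)G_-(v)+G_-(u)G_+(v)\bigr],
\qquad G_\pm(x):=A(x)\pm\lambda M(x)=Q(x)+(1\pm\lambda)xQ'(x).
\]
Indeed, the right-hand side equals $A(u)A(v)-\lambda^2M(u)M(v)$, because the cross terms cancel. A bivariate polynomial of the form $f(u)g(v)+g(u)f(v)$ is real stable whenever $f$ and $g$ are real-rooted with interlacing zeros of the right kind. So the plan is to prove that $G_+$ and $G_-$ interlace in the sense of the Hermite--Biehler/Obreschkoff theorem, and then invoke the standard criterion: for real $f,g$ with $f+ig$ stable, the polynomial $f(u)g(v)+g(u)f(v)$ is stable. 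A cleaner equivalent route is the Borcea--Br\"and\'en characterization of real stable polynomials of degree at most one in each variable, applied to the multiaffine form in auxiliary variables.

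A more direct approach avoids interlacing bookkeeping and is what I would try first. First treat the case where $Q$ has only simple negative zeros; the general case then follows by Hurwitz's theorem, since stability is preserved under limits and $Q$ is a limit of polynomials with simple negative zeros. With $\Phi(x)=xQ'(x)/Q(x)=\sum_i x/(x-\rho_i)$ as in the proof of Theorem~\ref{thm:euler-perturbation}, write
\[
F_\kappa(u,v)=Q(u)Q(v)\bigl[(1+\Phi(u))(1+\Phi(v))-\kappa\,\Phi(u)\Phi(v)\bigr].
\]
For $\operatorname{Im}u,\operatorname{Im}v>0$, the factor $Q(u)Q(v)$ is nonzero because the zeros of $Q$ are real. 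It therefore suffices to show that the bracket is nonzero. Each term $x/(x-\rho)=1+\rho/(x-\rho)$ with $\rho<0$ has positive imaginary part when $\operatorname{Im}x>0$, since $\rho/(x-\rho)$ then has imaginary part $-\rho\operatorname{Im}x/|x-\rho|^2>0$. Hence $\Phi$ maps the upper half-plane into itself, and $a=\Phi(u)$, $b=\Phi(v)$ both lie in $\mathbb H$.

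The main step is to show that $(1+a)(1+b)\neq\kappa ab$ for $a,b\in\mathbb H$. Rewrite the equation as
\[
\frac{1+a}{a}\cdot\frac{1+b}{b}=\kappa,
\qquad\text{i.e.}\qquad
\Bigl(1+\frac1a\Bigr)\Bigl(1+\frac1b\Bigr)=\kappa .
\]
For $a,b\in\mathbb H$, both $1/a$ and $1/b$ lie in the lower half-plane, so $w_1=1+1/a$ and $w_2=1+1/b$ have strictly negative imaginary parts. Their arguments therefore lie in $(-\pi,0)$, and the argument of the product $w_1w_2$ lies in $(-2\pi,0)$. The product can equal the positive real number $\kappa$ only if its argument is a multiple of $2\pi$, which is impossible in this open interval. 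Hence the bracket never vanishes, and $F_\kappa$ is real stable.

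The main obstacle I anticipate is handling degenerate cases carefully. If $Q$ is constant, then $\Phi\equiv0$ and $F_\kappa=Q^2$, which is a nonzero constant and hence stable. Zeros of $Q$ at the origin are excluded by the hypothesis that all zeros are negative. The limiting argument for repeated zeros requires $F_\kappa$ not to become identically zero, which holds because $F_\kappa(0,0)=Q(0)^2\neq0$.
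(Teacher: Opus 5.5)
Your direct argument is correct and is essentially the paper's proof. The paper uses the same function $R(z)=zQ'(z)/Q(z)=\sum_i z/(z-\rho_i)$ (your $\Phi$) mapping $\mathbb H$ into $\mathbb H$, and phrases the key step as $M/A=R/(1+R)\in\mathbb H$ after checking $A\neq0$ on $\mathbb H$. You use the reciprocal $A/M=1+1/\Phi$, which lies in the lower half-plane, and the same argument-counting finishes both proofs.

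The Hurwitz reduction to simple zeros is harmless but unnecessary, since the partial-fraction formula for $\Phi$ holds with zeros repeated according to multiplicity. The interlacing route in your first paragraph is likewise never needed.
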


\begin{proof}
If $Q$ is constant, then $F_\kappa(u,v)=Q^2$ is a nonzero constant and hence real stable. 
We may therefore assume that $\deg Q\ge1$.
Set $d=\deg Q$ and write $Q(x)=C\prod_{i=1}^{d}(x-\rho_i)$ with $\rho_i<0$,
where the zeros are repeated according to multiplicity.
For $z$ in the upper half-plane, set
\[
R(z):=\frac{zQ'(z)}{Q(z)}
=\sum_{i=1}^{d}\frac{z}{z-\rho_i}.
\]
Since
\[
\operatorname{Im}\frac{z}{z-\rho_i}
=
\frac{-\rho_i\operatorname{Im}z}{|z-\rho_i|^2}>0,
\]
we have $\operatorname{Im}R(z)>0$.
Since $A(z)=Q(z)+zQ'(z)=Q(z)(1+R(z))$, and all zeros of $Q$ are negative real numbers, we have $Q(z)\ne0$ whenever $\operatorname{Im}z>0$.
Moreover, $\operatorname{Im}R(z)>0$, so $1+R(z)\ne0$.
Hence $A(z)\ne0$ in the upper half-plane.
Therefore
\[
\frac{M(z)}{A(z)}
=
\frac{R(z)}{1+R(z)}.
\]
Since
\[
\operatorname{Im}
\left(
\frac{R(z)}{1+R(z)}
\right)
=
\frac{\operatorname{Im}R(z)}
{|1+R(z)|^2}>0,
\]
$M(z)/A(z)$ also lies in the upper half-plane.

Suppose that $u$ and $v$ lie in the upper half-plane and that $F_\kappa(u,v)=0$.
Since $A(u)A(v)\ne0$, we obtain
\begin{equation}\label{eq:MAAM}
\frac{M(u)}{A(u)}
\frac{M(v)}{A(v)}
=
\frac1\kappa.
\end{equation}
Let $\theta_1,\theta_2\in(0,\pi)$ be the respective arguments of
$\frac{M(u)}{A(u)}$ and $\frac{M(v)}{A(v)}$.
The argument of their product is congruent to
$\theta_1+\theta_2$ modulo $2\pi$.
Since $0<\theta_1+\theta_2<2\pi$,
this product cannot be a positive real number, contradicting \eqref{eq:MAAM} is positive.

Therefore $F_\kappa(u,v)\ne0$ whenever
$\operatorname{Im}u,\operatorname{Im}v>0$, and hence
$F_\kappa$ is real stable.
\end{proof}

\begin{thm}
\label{thm:layer-refined-stability}
For all positive integers $m$ and $n$, the layer-refined antichain polynomial $\mathcal N_{m,n}(u,v)$ defined in~\eqref{eq:layer-refined-antichain-polynomial} is real stable.
\end{thm}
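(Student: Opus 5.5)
The plan is to combine Corollary~\ref{cor:layer-refined-factorization} with Lemma~\ref{lem:euler-kernel-stability}, taking $Q=Q_{m,n}$ and $\kappa=\frac{(m+1)(n+1)}{mn}$. By \eqref{eq:AMmn-definitions} we have $M_{m,n}(x)=xQ_{m,n}'(x)$ and $A_{m,n}(x)=Q_{m,n}(x)+xQ_{m,n}'(x)$. These are exactly the polynomials $M$ and $A$ built from $Q=Q_{m,n}$ in Lemma~\ref{lem:euler-kernel-stability}. Hence \eqref{eq:layer-refined-factorization} says precisely that
\[
\mathcal N_{m,n}(u,v)=F_\kappa(u,v),\qquad \kappa=\frac{(m+1)(n+1)}{mn}>0.
\]

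It remains to check the hypothesis of Lemma~\ref{lem:euler-kernel-stability}, namely that $Q_{m,n}$ is a nonzero real polynomial whose zeros are all negative. Lemma~\ref{lem:Qmn-jacobi} gives this when $m\le n$.

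If $m>n$, I would use the symmetry $Q_{m,n}=Q_{n,m}$, which is visible from \eqref{eq:Qmn-definition} because the coefficients $\frac{1}{r+1}\binom mr\binom nr$ are symmetric in $m$ and $n$. Lemma~\ref{lem:Qmn-jacobi} applied to $(n,m)$ then shows that $Q_{m,n}$ has $n$ distinct negative zeros. Since the constant term of $Q_{m,n}$ is $1$, it is nonzero. Note that \eqref{eq:layer-refined-factorization} was stated for all positive $m,n$, so no ordering assumption is needed there.

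Applying Lemma~\ref{lem:euler-kernel-stability} then shows that $F_\kappa=\mathcal N_{m,n}$ is real stable. No step here should be a real obstacle. The only points to watch are that $\kappa$ is strictly positive and that the $m>n$ case is covered by the symmetry of $Q_{m,n}$.
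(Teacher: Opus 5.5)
Your proposal is correct and is essentially the paper's proof: write $\mathcal N_{m,n}=F_\kappa$ with $\kappa=\frac{(m+1)(n+1)}{mn}$ using Corollary~\ref{cor:layer-refined-factorization}, and obtain negativity of the zeros of $Q_{m,n}$ from Lemma~\ref{lem:Qmn-jacobi}. Your symmetry remark $Q_{m,n}=Q_{n,m}$ for $m>n$ is a worthwhile addition: Lemma~\ref{lem:Qmn-jacobi} is stated only for $m\le n$, and the paper's proof leaves that case implicit.
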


\begin{proof}
By~\eqref{eq:layer-refined-factorization},
\[
\mathcal N_{m,n}(u,v)
=
A_{m,n}(u)A_{m,n}(v)
-
\frac{(m+1)(n+1)}{mn}
M_{m,n}(u)M_{m,n}(v),
\]
where, as in~\eqref{eq:AMmn-definitions},
$M_{m,n}(x)=xQ_{m,n}'(x)$ and
$A_{m,n}(x)=Q_{m,n}(x)+xQ_{m,n}'(x)$.
Since all zeros of $Q_{m,n}$ are negative by
Lemma~\ref{lem:Qmn-jacobi}, Lemma~\ref{lem:euler-kernel-stability},
applied with $\kappa=(m+1)(n+1)/(mn)$, shows that
$\mathcal N_{m,n}(u,v)$ is real stable.
\end{proof}

\section{The folded family \texorpdfstring{$[k]\times H_n$}{[k] x Hn}}
\label{sec:Hn}

Recall that $H_n=\{(i,j)\in[n]\times[n]: i\le j\}$, ordered componentwise.
We first record several structural properties of $H_n$ that will be used throughout this section.

\subsection{\texorpdfstring{Structural properties of $H_n$}{Structural properties of Hn}}
\label{subsec:Hn-structure}
The poset $H_n$ is naturally graded by $r(i,j)=i+j-2$, and hence has rank $2n-2$.
Therefore $[k]\times H_n$ is graded of rank $k+2n-3$.
\begin{prop}
\label{prop:Hn-self-dual}
The poset $[k]\times H_n$ is self-dual under the involution
\[
(a,i,j)
\longmapsto
(k+1-a,n+1-j,n+1-i).
\]
\end{prop}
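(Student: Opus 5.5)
Write $\tau(a,i,j)=(k+1-a,\,n+1-j,\,n+1-i)$. The plan is to check directly that $\tau$ is a well-defined order-reversing involution of $[k]\times H_n$. That is exactly what self-duality requires.

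First, I will show that $\tau$ maps $[k]\times H_n$ into itself. If $a\in[k]$ then $k+1-a\in[k]$. If $(i,j)\in H_n$, so that $1\le i\le j\le n$, then $n+1-j$ and $n+1-i$ lie in $[n]$. Moreover $i\le j$ gives $n+1-j\le n+1-i$, so $(n+1-j,n+1-i)\in H_n$.

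Second, I will show that $\tau$ is an involution. Applying $\tau$ twice gives
\[
\tau(\tau(a,i,j))=\bigl(k+1-(k+1-a),\,n+1-(n+1-i),\,n+1-(n+1-j)\bigr)=(a,i,j).
\]
Hence $\tau$ is a bijection and equals its own inverse.

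Third, I will show that $\tau$ reverses the order, which is componentwise. Suppose $(a,i,j)\le(a',i',j')$, that is, $a\le a'$, $i\le i'$ and $j\le j'$. Then $k+1-a'\le k+1-a$. Also $n+1-j'\le n+1-j$, because $j\le j'$. Finally $n+1-i'\le n+1-i$, because $i\le i'$. Therefore $\tau(a',i',j')\le\tau(a,i,j)$.

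The converse direction follows by applying the same argument to $\tau$ itself, using $\tau^{-1}=\tau$. So $x\le y$ holds if and only if $\tau(y)\le\tau(x)$.

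Every step is a direct check. The only point needing care is that swapping the last two coordinates keeps the condition $i\le j$ defining $H_n$. The first step covers this.

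As a consistency check, $\tau$ sends rank $\rho=(a-1)+i+j-2$ to $k+2n-3-\rho$, as expected for a graded poset of rank $k+2n-3$.
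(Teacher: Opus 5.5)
Your proposal is correct and follows essentially the same route as the paper: check that $\tau$ maps $[k]\times H_n$ into itself (using $i\le j \Rightarrow n+1-j\le n+1-i$), that it reverses the componentwise order, and that $\tau^2$ is the identity. Your explicit remark that $\tau=\tau^{-1}$ gives the converse implication, making $\tau$ an isomorphism onto the dual, is a small completeness point the paper leaves implicit.
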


\begin{proof}
Let the involution denoted by $\varphi$.
Since $i\le j$, we have $n+1-j\le n+1-i$, so $\varphi(a,i,j)\in[k]\times H_n$.
Moreover, if $(a,i,j)\le(a',i',j')$, then $\varphi(a',i',j')\le \varphi(a,i,j)$.
Thus $\varphi$ reverses the order.
Since $\varphi^2$ is the identity, it is an order-reversing involution.
\end{proof}

We shall also use the following standard product theorem for symmetric chain orders; see Anderson~\cite[Theorem~3.6.1]{Anderson2002} and Engel~\cite[Theorem~5.1.5]{Engel1997}.

\begin{thm}[{\cite[Theorem~5.1.5]{Engel1997}}]
\label{thm:product-SCD}
If $P$ and $Q$ admit symmetric chain decompositions, then so does $P\times Q$.
\end{thm}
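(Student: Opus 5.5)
The statement is the classical product theorem for symmetric chain decompositions. The plan is to reduce it to the case of a chain times a chain and then glue the pieces together.

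\textbf{Setup.} Let $\mathscr C$ be a symmetric chain decomposition of $P$, of rank $R_P$, and $\mathscr D$ one of $Q$, of rank $R_Q$. Then $P\times Q$ is graded with rank function $\rho(x,y)=\rho_P(x)+\rho_Q(y)$ and has rank $R_P+R_Q$. Since the chains in $\mathscr C$ and $\mathscr D$ partition $P$ and $Q$, the sets $C\times D$ with $C\in\mathscr C$, $D\in\mathscr D$ partition $P\times Q$. It therefore suffices to decompose each $C\times D$ into saturated chains of $P\times Q$ whose minimum and maximum ranks sum to $R_P+R_Q$.

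\textbf{One block.} Write $C=\{c_0\lessdot c_1\lessdot\cdots\lessdot c_a\}$ with $\rho_P(c_0)=s$ and $s+(s+a)=R_P$. Similarly write $D=\{d_0\lessdot\cdots\lessdot d_b\}$ with $\rho_Q(d_0)=t$ and $2t+b=R_Q$. Assume without loss of generality that $a\le b$.

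Decompose the grid $C\times D\cong[a+1]\times[b+1]$ into the $a+1$ hook-shaped chains
\[
H_h=\{(c_h,d_h),(c_h,d_{h+1}),\ldots,(c_h,d_{b-h}),(c_{h+1},d_{b-h}),\ldots,(c_a,d_{b-h})\},
\qquad 0\le h\le a.
\]
Each step along $H_h$ changes one coordinate by a cover relation in $P$ or $Q$, so it is a cover in $P\times Q$. Hence $H_h$ is a saturated chain of $P\times Q$.

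\textbf{Symmetry of each hook.} The bottom element of $H_h$ has rank $s+t+2h$. The top element has rank $(s+a)+(t+b-h)$. Their sum is $2s+a+2t+b=R_P+R_Q$, so every hook is symmetric.

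\textbf{The hooks partition the block.} Take $(c_i,d_j)$ and set $h=\min(i,\,j,\,b-j)$; this pins down which hook contains the element. If $h=i\le j\le b-i$, the element lies on the vertical arm of $H_i$. Otherwise $h=b-j<i$, and the element lies on the horizontal arm of $H_{b-j}$. Here $a\le b$ guarantees that $b-h\ge a-h\ge0$, so each hook is well defined and nonempty. Checking that no element is counted twice is routine.

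\textbf{Gluing.} Taking the union of these hook decompositions over all pairs $(C,D)$ gives a symmetric chain decomposition of $P\times Q$.

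The only step that needs care is this last bookkeeping: verifying that the hooks partition $C\times D$, and confirming that saturation in the product holds because each step is a cover in a single coordinate.
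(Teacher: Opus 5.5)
\textbf{There is a genuine gap.} The reduction to the blocks $C\times D$ and the final gluing are fine, and this is the standard route: the paper gives no proof and only cites Engel and Anderson, whose argument is exactly this reduction to a product of two chains. The problem is the one step with real content: your hooks $H_h$ do not form a symmetric chain decomposition of $C\times D$.

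\emph{Symmetry.} The computation contains an arithmetic slip. The bottom $(c_h,d_h)$ has rank $s+t+2h$, and the top $(c_a,d_{b-h})$ has rank $s+t+a+b-h$. Their sum is $R_P+R_Q+h$, which is symmetric only for $h=0$.

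\emph{Covering.} The hooks miss elements. If $a,b\ge1$, the element $(c_1,d_0)$ lies in no $H_h$. Your second case assumes $h=b-j$, but $h=\min(i,j,b-j)$ can equal $j<\min(i,b-j)$.

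\emph{Well-definedness.} When $h>b/2$, the vertical arm $d_h,\dots,d_{b-h}$ is empty, so $H_h$ is not a chain starting at $(c_h,d_h)$. The hypothesis $a\le b$ gives $b-h\ge0$, not $b-h\ge h$.

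Already the $2\times2$ grid ($a=b=1$) fails. A rank count shows no diagonal start can work. The first $a+1$ ranks of $C\times D$ have sizes $1,2,\dots,a+1$, so in any symmetric chain decomposition exactly one chain starts at each of these ranks. Your chains start only at every other rank.

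\textbf{The fix.} Start each hook on the bottom edge of the grid:
\[
H_h=\{(c_h,d_0),(c_h,d_1),\ldots,(c_h,d_{b-h}),(c_{h+1},d_{b-h}),\ldots,(c_a,d_{b-h})\},\qquad 0\le h\le a.
\]
Each step is a cover in a single coordinate. The bottom and top ranks are $s+t+h$ and $(s+a)+(t+b-h)$, which sum to $R_P+R_Q$, and $h\le a\le b$ keeps all indices in range. For the partition:
\begin{itemize}
\item if $i+j\le b$, the element $(c_i,d_j)$ lies on the vertical arm of $H_i$;
\item if $i+j>b$, it lies on the horizontal arm of $H_{b-j}$, since then $0\le b-j<i\le a$.
\end{itemize}
The vertical arms consist exactly of the elements with $i+j\le b$, and the horizontal arms of those with $i+j>b$, so the hooks partition $C\times D$ without overlap. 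With this replacement, your gluing argument completes the proof.
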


\begin{prop}
\label{prop:Hn-SCD}
The poset $[k]\times H_n$ admits a symmetric chain decomposition for every positive integer $k$.
\end{prop}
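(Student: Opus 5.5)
By Theorem~\ref{thm:product-SCD} and the fact that the chain $[k]$ trivially admits a symmetric chain decomposition (it is itself a single symmetric chain), it suffices to show that $H_n$ admits a symmetric chain decomposition. The product theorem then gives one for $[k]\times H_n$.

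For $H_n$, graded by $r(i,j)=i+j-2$ with rank $2n-2$, I would construct the chains explicitly, one for each $t=0,1,\ldots,\lfloor (n-1)/2\rfloor$. Define the hook-shaped chain
\[
C_t=\{(1+t,1+t)\lessdot(1+t,2+t)\lessdot\cdots\lessdot(1+t,n-t)\lessdot(2+t,n-t)\lessdot\cdots\lessdot(n-t,n-t)\}.
\]
Its first leg has $i=1+t$ fixed while $j$ increases from $1+t$ to $n-t$. Its second leg has $j=n-t$ fixed while $i$ increases from $1+t$ up to $n-t$. Each step raises one coordinate by $1$, so these are cover relations in $H_n$. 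All points satisfy $i\le j$, because on the first leg $j\ge 1+t=i$, and on the second leg $i\le n-t=j$. Hence $C_t$ is a saturated chain in $H_n$.

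Next I would check symmetry. The minimum of $C_t$ has rank $2t$ and the maximum has rank $2(n-t)-2=2n-2-2t$, and these sum to $2n-2$, as required.

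Finally I would check that the chains partition $H_n$. A point $(i,j)\in H_n$ with $i\le j$ lies in $C_t$ exactly when $t=\min(i-1,\,n-j)$. If $i-1\le n-j$, the point lies on the first leg of $C_{i-1}$. Otherwise it lies on the second leg of $C_{n-j}$, and in that case $i>n-j+1=1+t$, so it is not double counted at the corner. Thus every point of $H_n$ lies in exactly one chain.

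The only real care needed is in this partition check, in particular at the corner point $(1+t,n-t)$ and in the boundary case where $n-t=1+t$ collapses $C_t$ to a single element. When $n$ is odd and $t=(n-1)/2$, the chain is the singleton $\{(t+1,t+1)\}$ of rank $n-1$, which is the middle rank and therefore symmetric. Combining this decomposition with Theorem~\ref{thm:product-SCD} completes the proof.
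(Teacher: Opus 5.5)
Your proof is correct. It uses the same reduction as the paper: an explicit symmetric chain decomposition of $H_n$, followed by Theorem~\ref{thm:product-SCD}. The decomposition of $H_n$ itself is different.

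The paper uses zigzag chains $\mathcal C_q$, consisting of the points with $j-i\in\{2q,2q+1\}$ and running $(1,1+2q)<(1,2+2q)<(2,2+2q)<\cdots<(n-2q,n)$. You use nested hooks indexed by $t=\min(i-1,n-j)$, with both endpoints on the diagonal $i=j$.

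In the paper's version the partition check is immediate, with no corner case, because each value $j-i\in\{0,\ldots,n-1\}$ lies in exactly one pair $\{2q,2q+1\}$. In yours, saturation and the rank identity $2t+(2n-2-2t)=2n-2$ are more transparent, but the partition needs the $\min$ bookkeeping at the corner $(1+t,n-t)$.

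There is one small step you left implicit. To place $(i,j)$ in $C_t$ you need $C_t$ to exist, that is, $t\le\lfloor (n-1)/2\rfloor$. This follows from $2\min(i-1,n-j)\le (i-1)+(n-j)\le (j-1)+(n-j)=n-1$, using $i\le j$.

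Both decompositions are invariant under the self-duality $(i,j)\mapsto(n+1-j,n+1-i)$; yours swaps the two legs of each hook. Either one serves Remark~\ref{rem:shadow-criteria}(ii) equally well, since only the existence of some symmetric chain decomposition is used later.
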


\begin{proof}
For $0\le q\le \left\lfloor\frac{n-1}{2}\right\rfloor$, define
\[
\mathcal C_q
=
\{(i,i+2q):1\le i\le n-2q\}
\cup
\{(i,i+2q+1):1\le i\le n-2q-1\}.
\]
The elements of $\mathcal C_q$ form the saturated chain,
\[
(1,1+2q)
<
(1,2+2q)
<
(2,2+2q)
<
(2,3+2q)
<
\cdots
<
(n-2q,n).
\]
Its minimum and maximum elements have ranks $2q$ and $2n-2q-2$, respectively, whose sum is $2n-2=\operatorname{rank}(H_n)$.
Thus $\mathcal C_q$ is symmetric.

For each $q$, the chain $\mathcal C_q$ consists precisely of the elements $(i,j)\in H_n$ satisfying $j-i=2q$ or $j-i=2q+1$.
Every integer $d\in\{0,1,\ldots,n-1\}$ belongs to exactly one pair $\{2q,2q+1\}$, 
namely the pair with $q=\lfloor d/2\rfloor$.
Hence the chains $\mathcal C_q$ are pairwise disjoint and their union
is $H_n$.

Since $[k]$ is a chain, it admits a symmetric chain decomposition.
Therefore Theorem~\ref{thm:product-SCD} implies that $[k]\times H_n$ admits a symmetric chain decomposition.
\end{proof}

\subsection{The case \texorpdfstring{$k\ge3$}{k at least 3}}
\label{subsec:Hn-large-k}

We now consider the case $k\ge3$.
Since the constant term of $\mathcal N_{[k]\times H_n}(x)$ is $1$, palindromicity implies monicity.
By Ding and Dong~\cite[Lemma~2.3]{DingDong2019}, the monic cases satisfy
\begin{equation}
\label{eq:H-monicity-parameter}
k=2n-1-4s
\end{equation}
for some nonnegative integer $s$.
Under~\eqref{eq:H-monicity-parameter}, the poset $[k]\times H_n$ has middle rank $r_0=2(n-s-1)$.

By Proposition~\ref{prop:Hn-SCD}, the poset $[k]\times H_n$ admits a symmetric chain decomposition.
Hence, by Remark~\ref{rem:shadow-criteria}, every $X\subseteq([k]\times H_n)_t$ with $t<r_0$ satisfies $|\nabla X|\ge |X|$.
Let $L=([k]\times H_n)_{r_0}$.
Equivalently,
\[
L
=
\{(a,i,j)\in[k]\times H_n:
a+i+j=2n-2s+1\}.
\]
Since $[k]\times H_n$ admits a symmetric chain decomposition, the middle rank $L$ is a maximum antichain.
Moreover, $\mathcal N_{[k]\times H_n}(x)$ is monic, so the maximum antichain is unique.

Let $w=|L|=w([k]\times H_n)$.
Since $[k]\times H_n$ is self-dual by Proposition~\ref{prop:Hn-self-dual}, Theorem~\ref{thm:self-dual-defect} gives
\begin{equation}
\label{eq:H-two-sided-defect}
D_1([k]\times H_n,L)
=
2D_1^-([k]\times H_n,L),
\end{equation}
where $D_1^-([k]\times H_n,L)$, defined in~\eqref{eq:one-sided-defect-count}, denotes the number of defect-one antichains lying strictly below $L$.
Combining~\eqref{eq:H-two-sided-defect} with Corollary~\ref{cor:defect-one-coefficient}, we see that palindromicity requires
\begin{equation}
\label{eq:H-palindromicity-defect-condition}
2D_1^-([k]\times H_n,L)
=
|[k]\times H_n|-w.
\end{equation}

By Theorem~\ref{thm:weak-shadow-propagation}, if $B\in\Ant(([k]\times H_n)_{<r_0})$ has $L$-defect one and $\rho_{\min}(B)\le r_0-2$, then its lowest-rank part $X=B\cap([k]\times H_n)_{\rho_{\min}(B)}$ satisfies $|\nabla X|=|X|$.
Thus the remaining task is to determine the nonempty sets below the middle rank for which the weak shadow inequality is an equality.

\begin{prop}
\label{prop:H-zero-growth}
Let $k=2n-1-4s\ge3$, and let $r_0=2(n-s-1)$ be the middle rank of $[k]\times H_n$.
Suppose that $t<r_0$ and that
$X\subseteq([k]\times H_n)_t$ is nonempty.
Then $|\nabla X|=|X|$ if and only if, for some integers $u$ and $q$ with $1\le u\le q\le s$,
\[
t=k+2q-3
\qquad\text{and}\qquad
X=\{(k,i,2q-i):u\le i\le q\}.
\]
In this case,
\[
\nabla X
=
\{(k,i,2q+1-i):u\le i\le q\}.
\]
\end{prop}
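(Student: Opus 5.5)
We plan to analyse equality in the symmetric-chain matching of Remark~\ref{rem:shadow-criteria}(ii) through an explicit decomposition. The proof has two directions; the easy one is first.

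\emph{Sufficiency.} For $X=\{(k,i,2q-i):u\le i\le q\}$ we need $i\le 2q-i$, i.e.\ $i\le q$, and $2q-i\le n$, which holds since $q\le s<n$. The rank is $k+2q-3$. An element $(k,i,j)$ can only be covered by $(k,i+1,j)$ (if $i+1\le j$) or $(k,i,j+1)$, since the first coordinate is already maximal. For $i<q$ both covers $(k,i+1,2q-i)$ and $(k,i,2q+1-i)$ have coordinate sum $2q+1$. The first of these equals $(k,i',2q+1-i')$ with $i'=i+1$. For $i=q$ the cover $(k,q+1,q)$ is not in $H_n$. Hence $\nabla X=\{(k,i,2q+1-i):u\le i\le q\}$, which has the same cardinality as $X$. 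The condition $t<r_0$ becomes $k+2q-3<2n-2s-2$, i.e.\ $2q<4s+2-2s$, i.e.\ $q\le s$.

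\emph{Necessity.} This is the main work. The plan is to build a specific symmetric chain decomposition $\mathscr C$ of $[k]\times H_n$, namely the product of $[k]$ with the chains $\mathcal C_q$ of Proposition~\ref{prop:Hn-SCD} under the standard grid decomposition of $[k]\times[\ell]$. Equality $|\nabla X|=|X|$ forces $\nabla X$ to be exactly the set of chain-successors $\{x^+:x\in X\}$. In particular, every cover of an element of $X$ must be the $\mathscr C$-successor of some element of $X$.

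We then propagate constraints using two facts. First, elements not in the top layer have the extra cover $(a+1,i,j)$. Second, elements with $i<j$ have two covers inside $H_n$. The goal is to show that every element of $X$ must have $a=k$ and $i+j$ even near the diagonal. More robustly, we can argue directly without a chain decomposition: define an injection $\phi\colon X\to\nabla X$ that avoids some cover, then show that any element of $X$ outside the claimed shape yields an extra element of $\nabla X$ not in $\phi(X)$. For $X$ inside the layer $a=k$ and the diagonal strip, the covers of $(k,i,j)$ are $(k,i+1,j)$ and $(k,i,j+1)$. Equality then forces the ``staircase'' shape $X=\{(k,i,t'-i)\}$, a consecutive run ending at the diagonal; a run not reaching the diagonal gains one extra element. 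The total $i+j=2q$ must be even, because when $i+j$ is odd the diagonal element $(k,q,q+1)$ has the two covers $(k,q+1,q+1)$ and $(k,q,q+2)$ and gives growth.

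For elements with $a<k$, we would use the fact that the projection of $X$ to $H_n$ at the largest layer $a$ already needs $|\nabla|\ge$ its size within layers $a$ and $a+1$. The copy at layer $a+1$ then adds fresh elements, with growth at least one unless that slice is empty. This requires $t<r_0$ so that rank levels of $H_n$ (with $k$ layers) are still increasing in the relevant window. We expect this step to be the main obstacle: controlling interactions between layers $a$ and $a+1$ when several layers of $X$ are occupied. The approach is induction on the lowest occupied layer, using the normalized matching of $[k]\times H_n$ restricted to ranks below the middle, and the count $|H_n|$ rank sizes $\lfloor t/2\rfloor+1$ for small $t$, to show strict growth whenever some layer $a<k$ is used. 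Finally, the bound $q\le s$ comes from $t<r_0$.
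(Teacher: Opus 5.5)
Your sufficiency argument is correct. Your single-diagonal analysis is essentially Step~3 of the paper: each component of the index set contributes one unit of growth unless it contains the point $(q,q)$, and an odd diagonal is impossible. The gap is necessity for general $X$. You only treat the case where every element of $X$ has first coordinate $k$, and the case where $X$ also meets layers $a<k$ is left as ``the main obstacle.''

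The tool you propose for that case is false. Write $h_r$ for the rank numbers of $H_n$ and $p_t=\sum_{r=t-k+1}^{t}h_r$ for those of $[k]\times H_n$. At $t=k+2q-3$ one gets $p_{t+1}-p_t=h_{k+2q-2}-h_{2q-2}\ge1$. Normalized matching would then give $|\nabla X|\ge\frac{p_{t+1}}{p_t}|X|>|X|$ for every nonempty $X$ in that rank, contradicting the very zero-growth sets you are characterizing. Concretely, $[3]\times H_4$ has rank sizes $1,2,4,5,6,\dots$, yet $X=\{(3,1,1)\}$ at rank $2$ has $\nabla X=\{(3,1,2)\}$.

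Your chain-successor observation is correct but never developed. The layer-by-layer sketch also misses the real difficulty: shadows from different layers overlap. The cover $(a,i,j)$ of $(a-1,i,j)$ is also a cover of $(a,i',j')$ whenever $(i',j')\lessdot(i,j)$ in $H_n$, and these two elements have the same rank.

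The missing idea is to handle all layers at once. Project rank $t$ onto the strip $\{(i,j)\in H_n:\lambda\le i+j\le\lambda+k-1\}$, $\lambda=t+3-k$, and let $S$ be the image of $X$. The lower diagonal $D_\lambda$ of the strip carries exactly the elements with $a=k$. Since $(a+1,i,j)$ projects to $(i,j)$, one gets $|\nabla X|-|X|=|N_H^+(S)\setminus(S\setminus D_\lambda)|-|S\cap D_\lambda|$. The argument then runs as follows.
\begin{enumerate}
\item If $S\cap D_\lambda=\varnothing$, a point of $S$ with maximal $i+j$ has an $H_n$-cover outside $S$, so there is growth.
\item Otherwise, a first-exit argument shows that the outer boundary $N_H^+(S)\setminus(S\setminus D_\lambda)$ is a vertex cut separating $U_\lambda=N_H^+(S\cap D_\lambda)$ from $D_{\lambda+k}$.
\item Explicit vertex-disjoint zigzag paths, $|U_\lambda|$ of them, show this cut has size at least $|U_\lambda|\ge|S\cap D_\lambda|$. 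So equality holds throughout, and your diagonal analysis applies.
\item One then shows $U_\lambda$ is the \emph{unique} minimum cut: omitting one of its vertices produces $|U_\lambda|+1$ paths that are disjoint apart from one shared start vertex. Hence the outer boundary equals $U_\lambda\subseteq D_{\lambda+1}$.
\item Any $x\in S\setminus D_\lambda$ has an upward path to $D_{\lambda+k}$. Its first vertex leaving $S\setminus D_\lambda$ would lie in the outer boundary, hence in $D_{\lambda+1}$, yet on a diagonal $\ge\lambda+2$, a contradiction.
\end{enumerate}
Note also that $t<r_0$ is needed for more than $q\le s$. It gives $\lambda\le 2s+1<n$, which keeps $D_\lambda$ off the top boundary $j=n$. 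Points there have a single cover and would yield extra zero-growth runs. The same bound keeps the disjoint paths inside $H_n$.
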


\begin{proof}
The proof is given in Appendix~\ref{app:technical-counts}.
\end{proof}

The preceding classification immediately localizes defect-one antichains to the two ranks immediately below the middle rank.

\begin{cor}
\label{cor:H-two-level-localization}
Let $B\in\Ant(([k]\times H_n)_{<r_0})$ have $L$-defect one.
Then
\begin{equation}
\label{eq:H-two-level-localization}
B
\subseteq
([k]\times H_n)_{r_0-2}
\cup
([k]\times H_n)_{r_0-1}.
\end{equation}
Moreover, if
$B\cap([k]\times H_n)_{r_0-2}\ne\varnothing$, then for some
integer $u$ with $1\le u\le s$,
\begin{equation}
\label{eq:H-lowest-rank-part}
B\cap([k]\times H_n)_{r_0-2}
=
\{(k,i,2s-i):u\le i\le s\}.
\end{equation}
\end{cor}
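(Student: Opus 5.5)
The plan is to combine the rank-pushing machinery of Theorem~\ref{thm:weak-shadow-propagation} with the zero-growth classification in Proposition~\ref{prop:H-zero-growth}. Write $P=[k]\times H_n$ and let $B\in\Ant(P_{<r_0})$ have $L$-defect one. If $\rho_{\min}(B)\ge r_0-1$, then $B\subseteq P_{r_0-1}$ and both claims hold trivially. So assume $\rho_{\min}(B)\le r_0-2$.

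Set $t=\rho_{\min}(B)$ and $X=B\cap P_t$. By Theorem~\ref{thm:weak-shadow-propagation}, which applies because $P$ has the upward matching property below $L$ (Proposition~\ref{prop:Hn-SCD} and Remark~\ref{rem:shadow-criteria}), we get $|\nabla X|=|X|$. Since $X$ is nonempty and $t<r_0$, Proposition~\ref{prop:H-zero-growth} gives integers $1\le u\le q\le s$ with $t=k+2q-3$ and $X=\{(k,i,2q-i):u\le i\le q\}$.

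We then pin down $q$ by iterating. The pushed set $B^\uparrow$ again has defect one and lies in $P_{<r_0}$. Its lowest occupied rank is $t+1$, and its part there contains $\nabla X=\{(k,i,2q+1-i):u\le i\le q\}$, together with whatever of $B$ already sat in rank $t+1$.

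\begin{itemize}
\item If $t+1\le r_0-2$, Theorem~\ref{thm:weak-shadow-propagation} applies to $B^\uparrow$. So its lowest-rank part $X'$ has zero growth. By Proposition~\ref{prop:H-zero-growth}, $X'$ lies on some rank $k+2q'-3$.
\item That rank must equal $t+1=k+2q-2$. But $k+2q'-3$ has the same parity as $k-1$, while $k+2q-2$ has the same parity as $k$. This is a contradiction.
\end{itemize}

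Hence $t+1\ge r_0-1$, and since $t\le r_0-2$ we get $t=r_0-2$. Now $r_0=2(n-s-1)$ and $k=2n-1-4s$ give $r_0-2=k+2s-3$, so $q=s$. Therefore $B\subseteq P_{r_0-2}\cup P_{r_0-1}$, and the lowest-rank part is $\{(k,i,2s-i):u\le i\le s\}$, which is exactly \eqref{eq:H-lowest-rank-part}.

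This also covers the ``moreover'' clause. If $B$ meets $P_{r_0-2}$, then $B\cap P_{r_0-2}$ is nonempty and lies in the lowest occupied rank. The argument above identifies it as the set in \eqref{eq:H-lowest-rank-part}.

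The main point needing care is the parity step. It requires that the zero-growth ranks $k+2q-3$ in Proposition~\ref{prop:H-zero-growth} are never consecutive, which is immediate. It also requires that every rank-pushed set still falls under that proposition: it must be nonempty, lie strictly below $r_0$, and have defect one. Lemma~\ref{lem:rank-pushing} guarantees all three.
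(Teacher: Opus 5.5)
Your proof is correct and follows essentially the same route as the paper's. You take the lowest-rank part, get zero growth from Theorem~\ref{thm:weak-shadow-propagation}, place it on a rank $k+2q-3$ via Proposition~\ref{prop:H-zero-growth}, push once, and derive a parity contradiction unless $t=r_0-2$; then $r_0-2=k+2s-3$ forces $q=s$. The only difference is organizational: you obtain both claims in a single pass, whereas the paper proves the localization first and then reruns the argument for the ``moreover'' clause.
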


\begin{proof}
Let $t=\rho_{\min}(B)$ and put $X=B\cap([k]\times H_n)_t$.
Suppose that $t<r_0-2$.
By Theorem~\ref{thm:weak-shadow-propagation}, $|\nabla X|=|X|$, so Proposition~\ref{prop:H-zero-growth} gives $t=k+2q-3$ for some integer $q$ with $1\le q\le s$.

Apply one rank-pushing step to the lowest-rank part $X$ of $B$ and set $B^\uparrow=(B\setminus X)\cup\nabla X$.
By Theorem~\ref{thm:weak-shadow-propagation}, $B^\uparrow$ is again an antichain of $L$-defect one.
Since $X\ne\varnothing$ and $|\nabla X|=|X|$, the upper shadow
$\nabla X$ is nonempty and is contained in
$([k]\times H_n)_{t+1}$.
Therefore the lowest occupied rank of $B^\uparrow$ is exactly $t+1$.
Since $t+1\le r_0-2$, Theorem~\ref{thm:weak-shadow-propagation} shows that its lowest-rank part $X'=B^\uparrow\cap([k]\times H_n)_{t+1}$ satisfies $|\nabla X'|=|X'|$.
Hence Proposition~\ref{prop:H-zero-growth} gives
\[
t+1=k+2q'-3
\]
for some integer $q'$ with $1\le q'\le s$.
This is impossible, since $t=k+2q-3$ and $t+1=k+2q'-3$ imply that $t$ and $t+1$ have the same parity, whereas consecutive integers have opposite parity.
Therefore $t\ge r_0-2$.
Since $B\subseteq([k]\times H_n)_{<r_0}$, this proves~\eqref{eq:H-two-level-localization}.

Now suppose that $B\cap([k]\times H_n)_{r_0-2}\ne\varnothing$.
Then $\rho_{\min}(B)=r_0-2$.
By Theorem~\ref{thm:weak-shadow-propagation}, the set $X=B\cap([k]\times H_n)_{r_0-2}$ satisfies $|\nabla X|=|X|$.
Hence Proposition~\ref{prop:H-zero-growth} gives $r_0-2=k+2q-3$
for some integer $q$ with $1\le q\le s$.
On the other hand, $r_0-2=k+2s-3$.
Therefore $q=s$.
It follows that~\eqref{eq:H-lowest-rank-part} holds for some integer $u$ with $1\le u\le s$.
This completes the proof.
\end{proof}

\begin{lem}
\label{lem:Hn-central-geometry}
Under the projection $(a,i,j)\longmapsto(i,j)$, the middle rank $([k]\times H_n)_{r_0}$ is identified with
\[
L_{n,s}
=
\{(i,j)\in H_n:
2s+2\le i+j\le 2n-2s\},
\]
while the rank $([k]\times H_n)_{r_0-1}$ is identified with
\[
M_{n,s}
=
\{(i,j)\in H_n:
2s+1\le i+j\le 2n-2s-1\}.
\]
Moreover,
\[
|L_{n,s}|
=
\binom{n+1}{2}-2s(s+1),
\qquad
|M_{n,s}|=|L_{n,s}|-1.
\]
\end{lem}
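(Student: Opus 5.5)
The plan is to mirror the proof of Lemma~\ref{lem:central-level-geometry}. The rank function on $[k]\times H_n$ is $\rho(a,i,j)=a+i+j-3$, and the middle rank is $r_0=2(n-s-1)$ with $k=2n-1-4s$. Hence an element of rank $r_0$ satisfies $a=2n-2s+1-i-j$, and an element of rank $r_0-1$ satisfies $a=2n-2s-i-j$. The first step is therefore to observe that the projection $(a,i,j)\mapsto(i,j)$ is injective on each rank level. Its image consists of those $(i,j)\in H_n$ for which the forced value of $a$ lies in $[k]$.

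For rank $r_0$, the condition $1\le a$ is $i+j\le 2n-2s$. The condition $a\le k=2n-1-4s$ is $i+j\ge 2s+2$. This gives exactly $L_{n,s}$. For rank $r_0-1$, the same computation gives $2s+1\le i+j\le 2n-2s-1$, which is exactly $M_{n,s}$.

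For the cardinalities, we have $|H_n|=\binom{n+1}{2}$, so it suffices to count the two corner regions of $H_n$ cut off by the sum constraints.
\begin{itemize}
\item The lower corner $\{(i,j)\in H_n: i+j\le c\}$ is counted along the diagonals $i+j=t$. For $2\le t\le n+1$, the number of pairs with $i\le j$ is $\lfloor t/2\rfloor$. Hence the lower region for $L_{n,s}$ (that is, $t\le 2s+1$) has $\sum_{t=2}^{2s+1}\lfloor t/2\rfloor=s^2+s-s=s^2$ elements. Explicitly, the sum runs $1,1,2,2,\dots,s,s$, with the last term $t=2s+1$ contributing $s$, so the total is $2\binom{s+1}{2}-0=s(s+1)$. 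I will double-check this bookkeeping carefully.
\item By the duality of Proposition~\ref{prop:Hn-self-dual} restricted to $H_n$, namely $(i,j)\mapsto(n+1-j,n+1-i)$, which sends $i+j$ to $2n+2-(i+j)$, the upper corner $i+j\ge 2n-2s+1$ corresponds to $i+j\le 2s+1$. It therefore has the same size $s(s+1)$.
\item This yields $|L_{n,s}|=\binom{n+1}{2}-2s(s+1)$.
\end{itemize}

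For $M_{n,s}$, the excluded lower region is $t\le 2s$, which has $s(s+1)-s=s^2$ elements. The excluded upper region is $t\ge 2n-2s$, which corresponds to $t\le 2s+2$ and has $s(s+1)+(s+1)=(s+1)^2$ elements. The total excluded is $2s^2+2s+1$, which gives $|M_{n,s}|=|L_{n,s}|-1$.

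The only step needing care is the diagonal counting, namely verifying $\#\{(i,j)\in H_n:i+j=t\}=\lfloor t/2\rfloor$ in the range used. This requires $2s+2\le n+1$, which should follow from $k\ge3$, i.e.\ $4s\le 2n-4$. This range condition is where I expect to spend the most attention. I also need $2s+2\le 2n-2s$ so that the corner regions do not overlap, which follows from the same inequality.
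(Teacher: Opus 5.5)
Your proof is correct and follows the paper's argument. You solve the rank equation for $a$, translate $1\le a\le k$ into the bounds on $i+j$, and subtract the two corner regions, counting the upper one via the involution $(i,j)\mapsto(n+1-j,n+1-i)$. The paper counts $\{i+j\le t\}$ row by row as $\lfloor t^2/4\rfloor$, whereas you sum $\lfloor t/2\rfloor$ over diagonals, which is the same count. Delete the erroneous intermediate ``$s^2+s-s=s^2$'' for the region $i+j\le 2s+1$; the correct value is $s(s+1)$, which is what you actually use afterwards. Your check that $k\ge3$ gives $2s+2\le n$ is exactly what justifies the diagonal count and the disjointness of the two corners.
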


\begin{proof}
For $(a,i,j)\in([k]\times H_n)_{r_0}$, we have $a=r_0+3-i-j$.
Thus $1\le a\le k$ is equivalent to $r_0+3-k\le i+j\le r_0+2$.
Using $r_0=2n-2s-2$ and~\eqref{eq:H-monicity-parameter}, this becomes
\[
2s+2\le i+j\le2n-2s.
\]
Hence the projection $(a,i,j)\mapsto(i,j)$ identifies the middle rank with $L_{n,s}$.

Similarly, for $(a,i,j)\in([k]\times H_n)_{r_0-1}$, we have $a=r_0+2-i-j$, and the condition $1\le a\le k$ becomes
\[
2s+1\le i+j\le2n-2s-1.
\]
Thus the rank $r_0-1$ is identified with $M_{n,s}$.

For every positive integer $t\le n+1$, the elements
$(i,j)\in H_n$ satisfying $i+j\le t$ are obtained by choosing $1\le i\le\left\lfloor\frac t2\right\rfloor$ and then $i\le j\le t-i$.
Hence their number is
\[
\sum_{i=1}^{\lfloor t/2\rfloor}(t-2i+1)
=
\left\lfloor\frac{t^2}{4}\right\rfloor.
\]

For $L_{n,s}$, the elements of $H_n$ lying outside $L_{n,s}$ are precisely those satisfying $i+j\le 2s+1$ or $i+j\ge 2n-2s+1$.
Under the involution $(i,j)\longmapsto(n+1-j,n+1-i)$, the second region is mapped bijectively onto the first.
Hence both regions contain
\[
\left\lfloor\frac{(2s+1)^2}{4}\right\rfloor
=
s(s+1)
\]
elements.
Therefore
\[
|L_{n,s}|
=
\binom{n+1}{2}-2s(s+1).
\]

For $M_{n,s}$, the elements outside it satisfy $i+j\le2s$ or $i+j\ge2n-2s$.
The first region is counted by $t=2s$.
Under the same involution, the second region is mapped onto
\[
\{(i,j)\in H_n:i+j\le2s+2\},
\]
which is counted by $t=2s+2$.
Thus
\[
\begin{aligned}
|M_{n,s}|
&=
\binom{n+1}{2}
-\left\lfloor\frac{(2s)^2}{4}\right\rfloor
-\left\lfloor\frac{(2s+2)^2}{4}\right\rfloor\\
&=
\binom{n+1}{2}-s^2-(s+1)^2\\
&=
|L_{n,s}|-1.
\end{aligned}
\]
This prove the lemma.
\end{proof}

Under the identifications in Lemma~\ref{lem:Hn-central-geometry}, the upper shadow of a subset $S\subseteq M_{n,s}$ is
\[
\nabla S
=
\{(i,j)\in L_{n,s}:
(i,j)\in S,\ (i-1,j)\in S,\text{ or }(i,j-1)\in S\}.
\]
Since $S$ lies in the rank immediately below $L$, its neighborhood in $L$ is precisely its upper shadow. 
Hence $\delta_L(S)=|\nabla S|-|S|$.
Therefore $S$ has $L$-defect one if and only if $|\nabla S|=|S|+1$.

Let
\[
T_{n,s}
=
\bigl|
\{S\subseteq M_{n,s}:S\ne\varnothing,\ |\nabla S|=|S|+1\}
\bigr|.
\]

\begin{prop}
\label{prop:H-tight-count}
Assume $s\ge1$. Then
\begin{equation}
\label{eq:H-tight-count}
T_{n,s}
=
(s+1)^2
+
\binom{n-2s}{2}
+
n-s
-
\mathbf 1_{\{n=2s+2\}}.
\end{equation}
Here $\mathbf 1_{\{n=2s+2\}}$ denotes the indicator of the condition $n=2s+2$.
\end{prop}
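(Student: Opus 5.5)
The plan is to classify the tight subsets $S\subseteq M_{n,s}$, those with $|\nabla S|=|S|+1$, explicitly, in the same way Lemma~\ref{lem:tight-set-classification} does for $M_{k,m,s}$. Then $T_{n,s}$ is obtained by counting each family.

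First we set up a matching. Each $(i,j)\in M_{n,s}$ is sent to an element of $L_{n,s}$ covering it. Take $(i,j+1)$ when this lies in $L_{n,s}$, that is, when $j<n$ and $i+j+1\le 2n-2s$. Otherwise take $(i+1,j)$, which is available when $i<j$.

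Since $|M_{n,s}|=|L_{n,s}|-1$ by Lemma~\ref{lem:Hn-central-geometry}, this gives an injection $\mu\colon M_{n,s}\to L_{n,s}$ missing exactly one cell. It also yields $|\nabla S|\ge|S|$. Tightness then means that $\nabla S\setminus\mu(S)$ consists of exactly one cell.

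Next we analyse this surplus the way one treats a staircase region. The region $L_{n,s}$ is bounded by four pieces: the lower diagonal $i+j=2s+2$, the upper diagonal $i+j=2n-2s$, the right side $j=n$, and the folding diagonal $i=j$ (or $j=i+1$).

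For a nonempty $S$, each "boundary run" of $S$ creates an extra shadow cell. We would show the following: if $S$ is not all of $M_{n,s}$, and $S$ meets two separate boundary runs or leaves a gap, then $|\nabla S|\ge|S|+2$.

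This leaves a list of candidate families.
\begin{enumerate}[label=\textup{(\roman*)}]
\item The whole set $M_{n,s}$.
\item Consecutive segments along the lower cut $i+j=2s+1$. This cut has $s$ cells and contributes segment counts of the form $\binom{s+1}{2}$.
\item Segments along the upper part near $i+j=2n-2s-1$. We expect these, together with (ii), to account for the $(s+1)^2$ term.
\item Segments along the column $j=n$, of length $n-2s-1$, giving $\binom{n-2s}{2}$.
\item A family of size $n-s$, coming from sets adjacent to the folding diagonal. Here the shadow of $(i,i)$ is the single cell $(i,i+1)$, so diagonal cells behave like the $C$-sets in the three-chain case.
\end{enumerate}

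Concretely, we would compute $\nabla$ of the sets $\{(i,i)\}\cup\{\dots\}$ and of their segments along the diagonal strips. The aim is to identify which unions give surplus exactly one, and to show the total from (ii), (iii) and (v) is $(s+1)^2+n-s$.

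The indicator term arises from a coincidence. When $n=2s+2$, the column family in (iv) and one of the diagonal families share a set; alternatively, a set from two families coincides with $M_{n,s}$ itself. Either way one set is counted twice, so we subtract $\mathbf 1_{\{n=2s+2\}}$.

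The main obstacle is the exhaustiveness step: proving that no other $S$ is tight. We plan to argue by peeling. Choose an extremal cell of $S$, say a cell with maximal $j$ and then maximal $i$. Show that removing it lowers $|\nabla S|-|S|$ by at most zero unless $S$ has one of the listed shapes. Then induct on $|S|$, keeping track of how components of $S$ interact with the four boundary pieces.

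The fold at $i=j$ is where this local analysis differs from the product-of-chains case. Near the fold we would check small configurations directly, by hand or with a short computer verification for small $n,s$, to calibrate the argument before writing it in general.
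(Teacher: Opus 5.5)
There is a genuine gap, and it starts with the shadow itself. Write $D_t=\{(i,j)\in H_n:i+j=t\}$, $m=n-2s-1$ and $b=2n-2s-1$. Under the projection, the upper shadow of $S\subseteq M_{n,s}$ is $\nabla S=(S\setminus D_{2s+1})\cup N_H^+(S)$. So every cell off the bottom diagonal also covers its own copy in $L_{n,s}$, by raising the $[k]$-coordinate.

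Your matching uses only $H_n$-covers, and it is not even injective: $(1,n)$ and $(2,n-1)$ are both sent to $(2,n)$. The inequality $|\nabla S|\ge|S|$ is still true, e.g.\ via the symmetric chain decomposition. Also, the shadow of a fold cell $(i,i)$ is $\{(i,i),(i,i+1)\}$, not the single cell $(i,i+1)$. Otherwise $\{(i,i)\}$ would have defect $0$, contradicting uniqueness of $L$.

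Because of this, only your families (ii) and (iv) are right. Segments near the upper diagonal $D_b$ give nothing new: the only tight subset of $D_b$ is $\{(m,n)\}$, which is already a column set. Sets of two or more fold cells are never tight, since their shadows are pairwise disjoint. The actual tight sets are:
\begin{itemize}
\item[(I)] intervals of $D_{2s+1}$, giving $\binom{s+1}{2}$ sets;
\item[(II)] $\{(i,2s+1-i):\ell\le i\le s\}\cup\{(i,2s+2-i):q\le i\le s+1\}$ with $1\le\ell\le q\le s+1$, giving $\binom{s+2}{2}$ sets;
\item[(III)] intervals of the column $j=n$, giving $\binom{n-2s}{2}$ sets;
\item[(IV)] fold singletons $\{(i,i)\}$ with $s+2\le i\le n-s-1$, giving $n-2s-2$ sets;
\item[(V)] $(M_{n,s}\setminus D_b)\cup\{(i,b-i):m\le i\le m+\rho-1\}$ with $0\le\rho\le s+1$, giving $s+2$ sets.
\end{itemize}
So the rest of $(s+1)^2$ comes from (II), which sits at the bottom, not from an upper family. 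Also $n-s=(n-2s-2)+(s+2)$, and $M_{n,s}$ is just the case $\rho=s+1$ of (V); listing it separately next to your (ii)--(v) would overcount.

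The indicator is not a column coincidence. When $n=2s+2$, the set $M_{n,s}\setminus D_b=D_{2s+1}\cup D_{2s+2}$ is both of type (II) with $\ell=q=1$ and of type (V) with $\rho=0$. Finally, the sets in (II) and in (V) with $\rho\le s$ are tight, differ from $M_{n,s}$, and touch several boundary pieces at once. So your planned lemma that such sets have defect at least $2$ cannot hold.

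The exhaustiveness plan does not close either, because tightness is not inherited by subsets. $M_{n,s}$ is tight, but its extremal cell $(m,n)$ (maximal $j$, then maximal $i$) has no private shadow cell: $(m,n)$ and $(m+1,n)$ are also covered by $(m,n-1)$ and $(m+1,n-1)$. Thus $M_{n,s}\setminus\{(m,n)\}$ still has shadow $L_{n,s}$ and defect $2$, and an induction on $|S|$ over tight sets has nothing to apply to. The defect of the large tight sets is a global balance: each cell of $S\cap D_{2s+1}$ contributes $-1$ locally, and this is paid back only at the far boundary.

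The missing idea is the paper's cut formulation.
\begin{itemize}
\item Write $|\nabla S|-|S|=|\partial_HS|-|S\cap D_{2s+1}|$, where $\partial_HS=\nabla S\setminus(S\setminus D_{2s+1})$.
\item Observe that $\partial_HS$ is a vertex cut separating $U=N_H^+(S\cap D_{2s+1})$ from $D_{2n-2s}$ in the Hasse graph of the strip.
\item There are $|U|$ vertex-disjoint zigzag paths from $U$ to $D_{2n-2s}$, so $|\partial_HS|\ge|U|=|S\cap D_{2s+1}|+c$, where $c$ is the number of interval components. Tightness therefore forces a single interval and a minimum cut.
\item Classifying the minimum cuts gives (I), (II) and (V): the cuts are either bottom cuts, or, only when $D_{2s+1}\subseteq S$, top cuts.
\item When $S\cap D_{2s+1}=\varnothing$, the unique boundary cell must lie on every upward path from $S$. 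Any cell with $i<j<n$ has two internally disjoint upward paths, so $S$ is confined to the column $j=n$ or to a single fold cell. This gives (III) and (IV).
\end{itemize}
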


\begin{proof}
The classification of the subsets satisfying
$|\nabla S|=|S|+1$ is given in
Appendix~\ref{app:technical-counts}, from which the formula follows.
\end{proof}

Let $C_{n,s}$ denote the number of $L$-defect-one antichains $B\subseteq([k]\times H_n)_{<r_0}$ satisfying
\[
B\cap([k]\times H_n)_{r_0-2}\ne\varnothing.
\]

\begin{prop}
\label{prop:H-cross-level-count}
Assume $s\ge1$. Then
\begin{equation}
\label{eq:H-cross-level-count}
C_{n,s}
=
\frac{s(2s^2+15s+19)}{6}
-
s\,\mathbf 1_{\{n=2s+2\}},
\end{equation}
where $\mathbf 1_{\{n=2s+2\}}$ denotes the indicator of the condition
$n=2s+2$.
\end{prop}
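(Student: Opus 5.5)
The plan is to reduce the count of $C_{n,s}$ to a count of tight subsets of $M_{n,s}$ that contain a prescribed boundary segment, using one rank-pushing step. Let $B$ be an $L$-defect-one antichain below $L$ with $B\cap([k]\times H_n)_{r_0-2}\ne\varnothing$. By Corollary~\ref{cor:H-two-level-localization}, $B=X_u\sqcup Y$ where
\[
X_u=\{(k,i,2s-i):u\le i\le s\},\qquad 1\le u\le s,
\]
and $Y\subseteq([k]\times H_n)_{r_0-1}$. By Proposition~\ref{prop:H-zero-growth},
\[
\nabla X_u=\{(k,i,2s+1-i):u\le i\le s\}.
\]

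First I will establish the bijection $B\mapsto(u,S)$, where $S:=B^\uparrow=\nabla X_u\sqcup Y$. The elements of rank $r_0-1$ lying above some element of $X_u$ are exactly the elements of $\nabla X_u$. Hence $X_u\cup Y$ is an antichain if and only if $Y\cap\nabla X_u=\varnothing$. Lemma~\ref{lem:rank-pushing} gives $\Gamma_L(B)=\Gamma_L(S)$, and Theorem~\ref{thm:weak-shadow-propagation} gives $\delta_L(S)=\delta_L(B)=1$. So $S$ is a tight subset of $M_{n,s}$ containing the projection
\[
\nabla X_u\cong\{(i,2s+1-i):u\le i\le s\}.
\]
This is a terminal segment of the lower boundary diagonal $i+j=2s+1$ of $M_{n,s}$. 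Conversely, let $S$ be any tight set containing this segment, and put $Y=S\setminus\nabla X_u$. Then $X_u\sqcup Y$ is an antichain. Since $X_u$ has zero shadow growth, Lemma~\ref{lem:rank-pushing} shows that its defect equals $\delta_L(S)=1$. Therefore
\[
C_{n,s}=\sum_{u=1}^{s}\bigl|\{S\subseteq M_{n,s}\ \text{tight}:S\supseteq\{(i,2s+1-i):u\le i\le s\}\}\bigr|.
\]

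Next I will invoke the classification of tight subsets of $M_{n,s}$ from Appendix~\ref{app:technical-counts}, which underlies Proposition~\ref{prop:H-tight-count}. In analogy with Lemma~\ref{lem:tight-set-classification}, I expect the families to be the following:
\begin{enumerate}[label=\textup{(\roman*)}]
\item the whole set $M_{n,s}$;
\item consecutive segments along the two boundary lines of the folded staircase;
\item families of ``staircase'' sets attached to the lower diagonal $i+j=2s+1$, which account for the $(s+1)^2$ term;
\item consecutive segments along the diagonal edge $i=j$, which account for the $n-s$ term.
\end{enumerate}
For each fixed $u$, I will go through these families and keep only the members containing the terminal segment $\{(i,2s+1-i):u\le i\le s\}$. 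The whole set $M_{n,s}$ always qualifies. A boundary segment of the lower diagonal qualifies precisely when it extends the given one. The staircase-type sets qualify when their diagonal part begins at or before $u$.

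Each such count should be a quadratic polynomial in $s-u$, possibly with a correction when $n=2s+2$. In that case the upper boundary region collapses, which is the same degeneracy that produces $-\mathbf 1_{\{n=2s+2\}}$ in Proposition~\ref{prop:H-tight-count}. Summing over $u=1,\ldots,s$ should then give the cubic $s(2s^2+15s+19)/6$. The correction for $n=2s+2$ should be exactly one lost set for each $u$, giving the term $-s\,\mathbf 1_{\{n=2s+2\}}$.

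The main obstacle is the second step: determining exactly which tight sets contain the given diagonal segment. This needs the explicit shape of the tight sets from the appendix, not just their number. Here I would first verify the small cases $s=1,2$ by hand to calibrate the per-$u$ counts, which should be $5$ for $s=1$ and total $17$ for $s=2$. I would then match the resulting quadratic in $s-u$ against the families, paying particular attention to the degenerate case $n=2s+2$.
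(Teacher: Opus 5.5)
Your first step is correct and coincides with the paper's Lemma~\ref{lem:H-cross-level-bijection}. A single zero-growth rank-pushing step identifies the cross-level antichains with lower part $X_u$ with the tight sets $S\subseteq M_{n,s}$ containing $\{(i,2s+1-i):u\le i\le s\}$, so $C_{n,s}=\sum_{u=1}^{s}N_u$.

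The gap is in the second step, which is where the whole count lies. You never compute $N_u$, and the classification you expect is not the true one, so the plan as stated gives the wrong number. By Lemma~\ref{lem:H-tight-classification}, with $a=2s+1$, $m=n-2s-1$ and $b=a+2m$, the tight sets are:
\begin{enumerate}[label=\textup{(\Roman*)}]
\item segments $\{(i,a-i):\alpha\le i\le v\}$ of $D_a$;
\item staircases $\{(i,a-i):\ell\le i\le s\}\cup\{(i,a+1-i):q\le i\le s+1\}$ with $1\le\ell\le q\le s+1$;
\item segments of the top row $j=n$;
\item \emph{singletons} $\{(i,i)\}$ with $s+2\le i\le n-s-1$;
\item the $s+2$ sets $(M_{n,s}\setminus D_b)\cup\{(i,b-i):m\le i\le m+\rho-1\}$ with $0\le\rho\le s+1$.
\end{enumerate}
In family (V), $M_{n,s}$ itself is only the case $\rho=s+1$. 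So $(s+1)^2=\binom{s+1}{2}+\binom{s+2}{2}$ comes from (I) and (II) together. Likewise $n-s=(m-1)+(s+2)$ comes from (IV) and (V), not from segments along $i=j$.

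The family you omit is decisive. Every type (V) set contains all of $D_a$, so it contributes to every $N_u$, giving $s+2$ sets per $u$ rather than the single set $M_{n,s}$ in your list. Even if you read your (ii) and (iii) as types (I) and (II), your families undercount $C_{n,s}$ by $s(s+1)$; for example, you would get $4$ instead of $6$ for $s=1$, $n>4$.

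With the correct list the count is immediate:
\begin{itemize}
\item type (I) contributes $u$, since one needs $v=s$ and $\alpha\le u$;
\item type (II) contributes $\sum_{\ell=1}^{u}(s+2-\ell)$, since one needs $\ell\le u$;
\item types (III) and (IV) never meet $D_a$ and contribute nothing;
\item type (V) contributes $s+2$.
\end{itemize}
When $m=1$, subtract $1$, because $M_{n,s}\setminus D_b=D_a\cup D_{a+1}$ is both the type (V) set with $\rho=0$ and the type (II) set with $\ell=q=1$. This coincidence is the precise source of $-s\,\mathbf 1_{\{n=2s+2\}}$. Summing over $u$ gives $\frac{s(s+1)}2+\sum_{j=1}^{s}j(j+1)+s(s+2)=\frac{s(s+1)}2+\frac{s(s+1)(s+2)}3+s(s+2)$, which equals $\frac{s(2s^2+15s+19)}6$.

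Finally, your calibration values $5$ and $17$ are the values for $n=2s+2$ and already include the correction. For $n>2s+2$ the values are $6$ and $19$.
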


\begin{proof}
The enumeration is given in Appendix~\ref{app:technical-counts}.
\end{proof}

Combining the two counts gives the total one-sided defect-one contribution.

\begin{cor}
\label{cor:H-total-defect-one}
Assume $s\ge1$. Then
\begin{equation}
\label{eq:H-total-defect-one}
\begin{aligned}
D_1^-([k]\times H_n,L)
={}&
(s+1)^2
+\binom{n-2s}{2}
+n-s\\
&+
\frac{s(2s^2+15s+19)}{6}
-(s+1)\mathbf 1_{\{n=2s+2\}}.
\end{aligned}
\end{equation}
\end{cor}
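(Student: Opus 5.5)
The corollary is obtained by partitioning the set of $L$-defect-one antichains lying strictly below $L$ according to whether they meet rank $r_0-2$. By Corollary~\ref{cor:H-two-level-localization}, every such antichain $B$ satisfies
\[
B\subseteq([k]\times H_n)_{r_0-2}\cup([k]\times H_n)_{r_0-1}.
\]
So $B$ either lies entirely in $([k]\times H_n)_{r_0-1}$, or it has nonempty intersection with $([k]\times H_n)_{r_0-2}$. These two classes are disjoint and together exhaust all defect-one antichains counted by $D_1^-([k]\times H_n,L)$.

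For the first class, we use that a nonempty subset of the single rank $([k]\times H_n)_{r_0-1}$ is automatically an antichain. Under the identification of Lemma~\ref{lem:Hn-central-geometry}, its $L$-neighborhood is exactly its upper shadow, as already noted before the definition of $T_{n,s}$. Hence these antichains are precisely the nonempty $S\subseteq M_{n,s}$ with $|\nabla S|=|S|+1$. Their number is $T_{n,s}$, which Proposition~\ref{prop:H-tight-count} evaluates.

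The second class is, by definition, counted by $C_{n,s}$, which Proposition~\ref{prop:H-cross-level-count} evaluates. Therefore
\[
D_1^-([k]\times H_n,L)=T_{n,s}+C_{n,s}.
\]
Substituting \eqref{eq:H-tight-count} and \eqref{eq:H-cross-level-count}, the two indicator terms combine as $-\mathbf 1_{\{n=2s+2\}}-s\,\mathbf 1_{\{n=2s+2\}}=-(s+1)\mathbf 1_{\{n=2s+2\}}$, which gives \eqref{eq:H-total-defect-one}.

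No step is a genuine obstacle, since all the difficulty lies in the two cited propositions. The only point requiring care is that the split is a true partition. An antichain that meets rank $r_0-2$ is not counted in $T_{n,s}$, because $T_{n,s}$ counts only subsets of rank $r_0-1$. Conversely, every antichain that avoids rank $r_0-2$ lies in rank $r_0-1$ by the localization \eqref{eq:H-two-level-localization}, and so it is counted in $T_{n,s}$.
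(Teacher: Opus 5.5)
Your proposal is correct and matches the paper's proof: Corollary~\ref{cor:H-two-level-localization} splits the defect-one antichains below $L$ into those contained in rank $r_0-1$, counted by $T_{n,s}$, and those meeting rank $r_0-2$, counted by $C_{n,s}$. Adding Propositions~\ref{prop:H-tight-count} and~\ref{prop:H-cross-level-count} then gives the formula, with the indicator terms combining to $-(s+1)\mathbf 1_{\{n=2s+2\}}$.
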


\begin{proof}
By Corollary~\ref{cor:H-two-level-localization}, every $L$-defect-one antichain below $L$ either lies entirely in rank $r_0-1$ or meets rank $r_0-2$.
The former are counted by~\eqref{eq:H-tight-count}, and the latter by~\eqref{eq:H-cross-level-count}.
Hence
\[
D_1^-([k]\times H_n,L)=T_{n,s}+C_{n,s},
\]
and the stated formula follows.
\end{proof}

Assume $s\ge1$, and set $m=n-2s-1$.
By~\eqref{eq:H-monicity-parameter}, $k=2m+1$, so $m\ge1$ because $k\ge3$.

Recall that $L=([k]\times H_n)_{r_0}$ is the unique maximum antichain and that $w=|L|$ is the width of $[k]\times H_n$.
By Lemma~\ref{lem:Hn-central-geometry},
\[
w
=
\binom{n+1}{2}-2s(s+1).
\]
Moreover,
\[
|[k]\times H_n|
=
(2m+1)\binom{n+1}{2}.
\]
Hence the necessary condition~\eqref{eq:H-palindromicity-defect-condition} becomes
\begin{equation}
\label{eq:H-palindromicity-ms-condition}
2D_1^-([k]\times H_n,L)
=
(2m+1)\binom{n+1}{2}-w.
\end{equation}
We now distinguish two cases according to the value of $m$.

Suppose first that $m=1$. Then $k=3$ and $n=2s+2$. 
Using~\eqref{eq:H-total-defect-one}, we obtain
\[
|[k]\times H_n|-w-2D_1^-([k]\times H_n,L)
=
-\frac{s(s-1)(2s+5)}{3}.
\]
Hence palindromicity requires $s(s-1)(2s+5)=0$. 
Since $s\ge1$, this forces $s=1$, so the only possible pair in this 
case is $(k,n)=(3,4)$.

Assume henceforth that $m\ge2$. If 
$\mathcal N_{[k]\times H_n}(x)$ is palindromic, then substituting
\eqref{eq:H-total-defect-one} into
the left hand side of \eqref{eq:H-palindromicity-ms-condition} yields the necessary equation
\begin{equation}
\label{eq:H-cubic-obstruction}
F(m,s):=
3m^3+12m^2s+6m^2+12ms^2+18ms-3m
-2s^3-15s^2-31s-12=0.
\end{equation}
The following lemma shows that this necessary equation has no integer 
solutions in the relevant range and therefore rules out all the 
remaining cases.

\begin{lem}
\label{lem:cubic-obstruction}
For the polynomial $F$ defined in~\eqref{eq:H-cubic-obstruction}, 
one has $F(m,s)\ne0$ for all integers $m\ge2$ and $s\ge1$.
\end{lem}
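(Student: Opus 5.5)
The plan is to treat $F(m,s)=0$ as an affine plane cubic curve and determine its integral points. A pure sign or congruence argument will not be enough. Reducing modulo $2$ and $3$ shows that $F$ vanishes identically on residues: mod $2$ we get $F\equiv m+m+s+s\equiv0$, and mod $3$ we get $F\equiv s^3-s\equiv0$. The curve also has infinitely many real points in the quadrant. So I expect to combine an elementary reduction to a thin strip with an explicit integral-point computation on an elliptic curve, which is the route indicated by Appendix~\ref{app:elliptic-obstruction}.

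\textbf{Step 1 (localization).} Write $F=F_3+F_2+F_1+F_0$ by homogeneous degree. The cubic part is
\[
F_3(m,s)=3m^3+12m^2s+12ms^2-2s^3=s^3\,g(m/s),\qquad g(t)=3t^3+12t^2+12t-2 .
\]
Since $g'(t)>0$ for $t>0$, $g$ has a unique positive root $t_0\approx0.145$. The lower-order part $F_2=6m^2+18ms-15s^2$ has magnitude $O(s^2)$ in the relevant region.

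For $m\ge(t_0+\varepsilon)s$ I will show $F>0$ by a direct inequality. For $m\le(t_0-\varepsilon)s$ I will show $F<0$. Then I sharpen this: expanding $F(t_0s+c,\,s)$ in powers of $s$ gives the real branch as $m=t_0s+c_0+O(1/s)$ for an explicit constant $c_0$. Consequently every integral point with $s$ beyond an explicit bound $s_1$ lies in a strip $|m-t_0s-c_0|<1$. Below $s_1$ the finitely many pairs with $m\ge2$ are handled by direct evaluation.

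\textbf{Step 2 (elliptic curve).} The projective closure of $F=0$ is a plane cubic. I would first check it is nonsingular, so that it has genus one. Next I locate a rational point to serve as base point; a rational point at infinity is not available since $g$ has no rational root, so I search small integer or rational points of $F=0$, allowing negative values (for example $s=-1$ or $m=0$). Using that point, a standard projective change of variables (Nagell's algorithm) puts the curve in Weierstrass form $E:\,y^2=x^3+Ax+B$, together with explicit rational maps in both directions.

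Then I compute the Mordell--Weil group $E(\mathbb Q)$: its torsion, its rank, and generators, via $2$-descent in Magma or Sage. With these, I apply the elliptic logarithm method of Stroeker--Tzanakis and Gebel--Peth\H{o}--Zimmer. This gives an explicit upper bound on the coefficients of any point that comes from an integral point of the original cubic. That bound is reduced by LLL to a small range, and the points in that range are enumerated. The output is a complete list of integral points on $F=0$. The final task is to check that none of them has $m\ge2$ and $s\ge1$; the list will presumably contain only trivial or negative points. The certificate consists of the transformation, the generators, the height bounds, and the LLL reduction.

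\textbf{Main obstacle.} The hard part is Step 2. It requires provably determining the rank and generators, which needs the descent to be conclusive with no Sha ambiguity. It also requires translating integrality of $(m,s)$ into a usable condition on $E$, because the transformation to Weierstrass form does not preserve integrality and one must control the denominators. If $E(\mathbb Q)$ turns out to have rank $0$, the problem collapses to checking finitely many torsion points, and Step 1 becomes unnecessary. Otherwise, Step 1 serves as an independent sanity check and restricts which points need inspection.
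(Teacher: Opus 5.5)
Your overall architecture matches the paper's: localize the solutions near the real asymptote, pass birationally to a Weierstrass model, then use elliptic logarithms, David's bound, LLL and a finite search. In the paper the model is $E\colon y^2+y=x^3-10713x+520204$ with $E(\mathbb{Q})\cong\mathbb{Z}^4$, so the rank-$0$ shortcut does not occur.

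The gap is exactly the step you flag and leave open: turning integrality of $(m,s)$ into a small linear form. Your suggested remedy, controlling denominators so that a Weierstrass integral-point method (Gebel--Peth\H{o}--Zimmer, or Stroeker--Tzanakis in Weierstrass form) applies, does not work. The cubic has no rational point at infinity, so the base point $O$ must be an affine rational point. Integrality of $(m,s)$ then gives no a priori bound on the denominator of $x$; in the paper's model $x=(5484m+8507s+4698)/(51m+5s+15)$, and $51m+5s+15$ is an arbitrary integer. So you cannot restrict to ($S$-)integral points of $E$. Moreover, large solutions do not approach $O$ on $E$, so $u(P)$ is not close to $0$ modulo $\omega$, which is what those methods exploit.

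The missing idea is the general-cubic form of the method (Stroeker--de Weger, with Tzanakis's explicit version of David's theorem).
\begin{itemize}
\item Large solutions lie on the single real branch. This branch tends to the unique real point at infinity $[1:a:0]$ of the cubic, where $a=1/t_0$ is the real root of $a^3-6a^2-6a-\tfrac32=0$.
\item Its image $Q_0$ on $E$ is defined over the cubic field $\mathbb{Q}(a)$ and is not rational.
\item The small quantity is the inhomogeneous form $L(P)=u(Q_0)-n_0\omega-\sum_{i=1}^4 n_iu(P_i)$. It is nonzero precisely because $Q_0\notin E(\mathbb{Q})$.
\item Pulling back the invariant differential, $dx/(2y+1)=-dm/F_s$, gives $|L(P)|=\int_m^\infty dt/|F_s(t,s(t))|<1/(60m)$ on the strip.
\item On the strip, $x$ is a ratio of affine-linear forms in $(m,s)$ with values $O(m)$. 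Hence $\widehat h(P)\le\tfrac12\log(Cm)+c$.
\item A certified positive-definite lower bound for the height pairing converts this into $|L(P)|<\exp(16-0.81M^2)$, where $M=\max_i|n_i|$.
\item David's bound is then applied to a form involving the degree-$3$ algebraic point $Q_0$, not only rational points.
\end{itemize}
In particular, your Step~1 is not merely a sanity check. The localization is exactly what supplies both the upper bound for $|L(P)|$ and the comparison between $m$ and $M$ that drives the whole argument.
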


\begin{proof}
The proof is given in Appendix~\ref{app:elliptic-obstruction}.
\end{proof}

The preceding discussion shows that, when $s\ge1$, palindromicity is possible only for $(k,n)=(3,4)$.
It remains to consider the boundary case $s=0$.
Then~\eqref{eq:H-monicity-parameter} gives $k=2n-1$, and the middle rank is $r_0=2n-2$.

\begin{prop}
\label{prop:H-zero-boundary-count}
Let $n\ge2$ and $k=2n-1$.
Then
\begin{equation}
\label{eq:H-zero-boundary-count}
D_1^-([k]\times H_n,L)
=
2^{n+1}+\binom{n-1}{2}-5.
\end{equation}
\end{prop}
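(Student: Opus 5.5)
By the boundary case $s=0$ of~\eqref{eq:H-monicity-parameter} we have $k=2n-1\ge3$ and $r_0=2n-2$. I would first remove every defect-one antichain that does not lie in the rank just below $L$. For $s=0$ Proposition~\ref{prop:H-zero-growth} allows no pair $1\le u\le q\le s$. So every nonempty $X$ in a rank $t<r_0$ satisfies $|\nabla X|>|X|$, which is the strict upward matching property below $L$. Corollary~\ref{cor:strict-shadow-localization} then says that $D_1^-([k]\times H_n,L)$ is the number of nonempty tight sets $S\subseteq([k]\times H_n)_{r_0-1}$, that is, sets with $|\nabla S|=|S|+1$. By Lemma~\ref{lem:Hn-central-geometry} with $s=0$ we have $L_{n,0}=H_n$ and $M_{n,0}=H_n\setminus\{(n,n)\}$. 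Under these identifications $\nabla S=\bigl(S\cup(S+e_1)\cup(S+e_2)\bigr)\cap H_n$. So the task is purely combinatorial: count the tight subsets of $H_n\setminus\{(n,n)\}$. This parallels Lemma~\ref{lem:tight-set-classification-zero}.

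The classification of tight sets rests on a run decomposition. Split $S$ into maximal horizontal runs in each row $i$, i.e.\ consecutive $j$-intervals. Each run contributes its right neighbour $(i,j_{\max}+1)$ to $\nabla S\setminus S$, unless $j_{\max}=n$. Vertical shifts $(i+1,j)$ add further elements, unless they are already covered by $S$ or by the horizontal shift. I would write $|\nabla S|-|S|$ as a sum of nonnegative local excesses. This sum can equal $1$ only in a few situations.

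\emph{(i)} The set $S$ is ``up-closed'' apart from one boundary defect, i.e.\ $S=J\setminus\{(n,n)\}$ for an upper order ideal $J$ of $H_n$ with $|J|\ge2$. Then $\nabla S=J$.

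\emph{(ii)} The set $S$ is a consecutive segment of the top row $\{(i,n):1\le i\le n-2\}$. The only excess is the single vertical overhang at the end of the segment.

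\emph{(iii)} A third family arises from the diagonal boundary $i=j$ of $H_n$, where the shift $(i,i)+e_1$ leaves $H_n$. I expect this family to consist of sets $J\setminus(\text{its part on a diagonal strip})$ for suitable upper ideals $J$, so that the single excess sits at the diagonal.

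For the counts, upper ideals of the shifted staircase $H_n$ correspond to strict partitions with parts at most $n$, so there are $2^n$ of them. Family (i) therefore gives $2^n-2$ sets. Family (ii) gives $\binom{n-1}{2}$ sets by the segment count used for~\eqref{eq:three-chain-tight-positive}. The target $2^{n+1}+\binom{n-1}{2}-5$ then forces family (iii) to contribute $2^n-3$. I would check this with a bijection to upper ideals, excluding three degenerate ones (empty, $\{(n,n)\}$, and one more), and I would verify the overlaps among the families, as in Lemma~\ref{lem:tight-set-classification-zero}.

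The main obstacle is the completeness of the classification. One must show that any tight $S$ outside these families has local excess at least $2$. I would argue row by row from the top row $i=1$ downward, using that each run and each column gap produces a new shadow element. The case analysis near the diagonal of $H_n$ is the delicate part. Small cases $n=2,3$ should be checked directly to calibrate the constant $-5$.
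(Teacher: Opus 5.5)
Your reduction to the rank $r_0-1$ is sound: Proposition~\ref{prop:H-zero-growth} with $s=0$ excludes equality, the symmetric chain decomposition (Proposition~\ref{prop:Hn-SCD} and Remark~\ref{rem:shadow-criteria}) gives $|\nabla X|\ge|X|$, and Corollary~\ref{cor:strict-shadow-localization} then applies. Families (i) and (ii) are also correct, disjoint, and correctly counted as $2^n-2$ and $\binom{n-1}{2}$.

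The gap is the rest of the classification. Family (iii) is never identified, and its size $2^n-3$ is read off from the formula you are trying to prove. The completeness argument via local excesses is only announced, not carried out. Your guess for (iii) also does not match what actually occurs. Write $\partial^+S=N_H^+(S)\setminus S$, so that tightness means $|\partial^+S|=1$. The tight sets outside (i) and (ii) are then of two kinds:
\begin{itemize}
\item the diagonal singletons $\{(i,i)\}$ with $1\le i\le n-2$, for which $\partial^+S=\{(i,i+1)\}$;
\item the sets $J\setminus\{(n-1,n),(n,n)\}$, with $J$ an upper ideal and $|J|\ge3$, for which $\partial^+S=\{(n-1,n)\}$, excluding the $n-2$ top-row segments $\{(u,n),\dots,(n-2,n)\}$ that already belong to (ii).
\end{itemize}
The most natural realization of your plan is the bijection $J\mapsto J\setminus\{(n-1,n),(n,n)\}$, with $\varnothing$, $\{(n,n)\}$ and $\{(n-1,n),(n,n)\}$ excluded. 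It does have $2^n-3$ members, but it overlaps (ii) in $n-2$ sets and misses the $n-2$ diagonal singletons. For $n=3$ it contains $\{(1,3)\}$, already in (ii), and omits $\{(1,1)\}$. The two errors cancel exactly, so neither agreement with the target nor calibration on $n=2,3$ can validate the classification.

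The missing idea is a way to locate the unique boundary point $e$ with $\partial^+S=\{e\}$. A first-exit argument shows that every upward saturated path from a point of $S$ to $(n,n)$ passes through $e$. A point $(i,j)$ with $i<j<n$ has two such paths to $(n-1,n)$ meeting only at their endpoints. One runs along row $i$ to $(i,n)$ and then along the top boundary. The other goes up to $(j,j)$ and then along the zigzag $(j,j)\to(j,j+1)\to(j+1,j+1)\to\cdots\to(n-1,n)$. This yields three constraints:
\begin{itemize}
\item any point $(i,j)$ with $i<j<n$ in $S$ forces $e\in\{(n-1,n),(n,n)\}$;
\item a diagonal point $(i,i)$ with $i\le n-2$ forces $e\in\{(i,i+1),(n-1,n),(n,n)\}$;
\item a top-row point reaches only top-row points.
\end{itemize}
Splitting by the position of $e$ then gives:
\begin{itemize}
\item $2^n-2$ sets for $e=(n,n)$;
\item $2^n-3$ sets for $e=(n-1,n)$, since then $S\cup\{(n-1,n),(n,n)\}$ is an upper ideal with at least three elements;
\item the $a-1$ final segments of $(1,n)<\dots<(a-1,n)$ for $e=(a,n)$ with $2\le a\le n-2$, in total $\binom{n-2}{2}$;
\item only $S=\{(i,i)\}$ for $e=(i,i+1)$ with $1\le i\le n-2$, giving $n-2$ sets.
\end{itemize}
Summing gives $2^{n+1}+\binom{n-1}{2}-5$; this is the paper's argument. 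Your row-run bookkeeping could in principle replace the path argument, but as written it does not determine which sets occur.
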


\begin{proof}
The proof is given in Appendix~\ref{app:technical-counts}.
\end{proof}

Recall that $L=([k]\times H_n)_{r_0}$ is the unique maximum antichain and that $w=|L|$ is the width of $[k]\times H_n$.
Since $s=0$, Lemma~\ref{lem:Hn-central-geometry} gives $w=\binom{n+1}{2}$.
Moreover, $|[k]\times H_n|=(2n-1)\binom{n+1}{2}$.
Hence~\eqref{eq:H-palindromicity-defect-condition} becomes
\[
2D_1^-([k]\times H_n,L)
=
(2n-2)\binom{n+1}{2}.
\]
By~\eqref{eq:H-zero-boundary-count}, the necessary condition for palindromicity reduces to
\begin{equation}
\label{eq:H-boundary-obstruction}
g(n):=2^{n+2}-n^3+n^2-2n-8=0.
\end{equation}

For the function $g$ defined in~\eqref{eq:H-boundary-obstruction}, the first few values are
\[
\begin{array}{c|cccc}
n&2&3&4&5\\ \hline
g(n)&0&0&0&10
\end{array}
\]

For $n\ge5$, $g(n+1)-g(n)=2^{n+2}-3n^2-n-2$.
At $n=5$, this equals $46$. 
Moreover,
\[
\bigl(g(n+2)-g(n+1)\bigr)
-\bigl(g(n+1)-g(n)\bigr)
=
2^{n+2}-6n-4>0
\qquad(n\ge5).
\]
Hence $g(n+1)-g(n)>0$ for all $n\ge5$.
Therefore $g(n)$ is strictly increasing for $n\ge5$.
Since $g(5)=10>0$, the equation $g(n)=0$ has no solution with $n\ge5$.

Therefore the only solutions are $n=2,3,4$, which yield $(k,n)=(3,2),(5,3),(7,4)$.

\begin{thm}
\label{thm:H-palindromicity-large-k}
Let $k\ge3$ and $n\ge1$. 
Then $\mathcal N_{[k]\times H_n}(x)$ is palindromic if and only if
\begin{equation}\label{eq:rangekn}
(k,n)\in\{(3,2),(5,3),(3,4),(7,4)\}.
\end{equation}
\end{thm}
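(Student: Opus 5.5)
The proof has two directions, and most of the necessity direction is already in place from the discussion preceding the statement.

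\emph{Necessity.} Suppose $k\ge3$ and $\mathcal N_{[k]\times H_n}(x)$ is palindromic. Then it is monic, since its constant term is $1$. By Ding and Dong's monicity classification, \eqref{eq:H-monicity-parameter} holds for some $s\ge0$, and we split into the cases $s\ge1$ and $s=0$.

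For $s\ge1$, set $m=n-2s-1\ge1$. Corollary~\ref{cor:H-total-defect-one} gives $D_1^-$ explicitly. Substituting it into the necessary condition \eqref{eq:H-palindromicity-ms-condition} yields two subcases.
\begin{enumerate}[label=\textup{(\alph*)}]
\item If $m=1$, the condition reduces to $s(s-1)(2s+5)=0$, so $s=1$ and $(k,n)=(3,4)$.
\item If $m\ge2$, the condition becomes the cubic equation \eqref{eq:H-cubic-obstruction}, which has no admissible solution by Lemma~\ref{lem:cubic-obstruction}.
\end{enumerate}
For $s=0$ we have $k=2n-1$. Proposition~\ref{prop:H-zero-boundary-count} reduces palindromicity to $g(n)=0$. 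The monotonicity argument given above shows this forces $n\in\{2,3,4\}$, that is, $(k,n)\in\{(3,2),(5,3),(7,4)\}$. The value $n=1$ must also be excluded: then $H_1$ is a point and $[k]\times H_1$ is a chain with polynomial $1+kx$, which is not palindromic for $k\ge3$. This completes the "only if" direction, since the defect-one criterion is merely necessary and we have shown the list in \eqref{eq:rangekn} is exhaustive.

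\emph{Sufficiency.} We verify the four pairs in \eqref{eq:rangekn} directly. The posets $[3]\times H_2$, $[5]\times H_3$, $[3]\times H_4$ and $[7]\times H_4$ have $9$, $30$, $30$ and $70$ elements respectively, so their antichain polynomials can be computed explicitly.

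We would organize the computation by Theorem~\ref{thm:central-completion}: sum $x^{|B|}(1+x)^{d-|\Gamma_L(B)|}$ over antichains $B$ outside the middle rank $L$. This reduces the work to enumerating antichains in the small complements $P\setminus L$. As an alternative route, we would present the antichains of $[k]\times H_n$ as strictly order-reversing partial maps $H_n\to[k]$, following the bijection used in Section~\ref{subsec:ZZ-polynomials} with $Q=H_n$. This gives $\mathcal N_{[k]\times H_n}(z)=\sum_{A\subseteq H_n}\Omega_A^\circ(k)z^{|A|}$, which for $|H_n|\le10$ is a finite computation.

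As a cheap sanity check, the defect-one identity $2D_1^-=|P|-w$ already holds in each case, matching the linear and subleading coefficients. Full palindromicity, however, requires all coefficients. For $[3]\times H_2$ the poset is small enough to enumerate by hand.

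\emph{Main obstacle.} The necessity direction is essentially assembled from prior results. The real work is the sufficiency check for $(7,4)$ and $(3,4)$, where the polynomials have degree $10$ and $8$ respectively. Here I would rely on a reproducible computer enumeration via the order-preserving-map formula, recording the explicit polynomials, rather than attempting a structural proof of palindromicity.
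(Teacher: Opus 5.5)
Your proposal is correct and takes the same route as the paper. Necessity is assembled from the monicity parameter $k=2n-1-4s$ and the defect-one counts: the factorization $s(s-1)(2s+5)=0$ when $m=1$, Lemma~\ref{lem:cubic-obstruction} when $m\ge2$, and $g(n)=0$ when $s=0$. Sufficiency is a direct enumeration of the four polynomials, which the paper simply records in $\gamma$-form.

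One small slip: $\mathcal N_{[3]\times H_4}(x)$ has degree $w=\binom{5}{2}-2\cdot1\cdot2=6$, not $8$. Only the $(7,4)$ case reaches degree $10$.
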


\begin{proof}
The necessity follows from the preceding discussion.
For the converse, a direct enumeration gives
\begin{align*}
\mathcal N_{[3]\times H_2}(x)
&=
(1+x)^3+6x(1+x),\\
\mathcal N_{[5]\times H_3}(x)
&=
\mathcal N_{[3]\times H_4}(x)
=
(1+x)^6
+24x(1+x)^4
+54x^2(1+x)^2
+8x^3,\\
\mathcal N_{[7]\times H_4}(x)
&=
(1+x)^{10}
+60x(1+x)^8
+560x^2(1+x)^6
\\
\quad&+1280x^3(1+x)^4
+690x^4(1+x)^2
+40x^5.
\end{align*}
Hence all four polynomials are palindromic.
\end{proof}

\subsection{Palindromicity}
\label{subsec:Hn-palindromicity}

We now complete the palindromicity classification for $\mathcal N_{[k]\times H_n}(x)$.
It remains only to consider the case $k=1$.

\begin{prop}
\label{prop:Hn-antichain-polynomial}
For every $n\ge1$,
\begin{equation}
\label{eq:Hn-antichain-polynomial}
\mathcal N_{H_n}(x)
=
\sum_{q=0}^{\lfloor(n+1)/2\rfloor}
\binom{n+1}{2q}x^q.
\end{equation}
In particular, $\mathcal N_{H_n}(x)$ is palindromic if and only if $n$ is odd.
\end{prop}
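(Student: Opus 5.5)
Our plan is to count $q$-element antichains of $H_n$ directly and then match the count with $\binom{n+1}{2q}$ by an explicit bijection. Since $H_n$ is an induced subposet of $[n]\times[n]$, the argument used for \eqref{eq:N1mnx} applies verbatim. A $q$-element antichain can be listed as $\{(i_1,j_1),\ldots,(i_q,j_q)\}$ with $i_1<\cdots<i_q$, and the antichain condition forces $j_1>\cdots>j_q$. The extra constraint coming from $H_n$ is $i_t\le j_t$ for every $t$. Because the $i$'s increase while the $j$'s decrease, this reduces to the single inequality $i_q\le j_q$. So a $q$-element antichain is the same thing as a chain of integers
\[
i_1<i_2<\cdots<i_q\le j_q<j_{q-1}<\cdots<j_1,
\]
all lying in $[n]$.

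Next we turn this weak chain into a strict one. We set $j'_t=j_t+1\in\{2,\ldots,n+1\}$. The condition then becomes
\[
1\le i_1<\cdots<i_q<j'_q<\cdots<j'_1\le n+1,
\]
which is an arbitrary $2q$-element subset of $[n+1]$. Conversely, any such subset, sorted increasingly, determines the $i$'s as its $q$ smallest elements and the $j'$'s as its $q$ largest. Subtracting $1$ from the $j'$'s recovers a valid antichain: we get $j_q=j'_q-1\ge i_q$ and $j_1\le n$. Hence the number of $q$-element antichains is $\binom{n+1}{2q}$, and the sum runs up to $q=\lfloor (n+1)/2\rfloor$, which gives \eqref{eq:Hn-antichain-polynomial}.

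For palindromicity, put $d=\lfloor (n+1)/2\rfloor$. If $n$ is odd, then $n+1=2d$, and $\binom{2d}{2q}=\binom{2d}{2(d-q)}$ gives $a_q=a_{d-q}$. If $n$ is even, then $n+1=2d+1$, and the leading coefficient is $\binom{n+1}{n}=n+1>1$, while the constant term is $1$; so the polynomial is not palindromic (this includes $n=0$ trivially, although $n\ge1$ is assumed). We expect no real obstacle here. The only step needing care is to check that the shift $j\mapsto j+1$ is a bijection onto $2q$-subsets, and in particular that the boundary case $i_q=j_q$, i.e.\ a diagonal element, is handled correctly by the strict inequality $i_q<j'_q$.
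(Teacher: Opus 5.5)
Your proof is correct and is essentially the paper's argument: the paper also lists the antichain by increasing first coordinate, reduces the $H_n$ constraint to $i_q\le j_q$, and uses the shift $i_1<\cdots<i_q<j_q+1<\cdots<j_1+1$ to biject onto $2q$-subsets of $[n+1]$; it then settles palindromicity by binomial symmetry for odd $n$ and by the leading coefficient $n+1$ for even $n$. One small slip lies outside the hypotheses: your parenthetical about $n=0$ is wrong, because there the polynomial is the constant $1$, which is palindromic (the leading coefficient $n+1$ equals $1$), but since $n\ge1$ is assumed this does not affect the proof.
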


\begin{proof}
Let $A=\{(i_1,j_1),\ldots,(i_q,j_q)\}$ be a $q$-element antichain of $H_n$. 
Since two elements having the same first coordinate are comparable, we may order the elements so that $i_1<\cdots<i_q$.
The antichain condition then forces $j_1>\cdots>j_q$.
Since $(i_t,j_t)\in H_n$ for every $t$, we have $i_t\le j_t$.
It follows that
\[
i_1<\cdots<i_q<j_q+1<j_{q-1}+1<\cdots<j_1+1
\]
is a $2q$-element subset of $[n+1]$.

Conversely, given $1\le a_1<\cdots<a_{2q}\le n+1$, define $i_t=a_t$ and $j_t=a_{2q+1-t}-1$, for $1\le t\le q$.
Then $i_1<\cdots<i_q$ and $j_1>\cdots>j_q$, and $i_q=a_q<a_{q+1}=j_q+1$, so $i_q\le j_q$. 
Hence $(i_t,j_t)\in H_n$ for every $t$, and the resulting set is an antichain.

These two constructions are inverse. Therefore the number of $q$-element antichains of $H_n$ is $\binom{n+1}{2q}$, which proves~\eqref{eq:Hn-antichain-polynomial}.

Suppose first that $n=2r-1$ is odd. 
Then
\[
\mathcal N_{H_{2r-1}}(x)
=
\sum_{q=0}^{r}\binom{2r}{2q}x^q,
\]
and $\binom{2r}{2q}=\binom{2r}{2(r-q)}$.
Thus the polynomial is palindromic.

If $n=2r$ is even, then the polynomial has degree $r$ and leading coefficient $\binom{2r+1}{2r}=2r+1$, whereas its constant coefficient is $1$.
Hence it is not palindromic.
\end{proof}

\begin{thm}
\label{thm:H-palindromicity-classification}
Let $k,n$ be positive integers.
Then $\mathcal N_{[k]\times H_n}(x)$ is palindromic if and only if
either $k=1$ and $n$ is odd
or $(k,n)$ satisfies \eqref{eq:rangekn}
\end{thm}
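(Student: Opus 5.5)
The proof assembles the case analyses already carried out in this section, split according to $k=1$, $k=2$, and $k\ge3$.

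For $k=1$, Proposition~\ref{prop:Hn-antichain-polynomial} says directly that $\mathcal N_{H_n}(x)$ is palindromic if and only if $n$ is odd. For $k\ge3$, Theorem~\ref{thm:H-palindromicity-large-k} says directly that palindromicity holds exactly for the pairs in \eqref{eq:rangekn}. Both directions are already established in these two cases, so the only remaining task is $k=2$, where I will show that $\mathcal N_{[2]\times H_n}(x)$ is never palindromic.

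For $k=2$, I will use the monicity classification of Ding and Dong~\cite[Lemma~2.3]{DingDong2019}, which yields \eqref{eq:H-monicity-parameter}, namely $k=2n-1-4s$. The constant term is $1$, so palindromicity forces monicity, and \eqref{eq:H-monicity-parameter} then requires $k$ to be odd. Since $k=2$ is even, monicity fails and $\mathcal N_{[2]\times H_n}(x)$ is not palindromic for any $n$.

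I expect the main obstacle to be checking that the monicity lemma really covers $k=2$ with no exceptions for small $n$, since \eqref{eq:H-monicity-parameter} was invoked above under the standing assumption $k\ge3$. If the lemma's hypotheses do not cover $k=2$, I would argue directly as follows.
\begin{enumerate}[label=\textup{(\roman*)}]
\item For $n=1$, $[2]\times H_1=[2]$, so $\mathcal N(x)=1+2x$, which is not palindromic.
\item For $n\ge2$, the poset is self-dual with a symmetric chain decomposition (Propositions~\ref{prop:Hn-self-dual} and~\ref{prop:Hn-SCD}). It has rank $2n-1$, which is odd, so it has two middle ranks $R_{n-1}$ and $R_n$ of equal maximal size.
\item Both middle ranks are therefore maximum antichains, so the leading coefficient is at least $2$.
\item The constant term is $1$, so the polynomial cannot be palindromic.
\end{enumerate}
The remaining point to verify is that $R_{n-1}$ and $R_n$ really are distinct maximum antichains, which follows from rank symmetry together with the Sperner property supplied by the symmetric chain decomposition.

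Combining the three cases gives the classification stated in Theorem~\ref{thm:H-palindromicity-classification}.
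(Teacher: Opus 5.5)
Your proposal is correct and matches the paper's proof: the paper likewise cites Proposition~\ref{prop:Hn-antichain-polynomial} for $k=1$, and for $k\ge2$ argues that palindromicity forces monicity, so Ding--Dong's relation $k=2n-1-4s$ makes $k$ odd, which excludes $k=2$ and reduces everything to Theorem~\ref{thm:H-palindromicity-large-k}. Your fallback for $k=2$ is also valid and makes that case independent of the cited lemma: every symmetric chain in a poset of odd rank $2n-1$ passes through both ranks $n-1$ and $n$, so these two ranks are distinct maximum antichains.
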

\begin{proof}
The case $k=1$ follows from
Proposition~\ref{prop:Hn-antichain-polynomial}.

Now suppose that $k\ge2$.
Palindromicity implies monicity, and Ding and Dong's monicity classification gives $k=2n-1-4s$ for some $s\ge0$.
Thus $k$ is odd, so $k\ne2$.
Hence $k\ge3$, and the result follows from Theorem~\ref{thm:H-palindromicity-large-k}.
\end{proof}

\subsection{Real-rootedness}
\label{subsec:Hn-real-rootedness}
We now prove that every palindromic antichain polynomial in the family $[k]\times H_n$ has only simple negative zeros.
We begin with $H_n$ itself.

\begin{prop}
\label{prop:Hn-real-rootedness}
For every $n\ge1$, the zeros of $\mathcal N_{H_n}(x)$ are
\begin{equation}
\label{eq:Hn-roots}
-\tan^2\left(
\frac{(2j-1)\pi}{2(n+1)}
\right),
\qquad
1\le j\le
\left\lfloor\frac{n+1}{2}\right\rfloor.
\end{equation}
In particular, all zeros of $\mathcal N_{H_n}(x)$ are simple and negative.
\end{prop}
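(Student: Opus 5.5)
The formula \eqref{eq:Hn-antichain-polynomial} identifies $\mathcal N_{H_n}(x)$ with the even part of a binomial expansion. The plan is to substitute $x=-\tan^2\theta$ and read off the zeros from a cosine. Put $N=n+1$. For $x=y^2$ we have
\[
\mathcal N_{H_n}(y^2)=\sum_{q}\binom{N}{2q}y^{2q}=\frac{(1+y)^N+(1-y)^N}{2}.
\]
Setting $y=i\tan\theta$ with $\theta\in(0,\pi/2)$ gives $x=-\tan^2\theta<0$. Using $1\pm i\tan\theta=e^{\pm i\theta}/\cos\theta$, this becomes
\[
\mathcal N_{H_n}(-\tan^2\theta)=\frac{\cos(N\theta)}{\cos^N\theta}.
\]

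Hence $\mathcal N_{H_n}(-\tan^2\theta)=0$ exactly when $\cos(N\theta)=0$, that is, when $\theta=\frac{(2j-1)\pi}{2N}$. We need $0<\theta<\pi/2$, which means $1\le j\le \lfloor N/2\rfloor$ after checking the boundary. When $N$ is even, $j=N/2+\tfrac12$ is not an integer. When $N$ is odd, $j=(N+1)/2$ gives $\theta=\pi/2$, which is excluded. So $j$ ranges over $1\le j\le\lfloor (n+1)/2\rfloor$.

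Since $\tan^2$ is strictly increasing on $(0,\pi/2)$, these values of $\theta$ give $\lfloor (n+1)/2\rfloor$ distinct negative numbers $-\tan^2\bigl(\frac{(2j-1)\pi}{2(n+1)}\bigr)$. Each of them is a zero of $\mathcal N_{H_n}$. By \eqref{eq:Hn-antichain-polynomial} the degree of $\mathcal N_{H_n}$ is exactly $\lfloor (n+1)/2\rfloor$, and its leading coefficient $\binom{n+1}{2\lfloor(n+1)/2\rfloor}$ is nonzero. So these are all the zeros, and each one is simple.

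The only delicate point is the counting at the endpoint $\theta=\pi/2$ in the parity cases, together with the observation that $\cos\theta\ne0$ on the open interval, which makes the identity valid there. No earlier results from the paper are needed beyond \eqref{eq:Hn-antichain-polynomial}.
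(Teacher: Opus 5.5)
Your proposal is correct and follows essentially the same route as the paper, which writes $\mathcal N_{H_n}(-y^2)=(1+y^2)^{(n+1)/2}\cos\bigl((n+1)\arctan y\bigr)$ for $y>0$; this is your identity $\cos(N\theta)/\cos^N\theta$ under the substitution $y=\tan\theta$. The paper then uses the same degree count, with $\deg\mathcal N_{H_n}=\lfloor (n+1)/2\rfloor$, to conclude that the listed zeros are all of the zeros and that each is simple.
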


\begin{proof}
By~\eqref{eq:Hn-antichain-polynomial}, $\mathcal N_{H_n}(x)=\sum_{q\ge0}\binom{n+1}{2q}x^q$.
For $y\in\mathbb R$, we have
\[
\mathcal N_{H_n}(-y^2)
=
\sum_{q\ge0}
(-1)^q\binom{n+1}{2q}y^{2q}.
\]
By the binomial theorem,
\[
\frac{(1+iy)^{n+1}+(1-iy)^{n+1}}{2}
=
\sum_{q\ge0}
\binom{n+1}{2q}(iy)^{2q}
=
\sum_{q\ge0}
(-1)^q\binom{n+1}{2q}y^{2q}.
\]
Hence
\[
\mathcal N_{H_n}(-y^2)
=
\frac{(1+iy)^{n+1}+(1-iy)^{n+1}}{2}.
\]

Since $\mathcal N_{H_n}(0)=1$, zero is not a root. 
Because $x=-y^2$, the values $y$ and $-y$ give the same value of $x$, so it suffices to consider $y>0$.
For such $y$, we have
\[
1+iy
=
\sqrt{1+y^2}\,
e^{i\arctan y}
\qquad
1-iy
=
\sqrt{1+y^2}\,
e^{-i\arctan y}.
\]
Therefore
\[
\begin{aligned}
\mathcal N_{H_n}(-y^2)
&=
\frac{
	(1+y^2)^{(n+1)/2}e^{i(n+1)\arctan y}
	+
	(1+y^2)^{(n+1)/2}e^{-i(n+1)\arctan y}
}{2}\\
&=
(1+y^2)^{(n+1)/2}
\cos\bigl((n+1)\arctan y\bigr).
\end{aligned}
\]
Since $(1+y^2)^{(n+1)/2}$ is positive, we have $\mathcal N_{H_n}(-y^2)=0$ if and only if $\cos\bigl((n+1)\arctan y\bigr)=0$.
As $y>0$, we have $0<\arctan y<\frac{\pi}{2}$.
Hence the possible zeros of the cosine are precisely
\[
(n+1)\arctan y
=
\frac{(2j-1)\pi}{2},
\qquad
1\le j\le
\left\lfloor\frac{n+1}{2}\right\rfloor.
\]
It follows that
\[
y
=
\tan\left(
\frac{(2j-1)\pi}{2(n+1)}
\right),
\qquad
1\le j\le
\left\lfloor\frac{n+1}{2}\right\rfloor.
\]
Since $x=-y^2$, the corresponding zeros of $\mathcal N_{H_n}(x)$ are precisely those in~\eqref{eq:Hn-roots}.
These numbers are negative and distinct. 
Moreover, $\deg\mathcal N_{H_n}(x)=\left\lfloor\frac{n+1}{2}\right\rfloor$, so the list in~\eqref{eq:Hn-roots} contains all its zeros. 
In particular, every zero is simple and negative.
\end{proof}

It remains to treat the four exceptional palindromic cases in \eqref{eq:rangekn}
Brändén~\cite[Lemma~4.1]{Branden2004} and Gal~\cite[Remark~3.1.1]{Gal2005} independently showed that nonpositive real-rootedness passes from a $\gamma$-polynomial to the corresponding palindromic polynomial. 
We use the following strict version, which also preserves simplicity.

\begin{lem}
\label{lem:gamma-root-transfer}
Let $f(x)$ be a palindromic polynomial of degree $d$, with $\gamma$-polynomial $\Gamma_f(t)$ defined by
\[
f(x)
=
(1+x)^d
\Gamma_f\left(\frac{x}{(1+x)^2}\right).
\]
If $\Gamma_f(t)$ has $\lfloor d/2\rfloor$ distinct negative zeros, then $f(x)$ has $d$ distinct negative zeros.
\end{lem}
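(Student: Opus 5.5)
The argument is a direct analysis of the substitution $t=x/(1+x)^2$ on the negative real axis. Let $g=\lfloor d/2\rfloor$ and let $\tau_1,\dots,\tau_g<0$ be the distinct zeros of $\Gamma_f$. Since $f$ has nonzero degree $d$ and $f(0)=\Gamma_f(0)$, the $\gamma$-expansion shows that $\Gamma_f$ has degree exactly $g$ whenever it has $g$ zeros. Hence $\Gamma_f(t)=\gamma_g\prod_{i=1}^g(t-\tau_i)$, and substituting gives
\[
f(x)=\gamma_g(1+x)^{d-2g}\prod_{i=1}^{g}\bigl(x-\tau_i(1+x)^2\bigr).
\]
This reduces the problem to locating the zeros of each quadratic factor $q_i(x)=x-\tau_i(1+x)^2$, together with the factor $(1+x)^{d-2g}$, which is either absent ($d$ even) or equal to $1+x$ ($d$ odd).

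For a fixed $\tau<0$, write $q(x)=-\tau x^2+(1-2\tau)x-\tau$. Its discriminant is $(1-2\tau)^2-4\tau^2=1-4\tau>0$, so $q$ has two distinct real roots. Their product is $1$ and their sum is $(2\tau-1)/(-\tau)\cdot(-1)$, which is negative. Therefore both roots are negative, reciprocal to each other, and distinct from $-1$; indeed $q(-1)=-1\ne0$. Equivalently, the map $x\mapsto x/(1+x)^2$ sends $(-\infty,-1)$ and $(-1,0)$ each bijectively and monotonically onto $(-\infty,0)$, with $x\mapsto1/x$ exchanging the two intervals. So each negative value $\tau$ has exactly two preimages, one in each interval, and neither equals $-1$.

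It remains to see that the $2g$ roots arising from different $\tau_i$ are pairwise distinct and distinct from $-1$. If $x_0$ were a common root of $q_i$ and $q_j$, then $\tau_i=x_0/(1+x_0)^2=\tau_j$ (note $x_0\ne-1$), forcing $i=j$. Thus $\prod_i q_i$ has $2g$ distinct negative roots, all different from $-1$. When $d$ is odd, the extra factor $1+x$ contributes the further simple root $-1$. In either case we obtain $d$ distinct negative zeros.

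The only point needing care is the degree bookkeeping: checking that $\gamma_g\ne0$, so that the factorization accounts for all $d$ roots. This follows because the total count $2g+(d-2g)=d$ matches $\deg f$.
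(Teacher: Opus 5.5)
Your proof is correct and is essentially the paper's argument: factor $\Gamma_f$, reduce to the quadratics $x-\tau_i(1+x)^2$ with discriminant $1-4\tau_i>0$, root product $1$ and negative root sum, separate roots of different factors via $\tau_i=x/(1+x)^2$, and add the root $-1$ when $d$ is odd. One small slip: the root sum is $(2\tau-1)/(-\tau)=(1-2\tau)/\tau<0$, and the extra factor $(-1)$ you appended would make it positive; also, $\gamma_g\neq0$ follows from $\deg\Gamma_f\le\lfloor d/2\rfloor$ and $\Gamma_f\not\equiv0$, not from the root count matching $\deg f$.
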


\begin{proof}
By assumption, $\Gamma_f(t)=c\prod_{j=1}^{\lfloor d/2\rfloor}(t-\theta_j)$, where $c\ne0$ and $\theta_j$ are distinct negative real numbers.
Hence
\[
\begin{aligned}
f(x)
&=
c(1+x)^d
\prod_{j=1}^{\lfloor d/2\rfloor}
\left(
\frac{x}{(1+x)^2}-\theta_j
\right)\\
&=
c(1+x)^{d-2\lfloor d/2\rfloor}
\prod_{j=1}^{\lfloor d/2\rfloor}
\bigl(x-\theta_j(1+x)^2\bigr).
\end{aligned}
\]

For each $j$, 
\[ 
x-\theta_j(1+x)^2 
=
-\theta_jx^2+(1-2\theta_j)x-\theta_j. 
\] 
Its discriminant is $1-4\theta_j>0$, so it has two distinct real zeros. 
By Vieta's formulas, the product of the two roots is $1$, so they have the same sign. 
Their sum is $(1-2\theta_j)/\theta_j<0$, since $\theta_j<0$.
Therefore each quadratic factor has two distinct negative zeros.
Moreover, the zeros arising from distinct $\theta_j$ are disjoint.
Indeed, if $x$ were a common zero corresponding to
$\theta_j\ne\theta_\ell$, then $x\ne-1$ and $\theta_j=\frac{x}{(1+x)^2}=\theta_\ell$, a contradiction.

If $d$ is odd, the remaining factor $1+x$ contributes the additional
zero $x=-1$, which is not a zero of any quadratic factor. Hence $f(x)$
has $d$ distinct negative zeros.
\end{proof}

It remains to consider the four exceptional palindromic cases in Theorem~\ref{thm:H-palindromicity-classification}.
Write
$\Gamma_{k,n}(t)$ for the $\gamma$-polynomial of $\mathcal N_{[k]\times H_n}(x)$. 
The explicit formulas obtained above give
\[
\begin{aligned}
&\Gamma_{3,2}(t)
=1+6t,\\
&\Gamma_{5,3}(t)
=
\Gamma_{3,4}(t)
=1+24t+54t^2+8t^3,\\
&\Gamma_{7,4}(t)
=1+60t+560t^2+1280t^3+690t^4+40t^5.
\end{aligned}
\]

The polynomial $\Gamma_{3,2}(t)$ has the single negative zero $-1/6$.
For the cubic polynomial, direct caculation shows that $\Gamma_{5,3}$ has a zero in each of the three disjoint negative intervals $(-7,-6)$, $(-1/2,-2/5)$ and $(-1/20,-1/25)$.
Since its degree is $3$, these are all its zeros, and they are distinct.

Similarly,
The five zeros of $\Gamma_{7,4}(x)$ lie in the pairwise disjoint intervals
\[
(-16,-15),\quad
\left(-\frac32,-\frac75\right),\quad
\left(-\frac12,-\frac25\right),\quad
\left(-\frac17,-\frac18\right),\quad
\left(-\frac1{40},-\frac1{50}\right).
\]
In particular, all its zeros are real, distinct, and negative.

Lemma~\ref{lem:gamma-root-transfer} now shows that the antichain polynomials corresponding to all four exceptional pairs have only simple negative zeros. 
Together with Proposition~\ref{prop:Hn-real-rootedness} and Theorem~\ref{thm:H-palindromicity-classification}, this proves the
following.

\begin{thm}
\label{thm:H-palindromic-real-rootedness}
If $\mathcal N_{[k]\times H_n}(x)$ is palindromic, then all its zeros are simple and negative.
\end{thm}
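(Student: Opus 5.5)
The proof is a case analysis over the classification in Theorem~\ref{thm:H-palindromicity-classification}: every palindromic $\mathcal N_{[k]\times H_n}(x)$ falls into one of two cases, and I treat them separately. First, if $k=1$ and $n$ is odd, the polynomial is $\mathcal N_{H_n}(x)$. Proposition~\ref{prop:Hn-real-rootedness} lists its zeros explicitly as $-\tan^2\bigl((2j-1)\pi/(2(n+1))\bigr)$ for $1\le j\le\lfloor(n+1)/2\rfloor$. These values are distinct and negative. Their number equals the degree $\lfloor (n+1)/2\rfloor$, so they are all the zeros. The same proposition gives more than needed, namely that the zeros are simple and negative for every $n$.

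Second, if $(k,n)$ lies in the exceptional set \eqref{eq:rangekn}, I use the explicit $\gamma$-expansions recorded in the proof of Theorem~\ref{thm:H-palindromicity-large-k}. These expansions give $\Gamma_{3,2}$, $\Gamma_{5,3}=\Gamma_{3,4}$ and $\Gamma_{7,4}$, of degrees $1$, $3$ and $5$. The corresponding palindromic degrees are $d=3,6,10$, so $\lfloor d/2\rfloor$ equals the degree of each $\Gamma$, as Lemma~\ref{lem:gamma-root-transfer} requires. I then show each $\Gamma$ has $\lfloor d/2\rfloor$ distinct negative zeros. Since all coefficients of each $\Gamma$ are positive, there are no nonnegative zeros. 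For the cubic and the quintic, I exhibit sign changes of $\Gamma$ at the endpoints of the stated pairwise disjoint negative intervals: $(-7,-6)$, $(-1/2,-2/5)$, $(-1/20,-1/25)$ for the cubic, and the five listed intervals for $\Gamma_{7,4}$. The intermediate value theorem then places a zero in each interval. Because the intervals are disjoint and their number equals the degree, these are all the zeros, and each is simple. Lemma~\ref{lem:gamma-root-transfer} then transfers this to $d$ distinct negative zeros of $\mathcal N_{[k]\times H_n}(x)$.

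The main obstacle is only computational: certifying the sign changes of $\Gamma_{7,4}$ at rational endpoints such as $-1/40$ and $-1/50$, which requires exact rational arithmetic. If any interval turned out to be mislabeled, I would fall back on a Sturm sequence for $\Gamma_{7,4}$ as a finite exact certificate of five distinct real roots. Combining the two cases gives the statement.
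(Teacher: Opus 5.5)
Your proposal is correct and follows the paper's own proof essentially step for step. You split along the palindromicity classification and use the explicit $-\tan^2$ zeros of Proposition~\ref{prop:Hn-real-rootedness} for $k=1$, $n$ odd; for the four exceptional pairs you trap the zeros of $\Gamma_{3,2}$ (trivially $-1/6$), $\Gamma_{5,3}=\Gamma_{3,4}$ and $\Gamma_{7,4}$ in disjoint negative intervals and then apply Lemma~\ref{lem:gamma-root-transfer}. The sign changes at all the listed endpoints do hold under exact rational evaluation, so the Sturm-sequence fallback is not needed.
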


\section{Minuscule consequences and Peck counterexamples}
\label{sec:minuscule-consequences}

In this section, we complete the proof of
Theorem~\ref{thm:main-minuscule} by considering the remaining connected minuscule posets.
We then return to Conjecture~\ref{conj:DD-Peck} and construct an infinite family of counterexamples.

\subsection{The remaining minuscule posets}
By Proctor's classification~\cite{Proctor1984}, after the families $[m]\times[n]$ and $H_n$ treated in the preceding sections, it remains to consider three types of connected minuscule posets.

First,
\[
K_n=[n]\oplus([1]\sqcup[1])\oplus[n],
\]
where $\oplus$ denotes the ordinal sum and $\sqcup$ denotes the disjoint union. 

For a finite poset $P$, let $J(P)$ denote the poset of order ideals of $P$, ordered by inclusion. 
Inductively, define $J^1(P)=J(P)$ and $J^{r+1}(P)=J(J^r(P))$.
The two exceptional connected minuscule posets are
\[
J^2([2]\times[3])
\qquad\text{and}\qquad
J^3([2]\times[3]).
\]
Ding and Dong~\cite{DingDong2019} determined all the palindromic cases for these posets.

\begin{thm}[{\cite{DingDong2019}}]
\label{thm:DD-remaining-palindromicity}
Let $P$ be one of $K_n$, $J^2([2]\times[3])$ and $J^3([2]\times[3])$.
Then $\mathcal N_{[k]\times P}(x)$ is palindromic if and only if one of the following holds.
\[
\begin{cases}
P=K_n, & k\in\{1,2n+1\},\\
P=J^2([2]\times[3]), & k\in\{5,11\},\\
P=J^3([2]\times[3]), & k=1.
\end{cases}
\]
\end{thm}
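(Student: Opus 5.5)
This theorem is attributed to Ding and Dong~\cite{DingDong2019}, so the plan is to reconstruct their classification with the tools of Section~\ref{sec:defect-one}. Necessity comes first. Palindromicity forces monicity, i.e.\ a unique maximum antichain. For each of the three posets the rank sizes of $[k]\times P$ are explicit, since the rank generating function is $(1+\cdots+q^{k-1})$ times that of $P$. The rank sizes of $P$ are as follows: for $K_n$ they are $1,\dots,1,2,1,\dots,1$; for $J^2([2]\times[3])$ (16 elements, rank 10) they are $1,1,1,2,2,2,2,2,1,1,1$; and for $J^3([2]\times[3])$ (27 elements) they are the $E_7$-minuscule rank numbers. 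Each of these posets is self-dual and Peck, so $[k]\times P$ is rank-symmetric and rank-unimodal, and its maximum antichains include the largest rank. Uniqueness requires a unique largest rank, and one also needs that no non-level antichain attains the width. This restricts $k$ to a finite list of residues or parities, in the spirit of \cite[Lemma~2.3]{DingDong2019}.

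Next I compare $[x]\mathcal N=|[k]\times P|$ with $[x^{d-1}]\mathcal N=d+2D_1^-$, using Corollary~\ref{cor:defect-one-coefficient} and Theorem~\ref{thm:self-dual-defect}. For $K_n$ the poset is almost a chain, and I can write $\mathcal N_{[k]\times K_n}$ explicitly. An antichain in $[k]\times K_n$ projects injectively to $K_n$ and behaves like a product of two chains with one doubled level, so a direct count via Theorem~\ref{thm:central-completion} with $L$ the middle rank gives the coefficients near the top. Equating the $x$ and $x^{d-1}$ coefficients should leave only $k=1$ and $k=2n+1$. For the two exceptional posets the ranks are bounded, so $D_1^-$ can be computed by the shadow localization of Theorem~\ref{thm:weak-shadow-propagation} and Corollary~\ref{cor:strict-shadow-localization}. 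Here the upward matching property comes from the symmetric chain decompositions, which exist by Theorem~\ref{thm:product-SCD} once $P$ has one. The result is $D_1^-$ as an explicit function of $k$ for $k$ large. This is linear in $k$ with slope smaller than $|P|/2$, while $|[k]\times P|-d$ grows with slope $|P|-\max\text{rank}$, so only finitely many $k$ survive. Those finitely many are checked directly.

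Sufficiency is by direct verification. For $k=1$, $\mathcal N_{K_n}(x)=1+(2n+2)x+x^2$ is palindromic. For $J^3([2]\times[3])$ the palindromic polynomial is obtained by computer enumeration. The three cases $[2n+1]\times K_n$, $[5]\times J^2([2]\times[3])$ and $[11]\times J^2([2]\times[3])$ are checked by explicit expansion in the $\gamma$-basis, as in Theorem~\ref{thm:H-palindromicity-large-k}. For the infinite family $[2n+1]\times K_n$, I would derive a closed form. Antichains of $[k]\times K_n$ are antichains in a product of two chains, apart from the pair of incomparable middle elements, so by inclusion–exclusion $\mathcal N$ is expressed through $\sum\binom{a}{r}\binom{b}{r}x^r$-type sums. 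Palindromicity at $k=2n+1$ then follows from the symmetry $\binom{a}{r}\binom{b}{r}$ versus $\binom{a}{a-r}\binom{b}{b-r}$ when $a=b$.

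The main obstacle is the defect-one count for $J^2([2]\times[3])$ with general $k$. Rank sizes there are non-strictly increasing (runs of equal ranks), so only weak shadow growth holds, and zero-growth sets must be classified as in Proposition~\ref{prop:H-zero-growth}. I would first try to show that zero-growth sets occur only on plateau ranks and lie along one end of the $[k]$ factor. Then defect-one antichains localize to a bounded number of ranks below the middle, giving a count affine in $k$.
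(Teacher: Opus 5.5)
\paragraph{Main gap: sufficiency for $P=K_n$, $k=2n+1$.}
Your argument for this infinite family does not work. Call the two middle elements of $K_n$ $c_1,c_2$, and put $C_i=[n]\oplus\{c_i\}\oplus[n]\cong[2n+1]$. Carried out correctly, your inclusion--exclusion reads
\[
\mathcal N_{[k]\times K_n}(x)
=2\,\mathcal N_{[k]\times[2n+1]}(x)-\mathcal N_{[k]\times[2n]}(x)
+x^2\sum_{i,j=1}^{k}\mathcal N_{[k-\max(i,j)]\times[n]}(x)\,\mathcal N_{[\min(i,j)-1]\times[n]}(x),
\]
with $\mathcal N_{[0]\times[n]}=1$. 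The last sum counts antichains containing some $(i,c_1)$ and some $(j,c_2)$. Their remaining elements have first coordinate $>\max(i,j)$ over the lower copy of $[n]$, and first coordinate $<\min(i,j)$ over the upper copy.

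At $k=2n+1$ the three terms behave as follows. The first term is palindromic of degree $2n+1$. The second is not palindromic. The whole polynomial has degree $2n+2$, since the middle rank has $2n+2$ elements. So the symmetry of $\binom ar\binom br$ when $a=b$ proves nothing. The cross term is not of the form $\sum_r\binom ar\binom brx^r$, and it is exactly what restores the symmetry.

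Already for $n=1$ the chain terms give $1+12x+15x^2+2x^3$ and the cross term gives $9x^2+10x^3+x^4$. Only their sum, $1+12x+24x^2+12x^3+x^4$, is palindromic. You therefore need a genuine identity. The paper proves nothing here itself; it imports the theorem from Ding and Dong. The input it later relies on is their identification $\mathcal N_{[2n+1]\times K_n}(x)=\operatorname{Cat}(D_{2n+2},x)$. The coefficients of that polynomial, $\binom{N}{i}^2-\frac{N}{N-1}\binom{N-1}{i}\binom{N-1}{i-1}$ with $N=2n+2$, are visibly invariant under $i\mapsto N-i$. Note also that an explicit $\gamma$-expansion can only settle finitely many cases, not this family.

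\paragraph{Necessity: sound in outline, but much heavier than needed and partly ill-posed.}
Corollary~\ref{cor:defect-one-coefficient} and Theorem~\ref{thm:self-dual-defect} assume a unique maximum antichain. So computing $D_1^-$ ``for general $k$'' only makes sense in monic cases. Monicity already confines $k$ to a finite set, not to residue classes:
\begin{itemize}
\item Each rank sequence of $P$ is a sum of indicators of intervals centred at the middle rank. The lengths are $2n+1,1$ for $K_n$; $11,5$ for $J^2([2]\times[3])$; and $17,9,1$ for $J^3([2]\times[3])$.
\item Multiplying by $[k]$ turns these into trapezoids with a common centre. The largest ranks of $[k]\times P$ therefore form the intersection of the plateaus, which has $1+\min_\ell|\ell-k|$ elements.
\item Since $[k]\times P$ is Sperner, a unique maximum antichain forces $k\in\{1,2n+1\}$, $\{5,11\}$ and $\{1,9,17\}$, respectively.
\end{itemize}
Necessity is thus immediate for $K_n$ and $J^2([2]\times[3])$, and the defect-one count you call the main obstacle is not needed. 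The only remaining task is to exclude $k=9$ and $k=17$ for $J^3([2]\times[3])$. You can do this by computing $[x^{d-1}]\mathcal N=d+2D_1^-$ and comparing it with $[x]\mathcal N=27k$, or by direct enumeration.

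Your slope heuristic is also off. Once $k$ is at least the number of ranks of $P$, one has $d=|P|$, so $|[k]\times P|-d=(k-1)|P|$.
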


We now prove real-rootedness in all the cases listed above.
\begin{proof}[Proof of Theorem~\ref{thm:main-minuscule}]
First consider $P=K_n$. 
When $k=1$, we have
\[
\mathcal N_{K_n}(x)=1+(2n+2)x+x^2.
\]
Its zeros are $-(n+1)\pm\sqrt{n(n+2)}$. 
Since $\sqrt{n(n+2)}<n+1$, both zeros are negative.

When $k=2n+1$, Ding and Dong~\cite[Proposition~3.2]{DingDong2019} identified the antichain polynomial with the type-$D$ Narayana polynomial
$\mathcal N_{[2n+1]\times K_n}(x)
=
\operatorname{Cat}(D_{2n+2},x)$.
The type-$D$ Narayana polynomials are real-rooted by Br\"and\'en~\cite[Theorem~7.1]{Branden2006}.
Since this polynomial has nonnegative coefficients and a nonzero constant term, none of its zeros is nonnegative. Hence all its zeros are negative.

Next let $Q=J^2([2]\times[3])$.
By Theorem~\ref{thm:DD-remaining-palindromicity}, it suffices to consider $k=5$ and $k=11$. 
Ding and Dong~\cite[Example~3.4]{DingDong2019} computed the corresponding $\gamma$-expansions:
\[
\mathcal N_{[5]\times Q}(x)
=
(1+x)^{10}
\Gamma_5\left(\frac{x}{(1+x)^2}\right),
\]
where $\Gamma_5(t)=1+70t+745t^2+1850t^3+1025t^4+62t^5$, and
\[
\mathcal N_{[11]\times Q}(x)
=
(1+x)^{16}
\Gamma_{11}\left(\frac{x}{(1+x)^2}\right),
\]
where
\[
\Gamma_{11}(t)
=
1+160t+4900t^2+49280t^3+194810t^4
+314720t^5+193760t^6+35840t^7+860t^8.
\]

For $\Gamma_5(t)$, by direct substitution, the five zeros of $\Gamma_5(x)$ lie, respectively, in the pairwise disjoint intervals
\[
(-15,-14),\quad
\left(-\frac32,-\frac75\right),\quad
\left(-\frac25,-\frac38\right),\quad
\left(-\frac19,-\frac1{10}\right),\quad
\left(-\frac1{50},-\frac1{60}\right).
\]

By the intermediate value theorem, the polynomial has a zero in each of these five pairwise disjoint intervals. 
Since all intervals are negative, these zeros are distinct and negative.
Similarly, $\Gamma_{11}(t)$ has a zero in each of the following pairwise disjoint negative intervals:
\[
\begin{gathered}
	(-36,-35),\quad
	\left(-4,-\frac{15}{4}\right),\quad
	\left(-\frac{13}{10},-\frac54\right),\quad
	\left(-\frac35,-\frac12\right),\\
	\left(-\frac3{10},-\frac14\right),\quad
	\left(-\frac{13}{100},-\frac3{25}\right),\quad
	\left(-\frac1{20},-\frac1{25}\right),\quad
	\left(-\frac1{100},-\frac1{125}\right).
\end{gathered}
\]
Thus $\Gamma_{11}(t)$ has a zero in each of eight disjoint negative intervals. 
Since its degree is $8$, all its zeros are distinct and negative.
Lemma~\ref{lem:gamma-root-transfer} now shows that $\mathcal N_{[5]\times Q}(x)$ and $\mathcal N_{[11]\times Q}(x)$ have only simple negative zeros.

It remains to consider $Q=J^3([2]\times[3])$.
By Theorem~\ref{thm:DD-remaining-palindromicity}, the only palindromic case is $k=1$. 
Ding and Dong~\cite[Proposition~3.1]{DingDong2019} computed
\[
\mathcal N_Q(x)=
1+27x+27x^2+x^3
=
(1+x)^3
\left(
1+24\frac{x}{(1+x)^2}
\right).
\]
Its $\gamma$-polynomial is therefore $\Gamma_1(t)=1+24t$, whose only zero is $-1/24$. 
Lemma~\ref{lem:gamma-root-transfer} shows that $\mathcal N_Q(x)$ has three distinct negative zeros.

By Proctor's classification, every connected minuscule poset belongs to one of the five families
\[
[m]\times[n],\qquad
H_n,\qquad
K_n,\qquad
J^2([2]\times[3]),\qquad
J^3([2]\times[3]).
\]
For $P=[m]\times[n]$, the result follows from Theorems~\ref{thm:three-chain-palindromicity-classification} and~\ref{thm:short-chain-real-rootedness}. 
The case $P=H_n$ follows from Theorem~\ref{thm:H-palindromic-real-rootedness}. 
The remaining three families were treated above. 
Hence, whenever $\mathcal N_{[k]\times P}(x)$ is palindromic, all its zeros are real and negative.
This completes the proof of Theorem~\ref{thm:main-minuscule}.
\end{proof}

\subsection{An infinite family of Peck counterexamples}
\label{subsec:Peck-counterexamples}
We now turn to Conjecture~\ref{conj:DD-Peck}.
In fact, the conjecture fails in a stronger form: there are infinitely many connected Peck posets whose antichain polynomials are not even unimodal.

For positive integers $r$ and $s$, let $A_s$ denote an antichain of cardinality $s$, and define
\[
P_{r,s}
:=
[r]\oplus A_s\oplus[r],
\]
where $\oplus$ denotes the ordinal sum of posets.

\begin{prop}
\label{prop:Peck-counterexample-family}

For all positive integers $r$ and $s$, the poset $P_{r,s}$ is a connected Peck poset, and
\begin{equation}
\label{eq:Peck-counterexample-polynomial}
\mathcal N_{P_{r,s}}(x)
=
(1+x)^s+2rx.
\end{equation}
Moreover, if $s\ge6$ and $r>\frac{s(s-3)}4$, then $\mathcal N_{P_{r,s}}(x)$ is not unimodal.

\end{prop}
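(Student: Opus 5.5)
The proof splits into three checks: the structure of $P_{r,s}$, the antichain count, and the failure of unimodality.

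Label the elements of $P_{r,s}$ as $c_1<\cdots<c_r<A_s<c'_1<\cdots<c'_r$, where every element of $A_s$ lies above all $c_i$ and below all $c'_j$. The poset is graded of rank $2r$. Its rank sizes are $1$ for ranks $0,\dots,r-1$, then $s$ at rank $r$, then $1$ for ranks $r+1,\dots,2r$.

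\emph{Peck property.} Connectedness is clear when $r\ge1$, since every element of $A_s$ is adjacent to $c_r$ and to $c'_1$ in the Hasse diagram. Rank-symmetry and rank-unimodality can be read off the rank sizes. For strong Sperner we argue through chains. Any union of $t$ antichains meets each chain in at most $t$ elements. Split the poset into the $s$ chains $\{a\}$ for $a\in A_s$, together with the chain $c_1<\dots<c_r<c'_1<\dots<c'_r$. A $t$-family therefore has size at most $\min(s,\cdot)$ contributions plus $\min(t,2r)$. Precisely, the bound is $s+\min(t-1,2r)$ for $t\ge1$, using the fact that an element of $A_s$ together with the big chain still forms a chain; it is cleaner to use the $s$ chains $c_1<\cdots<c_r<a<c'_1<\cdots<c'_r$ after reassigning, but it is simpler still to just compare. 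The sum of the $t$ largest rank numbers is $s+(t-1)$ for $1\le t\le 2r+1$. This value is attained by $A_s$ together with $t-1$ of the chain elements. The upper bound follows because a union of $t$ antichains is a subset of size at most $t$ in the chain $c_1<\dots<c_r<a_0<c'_1<\dots<c'_r$ for a fixed $a_0\in A_s$, plus at most the other $s-1$ elements of $A_s$.

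\emph{Antichain polynomial.} Any antichain containing some $c_i$ or $c'_j$ must be a singleton, because those elements are comparable to everything. So the antichains are the subsets of $A_s$ together with the $2r$ singletons $\{c_i\}$, $\{c'_j\}$. This gives $\mathcal N_{P_{r,s}}(x)=(1+x)^s+2rx$, so the coefficients are $a_1=s+2r$ and $a_j=\binom sj$ for $j\ne1$.

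\emph{Non-unimodality.} We need a dip followed by a rise. The inequality $a_1>a_2$ is equivalent to $2r>\binom s2-s=s(s-3)/2$, which is exactly the condition $r>s(s-3)/4$. We also need $a_2<a_3$, that is, $\binom s2<\binom s3$, which is equivalent to $s>5$; this holds since $s\ge6$. The sequence $a_1>a_2<a_3$ then has an interior strict local minimum, so it is not unimodal.

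The only delicate point is the strong Sperner upper bound. We handle it with the chain-cover argument above: a $t$-family meets a chain of length $2r+1$ in at most $t$ elements, and the remaining elements must come from the other $s-1$ points of $A_s$. This gives at most $t+s-1$, which is the sum of the $t$ largest ranks; for $t>2r+1$ the whole poset is trivially a $t$-family.
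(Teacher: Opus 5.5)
Your proof is correct. The antichain count and the non-unimodality check ($a_1>a_2\iff r>s(s-3)/4$ and $a_2<a_3\iff s>5$) match the paper's argument. The two proofs differ only in the strong Sperner step.

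\textbf{The paper's route.} It notes that consecutive ranks of $P_{r,s}$ are joined either by a single edge or by a complete bipartite graph with one shore of size one. From this it reads off the normalized matching property, and then cites the standard implication ``normalized matching $\Rightarrow$ strongly Sperner''.

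\textbf{Your route.} You prove the bound directly from the chain partition
\[
\{c_1<\cdots<c_r<a_0<c'_1<\cdots<c'_r\}\ \cup\ \bigl\{\{a\}:a\in A_s\setminus\{a_0\}\bigr\}.
\]
This gives $|F|\le\min(t,2r+1)+s-1$ for any union $F$ of $t$ antichains. The bound is attained by $A_s$ together with $t-1$ chain singletons.

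\textbf{Comparison.} Your partition is in fact a symmetric chain decomposition, since every chain is symmetric about rank $r$. So you are carrying out the elementary SCD argument, and it is self-contained. The paper's version is shorter but depends on the cited LYM-type theorem.

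\textbf{One fix for the write-up.} Delete the false start in your middle paragraph. Splitting the poset into the $s$ singletons of $A_s$ and the $2r$-element chain $c_1<\cdots<c'_r$ only gives $s+\min(t,2r)$, which is off by one. The sharp bound comes from merging $a_0$ into the long chain.
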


\begin{proof}
The rank sizes of $P_{r,s}$ are
\[
\underbrace{1,\ldots,1}_{r\text{ times}},
\ s,\
\underbrace{1,\ldots,1}_{r\text{ times}}.
\]
Hence $P_{r,s}$ is rank-symmetric and rank-unimodal.

Moreover, the bipartite graph between any two consecutive ranks is either a single edge or a complete bipartite graph with one shore of size one. 
Thus $P_{r,s}$ has the normalized matching property.
Consequently, it is strongly Sperner, and hence $P_{r,s}$ is Peck.

Since elements belonging to different summands of the ordinal sum are comparable, every antichain is either contained in $A_s$ or consists of a single element from one of the two chains. 
Therefore~\eqref{eq:Peck-counterexample-polynomial} holds.
The first three nonconstant coefficients are
\[
[x]\mathcal N_{P_{r,s}}(x)=s+2r,
\qquad
[x^2]\mathcal N_{P_{r,s}}(x)=\binom{s}{2},
\qquad
[x^3]\mathcal N_{P_{r,s}}(x)=\binom{s}{3}.
\]
If $r>\frac{s(s-3)}4$ and $s\ge6$, then $s+2r>\binom{s}{2}$ and $\binom{s}{3}>\binom{s}{2}$.
Therefore
\[
[x]\mathcal N_{P_{r,s}}(x)
>
[x^2]\mathcal N_{P_{r,s}}(x)
<
[x^3]\mathcal N_{P_{r,s}}(x),
\]
so the coefficient sequence is not unimodal.
\end{proof}

%%%%%%%%%%%%%%%%%%%%%%%%%%%%%%%%%%%%%%%%%%%%%%%%%%%%%下面是附录
\newpage

\appendix
\section{Technical shadow classifications}
\label{app:technical-counts}

This appendix contains the boundary and zero-growth classifications used in Sections~\ref{sec:three-chains} and~\ref{sec:Hn}.
\subsection{Proof of Lemma~\ref{lem:tight-set-classification}}
\begin{proof}
Put
\[
M=M_{k,m,s},
\qquad
L=L_{k,m,s},
\qquad
e_1=(1,0),
\qquad
e_2=(0,1).
\]	
We first prove the sufficiency of this lemma. 
Let $S$ be a nonempty consecutive segment of one of $C_0,C_1,C_2$.
In each of the three cases, the upper shadow of a consecutive segment of length $|S|$ consists of exactly $|S|+1$ points. 
Hence $|\nabla S|=|S|+1$, so $S$ is tight.
Now consider $S=M$.
The definition of the two adjacent levels gives $\nabla M=L$. Moreover, Lemma~\ref{lem:central-level-geometry} implies $|L|=|M|+1$.
Thus $S$ is also tight.

We now prove the necessity. 
The idea is to decompose the defect $|\nabla S|-|S|$ into local row contributions. 
Tightness forces these contributions to vanish almost everywhere. 
According to whether one of the first $s$ rows is completely filled, 
this implies either that $S$ is a consecutive segment of one of $C_0,C_1,C_2$, or that $S=M$.

For $S\subseteq M$, define $U(S):=S\cup(S+e_1)\cup(S+e_2)$.
Then $\nabla S=U(S)\cap L$.
The points of $U(S)$ lying outside $L$ are precisely
$S\cap C_0$, $(S\cap C_1)+e_2$ and $(S\cap C_2)+e_1$,
all of which are pairwise disjoint. Therefore
\begin{equation}\label{eq:ShadowS-S}
|\nabla S|-|S| = |U(S)|-|S| -|S\cap C_0| -|S\cap C_1| -|S\cap C_2|.
\end{equation}

For $1\le j\le m$, let
$S_j=\{i\in[k]:(i,j)\in S\}$
be the set of first coordinates of the points in the $j$-th row of $S$, and put $S_0=S_{m+1}=\varnothing$.
For a finite set $A\subseteq\mathbb Z$, let $c(A)$ be the number of nonempty consecutive components in $A$, with convention $c(\varnothing)=0$. 
Define $U(S)_j:=S_j\cup S_j^+\cup S_{j-1}$, where $S_j^+:=\{i+1:i\in S_j\}$.
Since $|S_j\cup S_j^+|=|S_j|+c(S_j)$,
we have
\begin{equation}\label{eq:Us-S}
|U(S)_j|-|S_j| = c(S_j) + |S_{j-1}\setminus(S_j\cup S_j^+)|.
\end{equation}
Define $\lambda_j:=\mathbf 1_{\{j\le s,\ s+1-j\in S_j\}}$ and $\rho_j:=\mathbf 1_{\{j\le m-s-1,\ k\in S_j\}}$. 
Since the $m$-th row of $M$ is $C_1$, we get
\[
\sum_{j=1}^m\lambda_j=|S\cap C_0|,
\qquad
\sum_{j=1}^m\rho_j=|S\cap C_2|,
\qquad|S_m|=|S\cap C_1|.
\]
Substituting them into \eqref{eq:ShadowS-S}, then using \eqref{eq:Us-S} gives the exact decomposition
\[
\begin{aligned}
|\nabla S|-|S|
&=
\sum_{j=1}^m c(S_j)
+\sum_{j=2}^m
|S_{j-1}\setminus(S_j\cup S_j^+)|
+|S_m|-\sum_{j=1}^m\lambda_j
-|S_m|-\sum_{j=1}^m\rho_j\\
&=
\sum_{j=1}^m
\bigl(c(S_j)-\lambda_j-\rho_j\bigr)
+
\sum_{j=2}^m
|S_{j-1}\setminus(S_j\cup S_j^+)|\\
&=
\sum_{j=1}^m a_j+\sum_{j=2}^m b_j.
\end{aligned}
\]
where $a_j:=c(S_j)-\lambda_j-\rho_j$ for $1\le j\le m$, and $b_j:=|S_{j-1}\setminus(S_j\cup S_j^+)|$ for $2\le j\le m$.

The first coordinates of the points in the $j$-th row of $M$
form the interval
\[
I_j=
\left[
\max\{1,s+1-j\},
\min\{k,k+m-s-1-j\}
\right].
\]
For $1\le j\le s$, we have $I_j=[s+1-j,k]$.
The points corresponding to its left and right endpoints lie in $C_0$ and $C_2$, respectively.
Thus $\lambda_j+\rho_j\le2$. If $S_j\ne\varnothing$, 
then $c(S_j)\ge1$, and hence
\[
a_j=c(S_j)-\lambda_j-\rho_j\ge-1.
\]
Moreover, equality $a_j=-1$ can occur only when $c(S_j)=1$ and $\lambda_j=\rho_j=1$.
In this case, $S_j$ is a single consecutive component containing both
endpoints of $I_j$, so necessarily $S_j=I_j$.
Conversely, if $S_j=I_j$, then $c(S_j)=1$, $\lambda_j=\rho_j=1$,
and therefore $a_j=-1$. Hence
\[
a_j=-1
\quad\Longleftrightarrow\quad
S_j=I_j.
\]
In particular, for $1\le j\le s$, if $S_j\ne I_j$,
then $a_j\ge0$.
For $s<j\le m-s-1$, we have
$a_j=c(S_j)-\rho_j\ge0$.
For $m-s\le j\le m$, we have $a_j=c(S_j)\ge0$.
Also, $b_j\ge0$ for $2\le j\le m$.

\medskip
\noindent
\textit{Case 1. $S_j\ne I_j$ for every $1\le j\le s$.}

Then every term in the defect decomposition is nonnegative.
Let $q=\max\{j:S_j\ne\varnothing\}$.
Assume that $S$ is tight, so $|\nabla S|-|S|=1$.

\smallskip
\noindent
\textit{Subcase 1.1. $q=m$.}

In this case, $S$ meets the top boundary $C_1$.
Then $a_m=c(S_m)\ge1$.
Hence $a_m=1$ and every other term in the defect decomposition
vanishes. Since $a_{m-1}=0$ and $m-1\ge m-s$, we have $S_{m-1}=\varnothing$.
The equality $b_{m-1}=0$ then gives $S_{m-2}=\varnothing$,
and continuing downward gives $S_j=\varnothing$ for every $1\le j<m$.
Thus $S=S_m\times\{m\}$, and $a_m=1$ says that $S_m$ is a single consecutive
component of $I_m=[1,k-s-1]$.
Therefore $S$ is a nonempty interval in $C_1$.

\smallskip
\noindent
\textit{Subcase 1.2. $m-s\le q<m$.}

In this case, the highest nonempty row of $S$ lies just below the
top boundary $C_1$.
Since $S_{q+1}=\varnothing$, we have $b_{q+1}=|S_q|\ge1$,
while $a_q=c(S_q)\ge1$.
Hence $|\nabla S|-|S|\ge2$,
which contradicts tightness. Thus this subcase cannot occur.

\smallskip
\noindent
\textit{Subcase 1.3. $s<q\le m-s-1$.}

In this case, the highest nonempty row of $S$ lies in the middle
region. Again, $b_{q+1}=|S_q|\ge1$.
Since the total defect is $1$, we must have $|S_q|=1$ and $a_q=0$.
Furthermore, every other $a_j$ and $b_j$ vanishes. 
In the middle range, $a_q=c(S_q)-\rho_q$,
so $S_q=\{k\}$.

If $2\le j\le q$ and $S_j=\{k\}$, then $b_j=0$ gives
\[
S_{j-1}
\subseteq
S_j\cup S_j^+
=
\{k,k+1\}.
\]
Since $S_{j-1}\subseteq[k]$, it follows that $S_{j-1}\subseteq\{k\}$.
Once an empty row occurs, the equality $b_j=0$ forces every lower
row to be empty. Hence the nonempty rows of $S$ are consecutive
and each consists of the single point $(k,j)$. Thus $S$ is a
nonempty interval in $C_2$.

\smallskip
\noindent
\textit{Subcase 1.4. $q\le s$.}

In this case, the highest nonempty row of $S$ lies in the lower
sloping region. As before, $b_{q+1}=|S_q|$, so tightness gives $|S_q|=1$ and $a_q=0$.
Since $q\le s$, the equality $a_q=0$ for a singleton means that
$S_q$ consists of exactly one endpoint of $I_q$.

If $S_q=\{k\}$, the argument of Subcase~1.3 shows that $S$ is a nonempty interval in $C_2$. 
Otherwise, $S_q=\{s+1-q\}$.
For $2\le j\le q$, if $S_j=\{s+1-j\}$,
then $b_j=0$ gives
\[
S_{j-1}
\subseteq
\{s+1-j,s+2-j\}.
\]
Since $I_{j-1}=[s+2-j,k]$,
we obtain $S_{j-1}\subseteq\{s+2-j\}$.
Again, once an empty row occurs, all lower rows are empty.
Hence the nonempty rows are consecutive singletons $(s+1-j,j)$,
and $S$ is a nonempty interval in $C_0$.

Thus, in Case~1, every tight set is a nonempty consecutive segment
of one of $C_0,C_1,C_2$.

\medskip
\noindent
\textit{Case 2. $S_h=I_h$ for some $1\le h\le s$.}

We now show that tightness forces $S=M$. 
Define
\[
\eta_j:=
\begin{cases}
-1,&2\le j\le s,\\
0,&s<j\le m-s-1,\\
1,&m-s\le j\le m.
\end{cases}
\]
For every $2\le j\le m$, we claim that
\[
a_j+b_j
=
\eta_j+|I_{j}\setminus S_{j}|-|I_{j-1}\setminus S_{j-1}|
+
|(I_{j-1}\setminus S_{j-1})\cap(S_j\cup S_j^+)|.
\]
To prove this identity, first observe that $|S_j\cup S_j^+|=|S_j|+c(S_j)$.
According to the position of the $j$-th row, we have
\[
a_j+|I_{j-1}\setminus (S_j\cup S_j^+)|
=
\begin{cases}
|I_j\setminus S_j|-1, & 2\le j\le s,\\[2mm]
|I_j\setminus S_j|, & s<j\le m-s-1,\\[2mm]
|I_j\setminus S_j|+1, & m-s\le j\le m.
\end{cases}
\]
Indeed, if $2\le j\le s$, then
\[
I_j=[s+1-j,k],
\qquad
I_{j-1}=[s+2-j,k],
\]
and the only possible elements of
$S_j\cup S_j^+$ outside $I_{j-1}$ are
$s+1-j$ and $k+1$, corresponding to
$\lambda_j$ and $\rho_j$, respectively.
If $s<j\le m-s-1$, then $I_{j-1}=I_j=[1,k]$,
and the only possible element of
$S_j\cup S_j^+$ outside $I_{j-1}$ is
$k+1$, corresponding to $\rho_j$.
Finally, if $m-s\le j\le m$, then $I_{j-1}$ is obtained from $I_j$
by adding one point at the right end, and $S_j\cup S_j^+\subseteq I_{j-1}$.

Thus the three cases can be written uniformly as
\[
a_j+|I_{j-1}\setminus (S_j\cup S_j^+)|
=
\eta_j+|I_j\setminus S_j|.
\]

On the other hand,
\[
\begin{aligned}
b_j
&=
|S_{j-1}\setminus (S_j\cup S_j^+)|\\
&=
|I_{j-1}\setminus (S_j\cup S_j^+)|
-
\bigl|
(I_{j-1}\setminus S_{j-1})
\setminus (S_j\cup S_j^+)
\bigr|.
\end{aligned}
\]
Moreover,
\[
\bigl|
(I_{j-1}\setminus S_{j-1})
\setminus (S_j\cup S_j^+)
\bigr|=
|I_{j-1}\setminus S_{j-1}|
-
\bigl|
(I_{j-1}\setminus S_{j-1})
\cap (S_j\cup S_j^+)
\bigr|.
\]
Combining these identities proves the claim.

Choose $h\le s$ such that $S_h=I_h$.
Since $a_j\ge-1$ for every $1\le j\le h$
and $b_j\ge0$, we have
\begin{equation}\label{eq:ajbj}
\sum_{j=1}^{h}a_j
+
\sum_{j=2}^{h}b_j
\ge-h.
\end{equation}

Since $|I_h\setminus S_h|=0$, summing the transition identity for
$j=h+1,\ldots,m$ gives
\begin{equation}\label{eq:tail-defect-sum}
\begin{aligned}
	\sum_{j=h+1}^{m}(a_j+b_j)
	&=
	\sum_{j=h+1}^{m}\eta_j
	+|I_m\setminus S_m|
	+\sum_{j=h+1}^{m}
	\bigl|
	(I_{j-1}\setminus S_{j-1})
	\cap (S_j\cup S_j^+)
	\bigr|\\
	&=
	h+1+|I_m\setminus S_m|
	+\sum_{j=h+1}^{m}
	\bigl|
	(I_{j-1}\setminus S_{j-1})
	\cap (S_j\cup S_j^+)
	\bigr|.
\end{aligned}
\end{equation}
Therefore, summing \eqref{eq:ajbj} and
\eqref{eq:tail-defect-sum}, we obtain
\[
|\nabla S|-|S|
\ge
-h+(h+1)
=
1.
\]
If $S$ is tight, equality must hold throughout. In particular, $|I_m\setminus S_m|=0$
and
\[
(I_{j-1}\setminus S_{j-1})
\cap (S_j\cup S_j^+)
=
\varnothing
\qquad
(h+1\le j\le m).
\]
Hence $S_m=I_m$.
We now argue by downward induction on $j$. Suppose that $S_j=I_j$ for some $j>h$. 
Then $S_j\cup S_j^+=I_j\cup I_j^+$ contains $I_{j-1}$. 
Hence
\[
I_{j-1}\setminus S_{j-1}
\subseteq
S_j\cup S_j^+.
\]
But $(I_{j-1}\setminus S_{j-1}) \cap (S_j\cup S_j^+)=\varnothing$ means $I_{j-1}\setminus S_{j-1}=\varnothing$.
Therefore we get $S_{j-1}=I_{j-1}$.

Starting with $j=m$ and proceeding downward, we conclude that $S_j=I_j$ for $h\le j\le m$.
Equality in \eqref{eq:ajbj} also forces $a_j=-1$ for every $1\le j\le h$,
and hence $S_j=I_j$, for every $1\le j\le h$.
Therefore $S=M$.

This completes the classification.
\end{proof}

\subsection{Proof of Lemma~\ref{lem:tight-set-classification-zero}}

\begin{proof}
For $s=0$, we have
\[
L_{k,m,0}=[k]\times[m],
\qquad
M_{k,m,0}=([k]\times[m])\setminus\{(k,m)\}.
\]
We first prove the necessity. 
Suppose that $S\subseteq M_{k,m,0}$ is tight. 
Then $\nabla S\setminus S=\{v\}$
for some $v\in[k]\times[m]$.
We distinguish two cases according to whether $v=(k,m)$.

If $v=(k,m)$, then every upper cover of an element of $S$ belongs to $S\cup\{(k,m)\}$. 
It follows that $S\cup\{(k,m)\}$ is an upper order ideal of $[k]\times[m]$. 
Since $S$ is nonempty, this ideal has at least two elements. 
Thus $S=J\setminus\{(k,m)\}$ for some upper order ideal $J\subseteq[k]\times[m]$ with $|J|\ge2$.

If $v\ne(k,m)$, then $(k,m)\notin\nabla S$ implies $(k-1,m)\notin S$ and $(k,m-1)\notin S$.
We claim that $S$ contains no $(i,j)$ with $i<k$ and $j<m$. Suppose otherwise. From $(i,j)$ to $(k,m)$, consider the two saturated
chains
\begin{align*}
&(i,j)<(i+1,j)<\cdots<(k,j)
<(k,j+1)<\cdots<(k,m),\\
&(i,j)<(i,j+1)<\cdots<(i,m)
<(i+1,m)<\cdots<(k,m).
\end{align*}
Apart from $(i,j)$ and $(k,m)$, these two chains are disjoint.
Since $v\ne(k,m)$, at least one of them does not contain $v$.
Starting from $(i,j)\in S$, every next element on this chain must belong to $S$. 
Otherwise, the first element leaving $S$ would belong to $\nabla S\setminus S=\{v\}$,
contrary to the choice of the chain. Hence the whole chain belongs to $S$, 
and in particular $(k,m)\in S$, 
contradicting $S\subseteq M_{k,m,0}$. This proves the claim.

From the claim, we have
\[
S\subseteq
\{(i,m):1\le i\le k-2\}
\cup
\{(k,j):1\le j\le m-2\}.
\]
Suppose that $S$ meets both sets. Let $(i,m)$ be the rightmost element
of $S$ in the top row and $(k,j)$ the highest element of $S$ in the
right column. Then $(i+1,m)\in\nabla S\setminus S$ and $(k,j+1)\in\nabla S\setminus S$.
These two elements are distinct, contradicting $|\nabla S\setminus S|=1$. Thus $S$ is contained in one of the two boundary chains.

If $S$ has $c$ consecutive components in the natural order on that chain, 
then each component contributes exactly one additional element to the upper shadow. 
Hence $|\nabla S|=|S|+c$. 
Since $S$ is tight, $c=1$. Thus $S$ is a nonempty interval in one of the two stated boundary chains.

We now prove the sufficiency. 
First, let $S=J\setminus\{(k,m)\}$,
where $J\subseteq[k]\times[m]$ is an upper order ideal with $|J|\ge2$.
Every nonempty upper order ideal of $[k]\times[m]$ contains $(k,m)$,
so $S$ is nonempty. 
Since $J\ne\{(k,m)\}$, the ideal $J$ contains an element covered by $(k,m)$, and hence $(k,m)\in\nabla S$.
Moreover, every upper cover of an element of $S$ belongs to $J$.
Therefore $\nabla S=J=S\cup\{(k,m)\}$, and hence $|\nabla S|=|S|+1$, that is, $S$ is tight.

Finally, let $S$ be a nonempty interval in one of the two stated
boundary chains. Its upper shadow consists of the points of $S$
together with exactly one additional point at the upper end of the
interval. That is to say, $|\nabla S|=|S|+1$.
Thus $S$ is tight.
\end{proof}

\subsection{Proof of Proposition~\ref{prop:H-tight-count}}
To prove Proposition~\ref{prop:H-tight-count}, we first deal with the auxiliary boundary for $[k]\times H_n$.
Throughout this subsection, let $s,m\ge1$ and put $n=m+2s+1$, $a=2s+1$, and $b=a+2m$. 
Recall that
\[
D_t=\{(i,j)\in H_n:i+j=t\}.
\]
Thus $D_a$ and $D_b$ are the two boundary diagonals of $M_{n,s}$, and $m=(b-a)/2$ measures the width between them.
Intervals such as $[u,w]$ denote intervals of integers. 
For $1\le r\le2m+1$, define
\[
I_r:=
\left[
\max\{1,r-m\},
\,s+\left\lfloor\frac{r+1}{2}\right\rfloor
\right]
\]
and $V_r=\{(r,i):i\in I_r\}$. Let $\mathcal G_{s,m}$ be the directed graph
with vertex set $V(\mathcal G_{s,m})=\bigsqcup_{r=1}^{2m+1}V_r$ and directed edges $(r,i)\longrightarrow(r+1,i)$ and $(r,i)\longrightarrow(r+1,i+1)$
whenever the indicated endpoints belong to $V(\mathcal G_{s,m})$.

The map $\Xi:V(\mathcal G_{s,m})\longrightarrow\bigcup_{r=1}^{2m+1}D_{a+r}$, 
defined by $\Xi(r,i):=(i,a+r-i)$ is an isomorphism from $\mathcal G_{s,m}$ onto the directed Hasse graph
induced by the diagonals $D_{a+1},D_{a+2},\ldots,D_{b+1}$. 
In particular, $V_1$ corresponds to $D_{a+1}$ and $V_{2m+1}$ corresponds to $D_{b+1}$.

For any $U\subseteq I_1$, we identify $U$ with the vertex set
$\{(1,i):i\in U\}\subseteq V_1$.
A vertex cut separating $U$ from
$V_{2m+1}$ is a subset $C\subseteq V(\mathcal G_{s,m})$ meeting every
directed path from a vertex of $U$ to a vertex of $V_{2m+1}$; the initial and
terminal vertices are allowed to belong to the cut.

The next proposition gives the exact minimum vertex-cut size between
an interval $U\subseteq V_1$ and $V_{2m+1}$, together with a complete
classification of the equality cases.
\begin{prop}\label{prop:shifted-strip-mincut}
Let $U=[u,w]\subseteq I_1$. Every vertex cut separating $U$ from $V_{2m+1}$ has cardinality at least $|U|=w-u+1$. Moreover, the cuts of cardinality $|U|$ are precisely the following.
\begin{enumerate}[label=\textup{(\arabic*)}]
\item If $w<s+1$, the only minimum cut is $C=U\subseteq V_1$.

\item If $w=s+1$ and $u>1$, the minimum cuts are
\[
C_q^-=
\{(1,i):u\le i<q\}
\cup
\{(2,i):q\le i\le s+1\},
\]
where $u\le q\le s+2$.

\item If $U=I_1$, the minimum cuts consist of the
cuts $C_q^-$ above, where $1\le q\le s+2$, together with
\[
C_\rho^+
=
\{(2m,i):m+\rho\le i\le m+s\}
\cup
\{(2m+1,i):m+1\le i\le m+\rho\},
\]
where $0\le \rho\le s+1$. In particular, the two families need not be disjoint when $m=1$.
\end{enumerate}
\end{prop}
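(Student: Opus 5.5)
The plan is to prove the lower bound with Menger's theorem and then classify the equality cases by a level-by-level expansion count, in the same spirit as the proof of Lemma~\ref{lem:tight-set-classification}.

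\emph{Lower bound.} I would exhibit $|U|$ vertex-disjoint directed paths from $U$ to $V_{2m+1}$. For $i\in U$, take the staircase path
\[
(1,i)\to(2,i)\to(3,i+1)\to(4,i+1)\to\cdots\to(2m+1,i+m),
\]
that is, the vertex on level $r$ is $(r,\,i+\lfloor (r-1)/2\rfloor)$. The first thing to check is that this vertex lies in $V_r$. The upper constraint $i+\lfloor (r-1)/2\rfloor\le s+\lfloor (r+1)/2\rfloor$ holds because $i\le s+1$. The lower constraint $i+\lfloor(r-1)/2\rfloor\ge \max\{1,r-m\}$ holds because $r\le 2m+1$. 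Distinct starting points give paths that are disjoint level by level. Any vertex cut meets each path, so it has at least $|U|$ vertices. For use below, note that a minimum cut meets each of these paths exactly once. The same holds for any other disjoint system of $|U|$ paths, for instance the one that increments on even steps instead of odd steps wherever that system stays inside $V(\mathcal G_{s,m})$.

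\emph{Equality cases.} Let $C$ be a cut with $|C|=|U|$. Let $R$ be the set of vertices reachable from $U\setminus C$ by directed paths avoiding $C$, and write $R_r=R\cap V_r$. Then $R_{2m+1}=\varnothing$. Moreover,
\[
C\supseteq (U\cap C)\,\sqcup\,\bigsqcup_r\bigl(N^+(R_r)\setminus R_{r+1}\bigr),
\]
where $N^+(R_r)\subseteq V_{r+1}$ is the out-neighbourhood of $R_r$.

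The key local fact concerns a set $X\subseteq V_r$ with $c(X)$ consecutive components. Such a set satisfies $|N^+(X)|\ge |X|+c(X)-\varepsilon_r(X)$. Here $\varepsilon_r(X)\in\{0,1,2\}$ counts the wall losses, which are of two kinds.
\begin{enumerate}[label=\textup{(\alph*)}]
\item A loss at the top wall occurs when $r$ is odd and $X$ contains the top endpoint $s+\lfloor (r+1)/2\rfloor$ of $I_r$. In that case $I_{r+1}$ has the same top endpoint, so the point $(r+1,\text{top}+1)$ is missing.
\item A loss at the left wall occurs when $r\ge m+1$ and $X$ contains the left endpoint $r-m$, since $I_{r+1}$ then starts at $r-m+1$.
\end{enumerate}

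Telescoping these inequalities from level $1$ up to the last nonempty level of $R$ shows that $|C|\ge |U|$. Equality can hold only if every component count is $1$, every wall loss is used, and $C$ contains nothing beyond the forced frontier. I would then read off the three cases.
\begin{enumerate}[label=\textup{(\roman*)}]
\item If $w<s+1$, no wall loss is available at level $1$. Hence $R=\varnothing$ and $C=U$.
\item If $w=s+1$ and $u>1$, the top-wall loss at level $1$ can be used once. Then $R_1$ must be a final segment $[q,s+1]$ of $U$, with $R_2=\varnothing$. This gives $C=C_q^-$, and the range $u\le q\le s+2$ comes from allowing $R_1$ to be empty.
\item If $U=I_1$, the reachable region may also propagate all the way up. In that case each even-to-odd step must use a left-wall loss and each odd-to-even step a top-wall loss. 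This forces $R_r=I_r$ up to level $2m-1$, and the frontier near the top is then exactly $C_\rho^+$. Otherwise the region stops at level $1$ as in case (ii), giving the cuts $C_q^-$ with $1\le q\le s+2$.
\end{enumerate}

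The main obstacle is the bookkeeping in case (iii). One must show that once $R$ passes level $2$ it cannot stop at any intermediate level without paying a strictly positive surplus. One must also handle the overlap of the two families when $m=1$. I would organise this as a transition identity for $a_r+b_r$, exactly as in Case~2 of the proof of Lemma~\ref{lem:tight-set-classification}, and then use downward induction from level $2m+1$.
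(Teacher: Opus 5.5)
\textbf{There is a genuine gap in the equality analysis.} Your lower bound is fine and is the paper's. The staircase paths $P_j$ have vertices $\xi_{r,j}=(r,j+\lfloor (r-1)/2\rfloor)$. Your forced frontier $F=(U\setminus R_1)\cup\bigcup_r\bigl(N^+(R_r)\setminus R_{r+1}\bigr)$ is indeed a cut contained in $C$, and $|F|=|U|+\sum_{r=1}^{2m}\bigl(|N^+(R_r)|-|R_r|\bigr)$. The trouble is that these level terms are not nonnegative.

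Your local count is actually an identity, $|N^+(X)|=|X|+c(X)-\varepsilon_r(X)$. For odd $r\ge m+1$ and $X=I_r$, both walls are lost, so $|N^+(I_r)|=|I_r|-1$; the strip narrows there. Hence the telescoping does not by itself give $|C|\ge|U|$, and equality does not force each level to be tight.

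This already happens for the cuts you are trying to find. Take $m\ge2$ and $C=C^+_\rho$. The reachable set is $R_r=I_r$ for $r\le 2m-1$. Every even level $r\le m$ has no wall loss available and contributes $+1$. Every odd level $m+1\le r\le 2m-1$ contributes $-1$. The $\lfloor m/2\rfloor$ terms of each sign cancel. So ``every component count is $1$ and every wall loss is used'' is not the equality condition. The mechanism you describe in (iii), ``each even-to-odd step must use a left-wall loss'', is also impossible at even $r\le m$, because there $r<m+1$.

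For the same reason, your conclusions $R=\varnothing$ in (i) and $R_2=\varnothing$ in (ii), drawn from local counts at the first two levels, are unjustified. You must rule out a reachable set that pays a positive surplus low down and recovers it through double wall losses at odd levels above the middle. In (ii), the left end of $R$ never decreases, but $\min I_r=r-m$ passes $u$ once $r\ge m+u$, so this is not excluded a priori. What is missing is a global argument.

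The paper gets one by staying with the canonical paths.
\begin{itemize}
\item A minimum cut meets each $P_j$ exactly once, at some level $h_j$, and has no other vertices.
\item The cross-edges are $\xi_{r,j}\to\xi_{r+1,j+1}$ for $r$ odd and $\xi_{r,j+1}\to\xi_{r+1,j}$ for $r$ even. They give a switching property forcing every $h_j\in\{\ell,\ell+1\}$. The indices at level $\ell+1$ form a suffix or a prefix according to the parity of $\ell$.
\item Explicit escape paths then rule out every $\ell$ except those producing $C^-_q$ and $C^+_\rho$. These go through $P_{w+1}$, through $P_{u-1}$, or along the leftmost staircase $z_r=(r,\lfloor (r-1)/2\rfloor)$.
\item Sufficiency of $C^+_\rho=\varphi(C^-_{\rho+1})$ follows from the edge-reversing involution $\varphi(r,i)=(2m+2-r,i+m+1-r)$.
\end{itemize}
If you want to keep the reachable-set approach, you need a potential such as $|I_r\setminus R_r|$, as in Case~2 of the proof of Lemma~\ref{lem:tight-set-classification}. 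It must be applied in all three cases, not only in (iii).
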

\begin{proof}
For $1\le j\le s+1$ and $1\le r\le2m+1$, define $\xi_{r,j} := \left( r,\, j+\left\lfloor\frac{r-1}{2}\right\rfloor \right).$ It follows directly from the definition of $I_r$ that $\xi_{r,j}\in V_r$. Moreover,
\[
P_j:
\xi_{1,j}\longrightarrow \xi_{2,j}\longrightarrow
\cdots\longrightarrow \xi_{2m+1,j}
\]
is a directed path in $\mathcal G_{s,m}$. The paths $P_u,P_{u+1},\ldots,P_w$ are pairwise vertex-disjoint. Therefore every vertex cut separating $U$ from $V_{2m+1}$ must meet each of these paths, and hence has cardinality at least $w-u+1=|U|$.

Now let $C$ be a cut of cardinality $|U|$. Since the paths $P_u,\ldots,P_w$ are pairwise disjoint, $C$ contains exactly one vertex from each $P_j$, and it contains no vertex outside their union. Thus, for each $u\le j\le w$, there is a unique integer $h_j$ such that $C\cap P_j=\{\xi_{h_j,j}\}$.

We first record a switching property of the integers $h_j$. We claim that for $u\le j<w$, the following implications hold:
\begin{equation}\label{eq:switching-property}
\begin{aligned}
	r\ \text{odd},\quad h_j\ge r+1
	&\quad\Longrightarrow\quad h_{j+1}\ge r+1,\\
	r\ \text{even},\quad h_{j+1}\ge r+1
	&\quad\Longrightarrow\quad h_j\ge r+1.
\end{aligned}
\end{equation}
To prove the claim, consider the case: $r$ is odd. We suppose that $h_j\ge r+1$ but $h_{j+1}\le r$. 
There is a cross-edge $\xi_{r,j}\longrightarrow \xi_{r+1,j+1}$. 
Consequently, one may follow $P_j$ up to $\xi_{r,j}$, use this cross-edge, and then follow $P_{j+1}$ from $\xi_{r+1,j+1}$ to the terminal level. 
This path avoids the cut point on $P_j$ and $P_{j+1}$, since it leaves $P_j$ before level $h_j$ and then enters $P_{j+1}$ after level $h_{j+1}$.
This contradicts the assumption that $C$ is a cut. 
The proof for even $r$ is analogous. This proves the claim.

For every integer $t\ge1$, put
$A_t=\{j\in[u,w]:h_j\ge t\}$.
In particular, $A_t=\varnothing$ for $t>2m+1$.
Thus $A_t$ records the paths $P_j$ whose cut vertices lie at level $t$
or above.
By \eqref{eq:switching-property}, $A_t$ is a terminal interval of $[u,w]$ when $t$ is even; $A_t$ is an initial interval of $[u,w]$ when $t$
is odd. 
Let $\ell=\min_{u\le j\le w}h_j$. Then $A_\ell=[u,w]$ and $A_{\ell+1}\subsetneq[u,w]$. 
Since $A_{\ell+1}$ and $A_{\ell+2}$ have opposite orientations, we have
$A_{\ell+2}\subseteq A_{\ell+1}$ and $A_{\ell+1}\subsetneq [u,w]$. Thus the set $A_{\ell+2}$ must be empty.
Otherwise a nonempty initial interval of $[u,w]$ would be contained in a proper terminal interval, or vice versa, which is impossible.
Hence
\begin{equation}\label{eq:two-consecutive-levels}
h_j\in\{\ell,\ell+1\}
\quad
(u\le j\le w).
\end{equation}
Moreover, the set of indices for which $h_j=\ell+1$ is a suffix when $\ell$ is odd and a prefix when $\ell$ is even.

We now distinguish three cases.

\medskip
\noindent
\emph{Case 1: $w<s+1$.}

In this case, $\ell=1$. Suppose instead that $\ell\ge2$. Starting from $\xi_{1,w}$, use the edge $\xi_{1,w}\longrightarrow \xi_{2,w+1}$ and then follow the path $P_{w+1}$. Since $P_{w+1}$ contains no vertex of $C$, this gives a directed path from $U$ to the terminal level avoiding $C$, a contradiction.

By \eqref{eq:two-consecutive-levels}, the cut points lie in the first two levels, and $A_2=\{j:h_j=2\}$ is a terminal interval of $[u,w]$. If $A_2$ were nonempty, then $w\in A_2$, and the same path
\[
\xi_{1,w}\longrightarrow \xi_{2,w+1}
\longrightarrow\cdots\longrightarrow \xi_{2m+1,w+1}
\]
would avoid $C$. Thus $A_2=\varnothing$, and $h_j=1$ for every $j\in[u,w]$. Hence $C=U$.

\medskip
\noindent
\emph{Case 2: $w=s+1$ and $u>1$.}

In this case, we first show that $\ell\le2$. Suppose that $\ell\ge3$. Follow $P_u$ from $\xi_{1,u}$ to $\xi_{2,u}$, use the cross-edge $\xi_{2,u}\longrightarrow \xi_{3,u-1}$,  
and then follow $P_{u-1}$ to the terminal level. Since $u-1\notin U$, the path $P_{u-1}$ contains no vertex of $C$, so the resulting path avoids $C$, a contradiction. Hence $\ell\le2$. 

Suppose first that $\ell=1$. Then $A_2=\{j\in[u,s+1]:h_j=2\}$ is a terminal interval of $[u,s+1]$. 
Thus $ A_2=[q,s+1]$ for some $u\le q\le s+2$, where $q=s+2$ corresponds to $A_2=\varnothing$. 
Hence 
\[ 
h_j= 
\begin{cases} 
1,&u\le j<q,\\ 
2,&q\le j\le s+1, 
\end{cases} 
\] 
and therefore 
\[ 
C = \{\xi_{1,j}:u\le j<q\} \cup \{\xi_{2,j}:q\le j\le s+1\}, 
\] 
which is exactly $C_q^-$. Now suppose that $\ell=2$. Then $A_3=\{j\in[u,s+1]:h_j=3\}$ is an initial interval of $[u,s+1]$. 
If $A_3$ were nonempty, then $u\in A_3$, so $h_u=3$. 
The path 
\[ 
\xi_{1,u}\longrightarrow \xi_{2,u} \longrightarrow \xi_{3,u-1}\longrightarrow\cdots 
\] 
would then avoid $C$, a contradiction. Thus $A_3=\varnothing$, and hence $h_j=2$ for $u\le j\le s+1$, which implies $C=C_u^-$.  
Thus all minimum cuts are of the asserted form.

\medskip
\noindent
\emph{Case 3: $w=s+1$ and $u=1$.}

If $\ell=1$, the same argument as above shows that $C=C_q^-$ for some $1\le q\le s+2$. The cut with $q=1$, consisting of all vertices of $V_2$, corresponds to the case in which $h_j=2$ for every $j$. It remains to consider cuts lying farther from the bottom. Suppose first that $m\ge2$. For $3\le r\le2m-1$, put $z_r=(r,\left\lfloor\frac{r-1}{2}\right\rfloor)$.
The sequence
\begin{equation*}
\Pi:
\xi_{1,1}\longrightarrow \xi_{2,1}\longrightarrow
z_3\longrightarrow z_4\longrightarrow\cdots
\longrightarrow z_{2m-1}\longrightarrow
\xi_{2m,1}\longrightarrow \xi_{2m+1,1}
\end{equation*}
is a directed path in $\mathcal G_{s,m}$. Notice that, for $3\le r\le2m-1$, the vertex $z_r$ lies strictly to the left of all the vertices $\xi_{r,j}$ with $1\le j\le s+1$. 

If $\ell=2$ and $A_3\ne\varnothing$, then $A_3$ contains $1$, and hence $h_1=3$. Therefore $\xi_{2,1}\notin C$, and the path $\Pi$ avoids $C$, a contradiction. 

If $3\le\ell\le2m-2$, then all points of $C$ lie on the canonical paths $P_j$ in levels $\ell$ and $\ell+1$, whereas $\Pi$ passes through the vertices $z_\ell$ and $z_{\ell+1}$ in these levels. Hence $\Pi$ again avoids $C$.

If $\ell=2m-1$, then the set $A_{2m}$ is a proper terminal interval of $[1,s+1]$, or it is empty. In either case, $1\notin A_{2m}$, and hence $h_1=2m-1$. Thus $\xi_{2m,1}\notin C$, while the vertex of $\Pi$ in level $2m-1$ is $z_{2m-1}$, which lies outside all the canonical paths. Therefore $\Pi$ once again avoids $C$.

Consequently, apart from the bottom cuts, the only possibilities are $\ell=2m$ or $\ell=2m+1$. When $\ell=2m$, the set $A_{2m+1}=\{j:h_j=2m+1\}$ is an initial interval, say $A_{2m+1}=[1,\rho]$ for some $0\le\rho\le s$, where $\rho=0$ denotes the empty set. It follows that
\[
\begin{aligned}
C
&=
\{\xi_{2m,j}:\rho+1\le j\le s+1\}
\cup
\{\xi_{2m+1,j}:1\le j\le\rho\}\\
&=
\{(2m,i):m+\rho\le i\le m+s\}\cup\{(2m+1,i):m+1\le i\le m+\rho\}\\
&=C_\rho^+.
\end{aligned}
\]
When $\ell=2m+1$, all cut points lie in the terminal level, which is the cut $C_{s+1}^+$. When $m=1$, there are no intermediate levels between the first two and the last two levels. The same staircase description \eqref{eq:two-consecutive-levels} gives directly either a cut
$C_q^-$ or a cut $C_\rho^+$.

We now verify the sufficiency, namely, that all the displayed sets are indeed minimum cuts. For $C_q^-$, a path starting at an index $i<q$ meets the cut in the first level, whereas a path starting at an index $i\ge q$ meets the cut in the second level. For the upper cuts, consider the map
\[
\varphi(r,i):=(2m+2-r,i+m+1-r),
\]
which reverses every directed edge of $\mathcal G_{s,m}$ and interchanges the first and terminal levels. Moreover, $\varphi(C_q^-)=C_{q-1}^+$. It follows that every $C_\rho^+$ is also a vertex cut. Since all the displayed cuts have cardinality $|U|$, the proof is complete.
\end{proof}
We now use Proposition~\ref{prop:shifted-strip-mincut} to classify all subsets $S\subseteq M_{n,s}$ satisfying $|\nabla_HS|=|S|+1$. For $S\subseteq H_n$, define its upper-cover shadow by
\[
N_H^+(S)
:=
\{(i+1,j)\in H_n:(i,j)\in S\}
\cup
\{(i,j+1)\in H_n:(i,j)\in S\}.
\]
For $S\subseteq M_{n,s}$, we have $N_H^+(S)\subseteq L_{n,s}$. Under the projection from
$[k]\times H_n$ to $H_n$, the upper shadow of $S$ is $\nabla_HS = (S\setminus D_a)\cup N_H^+(S).$ A nonempty subset $S\subseteq M_{n,s}$ is called $H$-tight if $|\nabla_HS|=|S|+1$. Define the external boundary of $S$ by $\partial_HS:=\nabla_HS\setminus(S\setminus D_a)$. Consequently,
\begin{equation}\label{eq:H-defect-boundary}
|\nabla_HS|-|S|
=
|\partial_HS|-|S\cap D_a|.
\end{equation}

\begin{lem}\label{lem:H-boundary-cut}
Let $S\subseteq M_{n,s}$. Put
\[
U:=
\{i:(i,a-i)\in S\}
\cup
\{i+1:(i,a-i)\in S\}
\subseteq I_1.
\]
The set $\Xi^{-1}(\partial_HS)$ is a vertex cut separating $U$ from $V_{2m+1}$.

Moreover, if $x\in S\setminus D_a$, then every directed path from $\Xi^{-1}(x)$ to $V_{2m+1}$ meets $\Xi^{-1}(\partial_HS)$.
\end{lem}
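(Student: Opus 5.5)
We argue that an escaping path is trapped, using only the definition of $\partial_HS$ and the isomorphism $\Xi$. Transported by $\Xi$, the claim reads as follows. Every directed path in the Hasse graph on the diagonals $D_{a+1},\dots,D_{b+1}$ that starts at a point of $U$ (identified with $D_{a+1}$), or at a point $x\in S\setminus D_a$, and ends on $D_{b+1}$ meets $\partial_HS=\nabla_HS\setminus(S\setminus D_a)$. The key observation is a closure property. Let $y\in D_{a+1}\cup\cdots\cup D_b$ with $y\in S$ (so $y\notin D_a$), and let $y\lessdot z$ in $H_n$. Then $z\in N_H^+(S)\subseteq\nabla_HS$, so $z$ lies either in $S\setminus D_a$ or in $\partial_HS$. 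In words, one cannot leave $S$ along a cover edge without landing in $\partial_HS$.

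\emph{Starting vertices.} A vertex of $U$ has the form $z=(i,a-i+1)$ or $z=(i+1,a-i)$ with $(i,a-i)\in S\cap D_a$. Such a $z$ is an upper cover of a point of $S$, so $z\in N_H^+(S)\subseteq\nabla_HS$. We need $z$ to lie in $D_{a+1}$ and in $H_n$; points outside $H_n$ are simply not vertices of $V_1$, so they do not arise. Consequently $z\in\partial_HS$ or $z\in S\setminus D_a$. In the first case the path already meets the cut at its initial vertex, which is permitted. In either case we may assume the path starts at a vertex of $S\setminus D_a$, which reduces the first statement to the second.

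\emph{Propagation.} Let $x=x_0\to x_1\to\cdots\to x_t\in D_{b+1}$ be a path with $x_0\in S\setminus D_a$. Take the largest index $p$ with $x_0,\dots,x_p\in S$. Since $S\subseteq M_{n,s}$ lies on the diagonals $D_a,\dots,D_b$, the endpoint $x_t$ is not in $S$, so $p<t$. Then $x_{p+1}$ covers $x_p\in S$, hence $x_{p+1}\in N_H^+(S)\subseteq\nabla_HS$, and $x_{p+1}\notin S$. Therefore $x_{p+1}\in\nabla_HS\setminus(S\setminus D_a)=\partial_HS$, and the path meets the cut.

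\emph{Main obstacle.} The one point to check carefully is that the edges of $\mathcal G_{s,m}$ correspond under $\Xi$ to cover relations of $H_n$. Then $x_{p+1}$ is genuinely a cover, namely $(i+1,j)$ or $(i,j+1)$ inside $H_n$, and the fact recorded before the lemma, $N_H^+(S)\subseteq L_{n,s}$ and $\nabla_HS=(S\setminus D_a)\cup N_H^+(S)$, applies. The paper has already asserted that $\Xi$ is an isomorphism onto the directed Hasse graph, so this is immediate. The remaining bookkeeping is to confirm that $U\subseteq I_1$ indexes exactly the covers in $D_{a+1}$ of $S\cap D_a$ that lie in $H_n$.
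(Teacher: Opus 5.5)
Your proposal is correct and is essentially the paper's own first-exit argument. The paper applies it directly to paths from $U$ and then notes that the same argument gives the second assertion. You instead dispose of the initial vertex first, which reduces the first assertion to the second; this is only a reordering of the same reasoning.
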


\begin{proof}
Let $x_1\to\cdots\to x_\ell$ be a directed path from $U$ to
$V_{2m+1}$. By the definition of $U$, the point $\Xi(x_1)$ is an
upper cover of some point of $S\cap D_a$.

If $\Xi(x_1)\notin S$, then
$\Xi(x_1)\in\partial_HS$, and hence
$x_1\in\Xi^{-1}(\partial_HS)$. Otherwise, since
$\Xi(x_\ell)\in D_{b+1}$ while $S\subseteq M_{n,s}$, there is a first
index $j$ such that $\Xi(x_j)\notin S$. Then
$\Xi(x_{j-1})\in S$, and $\Xi(x_j)$ is an upper cover of
$\Xi(x_{j-1})$. Hence
$\Xi(x_j)\in\partial_HS$, so
$x_j\in\Xi^{-1}(\partial_HS)$.

Thus every directed path from $U$ to $V_{2m+1}$ meets
$\Xi^{-1}(\partial_HS)$.

The same first-exit argument, starting from $\Xi^{-1}(x)$ for
$x\in S\setminus D_a$, proves the second assertion.
\end{proof}
For $1\le r\le 2m+1$ and $i\in I_r$, let $v_{r,i}$ denote the vertex of $\mathcal G_{s,m}$ corresponding to $(i,a+r-i)\in D_{a+r}$. The following elementary lemma records a special two-connectivity property of the shifted triangular strip. Although it may also be viewed through the vertex version of Menger's theorem, we give a direct construction of the two paths. 
\begin{lem}\label{lem:H-two-disjoint-paths}
Let $v_{r,i}$ be a nonterminal vertex of $\mathcal G_{s,m}$. If both $v_{r+1,i}$ and $v_{r+1,i+1}$ exist, then there are two directed paths from $v_{r,i}$ to $V_{2m+1}$ which are vertex-disjoint except at $v_{r,i}$.
\end{lem}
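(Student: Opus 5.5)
The plan is to construct the two paths explicitly by two greedy rules: a \emph{left} path that keeps the second coordinate whenever it can, and a \emph{right} path that raises it whenever it can. Both paths start at $v_{r,i}$. The left path steps first to $v_{r+1,i}$ and the right path to $v_{r+1,i+1}$; both vertices exist by hypothesis. From then on, at level $t\ge r+1$ the left path at $v_{t,L_t}$ moves to $v_{t+1,L_t}$ if $L_t\in I_{t+1}$, and otherwise to $v_{t+1,L_t+1}$. The right path at $v_{t,R_t}$ moves to $v_{t+1,R_t+1}$ if $R_t+1\in I_{t+1}$, and otherwise to $v_{t+1,R_t}$. Both paths stop when they reach $V_{2m+1}$.

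The first step is to record the shape of the intervals $I_t=[\max\{1,t-m\},\,s+\lfloor (t+1)/2\rfloor]$. Passing from $I_t$ to $I_{t+1}$, the lower endpoint increases by $0$ or $1$ (by $1$ exactly when $t\ge m$), and the upper endpoint increases by $0$ or $1$ (by $1$ exactly when $t$ is odd). Consequently, for every $j\in I_t$ at least one of $j,j+1$ lies in $I_{t+1}$, so both greedy rules are always well defined and produce genuine directed paths of $\mathcal G_{s,m}$ that reach $V_{2m+1}$.

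The second step is the width estimate $|I_t|\ge2$ for all $1\le t\le 2m+1$. For $t\le m$ the lower endpoint is $1$ and the width is $s+\lfloor (t+1)/2\rfloor\ge s+1\ge2$. For $t\ge m+1$ the width equals $s+\lfloor (t+1)/2\rfloor-(t-m)+1$, which is nonincreasing in $t$ and equals $s+1\ge2$ at $t=2m+1$.

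The main step is the invariant $L_t<R_t$ for all $t\ge r+1$; this is where I expect the only real content to lie. It holds at $t=r+1$. Suppose $L_t<R_t$. Then $L_{t+1}\le L_t+1\le R_t\le R_{t+1}$, so the only way the invariant can fail is $L_{t+1}=R_{t+1}$. That forces three things simultaneously: $R_t=L_t+1$, the left path moved diagonally, and the right path moved straight. The left path moving diagonally means $L_t\notin I_{t+1}$, so $L_t+1$ is the lower endpoint of $I_{t+1}$. The right path moving straight means $R_t+1\notin I_{t+1}$, so $R_t=L_t+1$ is the upper endpoint. Then $I_{t+1}$ would be a singleton, contradicting $|I_{t+1}|\ge2$.

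Hence at every level beyond $r$ the two paths occupy distinct vertices. They therefore share only the starting vertex $v_{r,i}$, which proves Lemma~\ref{lem:H-two-disjoint-paths}.
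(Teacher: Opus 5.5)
Your proof is correct and is essentially the paper's argument: the same greedy left/right paths, with a first coincidence forcing $I_{t+1}$ to be a singleton, which contradicts $|I_t|\ge s+1\ge 2$; your chain $L_{t+1}\le L_t+1\le R_t\le R_{t+1}$ simply makes explicit the no-crossing step that the paper leaves implicit. One harmless slip: the lower endpoint of $I_t$ increases exactly when $t\ge m+1$ and the upper endpoint exactly when $t$ is even (not $t\ge m$ and $t$ odd), but you only use that each endpoint increases by $0$ or $1$, so nothing is affected.
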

\begin{proof}
Start one path at $v_{r+1,i}$ and, at each subsequent level, choose the successor having the smaller second coordinate. Start the other path at $v_{r+1,i+1}$ and, at each subsequent level, choose the successor having the larger second coordinate. We claim that the second-coordinate index of the first path always remains strictly smaller than that of the second path. Suppose that the two paths meet for the first time in some layer. Immediately before meeting, their indices must differ by one. For the left path to move to the common index, its smaller successor must be unavailable, while for the right path to move to the same index, its larger successor must be unavailable. This would imply that the corresponding layer $I_t$ contains only the common index.

However, directly from the definition of $I_t$, one has $|I_t|\ge s+1\ge2$ for every $1\le t\le2m+1$. This is a contradiction. Hence the two paths remain disjoint from their initial vertices onward.
\end{proof}
We now classify the $H$-tight subsets. This classification will also yield Proposition~\ref{prop:H-tight-count} by direct enumeration.
\begin{lem}\label{lem:H-tight-classification}
Let $s,m\ge1$, with $n=m+2s+1$, $a=2s+1$, and $b=a+2m$.
A nonempty subset $S\subseteq M_{n,s}$ is $H$-tight if and only if it belongs to one of the following families:
\begin{enumerate}[label=\textup{(\Roman*)}]
\item $S=\{(i,a-i):u\le i\le v\}$, where $1\le u\le v\le s$;
\item $S=\{(i,a-i):\ell\le i\le s\}\cup\{(i,a+1-i):q\le i\le s+1\}$, where $1\le\ell\le q\le s+1$;
\item $S=\{(i,n):u\le i\le v\}$, where $1\le u\le v\le m$;
\item $S=\{(i,i)\}$, where $s+2\le i\le n-s-1$;
\item $S=(M_{n,s}\setminus D_b)\cup\{(i,b-i):m\le i\le m+\rho-1\}$, where $0\le\rho\le s+1$.
\end{enumerate}
The five families are disjoint except when $m=1$: the type~\textup{(V)} set with $\rho=0$ coincides with the type~\textup{(II)} set with $\ell=q=1$.
\end{lem}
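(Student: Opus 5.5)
The plan is to work with the defect identity \eqref{eq:H-defect-boundary}. A nonempty $S\subseteq M_{n,s}$ is $H$-tight exactly when
\[
|\partial_HS|=|S\cap D_a|+1 .
\]
Sufficiency will be a direct check for each family (I)--(V): in every case we list $\partial_HS$ explicitly.
\begin{itemize}
\item For (I), $\partial_HS$ consists of the $D_{a+1}$-points $(i,a+1-i)$ with $u\le i\le v+1$.
\item For (II), $\partial_HS$ is the set $\Xi(C_q^-)$ with $U=[\ell,s+1]$.
\item For (III), $\partial_HS$ is the row $j=n$ extended by one point.
\item For (IV), $\partial_HS=\{(i,i+1)\}$, since $(i+1,i)\notin H_n$.
\item For (V), $\partial_HS=\Xi(C^+_\rho)$, together with the observation that $S\cap D_a=D_a$ has $s$ points.
\end{itemize}
Comparing $|\partial_HS|$ with $|S\cap D_a|+1$ in each case gives tightness. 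The disjointness statement is read off from these descriptions: the sets differ either in which diagonals they meet or in their cardinality. The only coincidence arises when $m=1$, where $M_{n,s}\setminus D_b=D_a\cup D_{a+1}$.

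For necessity, first suppose $S\cap D_a\neq\varnothing$, and form $U\subseteq I_1=[1,s+1]$ as in Lemma~\ref{lem:H-boundary-cut}.
\begin{enumerate}
\item Write $S\cap D_a$ as a union of $c$ maximal intervals. The set $U$ is the union of the shifted intervals $[u_t,v_t+1]$. Adjacent ones may merge, but each merge still occurs only at a point missing from $S\cap D_a$. Hence $|U|\ge |S\cap D_a|+1$, and equality forces $c=1$, so $U=[u,v+1]$.
\item Apply Proposition~\ref{prop:shifted-strip-mincut} to each maximal interval of $U$ (the pieces are separated, so the vertex-disjoint canonical paths give additivity). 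Together with Lemma~\ref{lem:H-boundary-cut}, this yields $|\partial_HS|\ge|U|\ge|S\cap D_a|+1$.
\item Tightness therefore forces three things: $U$ is a single interval, $\Xi^{-1}(\partial_HS)$ is precisely a minimum cut for $U$, and $\partial_HS$ contains nothing else.
\end{enumerate}
Any point of $S\setminus D_a$ must then lie strictly between $U$ and that cut. Otherwise the second part of Lemma~\ref{lem:H-boundary-cut} would produce an extra boundary point outside the cut.

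The three cases of Proposition~\ref{prop:shifted-strip-mincut} then determine $S$ completely.
\begin{itemize}
\item Case (1), $w=v+1<s+1$: the cut is $U$ itself, so $S=S\cap D_a$, which is type (I).
\item Case (2): the cut is $C_q^-$. This forces $S\cap D_{a+1}$ to be exactly the points $v_{1,i}$ with $q\le i\le s+1$, giving type (II) with $\ell=u$.
\item Case (3): the cut is $C_q^-$, giving (II) with $\ell=1$, or $C_\rho^+$. For $C^+_\rho$, every vertex below the cut must belong to $S$, since otherwise a first exit would occur below the cut. This gives type (V).
\end{itemize}

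Now suppose $S\cap D_a=\varnothing$, so that $|\partial_HS|=1$; this is the step I expect to be the main obstacle. Let $\partial_HS=\{x\}$. By the first-exit argument of Lemma~\ref{lem:H-boundary-cut}, every directed path from a vertex of $S$ to $V_{2m+1}$ passes through $x$.
\begin{itemize}
\item Take a vertex of $S$ having both successors. By Lemma~\ref{lem:H-two-disjoint-paths} it has two paths to the top level that are disjoint apart from their start, and each must exit $S$. If both exits equal $x$, then $x$ must be the common start, which is impossible since $x\notin S$. So no vertex of $S$ has both successors.
\item Hence every point of $S$ has a unique upper cover in $H_n$. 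Such points are either on the row $j=n$, i.e.\ $(i,n)$, or on the diagonal, i.e.\ $(i,i)$.
\item A diagonal point $(i,i)$ has the single cover $(i,i+1)$, which is off the diagonal. If $(i,i)\in S$ together with anything else, the exits differ unless $(i,i+1)\in S$; but $(i,i+1)$ has two covers, a contradiction. This yields type (IV); the range of $i$ comes from the condition $(i,i)\in M_{n,s}\setminus D_a$.
\item Points on the row $j=n$ must form a single interval, since each interval exits at its right end. The bound $v\le m$ comes from $(v,n)\in M_{n,s}$. This yields type (III).
\end{itemize}
The delicate part is checking that the boundary cases are correctly excluded: diagonal points in the extreme diagonals, and $n=2s+2$, which is not possible here since $m\ge1$.
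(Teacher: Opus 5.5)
Your proposal is essentially the paper's proof, step for step. It uses the defect identity \eqref{eq:H-defect-boundary} and the lower bound from the vertex-disjoint canonical paths, which forces $c(X)=1$ and makes $\Xi^{-1}(\partial_HS)$ a minimum cut. It then runs through the three cases of Proposition~\ref{prop:shifted-strip-mincut} when $S\cap D_a\ne\varnothing$, and uses Lemma~\ref{lem:H-two-disjoint-paths} to confine $S$ to the diagonal and the top row when $S\cap D_a=\varnothing$.

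Only some boundary bookkeeping needs correcting.
\begin{itemize}
\item The condition $(i,i)\in M_{n,s}\setminus D_a$ gives $s+1\le i\le n-s-1$, which is not the type~(IV) range. The extra singleton $\{(s+1,s+1)\}$ is the type~(II) set with $\ell=q=s+1$. This is how the paper settles the ``extreme diagonal'' case you left open.
\item The cut $C_{s+2}^-$ in cases (2) and (3) gives a type~(I) set with $v=s$, not a type~(II) set, since type~(II) requires $q\le s+1$.
\item Your closing remark is wrong. Since $n=m+2s+1$, the equality $n=2s+2$ holds exactly when $m=1$, which is allowed. That is precisely the (II)/(V) coincidence you correctly identified in your disjointness discussion.
\end{itemize}
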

\begin{proof}
We first prove necessity, and we conduct the discussion by categorizing it into two scenarios.

\medskip
\noindent
\textbf{Case 1: $S\cap D_a\ne\varnothing$.}

Let $X=\{i:(i,a-i)\in S\cap D_a\}\subseteq[1,s]$, and let $c(X)$ denote the number of interval components of $X$. The upper covers of $S\cap D_a$ in $D_{a+1}$ have index set $U=X\cup(X+1)$. If the interval components of $X$ are $[u_1,v_1],\ldots,[u_c,v_c]$, then
\[
U=[u_1,v_1+1]\cup\cdots\cup[u_c,v_c+1],
\]
which implies
\begin{equation}\label{eq:U-component-count}
|U|=|X|+c(X)=|S\cap D_a|+c(X).
\end{equation}
By Lemma~\ref{lem:H-boundary-cut}, $\Xi^{-1}(\partial_HS)$ is a vertex cut separating $U$ from $V_{2m+1}$. The canonical paths $P_i$, $i\in U$, are pairwise vertex-disjoint. Therefore, we have $|\partial_HS|=|\Xi^{-1}(\partial_HS)|\ge |U|.$

On the other hand, by \eqref{eq:H-defect-boundary} and the tightness assumption, we have $|\partial_HS|=|S\cap D_a|+1$. 
Together with \eqref{eq:U-component-count}, this gives $|S\cap D_a|+1\ge |S\cap D_a|+c(X)$. 
Since $X\ne\varnothing$, we have $c(X)\ge1$, and therefore $c(X)=1$. 
Thus $X=[u,v]$ for some $1\le u\le v\le s$, and $U=[u,v+1]$. 
Moreover, $|\Xi^{-1}(\partial_HS)|=|U|$, so $\Xi^{-1}(\partial_HS)$ is a minimum vertex cut separating $U$ from $V_{2m+1}$. 
Suppose first that $v<s$. Then $v+1<s+1$, and Proposition~\ref{prop:shifted-strip-mincut} gives
\[
\partial_HS=\{(i,a+1-i):u\le i\le v+1\}.
\]

We claim that $S=S\cap D_a$. Indeed, if $(i,a+r-i)\in S$, for $r\ge2$, then the corresponding graph vertex $(r,i)$ has a directed path to $V_{2m+1}$ which does not meet $V_1$, and hence avoids $\Xi^{-1}(\partial_HS)$, which contradicts Lemma~\ref{lem:H-boundary-cut}. Similarly, if $(i,a+1-i)\in S$ with $i\notin U$, then the canonical path $P_i$ avoids $\Xi^{-1}(\partial_HS)=U$, giving the same contradiction. Finally, points corresponding to vertices of $\Xi^{-1}(\partial_HS)$ cannot belong to $S$, since $\partial_HS\cap(S\setminus D_a)=\varnothing$. Hence $S=S\cap D_a$ which is of type~\textup{(I)}.

Now suppose that $v=s$. Then $U=[u,s+1]$. If $\Xi^{-1}(\partial_HS)$ is a lower minimum cut, then $\Xi^{-1}(\partial_HS)=C_q^-$ for some $u\le q\le s+2$. If $q=s+2$, then $C_q^-=\{(1,i):u\le i\le s+1\},$ and the preceding argument again gives $S=S\cap D_a$, which is also of type~\textup{(I)}.

Suppose that $q\le s+1$. The points in $\{(i,a+1-i):q\le i\le s+1\}$ belong to the upper shadow of $S\cap D_a$, while their corresponding vertices under $\Xi$ do not belong to $C_q^-$. Hence these points are not in $\partial_HS$, and therefore they must belong to $S$. 

There are no further points in $S$. Indeed, if
$(i,a+1-i)\in S$ with $i<u$, then the canonical path $P_i$
from $\Xi^{-1}((i,a+1-i))$ to $V_{2m+1}$ avoids $C_q^-$.
If $(i,a+r-i)\in S$ with $r\ge2$, then
\[
\Xi^{-1}\bigl((i,a+r-i)\bigr)\notin C_q^-,
\]
since $C_q^-=\Xi^{-1}(\partial_HS)$ and
$(i,a+r-i)\in S$. Moreover, as $C_q^-$ is contained in the first
two layers, this vertex has a continuation to $V_{2m+1}$ avoiding
$C_q^-$. Both cases contradict Lemma~\ref{lem:H-boundary-cut}.
Therefore
\[
S=\{(i,a-i):u\le i\le s\}\cup\{(i,a+1-i):q\le i\le s+1\},
\]
which is of type~\textup{(II)}.

It remains to consider the upper minimum cuts. By Proposition~\ref{prop:shifted-strip-mincut}, they occur only when $U=I_1$, i.e., $S\cap D_a=D_a$. In this case, we have $\Xi^{-1}(\partial_HS)=C_\rho^+$ for some $0\le\rho\le s+1$. Since $C_\rho^+$ contains no vertices in $V_1,\ldots,V_{2m-1}$, all points of $D_{a+1}$ belong to $S$. Indeed, $D_{a+1}\subseteq\nabla_H(D_a)\subseteq\nabla_HS$ and
$D_{a+1}\cap\partial_HS=\varnothing$, so $D_{a+1}\subseteq S$.

Now suppose inductively that $D_{a+r}\subseteq S$ for some $1\le r<2m-1$. 
Since $\nabla_H(D_{a+r})\setminus D_{a+r}=D_{a+r+1}$,
we have $D_{a+r+1}\subseteq\nabla_HS$. 
Moreover, $C_\rho^+\cap V_{r+1}=\varnothing$, 
so $D_{a+r+1}\cap\partial_HS=\varnothing$. 
Hence $D_{a+r+1}\subseteq S$. Therefore $D_{a+1}\cup\cdots\cup D_{b-1}\subseteq S$. 
The graph layer $V_{2m}$ corresponds to the diagonal $D_b$. 
The vertices of $V_{2m}$ outside $C_\rho^+$ are exactly $\{(2m,i):m\le i\le m+\rho-1\}$. 
Consequently, 
\[
S\cap D_b=\{(i,b-i):m\le i\le m+\rho-1\}.
\]
Hence
\[
S=(M_{n,s}\setminus D_b)\cup\{(i,b-i):m\le i\le m+\rho-1\},
\]
which is of type~\textup{(V)}.

\medskip
\noindent
\textbf{Case 2: $S\cap D_a=\varnothing$.}

Since $S\subseteq M_{n,s}\setminus D_a$, the definition of $\nabla_H$ gives $S\subseteq\nabla_HS$. Hence the tightness condition is equivalent to $\nabla_HS=S\sqcup\{e\}$ for a unique point $e$. That is to say, $\partial_HS=\{e\}$. By Lemma~\ref{lem:H-boundary-cut}, every directed path in $\mathcal G_{s,m}$ starting at the graph vertex corresponding to a point of $S$ and ending in $V_{2m+1}$ must pass through the graph vertex corresponding to $e$.

Let $(i,j)\in S$ and suppose that $i<j<n$. Then $(i,j)$ has two distinct upper covers in $H_n$, namely $(i,j+1)$ and $(i+1,j)$. If $i+j=a+r$, then under the identification $\Xi$ these correspond respectively to $(r+1,i)$ and $(r+1,i+1)$ in $\mathcal G_{s,m}$. By Lemma~\ref{lem:H-two-disjoint-paths}, there are two directed paths from the graph vertex corresponding to $(i,j)$ to $V_{2m+1}$ which are vertex-disjoint except at their initial vertex. Since $e\notin S$, we have $e\ne (i,j)$, so the vertex corresponding to $e$ cannot lie on both paths. This is a contradiction. 
Therefore every point of $S$ lies either on the top boundary $j=n$ or on the folded diagonal $i=j$. 

Suppose first that $S$ contains a diagonal point $(i,i)$. 
Its unique upper cover in $H_n$ is $(i,i+1)$ which is strictly interior. 
Hence, after the first step from $(i,i)$ to $(i,i+1)$, 
Lemma~\ref{lem:H-two-disjoint-paths} provides two internally vertex-disjoint continuations from $(i,i+1)$ to the terminal level. 
It follows that no point strictly above $(i,i+1)$ can lie on every path from $(i,i)$ to the terminal level. 
Since every such path must meet the unique boundary point $e$, we obtain $e=(i,i+1)$.

We next claim that once there is $(i,i)\in S$, then it is the only point of $S$. First, no top-boundary point can belong to $S$, since every directed path starting from a point $(j,n)$ remains on the top boundary and therefore cannot pass through $(i,i+1)$. Now let $(j,j)\in S$ be another diagonal point. If $j>i$, then $(i,i+1)$ is not reachable from $(j,j)$. If $j<i$, the unique cover $(j,j+1)$ is strictly interior, and Lemma~\ref{lem:H-two-disjoint-paths} gives two internally vertex-disjoint continuations, at most one of which can pass through $(i,i+1)$. In either case there is a path from $(j,j)$ to the terminal level avoiding $e$, a contradiction. Hence we get $S=\{(i,i)\}$.

The condition $(i,i)\in M_{n,s}\setminus D_a$ gives $s+1\le i\le n-s-1$. When $i=s+1$, the singleton $\{(s+1,s+1)\}$ is already of type~\textup{(II)}, corresponding to $\ell=q=s+1$. The remaining diagonal singletons are therefore precisely those of type~\textup{(IV)}.

Finally, suppose that all points of $S$ lie on the top boundary. The top-boundary points of $M_{n,s}$ are $(1,n),(2,n),\ldots,(m,n)$, and their cover relation forms the chain
\[
(1,n)\longrightarrow(2,n)\longrightarrow\cdots\longrightarrow(m,n)\longrightarrow(m+1,n),
\]
where $(m+1,n)\in D_{b+1}$. 
Each interval component of $S$ along this chain, say
$\{(u,n),(u+1,n),\ldots,(v,n)\}$, contributes a distinct
external-boundary point $(v+1,n)$.
Since $|\partial_HS|=1$, the set $S$ has exactly one interval component. Hence $S=\{(i,n):u\le i\le v\}$ for some $1\le u\le v\le m$, which is of type~\textup{(III)}. This completes the proof of necessity.

For sufficiency, we verify the external boundary of each of the five displayed families. 

For type~\textup{(I)}, $\partial_HS=\{(i,a+1-i):u\le i\le v+1\}.$ 
Thus $|\partial_HS|=|S\cap D_a|+1$, and \eqref{eq:H-defect-boundary} gives $|\nabla_HS|=|S|+1$.

For type~\textup{(II)}, we have
\[
\partial_HS
=
\{(i,a+1-i):\ell\le i<q\}
\cup
\{(i,a+2-i):q\le i\le s+1\}.	
\]
Consequently, we get $|\partial_HS|=s-\ell+2=|S\cap D_a|+1$,
and hence $|\nabla_HS|=|S|+1$.

For type~\textup{(III)}, $\partial_HS=\{(v+1,n)\}$. 
Since $S\cap D_a=\varnothing$, equation \eqref{eq:H-defect-boundary} again gives $|\nabla_HS|=|S|+1$.

For type~\textup{(IV)}, $\partial_HS=\{(i,i+1)\}$, so the same conclusion follows.

For type~\textup{(V)}, we have
\[
\partial_HS
=
\{(i,b-i):m+\rho\le i\le m+s\}
\cup
\{(i,b+1-i):m+1\le i\le m+\rho\},
\]	
which implies
\[
|\partial_HS|
=s+1
=|D_a|+1
=|S\cap D_a|+1.
\]	
Hence $|\nabla_HS|=|S|+1$. Thus all five families are $H$-tight.
\end{proof}	
We can now prove Proposition~\ref{prop:H-tight-count}.
\begin{proof}[Proof of Proposition~\ref{prop:H-tight-count}]
By Lemma~\ref{lem:H-tight-classification}, the numbers of sets of the five types are respectively
\[
\binom{s+1}{2},\quad\binom{s+2}{2},\quad\binom{m+1}{2},\quad m-1,\quad s+2.
\]	
The five families are pairwise disjoint except when $m=1$. In that case, the type~\textup{(V)} set with $\rho=0$ is $M_{n,s}\setminus D_b=D_a\cup D_{a+1}$, which coincides with the type~\textup{(II)} set with $\ell=q=1$. This is the only duplication. Therefore
\[
\begin{aligned}
T_{n,s}
&=(s+1)^2+\binom{m+1}{2}+(m-1)+(s+2)-\mathbf 1_{\{m=1\}}\\
&=(s+1)^2+\binom{n-2s}{2}+n-s-\mathbf 1_{\{n=2s+2\}}.
\end{aligned}
\]	
\end{proof}

\subsection{Proof of Proposition~\ref{prop:H-zero-growth}}
\begin{proof} 
We first prove necessity, assuming $|\nabla X|=|X|$. The argument is divided into four steps. 
First, we project the rank level $([k]\times H_n)_t$ onto a diagonal strip of $H_n$ and translate the zero-growth condition into a boundary equality.  
Second, we show that $S$ meets the lower boundary diagonal $D_\lambda$ and that all the relevant shadow inequalities must be equalities.  
Next, we classify the possible form of $S\cap D_\lambda$ for which the upper-cover shadow has the same cardinality.  
Finally, we prove that the corresponding minimum vertex cut is unique, which forces $S$ to have no points outside $D_\lambda$. 

\medskip 
\noindent 
\textbf{Step 1: Projection to $H_n$ and translation of the zero-growth condition.} 

Under the projection $\pi_t:(a,i,j)\longmapsto(i,j)$,  
the rank level $([k]\times H_n)_t$ is identified with the diagonal strip 
\[ 
\mathcal B_\lambda 
= 
\left\{ 
(i,j)\in H_n: 
\lambda\le i+j\le\lambda+k-1 
\right\}, 
\] 
where $\lambda=t+3-k$. 
Indeed, for $(a,i,j)\in([k]\times H_n)_t$ we have $a=t+3-i-j$, and the condition $1\le a\le k$ is equivalent to $\lambda\le i+j\le\lambda+k-1$. Write $S:=\pi_t(X)\subseteq\mathcal B_\lambda$. 

Since an element corresponding to $(i,j)\in S$ has an upper cover obtained by increasing the first coordinate $a$ precisely when $i+j>\lambda$, we have 
\begin{equation}\label{eq:ZG-projected-shadow} 
\pi_{t+1}(\nabla X)=(S\setminus D_\lambda)\cup N_H^+(S). 
\end{equation} 
Since $\pi_t$ and $\pi_{t+1}$ are bijections on the corresponding rank levels, equation \eqref{eq:ZG-projected-shadow} gives 
\begin{equation}\label{eq:ZG-defect} 
|\nabla X|-|X|=|N_H^+(S)\setminus(S\setminus D_\lambda)|-|S\cap D_\lambda|, 
\end{equation} 
which implies
\begin{equation}\label{eq:ZG-zero-growth} 
|\nabla X|=|X| 
\quad\Longleftrightarrow\quad 
|N_H^+(S)\setminus(S\setminus D_\lambda)|=|S\cap D_\lambda|. 
\end{equation} 
Thus it remains only to determine when the condition on the right-hand side of \eqref{eq:ZG-zero-growth} holds.
\medskip 
\noindent 
\textbf{Step 2: Nonemptiness on $D_\lambda$ and equality in the shadow bounds.} 

Now we claim that $S\cap D_\lambda\ne\varnothing$. Otherwise, $S\cap D_\lambda=\varnothing$ gives $S=S\setminus D_\lambda$, and by \eqref{eq:ZG-defect} we would have 
\[
N_H^+(S)\setminus(S\setminus D_\lambda)=N_H^+(S)\setminus S=\varnothing.
\]
Choose a point of $S$ having maximal value of $i+j$. Since $t<r_0$, we have $i+j\le t+2\le r_0+1<2n$, so this point is not the maximum element $(n,n)$ of $H_n$ and therefore has an upper cover in $H_n$. By maximality of $i+j$, that upper cover does not belong to $S$, contradicting $N_H^+(S)\setminus(S\setminus D_\lambda)=\varnothing$. Thus $S\cap D_\lambda\ne\varnothing$. 

Set $U_\lambda:=N_H^+(S\cap D_\lambda)\subseteq D_{\lambda+1}$. We next show that $N_H^+(S)\setminus(S\setminus D_\lambda)$ is a vertex cut separating $U_\lambda$ from $D_{\lambda+k}$ in the directed Hasse graph of $H_n$; here a cut may contain initial or terminal vertices. Indeed, consider a directed path beginning at a point of $U_\lambda$ and ending in $D_{\lambda+k}$. If its first vertex does not belong to $S\setminus D_\lambda$, then it belongs to $N_H^+(S)\setminus(S\setminus D_\lambda)$. Otherwise, follow the path until it first leaves $S\setminus D_\lambda$. Such a vertex exists because $S\setminus D_\lambda\subseteq D_{\lambda+1}\cup\cdots\cup D_{\lambda+k-1}$, whereas the terminal vertex lies in $D_{\lambda+k}$. The first vertex outside $S\setminus D_\lambda$ is an upper cover of a point of $S$, and hence belongs to $N_H^+(S)\setminus(S\setminus D_\lambda)$. Therefore every $U_\lambda$ to $D_{\lambda+k}$ path meets $N_H^+(S)\setminus(S\setminus D_\lambda)$. 

To obtain a lower bound on the size of the vertex cut, we next construct
$|U_\lambda|$ pairwise vertex-disjoint directed paths from $U_\lambda$ to $D_{\lambda+k}$.
Write $k=2m+1$, so $m=n-2s-1\ge1$ and $n=m+2s+1$. For each $(i,\lambda+1-i)\in U_\lambda$, consider the directed path 
\[ 
P_i:\quad 
z_{0,i}\longrightarrow z_{1,i}\longrightarrow\cdots 
\longrightarrow z_{2m,i}, 
\] 
where 
\[ 
z_{\ell,i} 
= 
\left( 
i+\left\lfloor\frac{\ell}{2}\right\rfloor,\, 
\lambda+1+\ell-i-\left\lfloor\frac{\ell}{2}\right\rfloor 
\right), 
\qquad 0\le\ell\le2m. 
\] 
At odd steps the second coordinate is increased, and at even steps the first coordinate is increased. Since $t<r_0$, we have $\lambda\le2s+1$. Each initial point satisfies $1\le i\le\lfloor(\lambda+1)/2\rfloor\le s+1$. Along $P_i$, the first coordinate is at most $i+m\le s+m+1\le n$, and the second is at most $\lambda+m+1-i\le n$. After an even number of steps the second coordinate minus the first is $\lambda+1-2i\ge0$; after an odd number it is one larger. Hence all these vertices lie in $H_n$, and $z_{2m,i}\in D_{\lambda+k}$. Moreover, the paths $P_i$, for distinct $i$, are pairwise vertex-disjoint. Consequently, 
\begin{equation}\label{eq:ZG-cut-lower-bound} 
|N_H^+(S)\setminus(S\setminus D_\lambda)|\ge |U_\lambda|. 
\end{equation} 

On the other hand, $\lambda\le2s+1<n$, so every $(i,j)\in S\cap D_\lambda$ has $j<n$. The map $(i,j)\longmapsto(i,j+1)$ injects $S\cap D_\lambda$ into $U_\lambda$. Hence 
\begin{equation}\label{eq:ZG-diagonal-shadow} 
|U_\lambda|\ge |S\cap D_\lambda|. 
\end{equation} 
Combining \eqref{eq:ZG-zero-growth}, \eqref{eq:ZG-cut-lower-bound}, and \eqref{eq:ZG-diagonal-shadow}, we obtain 
\begin{equation}\label{eq:ZG-all-equalities} 
|N_H^+(S)\setminus(S\setminus D_\lambda)|=|U_\lambda|=|S\cap D_\lambda|. 
\end{equation} 
Thus it remains to classify the subsets $S\cap D_\lambda\subseteq D_\lambda$ satisfying
$|U_\lambda|=|S\cap D_\lambda|$.	

\medskip 
\noindent 
\textbf{Step 3: Classification of $S\cap D_\lambda$.} 

We now determine the possible form of $S\cap D_\lambda$ satisfying $|U_\lambda|=|S\cap D_\lambda|$.
Since $S\cap D_\lambda\ne\varnothing$, necessarily $\lambda\ge2$. 
Moreover, since $t<r_0$, $\lambda=t+3-k\le2s+1$. 
Thus the equality case is impossible when $s=0$. 
In particular, the diagonals under consideration do not meet the top boundary $j=n$. 

Suppose first that $\lambda=2q+1$ is odd. Then $D_\lambda=\{(i,2q+1-i):1\le i\le q\}$. 
Let $\operatorname{ind}_\lambda(S):=\{i:(i,\lambda-i)\in S\}$ be the index set of $S\cap D_\lambda$.
Let $c(\operatorname{ind}_\lambda(S))$ denote the number of interval components of $\operatorname{ind}_\lambda(S)$. 
Then $N_H^+(S\cap D_\lambda)$ has index set $\operatorname{ind}_\lambda(S)\cup(\operatorname{ind}_\lambda(S)+1)$, and therefore 
\[
|N_H^+(S\cap D_\lambda)|
=
|S\cap D_\lambda|+c(\operatorname{ind}_\lambda(S))
>
|S\cap D_\lambda|,
\]
contradicting \eqref{eq:ZG-all-equalities}. Thus $\lambda$ cannot be odd. 

Hence $\lambda=2q$ for some $q$. Then $D_\lambda=\{(i,2q-i):1\le i\le q\}$. The endpoint $(q,q)$ has only one upper cover, whereas every other point of $D_{2q}$ has two upper covers. 
In this case, we have $\operatorname{ind}_\lambda(S)=\{i:(i,2q-i)\in S\}$.
If $q\notin \operatorname{ind}_\lambda(S)$, 
then 
\[
|N_H^+(S\cap D_\lambda)|
=
|S\cap D_\lambda|+c(\operatorname{ind}_\lambda(S))
>
|S\cap D_\lambda|.
\]
If $q\in \operatorname{ind}_\lambda(S)$, then 
\[
|N_H^+(S\cap D_\lambda)|
=
|S\cap D_\lambda|+c(\operatorname{ind}_\lambda(S))-1.
\]
Hence $|N_H^+(S\cap D_\lambda)|=|S\cap D_\lambda|$ holds if and only if $c(\operatorname{ind}_\lambda(S))=1$ and $q\in \operatorname{ind}_\lambda(S)$. Therefore 
\begin{equation}\label{eq:ZG-lower-form} 
S\cap D_\lambda=\{(i,2q-i):u\le i\le q\} 
\end{equation} 
for some $1\le u\le q$. Since $2q=\lambda\le2s+1$, we have $q\le s$. Moreover, 
\[
U_\lambda=N_H^+(S\cap D_\lambda)=\{(i,2q+1-i):u\le i\le q\}. 
\]

\medskip 
\noindent 
\textbf{Step 4: Uniqueness of the minimum cut and elimination of points outside $D_\lambda$.} 

It remains to show that $S=S\cap D_\lambda$. We first prove that $U_\lambda$ is the unique minimum vertex cut separating $U_\lambda$ from $D_{2q+k}$. Let $C$ be such a vertex cut with $|C|=|U_\lambda|=q-u+1$. Suppose that $(c,2q+1-c)\notin C$ for some $u\le c\le q$. We construct $|C|+1$ directed paths from $U_\lambda$ to $D_{2q+k}$ which are pairwise vertex-disjoint except that two of them have the common initial vertex $(c,2q+1-c)$. 

In the first step, use the edges as follows:
\[
\begin{array}{rcll}
(i,2q+1-i)&\longrightarrow&(i,2q+2-i),
&u\le i<c,\\[1mm]
(c,2q+1-c)&\longrightarrow&(c,2q+2-c),\\
(c,2q+1-c)&\longrightarrow&(c+1,2q+1-c),\\[1mm]
(i,2q+1-i)&\longrightarrow&(i+1,2q+1-i),
&c<i\le q.
\end{array}
\]
Thus the first step reaches precisely the $q-u+2$ consecutive vertices from $(u,2q+2-u)$ to $(q+1,q+1)$.
From each $(j,2q+2-j)$ with $u\le j\le q+1$, continue along
\[
\left(
j+\left\lfloor\frac{\ell}{2}\right\rfloor,\,
2q+2+\ell-j-\left\lfloor\frac{\ell}{2}\right\rfloor
\right),
\qquad 0\le\ell\le2m-1.
\]
Since $u\le j\le q+1$, the second coordinate minus the first is $2q+2-2j\ge0$ after an even number of steps and is one larger after an odd number. The first coordinate is at most $j+m-1\le q+m\le n$, and the second is at most $2q+m+2-j\le n$. Thus these continuations lie in $H_n$. At each level their first coordinates are distinct, so they are pairwise vertex-disjoint and end in
$D_{2q+k}$. Hence we obtain $q-u+2$ directed paths from $U_\lambda$ to
$D_{2q+k}$ which are pairwise vertex-disjoint except that the two paths
starting from $(c,2q+1-c)$ share this initial vertex. 
Since $(c,2q+1-c)\notin C$, the cut $C$ must meet these $q-u+2$ paths in
$q-u+2$ distinct vertices, contradicting $|C|=q-u+1$. 
Hence every minimum cut of size $q-u+1$ contains all vertices of $U_\lambda$, and therefore it is exactly $U_\lambda$. 
Then by \eqref{eq:ZG-all-equalities}, we have already proved that $N_H^+(S)\setminus(S\setminus D_\lambda)$ is a minimum cut of size $|U_\lambda|$. Hence 
\begin{equation}\label{eq:NHS=U}
N_H^+(S)\setminus(S\setminus D_\lambda)=U_\lambda.
\end{equation}

To prove $S=S\cap D_\lambda$, it is now sufficient to show $S\setminus D_\lambda=\varnothing$. 
Suppose that $S\setminus D_\lambda\ne\varnothing$, and choose $x\in S\setminus D_\lambda$. 
We have $i+j\le\lambda+k-1$ for $x=(i,j)$ and $\lambda+k\le2n$. Hence $x$ has a directed continuation in $H_n$ to $D_{\lambda+k}$. The first vertex on any such path which leaves $S\setminus D_\lambda$ is an upper cover of a point of $S$, and therefore lies in $N_H^+(S)\setminus(S\setminus D_\lambda)=U_\lambda$ by \eqref{eq:NHS=U}.
But $x\notin U_\lambda$, since $U_\lambda$ is disjoint from $S\setminus D_\lambda$, and $U_\lambda\subseteq D_{\lambda+1}$. Every step of the path increases $i+j$, so a path starting at $x\in D_{\lambda+1}\cup\cdots\cup D_{\lambda+k-1}$ cannot meet $U_\lambda$. This contradiction shows that $S\setminus D_\lambda=\varnothing$, or equivalently $S=S\cap D_\lambda$. By \eqref{eq:ZG-lower-form}, we obtain
\[
S=\{(i,2q-i):u\le i\le q\}.
\] 

Finally, a point of the lower boundary diagonal $D_\lambda=D_{2q}$ of $\mathcal B_\lambda$ corresponds to first coordinate $a=k$. Hence 
\begin{equation}\label{eq:ZG-X-form} 
X=\{(k,i,2q-i):u\le i\le q\}. 
\end{equation} 
Since $\lambda=t+3-k=2q$, we obtain $t=k+2q-3$. This proves necessity. 

For the sufficiency, suppose that \eqref{eq:ZG-X-form} holds. Since the first coordinate is already equal to $k$, there is no upper cover obtained by increasing that coordinate. The upper covers obtained by increasing the last two coordinates have union 
\[ 
\nabla X=\{(k,i,2q+1-i):u\le i\le q\}. 
\] 
Therefore $|\nabla X|=q-u+1=|X|$. The proof is complete. 
\end{proof}

\subsection{Proof of Proposition~\ref{prop:H-cross-level-count}}
Throughout this subsection, assume $s\ge1$ and $k=2n-1-4s\ge3$. Let $\pi_t:([k]\times H_n)_t\to H_n$ denote the projection $(a,i,j)\mapsto(i,j)$.

\begin{lem}\label{lem:H-cross-level-bijection} 
For $1\le u\le s$, put 
\[ 
X_u=\{(k,i,2s-i):u\le i\le s\}\subseteq ([k]\times H_n)_{r_0-2}, 
\quad 
Z_u=\nabla X_u\subseteq ([k]\times H_n)_{r_0-1}, 
\] 
and let 
\[ 
\overline Z_u:=\pi_{r_0-1}(Z_u) 
= 
\{(i,a-i):u\le i\le s\}\subseteq M_{n,s}, 
\quad a=2s+1. 
\] 
Then $\Theta_u$ defined by $\Theta_u(B):=\pi_{r_0-1}\bigl((B\setminus X_u)\cup Z_u\bigr)$
is a bijection between the $L$-defect-one antichains
$B\subseteq ([k]\times H_n)_{<r_0}$ satisfying $B\cap ([k]\times H_n)_{r_0-2}=X_u$ and the
$H$-tight subsets $S\subseteq M_{n,s}$ satisfying
$\overline Z_u\subseteq S$.
\end{lem}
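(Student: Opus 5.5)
The plan is to show that $\Theta_u$ is exactly one rank-pushing step of Lemma~\ref{lem:rank-pushing}, followed by the identification of Lemma~\ref{lem:Hn-central-geometry}. We then construct an explicit inverse.

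\emph{Forward direction.} Let $B$ be an $L$-defect-one antichain below $L$ with $B\cap([k]\times H_n)_{r_0-2}=X_u$. By Corollary~\ref{cor:H-two-level-localization}, $B$ lies in ranks $r_0-2$ and $r_0-1$, so $\rho_{\min}(B)=r_0-2$ and the lowest-rank part of $B$ is $X_u$. Then $B^\uparrow=(B\setminus X_u)\cup Z_u$. By Lemma~\ref{lem:rank-pushing} and Theorem~\ref{thm:weak-shadow-propagation}, $B^\uparrow\subseteq([k]\times H_n)_{r_0-1}$ is an antichain of defect one. For a subset $S'$ of rank $r_0-1$ the defect equals $|\nabla S'|-|S'|$. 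Under $\pi_{r_0-1}$ this becomes $|\nabla_H S|-|S|$, so $S=\Theta_u(B)$ is $H$-tight, and it clearly contains $\overline Z_u=\pi_{r_0-1}(Z_u)$. The formula for $\overline Z_u$ is checked directly: $\nabla X_u=\{(k,i,2s+1-i)\}$ by Proposition~\ref{prop:H-zero-growth} with $q=s$, and projecting gives $(i,a-i)$.

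\emph{Inverse.} Given an $H$-tight $S\supseteq\overline Z_u$, set $Y=\pi_{r_0-1}^{-1}(S)$ and $B=(Y\setminus Z_u)\cup X_u$. Several facts must be checked.
\begin{enumerate}[label=(\roman*)]
\item $B$ is an antichain. Elements of $X_u$ and $Y\setminus Z_u$ lie in consecutive ranks, so a comparability would be a cover $x\lessdot y$ with $x\in X_u$. But every upper cover of an element of $X_u$ lies in $\nabla X_u=Z_u$, which has been removed.
\item $\Gamma_L(B)=\Gamma_L(Y)$. We have $\Gamma_L(X_u)=\Gamma_L(Z_u)$, as in the proof of Lemma~\ref{lem:rank-pushing}. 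Hence $\Gamma_L(B)=\Gamma_L(Y\setminus Z_u)\cup\Gamma_L(Z_u)=\Gamma_L(Y)$.
\item $\delta_L(B)=1$. Since $|X_u|=|Z_u|$, we get $|B|=|Y|$, so $\delta_L(B)=\delta_L(Y)=1$.
\item $B$ lies strictly below $L$, and $B\cap([k]\times H_n)_{r_0-2}=X_u$ holds by construction.
\end{enumerate}
Both compositions are the identity. This uses $Z_u\subseteq B^\uparrow$ in one direction and $Z_u\cap(Y\setminus Z_u)=\varnothing$ in the other, so $\Theta_u$ is a bijection.

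The main point needing care is the disjointness in the forward map. We need $Z_u\cap(B\setminus X_u)=\varnothing$ so that $\Theta_u(B)\supseteq\overline Z_u$ with $|\Theta_u(B)|=|B|$. This holds because any element of $Z_u\cap B$ would cover an element of $X_u\subseteq B$, contradicting that $B$ is an antichain. This argument is already contained in the proof of Lemma~\ref{lem:rank-pushing}. Beyond this, the proof only tracks the coordinate identification $a=r_0+2-i-j$ on rank $r_0-1$.
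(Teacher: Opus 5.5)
Your proposal is correct and follows essentially the same route as the paper. The forward map is one zero-growth rank-pushing step, justified by Corollary~\ref{cor:H-two-level-localization}, Proposition~\ref{prop:H-zero-growth} with $q=s$, and Lemma~\ref{lem:rank-pushing}. The inverse $X_u\cup(\pi_{r_0-1}^{-1}(S)\setminus Z_u)$ is an antichain by the cover argument, and it has defect one because pushing $X_u$ back up recovers $\pi_{r_0-1}^{-1}(S)$ with the same $\Gamma_L$ and the same cardinality. The only cosmetic difference is that you check $\Gamma_L(B)=\Gamma_L(Y)$ and $|B|=|Y|$ directly, whereas the paper re-invokes Lemma~\ref{lem:rank-pushing} on $\Theta_u^{-1}(S)$.
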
 

\begin{proof} 
Fix $u\in[1,s]$. We first show that $\Theta_u$ is well defined. Let $B\subseteq ([k]\times H_n)_{<r_0}$ be an $L$-defect-one antichain satisfying $B\cap ([k]\times H_n)_{r_0-2}=X_u$. By Corollary~\ref{cor:H-two-level-localization}, we obtain $B\subseteq ([k]\times H_n)_{r_0-2}\cup ([k]\times H_n)_{r_0-1}$. Hence $B\setminus X_u\subseteq ([k]\times H_n)_{r_0-1}$. By Proposition~\ref{prop:H-zero-growth}, applied with $q=s$, we have 
\[ 
Z_u=\nabla X_u=\{(k,i,2s+1-i):u\le i\le s\}\text{ and }|Z_u|=|X_u|. 
\] 
Therefore replacing $X_u$ by $Z_u$ is a zero-growth rank-pushing operation. Put $\widetilde S:= (B\setminus X_u)\cup Z_u$. 
By Lemma~\ref{lem:rank-pushing}, $\widetilde S$ is an antichain with $|\widetilde S|=|B|$ and
\[
|\Gamma_L(\widetilde S)|
=|\Gamma_L(B)|=|B|+1
=|\widetilde S|+1.
\]
Now $\widetilde S\subseteq ([k]\times H_n)_{r_0-1}$, so
$\Gamma_L(\widetilde S)\subseteq ([k]\times H_n)_{r_0}$ consists precisely of the upper
covers of $\widetilde S$. Under the projection $\pi_{r_0}$, we have $\pi_{r_0}(\Gamma_L(\widetilde S))=\nabla_H\Theta_u(B)$.
Therefore $|\nabla_H\Theta_u(B)|=|\Theta_u(B)|+1$, so
$\Theta_u(B)$ is $H$-tight. Moreover,
$Z_u\subseteq\widetilde S$ implies
$\overline Z_u=\pi_{r_0-1}(Z_u)\subseteq\Theta_u(B)$.
Hence $\Theta_u(B)$ belongs to the required family, and $\Theta_u$ is
well defined.

We next construct $\Theta_u^{-1}$ and prove it is also well defined. Let $S\subseteq M_{n,s}$ be
$H$-tight with $\overline Z_u\subseteq S$. Since
$Z_u\subseteq\pi_{r_0-1}^{-1}(S)$, define
\[
\Theta_u^{-1}(S)
:=
X_u\cup\left(\pi_{r_0-1}^{-1}(S)\setminus Z_u\right).
\]
We first show that $\Theta_u^{-1}(S)$ is an antichain. The two parts
lie in $([k]\times H_n)_{r_0-2}$ and $([k]\times H_n)_{r_0-1}$, respectively. If
$x\in X_u$ and
$y\in\pi_{r_0-1}^{-1}(S)\setminus Z_u$ were comparable, then
$x\lessdot y$, so $y\in\nabla X_u=Z_u$, a contradiction.
Thus $\Theta_u^{-1}(S)$ is an antichain.

Moreover,
$\Theta_u^{-1}(S)\cap ([k]\times H_n)_{r_0-2}=X_u$, and pushing $X_u$ upward gives
\[
\bigl(\Theta_u^{-1}(S)\setminus X_u\bigr)\cup\nabla X_u
=
\left(\pi_{r_0-1}^{-1}(S)\setminus Z_u\right)\cup Z_u
=
\pi_{r_0-1}^{-1}(S).
\]
Since $|\nabla X_u|=|X_u|$, Lemma~\ref{lem:rank-pushing} gives
$|\Theta_u^{-1}(S)|=|\pi_{r_0-1}^{-1}(S)|=|S|$ and
$\Gamma_L(\Theta_u^{-1}(S))=\Gamma_L(\pi_{r_0-1}^{-1}(S))$.
By the $H$-tightness of $S$,
\[
|\Gamma_L(\Theta_u^{-1}(S))|
=|\Gamma_L(\pi_{r_0-1}^{-1}(S))|
=|\nabla_HS|
=|S|+1
=|\Theta_u^{-1}(S)|+1.
\]
Hence $\Theta_u^{-1}(S)$ is an $L$-defect-one antichain, and
$\Theta_u^{-1}$ is well defined.

Finally, we check that $\Theta_u^{-1}\circ\Theta_u=\operatorname{id}$ and $\Theta_u\circ\Theta_u^{-1}=\operatorname{id}$. For any $L$-defect-one antichain $B\subseteq ([k]\times H_n)_{<r_0}$ satisfying $B\cap ([k]\times H_n)_{r_0-2}=X_u$, we have
\[
\begin{aligned}
\Theta_u^{-1}(\Theta_u(B))
&=
X_u\cup
\left(
\bigl((B\setminus X_u)\cup Z_u\bigr)\setminus Z_u
\right)\\
&=
X_u\cup(B\setminus X_u)
=
B,
\end{aligned}
\]
where $(B\setminus X_u)\cap Z_u=\varnothing$. 
Similarly, for every $H$-tight $S\subseteq M_{n,s}$ with $\overline Z_u\subseteq S$, we have
\[
\begin{aligned}
\Theta_u(\Theta_u^{-1}(S))
&=
\pi_{r_0-1}
\left(
\left(\pi_{r_0-1}^{-1}(S)\setminus Z_u\right)\cup Z_u
\right)\\
&=
\pi_{r_0-1}\bigl(\pi_{r_0-1}^{-1}(S)\bigr)
=
S.
\end{aligned}
\]
Thus $\Theta_u$ and $\Theta_u^{-1}$ are mutually inverse.
\end{proof}

We now count the antichains using this bijection.
\begin{proof}[Proof of Proposition~\ref{prop:H-cross-level-count}]
Put $m=n-2s-1$, $a=2s+1$, and $b=a+2m$.
By Corollary~\ref{cor:H-two-level-localization}, every cross-level $L$-defect-one antichain has a unique lower-rank part $X_u=\{(k,i,2s-i):u\le i\le s\}$ for some $1\le u\le s$. Hence Lemma~\ref{lem:H-cross-level-bijection} reduces the enumeration, for each fixed $u$, to counting the $H$-tight subsets $S\subseteq M_{n,s}$ satisfying $\overline Z_u = \{(i,a-i):u\le i\le s\} \subseteq S$. We count such sets using the five families in Lemma~\ref{lem:H-tight-classification}. For type~\textup{(I)}, we have
\[
S=\{(i,a-i):\alpha\le i\le v\},
\quad
1\le\alpha\le v\le s.
\]
The condition $\overline Z_u\subseteq S$ holds if and only if $v=s$ and $\alpha\le u$. Thus there are exactly $u$ sets of type~\textup{(I)}.

For type~\textup{(II)}, we have
\[
S=\{(i,a-i):\ell\le i\le s\}
\cup
\{(i,a+1-i):q\le i\le s+1\}.
\]
The parameters satisfy $1\le\ell\le q\le s+1$, 
and $\overline Z_u\subseteq S$ holds if and only if $\ell\le u$. 
For a fixed $\ell\le u$, the parameter $q$ can take the values $\ell,\ell+1,\ldots,s+1$, and hence there are $s+2-\ell$ choices. 
Therefore the number of type~\textup{(II)} sets is $\sum_{\ell=1}^{u}(s+2-\ell)$. 

No set of type~\textup{(III)} or type~\textup{(IV)} contains $\overline Z_u$.

For type~\textup{(V)}, every set contains the whole diagonal $D_a$, since $D_a\subseteq M_{n,s}\setminus D_b$. Hence every type~\textup{(V)} set contains $\overline Z_u$. There are therefore $s+2$ such sets.

When $m=1$, Lemma~\ref{lem:H-tight-classification} shows that the type~\textup{(V)} set with $\rho=0$ coincides with the type~\textup{(II)} set with $\ell=q=1$. This duplicated set contains $\overline Z_u$ for every $u$, so one must subtract one in this case.

Consequently, for fixed $u$, the number $N_u$ of $L$-defect-one antichains whose lower-rank part is $X_u$ is
\begin{align*}
N_u
&=u+\sum_{\ell=1}^{u}(s+2-\ell)+(s+2)-\mathbf 1_{\{m=1\}}\\
&=\frac{u(2s+5-u)}{2}+s+2-\mathbf 1_{\{m=1\}}.
\end{align*}
Since the lower-rank part $X_u$ is uniquely determined by the antichain, the classes corresponding to different values of $u$ are disjoint. Therefore $C_{n,s} = \sum_{u=1}^{s}N_u.$ Thus it can be computed as
\[
\begin{aligned}
C_{n,s}
&=
\sum_{u=1}^{s}
\left(
u+
\sum_{\ell=1}^{u}(s+2-\ell)
+s+2
-\mathbf 1_{\{m=1\}}
\right)\\
&=
\frac{s(s+1)}2
+
\sum_{\ell=1}^{s}(s-\ell+1)(s+2-\ell)
+
s(s+2)
-
s\mathbf 1_{\{m=1\}}.
\end{aligned}
\]
Putting $j=s-\ell+1$, we obtain
\[
\sum_{\ell=1}^{s}(s-\ell+1)(s+2-\ell)
=
\sum_{j=1}^{s}j(j+1)
=
\frac{s(s+1)(s+2)}3.
\]
Hence we obtain
\[
\begin{aligned}
C_{n,s}
&=\frac{s(s+1)}2
+\frac{s(s+1)(s+2)}3
+s(s+2)-s\mathbf 1_{\{m=1\}}\\
&=\frac{s(2s^2+15s+19)}6-s\mathbf 1_{\{m=1\}}\\
&=\frac{s(2s^2+15s+19)}6-s\mathbf 1_{\{n=2s+2\}}.
\end{aligned}
\]
This is the stated formula.
\end{proof}

\subsection{Proof of Proposition~\ref{prop:H-zero-boundary-count}}

\begin{proof}
We first reduce the count to antichains in the rank immediately below
the central rank. The central level is identified with $H_n$, and the
level immediately below it is identified with
$M_{n,0}=H_n\setminus\{(n,n)\}$. Suppose that a defect-one antichain
below the central level has lowest occupied rank $t\le r_0-2$.
Theorem~\ref{thm:weak-shadow-propagation} then forces its nonempty
lowest-rank part $X$ to satisfy $|\nabla X|=|X|$. This contradicts
Proposition~\ref{prop:H-zero-growth} with $s=0$. Hence every such
antichain is contained in $([k]\times H_n)_{r_0-1}$.

We next express the defect-one condition as a condition on the
boundary of a subset of $H_n$. Under the projection
$([k]\times H_n)_{r_0-1}\to M_{n,0}$, the antichains in question
correspond to the nonempty subsets $S\subseteq M_{n,0}$ satisfying
$|\nabla_HS|=|S|+1$. Since $D_1=\varnothing$, the same shadow
decomposition gives
$|\nabla_HS|-|S|=|N_H^+(S)\setminus S|$.
Thus, writing $\partial^+S:=N_H^+(S)\setminus S$, the defect-one
condition is equivalent to
\begin{equation}\label{eq:H-zero-boundary}
|\partial^+S|=1.
\end{equation}

We now determine where the unique boundary point can lie.
Suppose that $\partial^+S=\{e\}$. A first-exit argument shows that
every directed path from a point of $S$ to $(n,n)$ must pass through
$e$. If $x=(i,n)$ lies on the top boundary, there is a unique such
path, namely the top-boundary chain. If $x=(i,i)$, its only upper cover
is $(i,i+1)$. If $x=(i,j)$ with $i<j<n$, there are two paths from $x$
to $(n-1,n)$ meeting only at their endpoints. One goes via $(i,n)$
and then follows the top boundary; the other goes via $(j,j)$ and
then follows the zigzag path
\[
(j,j)\to(j,j+1)\to(j+1,j+1)\to\cdots\to(n-1,n).
\]
Consequently, apart from the initial point, the only possible
compulsory vertices are $(n,n)$, $(n-1,n)$, later vertices of the
top-boundary chain, and the first successor $(i,i+1)$ of a diagonal
point $(i,i)$.

We count the sets according to these possible positions of $e$.
If $e=(i,i+1)$ with $1\le i\le n-2$, then necessarily
$S=\{(i,i)\}$. This gives $n-2$ sets.

If $e=(a,n)$ with $2\le a\le n-2$, then $S$ is a nonempty final
interval in the chain
\[
(1,n)<(2,n)<\cdots<(a-1,n).
\]
There are $a-1$ such intervals, giving
$\sum_{a=2}^{n-2}(a-1)=\binom{n-2}{2}$ sets in total.

The two remaining positions of $e$ can be counted using upper order
ideals. If $e=(n-1,n)$, then \eqref{eq:H-zero-boundary} holds
precisely when $J=S\cup\{(n-1,n),(n,n)\}$ is an upper order ideal
of $H_n$ containing at least three elements. By
Proposition~\ref{prop:Hn-antichain-polynomial}, there are $2^n$ upper order
ideals. Excluding the empty ideal, $\{(n,n)\}$, and
$\{(n-1,n),(n,n)\}$ leaves $2^n-3$ possibilities.

Finally, if $e=(n,n)$, then \eqref{eq:H-zero-boundary} holds
precisely when $S\cup\{(n,n)\}$ is an upper order ideal containing
at least two elements. This gives $2^n-2$ possibilities.

These cases exhaust the possible boundary points. Therefore
\[
\begin{aligned}
D_1^-([k]\times H_n,L)
&=(n-2)+\binom{n-2}{2}+(2^n-3)+(2^n-2)\\
&=2^{n+1}+\binom{n-1}{2}-5,
\end{aligned}
\]
as required.
\end{proof}

\section{The elliptic obstruction}
\label{app:elliptic-obstruction}

\subsection{Proof of Lemma~\ref{lem:cubic-obstruction}}
\label{subsec:proof-cubic-obstruction}
We use the elliptic-logarithm method developed by Stroeker and Tzanakis~\cite{StroekerTzanakis1994}, in the general-cubic form of Stroeker and de Weger~\cite{StroekerDeWeger1999}, together with David's lower bound for linear forms in elliptic logarithms~\cite{David1995}.  The basic idea is to convert the original cubic Diophantine equation into an elliptic curve and express its rational points in terms of a Mordell--Weil basis. The restrictions on the original variables then produce a small linear form in elliptic logarithms. Comparing an explicit upper bound for this linear form with a lower bound from transcendence theory yields a finite bound for the Mordell--Weil coefficients; lattice reduction makes this bound effective enough for a final exact search.
\begin{proof}[Proof of Lemma~\ref{lem:cubic-obstruction}]
Suppose, for contradiction, that $F(\mu,s)=0$ has an integral solution with $\mu\ge2$ and $s\ge1$.

\medskip
\noindent\textbf{Step 1. Construct the elliptic model and the Mordell--Weil group.}

We first locate any possible integral solution and then pass to the associated elliptic curve.
For fixed $\mu\ge2$, differentiation with respect to $s$ gives
\[
\frac{\partial F}{\partial s}
=
-6s^2+(24\mu-30)s+12\mu^2+18\mu-31.
\]
The constant term is positive, whereas the leading coefficient is negative. Hence this quadratic has exactly one positive zero. Consequently, $s\mapsto F(\mu,s)$ first increases and then decreases on $[0,\infty)$. Moreover,
\[
F(\mu,0)=3\mu^3+6\mu^2-3\mu-12>0,\quad\lim_{s\to\infty}F(\mu,s)=-\infty.
\]
It follows that $F(\mu,\cdot)$ has exactly one positive real zero.

For $2\le \mu\le6$, this zero lies strictly between two consecutive integers, as follows from
\[
\begin{array}{c|cc}
\mu & \text{positive value} & \text{negative value}\\ \hline
2 & F(2,8)=6       & F(2,9)=-222\\
3 & F(3,15)=54     & F(3,16)=-606\\
4 & F(4,22)=66     & F(4,23)=-1254\\
5 & F(5,28)=1926   & F(5,29)=-24\\
6 & F(6,35)=2724   & F(6,36)=-282.
\end{array}
\]
Thus no positive integral zero exists for $2\le \mu\le6$.

Now suppose that $\mu\ge7$. Direct substitution gives
\begin{align*}
F(\mu,6\mu)&=75\mu^3-426\mu^2-189\mu-12\ge99\mu^2-189\mu-12>0,\\
F(\mu,7\mu)&=-11\mu^3-603\mu^2-220\mu-12<0.
\end{align*}
Since $F(\mu,\cdot)$ has exactly one positive real zero, that zero must lie in $(6\mu,7\mu)$. Thus every integral solution under consideration must satisfy $\mu\ge7$ and $6\mu<s<7\mu$.

We now pass from the cubic curve $F(\mu,s)=0$ to an elliptic curve by an explicit birational transformation. On the affine open set relevant to our argument, set
\begin{equation}\label{eq:forward-birational-map}
\begin{aligned}
	x&=\frac{5484\mu+8507s+4698}{51\mu+5s+15},\\
	y&=\frac{-2361\mu^2-34477\mu s+52353\mu+41646s^2+187407s+83292}
	{\mu(51\mu+5s+15)}.
\end{aligned}
\end{equation}
The inverse transformation is
\begin{equation}\label{eq:inverse-birational-map}
\begin{aligned}
	\mu&=\frac{(x-5866)(25x-889)}{D(x,y)},\\
	s&=-\frac{3(373x^2+5xy-62303x-1566y+2338958)}{D(x,y)},
\end{aligned}
\end{equation}
where
\[
D(x,y)=288x^2+5xy-95951x-8507y+6618599.
\]
Under this transformation, the cubic is birationally equivalent to the minimal Weierstrass model
\[
E:\qquad y^2+y=x^3-10713x+520204.
\]
All these rational-function identities are checked symbolically in the
certificate described in Appendix~\ref{app:elliptic-certificate}.

By the Mordell--Weil theorem, the group $E(\mathbb Q)$ is a finitely
generated abelian group.  In the present case, the supplementary
computations show that the torsion subgroup is trivial and that
$\operatorname{rank} E(\mathbb Q)=4$.  Moreover, the points
\[
\begin{aligned}
P_1&=(1561/100,-597809/1000),\\
P_2&=(889/25,-53713/125),\\
P_3&=(1072/9,26081/27),\\
P_4&=(7,667)
\end{aligned}
\]
form a $\mathbb Z$-basis of $E(\mathbb Q)$. Certificate~01A computes the full
Mordell--Weil group and proves that $E(\mathbb Q)\cong \mathbb Z^4$,
with trivial torsion and full saturation.  Certificate~01B verifies the exact
relations between the Magma generators and the four points above. 

Let $P\in E(\mathbb Q)$ be the point corresponding to $(\mu,s)$. Then we can write uniquely $P=n_1P_1+\cdots+n_4P_4$, and put $M=\max_{1\le i\le4}|n_i|$.

\medskip
\noindent\textbf{Step 2. The limiting point and the elliptic-logarithm linear form.}

We now identify the limiting point on the relevant real branch and define the linear form that measures the distance from $P$ to this point.
The homogeneous cubic part of $F(\mu,s)$ is
\[
3\mu^3+12\mu^2s+12\mu s^2-2s^3.
\]
Since the branch under consideration satisfies $6\mu<s<7\mu$, its
point at infinity has the form $[1:a:0]$ with $a\in(6,7)$.  Substituting
$s=a\mu$ into the homogeneous cubic part gives
\[
a^3-6a^2-6a-\frac32=0.
\]
It is not hard to compute that the relevant point at infinity of
the projective cubic is $[1:a:0]$.
Under the birational map \eqref{eq:forward-birational-map}, this point
is sent to
\begin{equation}\label{eq:def-Q0}
Q_0=\left(
\frac{5484+8507a}{51+5a},
\frac{-2361-34477a+41646a^2}{51+5a}
\right)\in E(\mathbb Q(a)).
\end{equation}
In particular, the first coordinate of $Q_0$ has minimal polynomial
\[
T^3-819T^2+52845T-1191877.
\]
This polynomial has no zero modulo $7$. By Gauss's lemma together
with the standard reduction-modulo-$p$ irreducibility criterion, the
original polynomial is therefore irreducible over $\mathbb Q$, i.e. $Q_0\notin E(\mathbb Q)$.

A direct calculation gives the discriminant of $E$ is $\Delta_E=-38215717087995<0$.
Hence the real locus $E(\mathbb R)$ is connected.

Let $\omega$ be the least positive real period of $E$, and choose normalized real elliptic logarithms
\[
u(P),\quad u(P_1),\ldots,u(P_4),\quad u(Q_0)\in[0,\omega).
\]
For brevity, write
\[
u_i=u(P_i)\quad(1\le i\le4),
\quad
u_0=u(Q_0).
\]
The numerical representatives of $\omega,u_1,u_2,u_3,u_4,u_0$ used in the two lattice reductions are enclosed by certified Arb balls in Certificate~07.  In particular, all nearest-integer roundings used below are determined by rigorous interval containments rather than by floating-point stability alone. 

Since $P=n_1P_1+\cdots+n_4P_4$, there is an integer $\mu_0$ such that
$u(P)=\mu_0\omega+\sum_{i=1}^4n_i u_i.$
The lift of $u_0-u(P)$ determined by the chosen real branch differs from $u_0-u(P)$ by an integral multiple $\ell\omega$ of the real period. Thus the corresponding elliptic integral is
\[
u_0-u(P)+\ell\omega
=
u_0-(\mu_0-\ell)\omega-\sum_{i=1}^4n_i u_i.
\]
Putting
$n_0=\mu_0-\ell,$
we obtain the elliptic-logarithm linear form
\begin{equation}\label{eq:def-linear-form}
L(P)=u_0-n_0\omega-\sum_{i=1}^4n_i u_i.
\end{equation}
By construction, $L(P)$ is the real elliptic integral from $P$ to $Q_0$ along the chosen branch. Notice that $L(P)\ne0$: otherwise, equality modulo the period lattice would give $P=Q_0$, contrary to $P\in E(\mathbb Q)$ and $Q_0\notin E(\mathbb Q)$.

\medskip
\noindent\textbf{Step 3. Upper bound for the elliptic-logarithm linear form.}

We derive an upper bound for $|L(P)|$ in terms of the Mordell--Weil coefficient size $M$.  Implicit differentiation on $F(\mu,s)=0$, followed by substitution in \eqref{eq:forward-birational-map}, gives the identity of invariant differentials
\begin{equation}\label{eq:differential-pullback}
\frac{dx}{2y+1}=-\frac{d\mu}{F_s(\mu,s)}.
\end{equation}
For each real $t\ge7$, the localization argument in Step~1 gives a unique root $s(t)\in(6t,7t)$. Since $6<s(t)/t<7$, the ratio $s(t)/t$ remains in a compact interval. Dividing
$F(t,s(t))=0$ by $t^3$ shows that every accumulation point $b$ of $s(t)/t$ satisfies
$b^3-6b^2-6b-\frac32=0,$
which has a unique root $a$ in $[6,7]$. Consequently, we get $\lim_{t\to\infty}s(t)/t=a$. Substitution in \eqref{eq:forward-birational-map} then gives $\lim_{t\to\infty}P(t)=Q_0$. Moreover,
\[
-F_s(t,s(t))=6s(t)^2-(24t-30)s(t)-12t^2-18t+31>60t^2,
\]
where the last inequality follows because the displayed quadratic is increasing for $s\ge6t$ and its value at $s=6t$ is $60t^2+162t+31$.  Equations \eqref{eq:def-Q0} and \eqref{eq:differential-pullback} therefore imply
\begin{equation}\label{eq:elliptic-integral-upper}
0<|L(P)|=\int_\mu^\infty\frac{dt}{|F_s(t,s(t))|}<\frac1{60\mu}.
\end{equation}

We next record the resulting bound for the period coefficient without relying
on a numerical period computation. Put
$f(x)=x^3-10713x+\frac{2080817}{4},$
and let $\xi$ be the unique real zero of $f$. Since $f(0)>0$ and $f(x)\to-\infty$ as $x\to-\infty$, we have $\xi<0$. Since the invariant
differential on the model $Y^2=f(x)$ is $dx/(2Y)$, the least positive
real period is
$\omega = \int_{\xi}^{\infty}\frac{dx}{\sqrt{f(x)}}.$
On $0\le x\le4$, the polynomial $f$ is positive and decreasing, and
$f(x)\le f(0)=2080817/4$. Therefore,
\[
\omega > \int_0^4\frac{dx}{\sqrt{f(x)}} \ge \frac{8}{\sqrt{2080817}} > \frac1{420}.
\]

Since $\mu\ge7$, equation \eqref{eq:elliptic-integral-upper} gives
$|L(P)|<\frac1{420}<\omega.$
On the other hand, $u_0-u(P)\in(-\omega,\omega)$. Since $L(P)=u_0-u(P)+\ell\omega$, the preceding two inequalities imply $|\ell|\le1$. Moreover,
\[
|\mu_0|=
\left|
\frac{u(P)}{\omega}
-\sum_{i=1}^4n_i\frac{u_i}{\omega}
\right|<
1+\sum_{i=1}^4|n_i|
\le4M+1.
\]
Since $\mu_0$ is an integer, it follows that $|\mu_0|\le4M$. Consequently,
\begin{equation}\label{eq:n0-bound}
|n_0|=|\mu_0-\ell|\le4M+1.
\end{equation}
It remains to relate $\mu$ to $M$. The $j$-invariant of $E$ is
$j_E=-\frac{559565271994368}{157266325465}.$
Since $b_2=0$, the explicit height-difference estimate of Silverman~\cite[Theorem~1.1]{Silverman1990}, formulated using the absolute logarithmic Weil height, gives, for every $R\in E(\overline{\mathbb Q})\setminus\{O\}$,
\begin{equation}\label{eq:height-difference}
\widehat h(R)-\frac12h(x(R))
<\frac{\log|\Delta_E|+\log\max\{1,|j_E|\}}{12}+1.07
<4.36.
\end{equation}
From $\mu\ge7$ and $6\mu<s<7\mu$, the numerator and denominator of the first formula in \eqref{eq:forward-birational-map}, before cancellation, both have absolute value less than $65705\mu$. Hence $h(x(P))<\log(65705\mu)$. For the lower height bound, put $\mathbf n=(n_1,n_2,n_3,n_4)^{\mathsf T}$, and let $\mathcal H=(\langle P_i,P_j\rangle_{\mathrm{NT}})_{1\le i,j\le4}$ be normalized so that
$\widehat h\left(\sum_{i=1}^4n_iP_i\right) = \mathbf n^{\mathsf T}\mathcal H\mathbf n.$
Thus
\[
\mathcal H_{ii}=\widehat h(P_i),
\quad
\mathcal H_{ij}
=
\frac{
\widehat h(P_i+P_j)-\widehat h(P_i)-\widehat h(P_j)
}{2}
\quad(i\ne j).
\]

Certificate~06 (labelled Certificate~06H in the accompanying files) gives a rigorous rational enclosure of this matrix.
More precisely, the two-sided height-difference estimate of
Silverman~\cite[Theorem~1.1]{Silverman1990} gives
$\left|\widehat h(R)-\frac12h(x(R))\right|<8$
for every affine point $R\in E(\mathbb Q)$ on the present curve.
For each
\[
R\in
\{P_1,P_2,P_3,P_4\}
\cup
\{P_i+P_j:1\le i<j\le4\},
\]
the certificate computes $x(16R)$ by exact rational arithmetic.
Since $\widehat h(16R)=256\widehat h(R)$, it follows that
\[
\frac{h(x(16R))}{512}-\frac1{32}
<
\widehat h(R)
<
\frac{h(x(16R))}{512}+\frac1{32}.
\]
All logarithms occurring here are enclosed by rational upper and lower bounds with explicit remainder estimates.

Let $C$ be the midpoint matrix of the resulting rational interval
matrix and let $R=(R_{ij})$ be its radius matrix. The exact computation
gives
\[
\rho:=\max_i\sum_{j=1}^4R_{ij}
=
\frac{85937500001}{500000000000}.
\]
For an admissible height-pairing matrix write
$\mathcal H=C+E_0$. By construction $|(E_0)_{ij}|\le R_{ij}$.
Since $E_0$ is symmetric, its operator norm is bounded by its maximal
absolute row sum, and hence
\[
\|E_0\|_2
\le \|E_0\|_{\infty}
\le \max_i\sum_{j=1}^4R_{ij}
=\rho.
\]
Thus $E_0\succeq-\rho I_4$, so every admissible height-pairing matrix
satisfies
\[
\mathbf v^{\mathsf T}\mathcal H\mathbf v
\ge
\mathbf v^{\mathsf T}A\mathbf v
\quad(\mathbf v\in\mathbb R^4),
\qquad
A:=C-\rho I_4.
\]
Certificate~06 verifies by exact rational arithmetic that all leading
principal minors of $A$ are positive. Hence $A$ is positive
definite. Moreover, it verifies
\[
\min_{1\le i\le4}
\frac{1}{(A^{-1})_{ii}}
=
\frac{
14651501224055023859638974109804626579833246124527
}{
35049496995026167487783130968596493901000000000000
}
>0.418.
\]
If $|n_i|=M=\max_j|n_j|$, the Cauchy--Schwarz inequality for the
inner product defined by $A$ gives
$\mathbf n^{\mathsf T}A\mathbf n \ge \frac{n_i^2}{(A^{-1})_{ii}} > 0.418M^2.$
Therefore,
$\widehat h(P) = \mathbf n^{\mathsf T}\mathcal H\mathbf n \ge 0.405M^2.$
Combining this with \eqref{eq:height-difference} and
$h(x(P))<\log(65705\mu)$, we obtain
\[
0.405M^2
\le
\widehat h(P)
<
\frac12\log(65705\mu)+4.36.
\]
Combining this inequality with \eqref{eq:elliptic-integral-upper} gives
\[
\log|L(P)|<\log\!\left(\frac{65705}{60}\right)+8.72-0.81M^2<16-0.81M^2.
\]
Thus we have proved, rather than assumed, the estimate
\begin{equation}\label{eq:LP1}
0<|L(P)|<\exp(16-0.81M^2).
\end{equation}

\medskip
\noindent\textbf{Step 4. Lower bound and the initial bound for the Mordell--Weil coefficients.}

We now combine an explicit lower bound for the nonzero linear form with the upper bound from Step~3.  If $M=0$ there is nothing to prove at this stage, so assume $M\ge1$. By \eqref{eq:n0-bound}, we get $|n_0|\le4M+1$. Therefore, on putting $N=\max\{|n_0|,|n_1|,\ldots,|n_4|\}$, we obtain
\begin{equation}\label{eq:N-versus-M}
M\le N\le4M+1\le5M.
\end{equation}

We apply Theorem~5 of Tzanakis~\cite{Tzanakis1996}, which is the explicit consequence of David's theorem~\cite[Theorem~2.1]{David1995} used in the general-cubic method of Stroeker and de Weger~\cite{StroekerDeWeger1999}. There are five algebraic points in the linear form, namely $Q_0,P_1,\ldots,P_4$, so Tzanakis's parameter is $k=5$; the common field degree is $D=[\mathbb Q(a):\mathbb Q]=3$. We take $\mathcal E=e$ and
\[
\mathcal A_0=51,\quad\mathcal A_1=\cdots=\mathcal A_5=34,
\]
where $\mathcal A_0$ is the period parameter and $\mathcal A_1,\ldots,\mathcal A_5$ correspond to the five points. To match Tzanakis's short-Weierstrass normalization, put $Y=y+1/2$; then
\[
Y^2=x^3+Ax+B,\quad A=-10713,\quad B=\frac{2080817}{4}.
\]
With the logarithmic projective height used in \cite[pp.~188--189]{Tzanakis1996}, the curve-height parameter is exactly
\[
\begin{aligned}
h_E
&=\max\left\{1,
h\!\left(-\frac{10713}{4},\frac{2080817}{64}\right),
h(j_E)\right\}\\
&=\log(559565271994368)
=33.958181298\ldots<34.
\end{aligned}
\]
For completeness, we make the period normalization in the admissibility conditions explicit. Let $\omega_1,\omega_2$ be a reduced period basis for the differential $dx/(2Y)$, put $\tau=\omega_2/\omega_1$, and retain $\omega$ for its least positive real period. Tzanakis uses the differential $dx/Y$, whose corresponding periods and elliptic logarithms are $\omega^{\mathrm T}=2\omega$, $\omega_j^{\mathrm T}=2\omega_j$ $(j=1,2)$, and $u^{\mathrm T}(Q)=2u(Q)$. Thus $\tau$ and the normalized elliptic logarithm
$u(Q)/\omega=u^{\mathrm T}(Q)/\omega^{\mathrm T}$
are unchanged, and
\[
\frac{3\pi(\omega^{\mathrm T})^2}{D|\omega_1^{\mathrm T}|^2\Im\tau}
=
\frac{3\pi\omega^2}{D|\omega_1|^2\Im\tau}.
\]
Consequently, the period and point admissibility parameters are identical in the two normalizations, provided the real period and the complex period basis are scaled together. Certificate~08 proves by Arb ball arithmetic that
\[
16.712<
\frac{3\pi\omega^2}{D|\omega_1|^2\Im\tau}
<16.713<51,
\]
For the five points $Q\in\{P_1,\ldots,P_4,Q_0\}$,
Certificate~08 certifies, in the same order, the conservative upper bounds $2.001$, $1.649$, $11.817$, $6.910$, $14.785$,
for the quantities obtained by multiplying this number by $(u(Q)/\omega)^2$; in particular, all five are less than $34$. Equation \eqref{eq:height-difference} gives $\widehat h(P_i)<8.04$ for $1\le i\le4$.

It remains to estimate the height of $Q_0$. Recall that the first coordinate $x(Q_0)$ has minimal polynomial
\[
g(T)=T^3-819T^2+52845T-1191877.
\]
On the unit circle $|T|=1$, we have
\[
|T^3-819T^2+52845T|\le1+819+52845<1191877.
\]
Set $f(T)=-1191877$. Then, on the unit circle $|T|=1$,
\[
|g(T)-f(T)|
=
|T^3-819T^2+52845T|
\le 53665
<|f(T)|.
\]
Rouch\'e's theorem therefore shows that $g$ has no zero in the closed unit disk. Hence all three conjugates of $x(Q_0)$ have absolute value greater than $1$, and consequently
$h(x(Q_0)) = \frac13\log1191877.$
Applying \eqref{eq:height-difference} to $Q_0\in E(\overline{\mathbb Q})$, we obtain
\[
\widehat h(Q_0)
<
\frac16\log1191877+4.36
<6.70.
\]
Thus all the conditions on the $\mathcal A_i$ hold. Certificate~08 also evaluates the six factors in the upper admissibility condition for $\mathcal E$ and proves that each is greater than $1$. Hence $\mathcal E=e$ is admissible. With exactly this normalization, Tzanakis's formula gives
\[
\begin{aligned}
c_4
&=
2.9\cdot10^{42}
3^{14}4^{72}7^{138.3}\cdot51\cdot34^5\\
&=
5.4005580033\ldots\cdot10^{218}
<5.41\cdot10^{218},\\
c_5&=1+\log3<2.099,\\
c_6&=h_E+1+\log3<36.099.
\end{aligned}
\]
The small-$N$ alternative in \cite[Theorem~5]{Tzanakis1996} is
\[
N<
\max\left\{
\exp(eh_E),
|q_i|,
\exp(\mathcal A_i/D):
0\le i\le5
\right\},
\]
where the $q_i$ are the denominators of the coefficients in the linear form. In the present situation all these denominators are equal to $1$, and
\[
\exp(eh_E)<10^{41},
\quad
\exp(\mathcal A_i/D)<10^8
\quad(0\le i\le5).
\]
If the small-$N$ alternative holds, then $M\le N<10^{41}<2\cdot10^{116}$,
so the desired initial bound follows immediately in this case. It therefore remains to
consider the second alternative of \cite[Theorem~5]{Tzanakis1996}.

With the normalization specified above, the linear form to which \cite[Theorem~5]{Tzanakis1996} applies is
\[
\Lambda(P)=2L(P)
=
2u_0-n_0(2\omega)-\sum_{i=1}^4 n_i(2u_i).
\]
The integer coefficients are unchanged by this rescaling, so the same bound $N\le5M$ applies.
In the remaining case, Theorem~5 of Tzanakis~\cite{Tzanakis1996}, together with \eqref{eq:N-versus-M}, gives
\begin{equation}\label{eq:LP2}
2|L(P)|=|\Lambda(P)|
>
\exp\!\left(
-5.41\cdot10^{218}
(\log(5M)+2.099)
(\log\log(5M)+36.099)^7
\right).
\end{equation} 
Combining \eqref{eq:LP1} and \eqref{eq:LP2}, we obtain
\begin{equation}\label{eq:M3}
0.81M^2-16-\log 2
<
5.41\cdot10^{218}
(\log(5M)+2.099)
(\log\log(5M)+36.099)^7.
\end{equation}
Set the initial bound $M_0=2\cdot10^{116}$. Certificate~08 evaluates this ratio using rigorous Arb ball arithmetic
and proves that its exact value lies in the following interval:
\begin{equation}\label{eq:M4}
0.993<\frac{
	5.41\cdot10^{218}
	(\log(5M_0)+2.099)
	(\log\log(5M_0)+36.099)^7
}{
	0.81M_0^2-16-\log 2
}
<0.994.
\end{equation}
Furthermore, for $M\ge M_0$, the logarithmic derivative of the numerator in \eqref{eq:M4}, with respect to $\log M$, is
\[
\frac{1}{\log(5M)+2.099}
+
\frac{7}{
\log(5M)
\bigl(\log\log(5M)+36.099\bigr)
}
<0.005.
\]
Denote the ratio in \eqref{eq:M4} by $R(M_0)$, then \eqref{eq:M4} gives $R(M_0)<0.994<1$. For $M\ge M_0$, the denominator is positive, and \eqref{eq:M3} is precisely $R(M)>1$. Here $R(M)$ denotes the ratio in \eqref{eq:M4} with $M_0$
replaced by $M$. Moreover, for $M\ge M_0$,
\[
\frac{d}{d\log M}\log(0.81M^2-16-\log 2)
=\frac{1.62M^2}{0.81M^2-16-\log 2}>2.
\]
However, for $M\ge M_0$
\[
\frac{d\log R(M)}{d\log M}
=
\frac{d\log(\operatorname{num}R)}{d\log M}
-
\frac{d\log(\operatorname{den}R)}{d\log M}
<0.005-2<0
\]
implies $R(M)$ is strictly decreasing on
$[M_0,\infty)$, and this contradicts $R(M)>1>R(M_0)$.
Consequently, $M<2\cdot10^{116}$.

\medskip
\noindent\textbf{Step 5. Two lattice reductions.}

We now apply LLL lattice reduction~\cite{LenstraLenstraLovasz1982} to sharpen the initial bound on $M=\max_{1\le i\le 4}|n_i|$,
with the aim of reducing it to a range that can be checked exhaustively by exact computation.	
We first record the reduction estimate used in both rounds.  

It is known that $M\le M_0$, and since the real elliptic logarithm is additive modulo the real period
$\omega$, the relation $P=\sum_{i=1}^{4}n_iP_i$ implies that there exists an integer $\mu_0$ such that $u(P)=\mu_0\omega+\sum_{i=1}^4 n_i u_i$.
Moreover, the lift of $u_0-u(P)$ corresponding to the chosen real
branch differs from it by an integral multiple of $\omega$. 
Thus, for some $\ell\in\mathbb Z$, $L(P)=u_0-u(P)+\ell\omega$.  
Putting $n_0=\mu_0-\ell$, we retain the normalization
\[
|n_0|\le4M+1,
\quad
L(P)=u_0-n_0\omega-\sum_{i=1}^4n_i u_i,
\]
where $0<|L(P)|<\exp(16-0.81M^2)$. Let $C>1$, and put
\[
r_i=[Cu_i]\quad(1\le i\le4),
\quad
r_\omega=[C\omega],
\quad
r_0=[Cu_0],
\]
where $[\,\cdot\,]$ denotes nearest-integer rounding.
Let $e_1,\ldots,e_4$ be the standard basis vectors of $\mathbb Z^4$.
Define
\[
\mathcal L_C
=
\left\langle
(e_1,r_1),\ldots,(e_4,r_4),
(0,0,0,0,r_\omega)
\right\rangle_{\mathbb Z}
\subseteq\mathbb Z^5
\]
and let $\mathbf y=(0,0,0,0,r_0)$. Write the Euclidean distance from $\mathbf y$ to the lattice $\mathcal L_C$ as $d_C=\operatorname{dist}(\mathbf y,\mathcal L_C)$. 
Suppose $d_C^2>4M_0^2+(4M_0+1)^2$, and define
\[
\delta_C
:=
\sqrt{d_C^2-4M_0^2}-(4M_0+1)>0.
\]
We consider the lattice vector
\[
\mathbf v
=
\sum_{i=1}^4n_i(e_i,r_i)
+n_0(0,0,0,0,r_\omega)
\in\mathcal L_C.
\]
The first four coordinates of $\mathbf y-\mathbf v$ are $-n_1,-n_2,-n_3,-n_4$. Its final coordinate is $r_0-n_0r_\omega-\sum_{i=1}^4n_i r_i$. Using the definition of $L(P)$, we may write this coordinate as
\[
r_0-n_0r_\omega-\sum_{i=1}^4n_i r_i=CL(P)+(r_0-Cu_0)-n_0(r_\omega-C\omega)-\sum_{i=1}^4n_i(r_i-Cu_i).
\]
Every rounding error on the right-hand side has absolute value at most $1/2$. Moreover, we have $|n_0|\le4M+1$ and $\sum_{i=1}^4|n_i|\le4M$.
It follows that
\[
\begin{aligned}
\left|
r_0-n_0r_\omega-\sum_{i=1}^4n_i r_i
\right|
&\le
C|L(P)|
+\frac12
+\frac{|n_0|}{2}
+\frac12\sum_{i=1}^4|n_i|\\
&<
C\exp(16-0.81M^2)+4M+1.
\end{aligned}
\]
Consequently, we get
\[
\begin{aligned}
d_C^2
&\le\|\mathbf y-\mathbf v\|^2\\
&<
\sum_{i=1}^4n_i^2
+
\left(
C\exp(16-0.81M^2)+4M+1
\right)^2\\
&\le
4M_0^2
+
\left(
C\exp(16-0.81M^2)+4M_0+1
\right)^2.
\end{aligned}
\]
The assumed inequality $d_C^2>4M_0^2+(4M_0+1)^2$ ensures that $d_C^2-4M_0^2>0$. Therefore,
\[
\sqrt{d_C^2-4M_0^2}
<
C\exp(16-0.81M^2)+4M_0+1.
\]
Subtracting $4M_0+1$, taking logarithms and rearranging yields
\[
M^2< \frac{\log C+16-\log\delta_C}{0.81}.
\]

For each $z\in\{u_1,u_2,u_3,u_4,\omega,u_0\}$ and each $C\in\{10^{14},10^{590}\}$, Certificate~07 constructs a certified interval $I_z$ containing $z$ and verifies that
\[
CI_z\subset
\left(r_z-\frac12,r_z+\frac12\right),
\quad r_z=[Cz].
\]
Thus every rounded integer used to construct the two lattices is uniquely determined. Once these integers have been fixed, the lattice bases and target vectors are exact integer data; Certificates~03 and~04 perform the subsequent closest-vector computations exactly. 

We now apply this reduction estimate twice. For the first reduction, take $M_0=2\cdot10^{116}$ and $C=10^{590}$. The exact closest-vector computation recorded in Appendix~\ref{app:elliptic-certificate} gives
$d_1^2>20M_0^2+8M_0+1.$
Certificate~03 verifies by exact integer arithmetic that
\[
d_1^2-4M_0^2>
\left(7\cdot10^{117}+4M_0+1\right)^2.
\]
Hence $\delta_1>7\cdot10^{117}$. By the reduction estimate above,
\[
M^2
<\frac{\log(10^{590})+16-\log\delta_1}{0.81}
<\frac{473\log10+16-\log7}{0.81}
<37^2.
\]
Therefore $M<37$, and hence $M\le36$.

For the second reduction, take $M_0=36$ and $C=10^{14}$. Certificate~07 proves that the six integers used to construct this lattice are the unique nearest-integer roundings at this scaling. With these exact integer data, Certificate~04 applies Magma's exact \texttt{ClosestVectors} routine and gives $d_2^2=116890$, which satisfies the hypothesis of the reduction estimate. 
Moreover, it yields the rigorous lower bound $\delta_2>189$.
By the reduction estimate above and $\log10<2.303$,
\[
M^2
<\frac{14\log10+16-\log\delta_2}{0.81}
<\frac{14\cdot2.303+16}{0.81}
<64.
\]
Thus $M<8$, and hence $M\le7$. It is therefore sufficient to examine only $(n_1,n_2,n_3,n_4)\in[-7,7]^4$.

\medskip
\noindent\textbf{Step 6. Exact finite enumeration.}

The exact enumeration consists of $15^4=50625$ coefficient vectors. For each vector, the program computes $P=n_1P_1+\cdots+n_4P_4$ using exact rational arithmetic, applies the inverse birational map, and checks the integrality and range conditions on $\mu$ and $s$.  The zero vector gives the point at infinity $O$ and is handled separately, since the affine inverse formulas do not apply to $O$. The points with $D(x,y)=0$ are excluded from the relevant image in the final enumeration paragraph of Appendix~\ref{subsec:elliptic-computational-certificate}.  No vector produces an integral pair satisfying $F(\mu,s)=0$, for $\mu\ge7$ and $6\mu<s<7\mu$. The lattice data, closest-vector calculations, inverse birational map, and complete enumeration certificate are given in Appendix~\ref{app:elliptic-certificate}.

This contradicts the existence of the assumed integral solution. Hence $F(\mu,s)=0$ has no integral solution with $\mu\ge2$ and $s\ge1$.

\end{proof}

\subsection{Computational certificate}
\label{app:elliptic-certificate}

The supplementary archive contains six self-contained Magma scripts~\cite{BosmaCannonPlayoust1997}, three Python certificates, and the corresponding output logs described in the accompanying README. Certificates~02 and~03 were rerun from the revised Magma sources, and their complete current logs are included at the root of the archive. An independent Python/Arb revision verifier is also included as a supplementary cross-check. The certificate scripts are:
\begin{enumerate}
\item \path{01A_mordell_weil_group_only.m}, which verifies the triviality of the rational torsion subgroup and computes the full Mordell--Weil group;

\item \path{01B_ultra_short_exact_check.m}, which verifies by exact rational elliptic-curve arithmetic that the four points stated in the proof form the same Mordell--Weil basis as the generators returned in the accompanying Certificate~01A run;

\item \path{02_maps_height_david.m}, which checks the birational maps and the differential identity exactly, and independently cross-checks the numerical parameters entering the David bound;

\item \path{03_first_lll.m}, which verifies the first elliptic-logarithm lattice reduction and proves $M\leq36$;

\item \path{04_second_lll.m}, which verifies the second elliptic-logarithm lattice reduction and proves $M\leq7$;

\item \path{05_exact_enumeration.m}, which enumerates all $15^4=50625$ remaining coefficient vectors using exact rational arithmetic.

\item \path{06_height_matrix_certificate.py} (Certificate~06), which uses exact integer and rational arithmetic, together with rigorous rational logarithm
bounds, to prove
\[
\widehat h(n_1P_1+\cdots+n_4P_4)
\ge
0.405\max_i|n_i|^2.
\]
\item \path{07_real_period_elliptic_logs_certificate.py}, which uses Arb ball arithmetic to enclose the real period and the five real elliptic logarithms, and rigorously certifies every rounding used in Certificates~03 and~04;

\item \path{08_david_parameters_certificate.py}, which uses exact algebraic input and Arb ball arithmetic to certify all admissibility inequalities and numerical constants entering the David--Tzanakis initial bound.
\end{enumerate}

Certificates~01A and~01B together verify the Mordell--Weil assertion used in the proof.  Certificate~01A proves that $E(\mathbb Q)\cong\mathbb Z^4$ and that the rational torsion subgroup is trivial; the rank and saturation flags returned by Magma are both true.  If $G_1,\ldots,G_4$ denote the generators printed in the corresponding accompanying Certificate~01A log, then Certificate~01B verifies the exact relations
\[
\begin{pmatrix}
G_1\\G_2\\G_3\\G_4
\end{pmatrix}
=
\begin{pmatrix}
0&-1&1&1\\
0&0&0&-1\\
1&0&-1&0\\
0&1&0&0
\end{pmatrix}
\begin{pmatrix}
P_1\\P_2\\P_3\\P_4
\end{pmatrix}.
\]
The change-of-basis matrix has determinant $-1$.  Hence
$P_1,\ldots,P_4$ form a Mordell--Weil basis of $E(\mathbb Q)$.

The lattice reductions use two bounds established earlier in the proof of
Lemma~\ref{lem:cubic-obstruction}: the estimate
$0<|L(P)|<\exp(16-0.81M^2)$ in~\eqref{eq:LP1} and the initial bound
$M<2\cdot10^{116}$ established in Step~4 of the proof.  These are intermediate estimates in
the proof, and the reductions do not assume the conclusion of the lemma.

\subsection{Verification data and lattice reductions}
\label{subsec:elliptic-computational-certificate}

We record the exact and certified computations used in the proof of
Lemma~\ref{lem:cubic-obstruction}. Certificate~02 verifies the rational-function
identities in \eqref{eq:forward-birational-map}--\eqref{eq:inverse-birational-map},
the coordinates of $Q_0$ in \eqref{eq:def-Q0}, and the differential identity
\eqref{eq:differential-pullback}. Certificates~01A and~01B verify the
Mordell--Weil basis. Certificates~06--08 supply the rigorous height, period,
elliptic-logarithm, and David--Tzanakis estimates, while Certificates~03--05
perform exact integer or rational computations.

All rational-function and elliptic-curve identities used in this appendix are verified exactly. The rigorous lower bound for the N\'eron--Tate height pairing is supplied separately by Certificate~06.

The real period and the five real elliptic logarithms used in the two lattice reductions are enclosed by Certificate~07 using Arb midpoint-radius ball arithmetic. The input polynomials and rational points are exact, and all polynomial-root isolation, Carlson-integral evaluation, and subsequent arithmetic operations are performed with directed error bounds. Certificate~07 then verifies explicit interval containments fixing all nearest-integer roundings uniquely.

Once these rounded integers have been fixed, the lattice bases, target vectors, closest-vector calculations, and squared-distance calculations in Certificates~03 and~04 are exact over $\mathbb Z$. The parameters entering the David--Tzanakis estimate are rigorously enclosed by Certificate~08. The corresponding finite-precision calculations in Certificate~02 are retained only as an independent numerical cross-check.

\paragraph{Certified period and elliptic logarithms.}
Put
\[
Y=y+\frac12,
\quad
f(x)=x^3-10713x+\frac{2080817}{4}.
\]
Let $\xi$ be the unique real zero of $f$, and let
$\zeta,\overline\zeta$ be its two nonreal zeros. Define $I(x) = R_F\bigl(x-\xi,x-\zeta,x-\overline\zeta\bigr),$ where $R_F$ denotes Carlson's symmetric elliptic
integral~\cite{Carlson1995}. Equivalently,
$I(x)=\int_x^\infty\frac{dt}{2\sqrt{f(t)}}$ for $x>\xi$.
The real period is
\[
\omega=2R_F\bigl(0,\xi-\zeta,\xi-\overline\zeta\bigr)
=\int_\xi^\infty\frac{dt}{\sqrt{f(t)}}.
\]

For a real affine point $R=(x(R),y(R))$ with $y(R)+\tfrac12\ne0$,
the real elliptic logarithm, normalized to $u(R)\in[0,\omega)$, is given by
\[
u(R)
=
\begin{cases}
I(x(R)),
& y(R)+\dfrac12<0,\\[6pt]
\omega-I(x(R)),
& y(R)+\dfrac12>0.
\end{cases}
\]
In particular, this formula gives $u_1,\ldots,u_4$.  The point $Q_0$
lies on the upper branch, and hence $u_0=\omega-I(x(Q_0)).$

Certificate~07 starts with the exact rational polynomial $f$ and the
exact minimal polynomial $a^3-6a^2-6a-\frac32$ of the number $a$ occurring in $Q_0$. It isolates all their roots in
certified complex balls. Before evaluating $u_0$, it also verifies the
curve equation for $Q_0$ exactly by clearing denominators and reducing
modulo the minimal polynomial of $a$.

The Carlson integrals and all subsequent operations are evaluated at $750$ decimal digits using Arb ball arithmetic~\cite{Johansson2017}. The resulting certified balls have centers beginning
\[
\begin{aligned}
\omega&=0.615134685591167604099518514350\ldots,\\
u_1&=0.212802433486132940022858161049\ldots,\\
u_2&=0.193174049321678622005198210391\ldots,\\
u_3&=0.517246759484073082839728624775\ldots,\\
u_4&=0.395521315488017005327971293202\ldots,\\
u_0&=0.578570757851769889192964515668\ldots.
\end{aligned}
\]
\paragraph{Certified rounding containments.}
For $\mathcal Z=\{u_1,u_2,u_3,u_4,\omega,u_0\},$ Certificate~07 constructs, for every $z\in\mathcal Z$, an Arb ball $I_z$ containing $z$. If $r_z=[Cz]$ denotes nearest-integer rounding, then for each of the two scales $C=10^{14}$ and $C=10^{590}$ the script rigorously verifies that the distance of $CI_z$ from either half-integer boundary is greater than $10^{-2}$. Equivalently,
\begin{equation}\label{eq:certified-rounding-containment}
CI_z\subset
\left(
r_z-\frac12+\frac1{100},
r_z+\frac12-\frac1{100}
\right)
\subset
\left(r_z-\frac12,r_z+\frac12\right).
\end{equation}
For the second reduction, $C=10^{14}$, the certified rounded integers are
\[
\begin{aligned}
(r_{u_1},r_{u_2},r_{u_3},r_{u_4},r_{\omega},r_{u_0})
={}&(21280243348613,19317404932168,51724675948407,\\
&\quad 39552131548802,61513468559117,57857075785177).
\end{aligned}
\]
For the first reduction, $C=10^{590}$, the six exact $590$-digit rounded integers are printed in full in the Certificate~07 log and are asserted there to agree entry by entry with the integers used in Certificate~03. Thus all twelve nearest-integer roundings used in the two lattice constructions are uniquely determined by certified interval arithmetic.

\paragraph{Rigorous David--Tzanakis parameters.}
Certificate~08 gives a self-contained verification of all transcendental
inequalities entering the David--Tzanakis initial bound. It uses Arb ball
arithmetic with directed rounding, isolates the roots of the exact curve polynomial,
evaluates the AGM period formulae and the five Carlson elliptic integrals,
and proves the period and point admissibility inequalities for
$\mathcal A_0=51$, $\mathcal A_i=34$, and $\mathcal E=e$. Exact projective
height data together with the explicit one-sided Silverman estimate verify
the required canonical-height upper bounds. Finally, direct Arb evaluations
prove the stated bounds for $c_4,c_5,c_6$, all branches of the small-$N$
alternative, and the endpoint and derivative inequalities at
$M_0=2\cdot10^{116}$. Certificate~02 remains the exact Magma verification of
the birational maps and differential identity; its floating-point David
output is used only as an independent numerical cross-check.

\paragraph{First reduction.}
Take $M_0=2\cdot10^{116}$ and $C=10^{590}$. For $r_i=[Cu_i]$, $r_\omega=[C\omega]$, and $r_0=[Cu_0]$, let $\mathcal L$
be the lattice generated by the rows of
\[
B=
\begin{pmatrix}
1&0&0&0&r_1\\
0&1&0&0&r_2\\
0&0&1&0&r_3\\
0&0&0&1&r_4\\
0&0&0&0&r_\omega
\end{pmatrix}.
\]
Set $\mathbf y=(0,0,0,0,r_0)$.
Exact closest-vector computation gives a unique closest vector and squared
distance
\[
\begin{split}
d_1^2={}&
68458651583052304198049256658341101449164082617855023404034\\
&83146299452391527572067966648905463911991889932165245135738\\
&64731017937911732116526580654560264031320139117691433735972\\
&19754663454588548340762869144210215948097731569879560663200.
\end{split}
\]
In particular, Certificate~03 verifies the stronger exact inequality
\[
d_1^2-4M_0^2>\left(7\cdot10^{117}+4M_0+1\right)^2.
\]
Putting $\delta_1 = \sqrt{d_1^2-4M_0^2}-(4M_0+1),$ the displayed exact inequality gives $\delta_1>7\cdot10^{117}$.
Moreover, Certificate~03 verifies the exact comparison used in the manuscript. Since $C=10^{590}$, $\delta_1>7\cdot10^{117}$, $\log 10<2.303$, and $\log 7>1.94$, we obtain
\[
\log C+16-\log\delta_1
<473\cdot2.303+16-1.94
=1103.379
<0.81\cdot37^2.
\]
Hence $M<37$, and therefore $M\le36$.

Certificate~07 proves that the six integers defining the displayed lattice and target are the unique nearest-integer roundings of the corresponding scaled elliptic logarithms. Certificate~03 then applies Magma's exact \texttt{ClosestVectors} routine to these integral data. The script asserts that the returned list contains exactly one closest vector and that its squared distance is the stated integer $d_1^2$.

\paragraph{Second reduction.}
Now take $M_0=36$ and $C=10^{14}$. The rounded integers are
\[
\begin{aligned}
(r_1,r_2,r_3,r_4,r_\omega,r_0)
={}&(21280243348613,19317404932168,51724675948407,\\
&\quad 39552131548802,61513468559117,57857075785177).
\end{aligned}
\]
Relative to the original row basis, the unique closest lattice vector has coefficient vector $(151,114,-212,190,-31)$. Its displacement from the target is $(151,114,-212,190,7)$, and therefore
\[
d_2^2
=151^2+114^2+(-212)^2+190^2+7^2
=116890.
\]
Consequently,
\[
\delta_2
=
\sqrt{116890-4\cdot36^2}-(4\cdot36+1)
=
\sqrt{111706}-145.
\]
Since $111706>334^2$, we have $\delta_2>189>1$. Therefore, using only $\log 10<2.303$,
\[
M^2
<\frac{\log(10^{14})+16-\log\delta_2}{0.81}
<\frac{14\cdot2.303+16}{0.81}
<64.
\]
Hence $M<8$, and therefore $M\le7$.

Certificate~07 proves that the six displayed integers are the unique nearest-integer roundings for $C=10^{14}$. In Certificate~04, the resulting lattice basis and target vector are therefore exact integral data. Magma's exact \texttt{ClosestVectors} routine returns a list containing exactly one closest vector, and the script asserts $\#\mathrm{CV}=1, \quad d_2^2=116890.$ The script also recomputes, using exact integer arithmetic, the displacement $(151,114,-212,190,7)$ and verifies that its squared norm is \(116890\). Thus the value and uniqueness of the closest vector are explicitly checked by Certificate~04.

\paragraph{Final exact enumeration.}
Every remaining point has the form
\[
P=n_1P_1+n_2P_2+n_3P_3+n_4P_4,
\quad -7\leq n_i\leq7.
\]
The inverse rational functions on $E$ are
\[
D(x,y)=288x^2+5xy-95951x-8507y+6618599,
\] $\mu=\frac{(x-5866)(25x-889)}{D(x,y)},$ and
\[
s=-\frac{3(373x^2+5xy-62303x-1566y+2338958)}{D(x,y)}.
\]
All $15^4=50625$ coefficient vectors were enumerated using exact arithmetic in $\mathbb Q$.  The zero vector gives $O\in E(\mathbb Q)$ and is handled separately.  Among the remaining affine points there were two ordinary integral inverse images,	$(\mu,s)=(1,-2)$, $(\mu,s)=(0,-4)$, and two points with $D(x,y)=0$. None of the ordinary inverse images
satisfies $\mu\ge7$ and $6\mu<s<7\mu$. The points with $D(x,y)=0$ cannot arise from
a solution in the required range either. Indeed, the denominator
$51\mu+5s+15$ in the forward formula for $x$ is positive there, and
\[
\begin{aligned}
&5484\mu+8507s+4698-600(51\mu+5s+15)\\
&\qquad=-25116\mu+5507s-4302
>7926\mu-4302>0,
\end{aligned}
\]
where we used $s>6\mu$ and $\mu\ge7$. Hence $x>600$. Similarly,
\[
\begin{aligned}
&1000(51\mu+5s+15)-(5484\mu+8507s+4698)\\
&\qquad=45516\mu-3507s+10302
>20967\mu+10302>0,
\end{aligned}
\]
using $s<7\mu$, and therefore $x<1000$. Thus $600<x<1000$ throughout
the relevant image. On the other hand, the inverse identity implies
\[
\mu D(x,y)=(x-5866)(25x-889).
\]
Since neither $5866$ nor $889/25$ lies in $(600,1000)$, one necessarily has
$D(x,y)\ne0$ on the relevant image.

It follows that the original cubic equation has no integral solution in the required range.

\begin{rem}
Each script terminates with an explicit verification message. The complete
scripts, raw output logs, software versions, SHA-256 checksums, and reproduction
instructions are included in the supplementary archive. The generators returned
by \texttt{MordellWeilGroup} may depend on the Magma random seed, so
Certificate~01B is tied to the generators printed in the accompanying
Certificate~01A log. Certificate~05 is an exhaustive exact enumeration and uses
no random or probabilistic search. No call to
\texttt{SetClassGroupBounds("GRH")} is made; hence no GRH-dependent class-group
bounds are used.
\end{rem}
\section*{Declaration of generative AI and AI-assisted technologies in the manuscript preparation process}
During the preparation of this work, the authors used ChatGPT
(OpenAI) to obtain feedback on the exposition and to assist with
exploratory computations and language revision. No AI-generated
output was treated as mathematical evidence. The authors
independently verified all mathematical arguments and computations,
reviewed and revised all AI-assisted content, and take full
responsibility for the manuscript.
%%%%%%%%%%%%%%%%%%%%%%%%%%%%%%%%%%%%%%%%%%%%%%%%%%%%%%%%%%%%%% references

\end{document}